\documentclass[10pt]{amsart}
\usepackage{fullpage, amsmath, amsthm,amsfonts,url,amssymb,stmaryrd, mathrsfs}
\usepackage{mathrsfs}
\usepackage{amssymb,amscd}
\usepackage{color}
\usepackage{extarrows}
\usepackage[colorlinks,linkcolor=red,anchorcolor=green,citecolor=blue]{hyperref}
\usepackage{enumitem}

\topmargin=0.33in
\oddsidemargin=0.22in
\evensidemargin=0.22in
\textwidth=6.3in
\textheight=8.593in
\setlength{\parskip}{3mm}

\newtheorem{theorem}{Theorem}[section]
\newtheorem{corollary}[theorem]{Corollary}
\newtheorem{lemma}[theorem]{Lemma}
\newtheorem{proposition}[theorem]{Proposition}

\newtheorem{definition}[theorem]{Definition}
\newtheorem{hypothesis}[theorem]{Hypothesis}
\newtheorem{remark}[theorem]{Remark}
\newtheorem{notation}[theorem]{Notation}

\newcommand{\hooklongrightarrow}{\lhook\joinrel\longrightarrow}

\newcommand{\us}{\upsilon}
\newcommand{\ra}{\rightarrow}
\newcommand{\lra}{\longrightarrow}

\newcommand{\ul}{\underline}

\newcommand{\bA}{\mathbb A}
\newcommand{\bC}{\mathbb C}
\newcommand{\F}{\mathbb F}
\newcommand{\bG}{\mathbb G}
\newcommand{\bH}{\mathbb H}

\newcommand{\Q}{\mathbb Q}
\newcommand{\R}{\mathbb R}

\newcommand{\Z}{\mathbb Z}
\newcommand{\bS}{\mathbb S}

\newcommand{\cL}{\mathcal L}
\newcommand{\co}{\mathcal O}

\newcommand{\cR}{\mathcal R}
\newcommand{\cH}{\mathcal H}
\newcommand{\cC}{\mathcal C}
\newcommand{\cS}{\mathcal S}

\newcommand{\cW}{\mathcal W}
\newcommand{\cT}{\mathcal T}
\newcommand{\cM}{\mathcal M}

\newcommand{\cV}{\mathcal V}

\newcommand{\cU}{\mathcal U}
\newcommand{\cZ}{\mathcal Z}

\newcommand{\fh}{\mathfrak h}
\newcommand{\fm}{\mathfrak{m}}
\newcommand{\ub}{\mathfrak b}
\newcommand{\fl}{\mathfrak l}

\newcommand{\ug}{\mathfrak g}

\newcommand{\fx}{\mathfrak x}

\newcommand{\fs}{\mathfrak s}
\newcommand{\ft}{\mathfrak t}
\newcommand{\fa}{\mathfrak a}
\newcommand{\fz}{\mathfrak z}

\newcommand{\sN}{\mathscr N}

\DeclareMathOperator{\tr}{\mathrm tr}
\DeclareMathOperator{\GL}{\mathrm GL}

\DeclareMathOperator{\et}{\text{\'et}}
\DeclareMathOperator{\Fil}{\mathrm Fil}
\DeclareMathOperator{\Res}{\mathrm Res}

\DeclareMathOperator{\Sym}{\mathrm Sym}
\DeclareMathOperator{\Gal}{\mathrm Gal}
\DeclareMathOperator{\Hom}{\mathrm Hom}

\DeclareMathOperator{\End}{\mathrm End}
\DeclareMathOperator{\cris}{\mathrm cris}
\DeclareMathOperator{\rig}{\mathrm rig}
\DeclareMathOperator{\an}{\mathrm an}
\DeclareMathOperator{\Spec}{\mathrm Spec}

\DeclareMathOperator{\dR}{\mathrm dR}
\DeclareMathOperator{\Frob}{\mathrm Frob}
\DeclareMathOperator{\Ind}{\mathrm Ind}
\DeclareMathOperator{\unr}{\mathrm unr}

\DeclareMathOperator{\Ker}{\mathrm Ker}
\DeclareMathOperator{\pr}{\mathrm pr}

\DeclareMathOperator{\Ext}{\mathrm Ext}

\DeclareMathOperator{\Spm}{\mathrm Spm}

\DeclareMathOperator{\Ima}{\mathrm Im}
\DeclareMathOperator{\SL}{\mathrm SL}
\DeclareMathOperator{\lalg}{\mathrm lalg}

\DeclareMathOperator{\dett}{\mathrm det}

\DeclareMathOperator{\sss}{\mathrm ss}

\DeclareMathOperator{\red}{\mathrm red}

\DeclareMathOperator{\st}{\mathrm st}
\DeclareMathOperator{\St}{\mathrm St}
\DeclareMathOperator{\PGL}{\mathrm PGL}
\DeclareMathOperator{\Art}{\mathrm Art}
\DeclareMathOperator{\ab}{\mathrm ab}
\DeclareMathOperator{\WD}{\mathrm WD}
\DeclareMathOperator{\HT}{\mathrm HT}

\begin{document}
\title{$\cL$-invariants and local-global compatibility for the group $\GL_2/F$}
\author{Yiwen Ding}
\address{Department of Mathematics, Imperial College London}
\email{y.ding@imperial.ac.uk}
\maketitle
\begin{abstract}Let $F$ be a totally real number field, $\wp$ a place of $F$ above $p$. Let $\rho$ be a $2$-dimensional $p$-adic representation of $\Gal(\overline{F}/F)$ which appears in the \'etale cohomology of quaternion Shimura curves (thus $\rho$ is associated to Hilbert eigenforms). When the restriction  $\rho_{\wp}:=\rho|_{D_{\wp}}$ at the decomposition group of $\wp$ is semi-stable non-crystalline, one can associate to $\rho_{\wp}$ the so-called Fontaine-Mazur $\cL$-invariants, which are however invisible in the classical local Langlands correspondence. In this paper, we prove one can find these $\cL$-invariants in the completed cohomology group of quaternion Shimura curves, which generalizes some of Breuil's results \cite{Br10} in the $\GL_2/\Q$-case.
\end{abstract}
\tableofcontents
\section*{Introduction}
Let $F$ be a totally real number field, $B$ a quaternion algebra of center $F$ such that there exists only one real place of $F$ where $B$ is split. One can associate to $B$ a system of quaternion Shimura curves $\{M_K\}_{K}$,  proper and smooth over $F$, indexed by open compact subgroups $K$ of $(B\otimes_{\Q} \bA^{\infty})^{\times}$. We fix a prime number $p$, and suppose that there exists only one prime $\wp$ of $F$ above $p$. Suppose $B$ is split at $\wp$, i.e.  $(B\otimes_{\Q} \Q_p)^{\times} \cong \GL_2(F_{\wp})$ (where $F_{\wp}$ denotes the completion of $F$ at $\wp$). Let $E$ be a finite extension of $\Q_p$ sufficiently large with $\co_E$ its ring of integers and $\varpi_E$ a uniformizer of $\co_E$.

Let $\rho$ be a $2$-dimensional continuous representation of $\Gal(\overline{F}/F)$ over $E$ such that $\rho$ appears in the \'etale cohomology of $M_K$ for $K$ sufficiently small (so $\rho$ is associated to Hilbert eigenforms). By the theory of completed cohomology of Emerton (\cite{Em1}), one can associate to $\rho$ a unitary admissible Banach representation $\widehat{\Pi}(\rho)$ of $\GL_2(F_{\wp})$ as follows: put
\begin{equation*}
  \widehat{H}^1(K^p,E):=\Big(\varprojlim_{n}\varinjlim_{K_p'} H^1_{\et}\big(M_{K^pK_p'} \times_{F} \overline{F}, \co_E/\varpi_E^n\big)\Big)\otimes_{\co_E} E
\end{equation*}
where $K^p$ denotes the component of $K$ outside $p$, and $K_p'$ runs over open compact subgroups of $\GL_2(F_{\wp})$. This is an $E$-Banach space equipped with a continuous action of $\GL_2(F_{\wp}) \times \Gal(\overline{F}/F) \times \cH^{p}$ where $\cH^{p}$ denotes the $E$-algebra of Hecke operators outside $p$. Put
\begin{equation*}
  \widehat{\Pi}(\rho):=\Hom_{\Gal(\overline{F}/F)}\big(\rho, \widehat{H}^1(K^{p},E)\big).
\end{equation*}
The representation $\widehat{\Pi}(\rho)$  is supposed to be (a finite direct sum of) the right representation of $\GL_2(F_{\wp})$ corresponding to $\rho_{\wp}:=\rho|_{\Gal(\overline{F_{\wp}}/F_{\wp})}$ in the $p$-adic Langlands program (cf. \cite{Br0}). In nowadays, we know quite little about $\widehat{\Pi}(\rho)$, e.g. we don't know wether  it depends only on the local Galois representation $\rho_{\wp}$. By local-global compatibility of the classical local Langlands correspondence for $\GL_2/F$ (for $\ell=p$), one can indeed describe the locally algebraic vectors of $\widehat{\Pi}(\rho)$ in terms of the Weil-Deligne representation $\WD(\rho_{\wp})$ associated to $\rho_{\wp}$ and the Hodge-Tate weights $\HT(\rho_{\wp})$ of $\rho_{\wp}$ via the local Langlands correspondence. However, in general, (unlike the $\ell\neq p$ case), when passing to $\big(\WD(\rho_{\wp}), \HT(\rho_{\wp})\big)$, a lot of information about $\rho_{\wp}$ is lost. Finding the lost information in $\widehat{\Pi}(\rho)$ is thus one of the key problems in $p$-adic Langlands program (this is in fact the starting point of Breuil's initial work on $p$-adic Langlands program, cf. \cite{Br080}).

In this paper, we consider the case where $\rho_{\wp}$ is semi-stable non-crystalline and non-critical (i.e. $\rho_{\wp}$ satisfies the hypothesis \ref{hyp: clin-aq0}). In this case, the missing data, when passing from $\rho_{\wp}$ to $\big(\WD(\rho_{\wp}), \HT(\rho_{\wp})\big)$, can be explicitly described by the so-called \emph{Fontaine-Mazur $\cL$-invariants} $\ul{\cL}_{\Sigma_{\wp}}=(\cL_{\sigma})_{\sigma\in \Sigma_{\wp}}\in E^d$ associated to $\rho_{\wp}$ (e.g. see \S \ref{sec: clin-ene}), where $\Sigma_{\wp}$ denotes the set of $\Q_p$-embeddings of $F_{\wp}$ in $\overline{\Q_p}$. Using these $\cL$-invariants, Schraen has associated to $\rho_{\wp}$ a locally $\Q_p$-analytic representation $\Sigma\big(\WD(\rho_{\wp}), \HT(\rho_{\wp}), \ul{\cL}_{\Sigma_{\wp}}\big)$ of $\GL_2(F_{\wp})$ over $E$ (cf. \cite[\S 4.2]{Sch10}, see also \S \ref{sec: clin-4.2}), which generalizes Breuil's theory \cite{Br04} in $\GL_2(\Q_p)$-case. Note that one can indeed recover $\rho_{\wp}$ from $\Sigma\big(\WD(\rho_{\wp}), \HT(\rho_{\wp}), \ul{\cL}_{\Sigma_{\wp}}\big)$. The main result of this paper is the
\begin{theorem}[cf. $\text{Thm.\ref{thm: clin-sio}}$]\label{thm: clin0}
Keep the above notation and suppose that $\rho$ is absolutely irreducible modulo $\varpi_E$, there exists a continuous injection of $\GL_2(F_{\wp})$-representations
\begin{equation*}
  \Sigma\big(\WD(\rho_{\wp}), \HT(\rho_{\wp}), \ul{\cL}_{\Sigma_{\wp}}\big) \hooklongrightarrow \widehat{\Pi}(\rho)_{\Q_p-\an},
\end{equation*}
where $\widehat{\Pi}(\rho)_{\Q_p-\an}$ denotes the locally $\Q_p$-analytic vectors of $\widehat{\Pi}(\rho)$.
\end{theorem}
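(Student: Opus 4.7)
The plan is to build the embedding by decomposing the target representation into manageable pieces, anchoring it at the locally algebraic level via classical local-global compatibility, and then producing the remaining extensions as shadows of tangent vectors on an eigenvariety attached to the tower $\{M_K\}_K$.

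First, I would unpack Schraen's representation $\Sigma\big(\WD(\rho_{\wp}), \HT(\rho_{\wp}), \ul{\cL}_{\Sigma_{\wp}}\big)$ as recalled in \S\ref{sec: clin-4.2}: it is glued from the locally algebraic representation $\pi_{\lalg}(\rho_{\wp})$ attached to $(\WD(\rho_{\wp}), \HT(\rho_{\wp}))$ by classical local Langlands together with, for each $\sigma \in \Sigma_{\wp}$, a locally $\Q_p$-analytic extension whose class is prescribed by the $\cL$-invariant $\cL_{\sigma}$. The anchor of the embedding is furnished by the classical local-global compatibility for $\GL_2/F$ at $\ell=p$ (via Saito/Carayol), which gives an embedding $\pi_{\lalg}(\rho_{\wp})\hooklongrightarrow \widehat{\Pi}(\rho)^{\lalg}$; the task then reduces to lifting each of the $d=|\Sigma_{\wp}|$ Schraen extensions, independently, into $\widehat{\Pi}(\rho)_{\Q_p-\an}$.

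To produce these extensions, I would apply Emerton's locally analytic Jacquet functor $J_B$ to $\widehat{\Pi}(\rho)_{\Q_p-\an}$ and use the adjunction with parabolic induction. Then the existence of a particular locally analytic extension of principal series inside $\widehat{\Pi}(\rho)_{\Q_p-\an}$ amounts to the existence of a specific locally analytic character of the diagonal torus appearing in $J_B(\widehat{\Pi}(\rho)_{\Q_p-\an})$ together with an infinitesimal (non-smooth) relation to the classical character associated to $\rho_{\wp}$. This is in turn equivalent to producing a suitable tangent vector at the classical non-critical point $x_{\rho}$ on the eigenvariety $\cE$ cut out by $J_B(\widehat{H}^1(K^p,E)_{\Q_p-\an})$. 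On the Galois side, trianguline deformation theory (Colmez, Bellaïche-Chenevier) identifies the infinitesimal trianguline deformations of $\rho_{\wp}$ preserving the refinement: for each $\sigma$, $\cL_{\sigma}$ picks out a canonical one-parameter direction in this tangent space. Under the non-critical hypothesis \ref{hyp: clin-aq0}, patched Taylor-Wiles methods applied to the absolutely irreducible $\overline{\rho}$ show that $\cE$ is locally isomorphic to (a smooth cover of) the trianguline deformation space near $x_{\rho}$, so these $d$ canonical Galois tangent directions lift to tangent vectors on $\cE$, and adjunction turns them back into the desired extensions in $\widehat{\Pi}(\rho)_{\Q_p-\an}$.

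Finally, I would assemble the individual extensions: by comparing the explicit description of the $\sigma$-direction of the extension obtained on $\cE$ with the corresponding piece of $\Sigma\big(\WD(\rho_{\wp}), \HT(\rho_{\wp}), \ul{\cL}_{\Sigma_{\wp}}\big)$, one checks that the $\cL$-invariant computed on the automorphic side agrees with $\cL_{\sigma}$; amalgamating over $\sigma$ produces the full embedding. The main obstacle is the multi-variable nature of $\ul{\cL}_{\Sigma_{\wp}}$: one must produce $d$ independent tangent directions and verify that they glue compatibly to reconstruct Schraen's amalgam rather than merely its individual constituents. This requires a precise analysis of the local geometry of $\cE$ at $x_{\rho}$ and a careful matching between the intrinsic $\cL$-invariants of the extensions so produced and the Fontaine-Mazur invariants of $\rho_{\wp}$, which is where the non-criticality and the étaleness/smoothness of the weight map on $\cE$ play their essential role.
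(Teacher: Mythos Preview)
Your overall architecture is right and matches the paper: anchor at the locally algebraic level, pass to the Jacquet module, produce for each $\tau\in\Sigma_{\wp}$ a tangent vector on the eigenvariety at the classical point $z_{\rho}$, feed it through the adjunction formula to get an extension of principal series, and amalgamate via the decomposition (\ref{equ: clin-al1h}). Where you diverge from the paper is in the mechanism that links the eigenvariety tangent space to the Galois side.

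You invoke Taylor--Wiles patching to identify a neighborhood of $z_{\rho}$ on $\cE$ with a trianguline deformation space, then transport the ``canonical $\cL_{\sigma}$-directions'' from Galois to automorphic. The paper does \emph{not} use any $R=\bT$ or patching input. Instead it (i) proves directly that the eigenvariety is \'etale over the weight space $\cW_{1,\Sigma_{\wp}}$ at $z_{\rho}$ (Thm.~\ref{thm: clin-elt}), using only multiplicity one for automorphic representations of $(B\otimes_{\Q}\bA)^{\times}$; (ii) for each $\tau$, restricts to the one-dimensional slice $\cV(K^p,w)_{\overline{\rho},\tau}$ over $\cW_{1,\tau}$, where \'etaleness gives a canonical nonzero tangent vector $t_{\tau}$; (iii) pulls back the universal Galois family along $t_{\tau}$ and reads off the character $\widetilde{\chi}_{\tau}$ from the global triangulation of \cite{KPX}; and (iv) computes $\widetilde{\chi}_{\tau}$ explicitly in terms of $\cL_{\tau}$ via Zhang's formula \cite{Zhang} (Lem.~\ref{thm: clin-aue}), which is the genuine replacement for your patching step. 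The direction of the argument is thus opposite to yours: rather than lifting Galois directions to the eigenvariety, the paper takes the eigenvariety's own weight-space tangent directions and computes their Galois shadow.

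Both routes are viable, but the paper's is lighter: it needs no modularity-lifting input, only multiplicity one, global triangulation, and an explicit derivative formula. Your patching route would import substantial machinery (and, in the quaternionic setting at hand, results that postdate this paper) to establish a local isomorphism that is never actually needed here.
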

Such a result is called local-global compatibility, since the left side of this injection depends only on the local representation $\rho_{\wp}$ while the right side is globally constructed. Moreover, one can prove the ``uniqueness" (in the sense of Cor.\ref{cor: clin-ape}) of $\Sigma\big(\WD(\rho_{\wp}), \HT(\rho_{\wp}), \ul{\cL}_{\Sigma_{\wp}}\big)$ as subrepresentation of $\widehat{\Pi}(\rho)_{\Q_p-\an}$. As a result, we see the local Galois representation $\rho_{\wp}$ is determined by $\widehat{\Pi}(\rho)$. Such a  result in the $\Q_p$-case,  proved by Breuil (\cite{Br10}),  was the first discovered local-global compatibility in the $p$-adic local Langlands correspondence. In fact, the $\cL$-invariants appearing in the automorphic representation side are often referred to as \emph{Breuil's $\cL$-invariants}. The theorem \ref{thm: clin0} thus shows the equality of Fontaine-Mazur $\cL$-invariants and Breuil's $\cL$-invariants. Our approach is by using some $p$-adic family arguments on both $\GL_2$-side and Galois side, thus different from that of Breuil (by using modular symbols).

In the following (of the introduction), we sketch how we manage to ``find" $\{\cL_{\sigma}\}_{\sigma\in \Sigma_{\wp}}$ in $\widehat{\Pi}(\rho)$. For simplicity, suppose $\rho_{\wp}$ is of Hodge-Tate weights $(-1,0)_{\Sigma_{\wp}}$ (thus $\rho_{\wp}$ is associated to Hilbert eigenforms of weights $(2,\cdots, 2; 0)$ in the notation of \cite{Ca2}). Let $\tau\in \Sigma_{\wp}$, it's enough to find $\cL_{\tau}$ in $\widehat{\Pi}(\rho)_{\tau-\an}$ \big(the maximal  locally $\tau$-analytic subrepresentation of $\widehat{\Pi}(\rho)$\big) in the sense of (\ref{equ: clin-utau}) below:

Denote by $Z_1:=\bigg\{\begin{pmatrix}
  a & 0 \\ 0 & a
\end{pmatrix}\ \Big|\ a\in 1+2\varpi \co_{\wp}\bigg\}$ (where $\co_{\wp}$ denotes the ring of integers of $F_{\wp}$ and $\varpi$ is a uniformizer of $\co_{\wp}$), consider $\widehat{H}^1(K^p,E)_{\tau-\an}^{Z_1}$ \big(where ``$(\cdot)^{Z_1}$" signifies the vectors fixed by $Z_1$, and ``$\tau-\an$" signifies the locally $\tau$-analytic subrepresentation\big). By applying Jacquet-Emerton functor, one gets an essentially admissible locally $\tau$-analytic representation of $T(F_{\wp})$: $J_B(\widehat{H}^1(K^p,E)_{\tau-\an}^{Z_1})$, which is moreover equipped with an action of $\cH^p$ commuting with that of $T(F_{\wp})$. Following Emerton, one can construct  an eigenvariety $\cV_{\tau}$ from $J_B(\widehat{H}^1(K^p,E)_{\tau-\an}^{Z_1})$, which is in particular a rigid space finite over $\widehat{T}_{\tau}$, the rigid space parameterizing locally $\tau$-analytic characters of $T(F_{\wp})$ (cf. Thm.\ref{thm: clin-cjw}). A closed point of $\cV_{\tau}$ can be written as $(\chi,\lambda)$ where $\chi$ is a locally $\tau$-analytic character of $T(F_{\wp})$ and $\lambda$ is a system of Hecke eigenvalues (for $\cH^p$).

One can associate to $\rho$ an $E$-point $z_{\rho}=(\chi_{\rho}, \lambda_{\rho})$ of $\cV_{\tau}$, where $\chi_{\rho}=\unr(\alpha/q)\otimes \unr(q\alpha)$ \big($\unr(a)$ denotes the unramified character of $F_{\wp}^{\times}$ sending $\varpi$ to $a$\big), $\lambda_{\rho}$ denotes the system of eigenvalues of $\cH^p$ associated to $\rho$ (via the Eichler-Shimura relations), $\{\alpha,q\alpha\}$ are the eigenvalues of $\varphi^{d_0}$ on $D_{\st}(\rho_{\wp})$ (where $d_0$ is the degree of the maximal unramified extension of $\Q_p$ in $F_{\wp}$, $q:=p^{d_0}$). Moreover, by multiplicity one result on automorphic representations of $(B\otimes_{\Q} \bA)^{\times}$, one can prove as in \cite[\S 4.4]{Che11} that $\cV_{\tau}$ is smooth at $z_{\rho}$ (cf. Thm.\ref{thm: clin-elt}, note that by the hypothesis \ref{hyp: clin-aq0}, $z_{\rho}$ is in fact a \emph{non-critical} point).

Let $t:\Spec E[\epsilon]/\epsilon^2 \ra \cV_{\tau}$ be a non-zero element in the tangent space of  $\cV_{\tau}$ at $z_{\rho}$, via the composition
\begin{equation*}
  t: \Spec E[\epsilon]/\epsilon^2 \lra \cV_{\tau} \lra \widehat{T}_{\tau},
\end{equation*}
one gets a character $\widetilde{\chi}_{\rho}=\widetilde{\chi}_{\rho,1}\otimes \widetilde{\chi}_{\rho,2}: T(F_{\wp})^{\times} \ra (E[\epsilon]/\epsilon^2)^{\times}$, which is in fact an extension of $\chi_{\rho}$ by $\chi_{\rho}$. One key point is that, by applying an adjunction formula in family for the Jacquet-Emerton functor (see \cite[Lem.4.5.12]{Em1} for the $\GL_2(\Q_p)$-case) to the tangent space of $\cV_{\tau}$ at $z_{\rho}$, one gets a non-zero continuous morphism of $\GL_2(F_{\wp})$-representations (where $\overline{B}(F_{\wp})$ denotes the group of lower triangular matrices) (see (\ref{equ: clin-pqgi}))
\begin{equation}\label{equ: clin-pf2}
  \big(\Ind_{\overline{B}(F_{\wp})}^{\GL_2(F_{\wp})} \widetilde{\chi}_{\rho}\delta^{-1}\big)^{\tau-\an} \lra \widehat{H}^1(K^p,E)_{\tau-\an}^{Z_1}[\lambda_{\rho}]
\end{equation}
where $\delta:=\unr(q^{-1}) \otimes \unr(q)$ and we refer to \cite[\S 2]{Sch10} for locally $\tau$-analytic parabolic inductions, and where the right term denotes the generalized $\lambda_{\rho}$-eigenspace of $\widehat{H}^1(K^p,E)_{\tau-\an}^{Z_1}$.

Another key point is that one can describe the character $\widetilde{\chi}_{\rho}$ in term of $\cL_{\tau}$:
\begin{lemma}[$\text{cf. Lem.\ref{thm: clin-aue}}$]\label{lem: clin-htn}
  There exists an additive character $\chi$ of $F_{\wp}^{\times}$ in $E$ such that $\widetilde{\chi}_{\rho}$ (as a $2$-dimensional representation of $T(F_{\wp})$ over $E$) is isomorphic to $\chi_{\rho}\otimes_E \psi(\cL_{\tau},\chi)$ where
  \begin{equation*}
    \psi(\cL_{\tau},\chi)\begin{pmatrix} a & 0 \\ 0 & d \end{pmatrix}=\begin{pmatrix}1 & \log_{\tau,-\cL_{\tau}}(ad^{-1})+\chi (ad) \\ 0 & 1\end{pmatrix},
  \end{equation*}
  and  $\log_{\tau,\cL}$ denotes the additive character of $F_{\wp}^{\times}$ such that $\log_{\tau,\cL}|_{\co_{\wp}^{\times}}=\tau \circ \log$ and $\log_{\tau,\cL}(p)=\cL$.
\end{lemma}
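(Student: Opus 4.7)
My plan is to transport the computation of $\widetilde{\chi}_\rho$ to the Galois side, where $\cL_\tau$ is visible by definition. By the existence of a universal pseudo-character on $\cV_\tau$ deforming $\tr\rho$, and the smoothness of $\cV_\tau$ at $z_\rho$ (Thm.\ref{thm: clin-elt}), the tangent vector $t$ lifts to a non-zero first-order Galois deformation $\widetilde{\rho}\colon\Gal(\overline{F}/F)\to\GL_2(E[\epsilon]/\epsilon^2)$ of $\rho$, and $\widetilde{\chi}_\rho$ is then the image of $t$ under the structural map $\cV_\tau\to\widehat{T}_\tau$. The task is to read off $\widetilde{\chi}_\rho$ from the local restriction $\widetilde{\rho}_\wp:=\widetilde{\rho}|_{D_\wp}$.

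Because $\rho_\wp$ is semi-stable non-crystalline, of Hodge-Tate weights $(-1,0)_{\Sigma_{\wp}}$, and non-critical (Hypothesis \ref{hyp: clin-aq0}), its $(\varphi,\Gamma)$-module over the Robba ring carries a unique non-critical triangulation whose parameter, after incorporating the $\delta^{-1}$-twist visible in (\ref{equ: clin-pf2}), is $\chi_\rho$. The eigenvariety $\cV_\tau$ is built to interpolate Frobenius eigenvalues on $D_{\st}$ in $\tau$-analytic families, hence trianguline parameters, so the composition $\Spec E[\epsilon]/\epsilon^2\xrightarrow{t}\cV_\tau\to\widehat{T}_\tau$ coincides with the $\tau$-analytic trianguline parameter of $\widetilde{\rho}_\wp$. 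In particular $\widetilde{\rho}_\wp$ is trianguline with parameter determined by $\widetilde{\chi}_\rho$, and the problem reduces to a purely local computation of trianguline parameters of first-order deformations of $\rho_\wp$.

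For the local computation I write $\widetilde{\chi}_{\rho,i}=\chi_{\rho,i}(1+\epsilon\,\psi_i)$ with additive $\psi_i\colon F_\wp^\times\to E$. Deformations of the central character (equivalently of $\det\widetilde{\rho}_\wp$) are unconstrained and account exactly for the free summand $\chi(ad)$ in the stated formula. The genuinely new data sit in $\psi_1-\psi_2$. Since $\widetilde{\chi}_\rho$ is $\tau$-analytic, the only available additive characters on $F_\wp^\times$ are spanned by the smooth characters together with $\tau\circ\log$, and the smooth piece can again be absorbed into $\chi$. What remains is the coefficient of $\tau\circ\log$ in $\psi_1-\psi_2$: an explicit $(\varphi,\Gamma)$-module computation with $D_{\st}(\widetilde{\rho}_\wp)$ shows that, in order for a first-order deformation of the semi-stable non-crystalline $\rho_\wp$ to remain admissible in the $\tau$-direction, this coefficient must be dictated by the slope of the non-Frobenius-stable Hodge line in $D_{\st}(\rho_\wp)_\tau$ relative to the eigenbasis, which is the very definition of the Fontaine-Mazur $\cL$-invariant $\cL_\tau$. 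This pins down $\log_{\tau,-\cL_\tau}(ad^{-1})$ and yields the stated shape of $\widetilde{\chi}_\rho$.

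The hardest step will be the second paragraph: showing that the tangent map $\cV_\tau\to\widehat{T}_\tau$ at $z_\rho$ actually factors through the $\tau$-analytic trianguline parameter of $\widetilde{\rho}_\wp$, rather than through some weaker invariant such as the generalised Frobenius eigenvalue. This requires the non-criticality of $z_\rho$, the construction of $\cV_\tau$ from $J_B$ applied to $\tau$-analytic vectors (which in particular accounts for the $\delta^{-1}$ twist), and an interpolation of $(\varphi,\Gamma)$-modules in the spirit of Kedlaya-Pottharst-Xiao. Once this identification is in hand, the remaining work is the explicit $p$-adic Hodge-theoretic calculation sketched above.
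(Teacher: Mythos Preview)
Your overall strategy is the same as the paper's: pass from $t$ to a first-order Galois deformation $\widetilde{\rho}$, use global triangulation (Kedlaya--Pottharst--Xiao) to obtain a triangulation of $D_{\rig}(\widetilde{\rho}_\wp)$ over $\cR_{E[\epsilon]/\epsilon^2}$ whose parameters are $\unr(q)\widetilde{\chi}_{\rho,1}$ and $\widetilde{\chi}_{\rho,2}\prod_\sigma\sigma^{-1}$, and then read off the constraint linking the $\tau$-direction derivative to $\cL_\tau$. So at the level of architecture there is no disagreement.

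Two points deserve sharpening. First, you describe the final local step as checking that the deformation ``remain admissible in the $\tau$-direction'' via $D_{\st}(\widetilde{\rho}_\wp)$. This is not quite the right mechanism: $\widetilde{\rho}_\wp$ is not semi-stable over $E[\epsilon]/\epsilon^2$ in general, so $D_{\st}$ is not the object to look at, and there is no weak-admissibility inequality to impose (the deformation already comes from an honest Galois representation). The genuine content is a computation in $D_{\rig}(\widetilde{\rho}_\wp)$: writing $\psi_\tau=\widetilde{\chi}_\rho\chi_\rho^{-1}$ as $\unr(1+\gamma\epsilon)(1-\epsilon\log_{\tau,0,\varpi})\otimes\unr(1+\eta\epsilon)(1+\epsilon\log_{\tau,0,\varpi})$, the identity one needs is $(\eta-\gamma)/2=(-\cL_\tau)(\varpi)$. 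This is exactly the formula of Zhang \cite[Thm.~1.1]{Zhang} (generalizing Colmez \cite{Colm10} from $\Q_p$ to $F_\wp$), which expresses the Fontaine--Mazur $\cL$-invariant as the logarithmic derivative of the crystalline Frobenius eigenvalue along the family. The paper invokes this as a black box; your ``explicit $(\varphi,\Gamma)$-module computation'' would have to reprove it.

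Second, you call the identification of the tangent map with the trianguline parameter the ``hardest step''. In practice this is the more routine part: once non-criticality rules out companion points near $z_\rho$, \cite[Thm.~6.3.9]{KPX} gives the triangulation over an affinoid neighbourhood in the eigencurve with the correct parameters, and pulling back along $t$ is formal. The substantive input you are underselling is the Colmez--Zhang formula in the last step.
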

To prove this lemma, one considers the $p$-adic family of Galois representations over $\cV_{\tau}$. In fact, there exist an admissible neighborhood $U$ of $z_{\rho}$ in $\cV_{\tau}$ and a continuous representation $\rho_U: \Gal(\overline{F}/F) \ra \GL_2(\co_U)$ such that the evaluation of $\rho_U$ at any classical point of $U$ (which thus corresponds to certain Hilbert eigenforms $h$) is just the Galois representation associated to $h$. Via the map $t$, one gets a continuous representation $\widetilde{\rho}: \Gal(\overline{F}/F) \ra \GL_2(E[\epsilon]/\epsilon^2)$ which satisfies $\widetilde{\rho}\equiv \rho \pmod{\epsilon}$. By the theory of global triangulation \cite{KPX}, one can obtain an exact sequence (cf. (\ref{equ: clin-2no})):
\begin{equation*}
  0 \ra \cR_{E[\epsilon]/\epsilon^2}\big(\unr(q)\widetilde{\chi}_{\rho,1}\big) \ra D_{\rig}(\widetilde{\rho}_{\wp}) \ra \cR_{E[\epsilon]/\epsilon^2}\Big(\widetilde{\chi}_{\rho,2}\prod_{\sigma\in \Sigma_{\wp}}\sigma^{-1}\Big)\ra 0,
\end{equation*}
where $\widetilde{\rho}_{\wp}:=\widetilde{\rho}|_{\Gal(\overline{F_{\wp}}/F_{\wp})}$. The lemma then follows by applying the formula in \cite[Thm.1.1]{Zhang} (which generalizes Colmez's formula \cite{Colm10} in $\Q_p$-case) to $\widetilde{\rho}_{\wp}$.

Return to the map (\ref{equ: clin-pf2}). We know  $\big(\Ind_{\overline{B}(F_{\wp})}^{\GL_2(F_{\wp})} \widetilde{\chi}_{\rho}\delta^{-1}\big)^{\tau-\an}$ lies in an exact sequence
\begin{equation*}
  0 \ra \big(\Ind_{\overline{B}(F_{\wp})}^{\GL_2(F_{\wp})} \chi_{\rho}\delta^{-1}\big)^{\tau-\an}\ra \big(\Ind_{\overline{B}(F_{\wp})}^{\GL_2(F_{\wp})} \widetilde{\chi}_{\rho}\delta^{-1}\big)^{\tau-\an} \xrightarrow{s}  \big(\Ind_{\overline{B}(F_{\wp})}^{\GL_2(F_{\wp})} \chi_{\rho}\delta^{-1}\big)^{\tau-\an} \ra 0.
\end{equation*}
where $s$ depends on $\cL_{\tau}$ and $\chi$ as in the lemma \ref{lem: clin-htn}. On the other hand, it's known that $\big(\Ind_{\overline{B}(F_{\wp})}^{\GL_2(F_{\wp})} \chi_{\rho}\delta^{-1}\big)^{\tau-\an}$ admits a unique finite dimensional subrepresentation $V(\alpha):=\unr(\alpha)\circ \dett$. Put $\Sigma(\alpha,\cL_{\tau}):=s^{-1}(V(\alpha))/V(\alpha)$ (cf. \cite[\S 4.2]{Sch10}), which turns out to be \emph{independent} of the character $\chi$ in Lem.\ref{lem: clin-htn} and thus depends \emph{only} on $\cL_{\tau}$.
At last, one can prove that (\ref{equ: clin-pf2}) induces actually  a continuous injection of locally $\tau$-analytic representations of $\GL_2(F_{\wp})$
\begin{equation}\label{equ: clin-utau}
\Sigma(\alpha,\cL_{\tau}) \hooklongrightarrow \widehat{\Pi}(\rho)_{\tau-\an}.
\end{equation}

It seems this argument might work for some other groups and some other Shimura varieties. For example, in the $\GL_2/\Q$-case (with Coleman-Mazur eigencurve, reconstructed by Emerton \cite[\S 4]{Em1} using completed cohomology of modular curves), by restricting the map \cite[(4.5.9)]{Em1} to the tangent space at a semi-stable non-crystalline point, one can obtain a map as in (\ref{equ: clin-pf2}). On the other hand, one can prove a similar result as in Lem.\ref{lem: clin-htn} by Kisin's theory in \cite{Ki} and Colmez's formula \cite{Colm10}. Combining them together, one can actually reprove Breuil's result in \cite{Br10} for locally analytic representations and thus obtain directly the equality of Fontaine-Mazur $\cL$-invariant and Breuil's $\cL$-invariant without using Darmon-Orton's $\cL$-invariant (as in Breuil's original proof \cite{Br10}).

We refer to the body of the text for more detailed and more precise statements.

After the results of this paper was firstly announced, Yuancao Zhang informed us that he had proved the existence of $\cL$-invariants in $\widehat{\Pi}(\rho)$ in certain cases by using some arguments as in \cite[\S 5]{BE}, however, the equality between these $\cL$-invariants and Fontaine-Mazur $\cL$-invariants was not proved.
\addtocontents{toc}{\protect\setcounter{tocdepth}{1}}
\subsection*{Acknowledgements} I would like to thank Santosh Nadimpalli, Benjamin Schraen, Yichao Tian, Yuancao Zhang for useful discussions or answering my questions during the preparation of this paper. In particular, I would like to thank Christophe Breuil for a careful reading of a preliminary version, for his remarks on this work, which help improve a lot of the text, and for suggesting the proof of Cor.\ref{cor: clin-ape}.
\addtocontents{toc}{\protect\setcounter{tocdepth}{1}}
\section{Notations and preliminaries}\label{sec: clin-1}Let $F$ be a totally real field of degree $d$ over $\Q$, denote by $\Sigma_{\infty}$ the set of real embeddings of $F$.  For a finite place $\fl$ of $F$, we denote by $F_{\fl}$ the completion of $F$ at $\fl$, $\co_{\fl}$ the ring of integers of $F_{\fl}$ with $\varpi_{\fl}$ a uniformiser of $\co_{\fl}$. Denote by $\bA$ the ring of adeles of $\Q$ and $\bA_F$ the ring of adeles of $F$. For a set $S$ of places of $\Q$ (resp. of $F$), we denote by $\bA^S$ (resp. by $\bA_F^S$) the ring of adeles of $\Q$ (resp. of $F$) outside $S$, $S_F$ the set of places of $F$ above that in $S$, and $\bA_F^S:=\bA_F^{S_F}$.

Let $p$ be a prime number, suppose there exists only one prime $\wp$ of $F$ lying above $p$. Denote by $\Sigma_{\wp}$ the set of $\Q_p$-embeddings of $F_{\wp}$ in $\overline{\Q_p}$; let $\varpi$ be a uniformizer of $\co_{\wp}$, $F_{\wp,0}$ the maximal unramified extension of $\Q_p$ in $F_{\wp}$, $d_0:=[F_{\wp,0}:\Q_p]$, $e:=[F_{\wp}:F_{\wp,0}]$, $q:=p^{d_0}$ and $\us_{\wp}$ a $p$-adic valuation on $\overline{\Q_p}$ normalized by $\us_{\wp}(\varpi)=1$. Let $E$ be a finite extension of $\Q_p$ big enough such that $E$ contains all the $\Q_p$-embeddings of $F$ in $\overline{\Q_p}$, $\co_E$ the ring of integers of $E$ and $\varpi_E$ a uniformizer of $\co_E$.

Let $B$ be a quaternion algebra of center $F$ with $S(B)$ the set (of even cardinality) of places of $F$ where $B$ is ramified, suppose $|S(B)\cap \Sigma_{\infty}|=d-1$ and $S(B)\cap \Sigma_{\wp}=\emptyset$, i.e. there exists $\tau_{\infty}\in \Sigma_{\infty}$ such that $B\otimes_{F,\tau_{\infty}} \R\cong M_2(\R)$, $B\otimes_{F,\sigma} \R \cong \bH$ for any $\sigma\in \Sigma_{\infty}$, $\sigma\neq \tau_{\infty}$, where $\bH$ denotes the Hamilton algebra, and $B\otimes_{\Q} \Q_p\cong M_2(F_{\wp})$. We associate to $B$ a reductive algebraic group $G$ over $\Q$ with $G(R):=(B\otimes_{\Q} R)^{\times}$ for any $\Q$-algebra $R$. Set $\bS:=\Res_{\bC/\R}\bG_m$, and denote by $h$ the morphism
\begin{equation*}
  h: \bS(\R)\cong \bC^{\times} \lra G(\R)\cong \GL_2(\R)\times (\bH^*)^{d-1}, \ a+bi\mapsto \bigg(\begin{pmatrix}a&b\\ -b&a\end{pmatrix}, 1, \cdots, 1\bigg).
\end{equation*}
The space of $G(\R)$-conjugacy classes of $h$ has a structure of complex manifold, and is isomorphic to $\fh^{\pm}:=\bC \setminus \R$ (i.e. $2$ copies of the Poincar\'e's upper half plane). We get a projective system of Riemann surfaces indexed by open compact subgroups of $G(\bA^{\infty})$:
\begin{equation*}
  M_K(\bC):=G(\Q)\setminus \big(\fh^{\pm} \times (G(\bA^{\infty})/K)\big)
\end{equation*}
where $G(\Q)$ acts on $\fh^{\pm}$ via $G(\Q)\hookrightarrow G(\R)$ and the transition map is given by
\begin{equation}\label{equ: clin-sab}
G(\Q)\setminus \big(\fh^{\pm} \times (G(\bA^{\infty})/K_1)\big) \lra G(\Q)\setminus \big(\fh^{\pm} \times (G(\bA^{\infty})/K_2)\big), \ (x,g)\mapsto (x,g),
\end{equation} for $K_1\subseteq K_2$. It's known that $M_K(\bC)$ has a canonical proper smooth model over $F$ (via the embedding $\tau_{\infty}$), denoted by $M_K$, and these $\{M_K\}_{K}$ form a projective system of proper smooth algebraic curves over $F$ (i.e. the transition map (\ref{equ: clin-sab}) admits also an $F$-model).  
One has a natural isomorphism $G(\Q_p) \xlongrightarrow{\sim} \GL_2(F_{\wp})$.
Let $K_{0,\wp}:=\GL_2(\co_{\wp})$, in the following, we fix an open compact subgroup $K^{p}$ of $G(\bA^{\infty,p})$ small enough such that the open compact subgroup $K^pK_{0,\wp}$ of $G(\bA^{\infty})$ is neat (cf. \cite[Def.4.11]{New}). Denote by $S(K^{p})$ the set of finite places $\fl$ of $F$ such that $\fl \nmid p$, that $B$ is split at $\fl$, i.e. $B\otimes_F F_{\fl}\xlongrightarrow{\sim}  \mathrm{M}_2(F_{\fl})$, and that $K^{p}\cap \GL_2(F_{\fl})\cong \GL_2(\co_{\fl})$. Denote by $\cH^p$ the commutative $\co_E$-algebra generated by the double coset operators $[\GL_2(\co_{\fl}) g_{\fl} \GL_2(\co_{\fl})]$ for all $g_{\fl}\in \GL_2(F_{\fl})$ with $\dett(g_{\fl})\in \co_{\fl}$ and for all $\fl \in S(K^p)$. Set
\begin{eqnarray*}T_{\fl}&:=&\bigg[\GL_2(\co_{\fl})\begin{pmatrix} \varpi_{\fl} & 0 \\ 0 & 1\end{pmatrix}\GL_2(\co_{\fl})\bigg], \\
S_{\fl} &:=& \bigg[\GL_2(\co_{\fl})\begin{pmatrix} \varpi_{\fl} & 0 \\ 0 & \varpi_{\fl}\end{pmatrix}\GL_2(\co_{\fl})\bigg],
\end{eqnarray*}
then $\cH^p$ is the polynomial algebra over $\co_E$ generated by $\{T_{\fl}, S_{\fl}\}_{\fl \in S(K^p)}$.

Denote by $Z_0$ the kernel of the norm map $\sN: \Res_{F/\Q} \bG_m \ra \bG_m$ which is a subgroup of $Z=\Res_{F/\Q} \bG_m$. We set $G^c:=G/Z_0$.



Denote by $\Art_{F_{\wp}}: F_{\wp}^{\times}\xrightarrow{\sim} W_{F_\wp}^{\ab}$ the local Artin map normalized by sending uniformizers to geometric Frobenius elements \big(where $W_{F_{\wp}}\subset \Gal(\overline{\Q_p}/F_{\wp})$ denotes the Weil group\big). 
Let $\sigma\in \Sigma_{\wp}$, denote by $\log_{\sigma}$ the composition $\co_{\wp}^{\times} \xrightarrow{\log} \co_{\wp} \xrightarrow{\sigma} E$. For $\cL\in E$, denote by $\log_{\sigma,\cL, \varpi}$ the (additive) character of $F_{\wp}^{\times}$ such that $\log_{\sigma,\cL,\varpi}|_{\co_{\wp}^{\times}}=\log_{\sigma}$ and $\log_{\sigma,\cL,\varpi}(\varpi)=\cL$.  Denote by $\log_{\sigma,\cL}$ the (additive) character of $F_{\wp}^{\times}$ in $E$ satisfying $\log_{\sigma,\cL}|_{\co_{\wp}^{\times}}=\log_\sigma$ and $\log_{\sigma,\cL}(p)=\cL$. Let $\cL(\varpi):=e^{-1}\big(\cL-\log_{\sigma}\big(\frac{p}{\varpi^{e}}\big)\big)$, thus one has
\begin{equation*}
  \log_{\sigma,\cL}=\log_{\sigma, \cL(\varpi), \varpi}.
\end{equation*}
Denote by $\unr(a)$ the unramified character of $F_{\wp}^{\times}$ sending $\varpi$ to $a$.

Let $V$ be an $E$-vector space equipped with an $E$-linear action of $A$ (with $A$ a set of operators), $\chi$ a system of eigenvalues of $A$, denote by $V^{A=\chi}$ the $\chi$-eigenspace, $V[A=\chi]$ the generalized $\chi$-eigenspace, $V^A$ the vector space of $A$-fixed vectors.

Let $S\subseteq \Sigma_\wp$, $k_{\sigma}\in \Z_{\geq 2}$ for all $\sigma\in S$, denote by $W(\ul{k}_S):=\otimes_{\sigma\in S} \big(\Sym^{k_{\sigma}-2} E^2\big)^{\sigma}$ the algebraic representation of $G(\Q_p)\cong\GL_2(F_{\wp})$ with $\GL_2(F_{\wp})$ acting on $\big(\Sym^{k_{\sigma}-2} E^2\big)^{\sigma}$ via $ \GL_2(F_{\wp}) \xrightarrow{\sigma} \GL_2(E)$ for $\sigma\in S$. Let $w\in \Z$, $k_{\sigma}\in \Z_{\geq 2}$, $k_{\sigma}\equiv w \pmod{2}$ for all $\sigma\in \Sigma_{\wp}$, put $W(\ul{k}_{\Sigma_{\wp}},w):=\otimes_{\sigma\in \Sigma_{\wp}} \big(\Sym^{k_{\sigma}-2} E^2\otimes \dett^{\frac{w-k_{\sigma}+2}{2}}\big)^{\sigma}$.

Denote by  $B(F_{\wp})$ \big(resp. $\overline{B}(F_{\wp})$\big) the subgroup of $\GL_2(F_{\wp})$ of upper (resp. lower) triangular matrices, $T(F_{\wp})$ the group of diagonal matrices, $N(F_{\wp})$ the group of unipotent elements in $B(F_{\wp})$, $N_0:=N(F_{\wp})\cap \GL_2(\co_{\wp})$, $Z':=T(F_{\wp})\cap \SL_2(F_{\wp})$,  $K_{1,\wp}:=\{g\in \GL_2(\co_{\wp})\ |\ g\equiv 1 \pmod{2\varpi} \}$, $Z_{1}$ the center of $K_{1,\wp}$, $Z_{1}':=Z'\cap K_{1,\wp}$. Put $\delta:=\unr(q^{-1}) \otimes \unr(q)$ being a character of $T(F_{\wp})$ (which is in fact the modulus character of $B(F_{\wp})$).
\subsection*{Locally $\Q_p$-analytic representations of $\GL_2(F_{\wp})$}
Recall some notions on locally $\Q_p$-analytic representations. Let $V$ be a locally $\Q_p$-analytic representation of $\GL_2(F_{\wp})$ over $E$, i.e. a locally analytic representation of $\GL_2(F_{\wp})$ with $\GL_2(F_{\wp})$ viewed as a $p$-adic $\Q_p$-analytic group, $V$ is naturally equipped with a $\Q_p$-linear action of the Lie algebra $\ug$ of $\GL_2(F_{\wp})$ \big(thus an $E$-linear action of $\ug_{\Sigma_{\wp}}:=\ug\otimes_{\Q_p} E$\big) given by
\begin{equation*}
  \fx\cdot v:=\frac{d}{dt}\exp(t\fx)(v)|_{t=0}.
\end{equation*} Using the isomorphism
\begin{equation}\label{equ: clin2-grwm}
  F_{\wp}\otimes_{\Q_p} E \xlongrightarrow{\sim} \prod_{\sigma\in \Sigma_{\wp}} E, \  a \otimes b \mapsto (\sigma(a)b)_{\sigma: F_{\wp}\ra E},
\end{equation}
one gets a decomposition $\ug_{\Sigma_{\wp}}\xrightarrow{\sim} \prod_{\sigma\in \Sigma_{\wp}} \ug_{\sigma}$ with $\ug_{\sigma}:=\ug \otimes_{F_{\wp},\sigma}E$. Let $J\subseteq \Sigma_{\wp}$, a vector $v\in V$ is called \emph{locally $J$-analytic} if the action of $\ug_{\Sigma_{\wp}}$ on $v$ factors through $\ug_{J}:=\prod_{\sigma\in J} \ug_{\sigma}$ (we put $\ug_{\emptyset}:=\{0\}$), in other words, if $v$ is killed by $\ug_{\Sigma_{\wp}\setminus J}$  (cf. \cite[Def.2.4]{Sch10}); $v$ is called \emph{quasi-$J$-classical} if there exist a finite dimensional representation $U$ of $\ug_J$ and a $\ug_J$-invariant map
\begin{equation*}\label{equ: drp-var}
  U \hooklongrightarrow  V
\end{equation*}
whose image contains $v$, if the $\ug_J$-representation $U$ can moreover give rise to an algebraic representation of $\GL_2(F_{\wp})$, then we say that $v$ is \emph{$J$-classical}. In particular, $v$ is $\Sigma_{\wp} \setminus J$-classical if $v$ is locally $J$-analytic. Note that $v$ is $\emptyset$-analytic is equivalent to that $v$ is a smooth vector for the action of $\GL_2(F_{\wp})$ \big(i.e. $v$ is fixed by certain open compact subgroup of $\GL_2(F_{\wp})$\big) which implies in particular $v$ is $\Sigma_{\wp}$-classical.

Let $V$ be a Banach representation of $\GL_2(F_{\wp})$ over $E$, denote by $V_{\Q_p-\an}$ the $E$-vector subspace generated by the locally $\Q_p$-analytic vectors of $V$, which is stable by $\GL_2(F_{\wp})$ and hence is a locally $\Q_p$-analytic representation of $\GL_2(F_{\wp})$. If $V$ is moreover admissible, by \cite[Thm.7.1]{ST03}, $V_{\Q_p-\an}$ is an admissible locally $\Q_p$-analytic representation of $\GL_2(F_{\wp})$ and  dense in $V$. For $J\subseteq \Sigma_{\wp}$, denote by $V_{J-\an}$ the subrepresentation generated by locally $J$-analytic vectors of $V_{\Q_p-\an}$, put $V_{\infty}:=V_{\emptyset-\an}$.

Let $\chi$ be a continuous (thus locally $\Q_p$-analytic) character of $F_{\wp}^{\times}$ (or any open compact subgroup of $F_{\wp}^{\times}$) over $E$, then $\chi$ induces a natural $\Q_p$-linear map (where $F_{\wp}$ is viewed as the Lie algebra of $F_{\wp}^{\times}$)
\begin{equation*}
  F_{\wp}\lra E,\  \fx \mapsto \frac{d}{dt}\chi\big(\exp(t\fx)\big)|_{t=0},
\end{equation*}
and hence an $E$-linear map $d_{\chi}: F_{\wp}\otimes_{\Q_p} E\cong \prod_{\sigma\in \Sigma_{\wp}} E \ra E$. So there exist $k_{\chi,\sigma}\in E$, called the $\sigma$-weight of $\chi$,  for all $\sigma\in \Sigma_{\wp}$ such that $d_\chi\big((a_{\sigma})_{\sigma\in \Sigma_{\wp}}\big)=\sum_{\sigma\in \Sigma_{\wp}} a_{\sigma}k_{\chi,\sigma}$.

Let $\chi=\chi_1 \otimes \chi_2 $ be a locally $\Q_p$-analytic character of $T(F_{\wp})$ over $E$. Put
\begin{equation}\label{equ: clin-cchi}C(\chi):=\{\sigma\in \Sigma_{\wp}\ |\ k_{\chi_1,\sigma}-k_{\chi_2,\sigma}\in \Z_{\geq 0}\}.\end{equation}
Denote by $\ft$ the Lie algebra of $T(F_{\wp})$, the character $\chi$ induces a character $d\chi$ of $\ft_{\Sigma_{\wp}}:=\ft\otimes_{\Q_p} E$ given by $d\chi: \ft_{\Sigma_{\wp}} \ra E$, $d\chi \begin{pmatrix}
  a_{\sigma} & 0 \\ 0 & d_{\sigma}
\end{pmatrix}=a_{\sigma}k_{\chi_1,\sigma}+d_{\sigma}k_{\chi_2,\sigma}$ for $ \begin{pmatrix}
  a_{\sigma} & 0 \\ 0 & d_{\sigma}
\end{pmatrix}\in \ft_{\sigma}:=\ft \otimes_{F_\wp,\sigma} E$, $\sigma\in \Sigma_{\wp}$.
\addtocontents{toc}{\protect\setcounter{tocdepth}{2}}
\section{Completed cohomology of quaternion Shimura curves}\label{sec: clin-2}
Recall some facts on completed cohomology of quaternion Shimura curves, following \cite{Em1} and \cite{New}.
\subsection{Generalities}\label{sec: clin-2.1}
Let $W$ be a finite dimensional algebraic representation of $G^c$ over $E$, as in \cite[\S 2.1]{Ca2}, one can associate to $W$ a local system $\cV_W$ of $E$-vector spaces over $M_K$. Let $W_0$ be $\co_E$-lattice of $W$, denote by $\cS_{W_0}$ the set (ordered by inclusions) of open compact subgroups of $G(\Q_p)\cong \GL_2(F_{\wp})$ which stabilize $W_0$. For any $K_{p}\in S_{W_0}$, one can associate to $W_0$ \big(resp. to $W_0/\varpi_E^s$ for $s\in \Z_{\geq 1}$\big) a local system $\cV_{W_0}$ \big(resp. $\cV_{W_0/\varpi_E^s}$\big) of $\co_E$-modules \big(resp. of $\co_E/\varpi_E^s$-modules\big) over $M_{K^pK_p}$.  Following Emerton (\cite{Em1}), we put
\begin{eqnarray*}
   H^i_{\et}(K^p,W_0)&:=&\varinjlim_{K_p\in \cS_{W_0}} H^i_{\et} \big( M_{K_pK^p, \overline{\Q}}, \cV_{W_0}\big) \\
&\cong& \varinjlim_{K_p\in \cS_{W_0}} \varprojlim_{s} H^i_{\et} ( M_{K_pK^p, \overline{\Q}}, \cV_{W_0/\varpi_E^s});\\
\widetilde{H}^i_{\et}(K^p,W_0)&:=& \varprojlim_{s}\varinjlim_{K_p\in \cS_{W_0}}H^i_{\et} ( M_{K_pK^p,\overline{\Q}}, \cV_{W_0/\varpi_E^s});\\
H^i_{\et}(K^p,W_0)_E &:=& H^i_{\et}(K^p,W_0) \otimes_{\co_E}E; \\
\widetilde{H}^i_{\et}(K^p,W_0)_E &:=&\widetilde{H}^i_{\et}(K^p,W_0) \otimes_{\co_E}E.
\end{eqnarray*}
All these groups ($\co_E$-modules or $E$-vector spaces) are equipped with a natural topology induced from the discrete topology on the finite group $H^i_{\et} \big( M_{K_pK^p, \overline{\Q}}, \cV_{W_0/\varpi_E^s}\big)$, and equipped with a natural continuous action of $\cH^p\times \Gal(\overline{\Q}/F)$ and of $K_p\in \cS_{W_0}$. Moreover, for any $\fl\in S(K^p)$, the action of $\Gal(\overline{F_{\fl}}/F_{\fl})$ (induced by that of $\Gal(\overline{\Q}/F)$)  is unramified and satisfies the Eichler-Shimura relation:
\begin{equation}\label{equ: clin-llf-}
  \Frob_{\fl}^{-2} -T_{\fl}\Frob_{\fl}^{-1}+\ell^{f_{\fl}}S_{\fl}=0
\end{equation}
where $\Frob_{\fl}$ denotes the arithmetic Frobenius, $\ell$ the prime number lying below $\fl$, $f_{\fl}$ the degree of the maximal unramified extension (of $\Q_{\ell}$) in $F_{\fl}$ over $\Q_{\ell}$ (thus $\ell^{f_{\fl}}= |\co_{\fl}/\varpi_{\fl}|$). Note that $\widetilde{H}^i_{\et}(K^p,W_0)_E$ is an $E$-Banach space with norm defined by the $\co_E$-lattice $\widetilde{H}^i_{\et}(K^p,W_0)$.

Consider the ordered set (by inclusion) $\{W_0\}$ of $\co_E$-lattices of $W$, following \cite[Def.2.2.9]{Em1}, we put
\begin{eqnarray*}
H^i_{\et}(K^p,W)&:=& \varinjlim_{W_0} H^i_{\et}(K^p,W_0)_E, \\
\widetilde{H}^i_{\et}(K^p,W)&:=& \varinjlim_{W_0} \widetilde{H}^i_{\et}(K^p,W_0)_E,
\end{eqnarray*}
where all the transition maps are topological isomorphisms (cf. \cite[Lem.2.2.8]{Em1}). These $E$-vector spaces are moreover equipped with a natural continuous action of $\GL_2(F_{\wp})$.
\begin{theorem}[$\text{cf. \cite[Thm.2.2.11 (i), Thm.2.2.17]{Em1}}$]\label{thm: clin-ecs}(1) The $E$-Banach space $\widetilde{H}^i_{\et}(K^p,W)$ is an admissible Banach representation of $\GL_2(F_{\wp})$. If $W$ is the trivial representation, the representation $\widetilde{H}^i_{\et}(K^p,W)$ is unitary.

(2) One has a natural isomorphism of Banach representations of $\GL_2(F_{\wp})$ invariant under the action of $\cH^p \times \Gal(\overline{F}/F)$:
\begin{equation}\label{equ: clin-wpkw}
  \widetilde{H}^i_{\et}(K^p,W) \xlongrightarrow{\sim} \widetilde{H}^i_{\et}(K^p,E)\otimes_E W.
\end{equation}

(3) One has a natural $\GL_2(F_{\wp})\times \cH^p \times \Gal(\overline{F}/F)$-invariant map
\begin{equation}\label{equ: clin-ehiw}
  H^i_{\et}(K^p,W) \lra \widetilde{H}^i_{\et}(K^p,W).
\end{equation}
%
\end{theorem}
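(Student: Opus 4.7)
The plan is to transpose Emerton's general framework for completed cohomology---developed in \cite{Em1} for modular curves---to the setting of the quaternion Shimura curves $M_K$. Since all essential inputs (properness and smoothness of each $M_{K^pK_p}$ over $F$, and finiteness of its \'etale cohomology with torsion coefficients) are available here, the proofs transfer essentially verbatim; Newton \cite{New} and Carayol \cite{Ca2} have already laid out much of the relevant formalism in this context.

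For (1), I would fix a lattice $W_0\subset W$ and a sufficiently small $K_p\in \cS_{W_0}$. Properness of $M_{K^pK_p}$ guarantees that each $H^i_{\et}(M_{K^pK_p}\times_F \overline{\Q}, \cV_{W_0/\varpi_E^s})$ is a finite $\co_E/\varpi_E^s$-module. Varying $K_p'\subseteq K_p$ in $\cS_{W_0}$ and taking the direct limit would yield a smooth admissible $K_p$-representation over $\co_E/\varpi_E^s$. Then, passing to the inverse limit over $s$ and invoking Emerton's admissibility criterion from \cite{Em1}, one shows that the Pontryagin dual of $\widetilde{H}^i_{\et}(K^p,W_0)$ is a finitely generated module over the Iwasawa algebra $\co_E[[K_p']]$ for any sufficiently small $K_p'$; inverting $\varpi_E$ then delivers admissibility as a $\GL_2(F_\wp)$-Banach representation. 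Unitariness when $W$ is trivial is immediate: the integral model $\widetilde{H}^i_{\et}(K^p,\co_E)$ itself furnishes a $\GL_2(F_\wp)$-invariant $\co_E$-lattice in $\widetilde{H}^i_{\et}(K^p,E)$.

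For (2), I would construct (\ref{equ: clin-wpkw}) as follows: for $K_p\in \cS_{W_0}$ small enough that $K_p$ acts trivially on $W_0/\varpi_E^s$, the local system $\cV_{W_0/\varpi_E^s}$ becomes constant on $M_{K^pK_p}$, giving an $\cH^p\times \Gal(\overline{F}/F)$-equivariant isomorphism at each finite level; passing to the direct limit over $K_p$, then to the inverse limit over $s$, and inverting $p$ would then produce (\ref{equ: clin-wpkw}), with $\GL_2(F_\wp)$-equivariance verified by tracking how the limit process intertwines the natural twisting by the group action. For (3), the map (\ref{equ: clin-ehiw}) is simply induced by the canonical morphisms $H^i_{\et}(M_{K^pK_p}\times_F \overline{\Q}, \cV_{W_0})\to \varprojlim_s H^i_{\et}(M_{K^pK_p}\times_F \overline{\Q}, \cV_{W_0/\varpi_E^s})$, which are functorial in $K_p$ and $W_0$; equivariance under $\GL_2(F_\wp)\times \cH^p\times \Gal(\overline{F}/F)$ is automatic. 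The main (and essentially only) obstacle is the admissibility assertion in (1), where one must carefully verify that the finiteness of the dualised tower over the Iwasawa algebra goes through; once this is granted, parts (2) and (3) follow formally from the limit definitions.
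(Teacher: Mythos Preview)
Your proposal is correct and matches the paper's approach exactly: the paper does not supply its own proof but simply cites \cite[Thm.~2.2.11(i), Thm.~2.2.17]{Em1}, relying on Emerton's general machinery for completed cohomology applied to this tower of Shimura curves (with \cite{New} providing the relevant adaptation). Your sketch of how the admissibility, the isomorphism (\ref{equ: clin-wpkw}), and the comparison map (\ref{equ: clin-ehiw}) arise from the limit constructions is precisely the content of those cited results.
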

\subsection{Localization at a non-Eisenstein maximal ideal}Let $\rho$ be a $2$-dimensional continuous representation of $\Gal(\overline{F}/F)$ over $E$ such that $\rho$ is unramified at all $\fl\in S(K^p)$. Let $\rho_0$ be a $\Gal(\overline{F}/F)$-invariant lattice of $\rho$, and $\overline{\rho}^{\sss}$ the semisimplification of $\rho_0/\varpi_E$, which is in fact independent of the choice of $\rho_0$. To $\overline{\rho}^{\sss}$, one can associate a maximal ideal of $\cH^p$, denoted by $\fm(\overline{\rho}^{\sss})$, as the kernel of the following morphism
\begin{equation*}
  \cH^p \lra k_E:=\co_E/\varpi_E, \ T_{\fl} \mapsto \tr(\Frob_{\fl}^{-1}),\ S_{\fl} \mapsto \dett(\Frob_{\fl}^{-1})
\end{equation*}
for all $\fl\in S(K^p)$.
\begin{notation}
  For an $\cH^p$-module $M$, denote by $M_{\overline{\rho}^{\sss}}$ the localisation of $M$ at $\fm(\overline{\rho}^{\sss})$.
\end{notation}
Keep the notation in \S \ref{sec: clin-2.1}. As in \cite[\S 5.2, 5.3]{Em4}, one can show that $\widetilde{H}_{\et}^1(K^p,W)_{\overline{\rho}^{\sss}}$ is a direct summand of $\widetilde{H}_{\et}^1(K^p,W)$.
Suppose in the following that  $\rho$ is absolutely irreducible modulo $\varpi_E$ and put $\overline{\rho}:=\overline{\rho}^{\sss}$. 
\begin{proposition}[$\text{\cite[Prop.5.2]{New}}$]\label{prop: clin-trn}
The map (\ref{equ: clin-ehiw}) induces an isomorphism
\begin{equation*}
  H^1_{\et}(K^p,W)_{\overline{\rho}} \xlongrightarrow{\sim} \widetilde{H}^1_{\et}(K^p,W)_{\overline{\rho}, \infty},
\end{equation*}
where $ \widetilde{H}^1_{\et}(K^p,W)_{\overline{\rho}, \infty}$ denotes the smooth vectors (for the action of $\GL_2(F_{\wp})$) in $\widetilde{H}^1_{\et}(K^p,W)_{\overline{\rho}}$.
\end{proposition}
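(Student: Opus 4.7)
The plan is to extract the isomorphism from a Hochschild–Serre type spectral sequence comparing the \'etale cohomology of a finite-level $M_{K_pK^p}$ with torsion coefficients and the $K_p$-invariants of completed cohomology, then pass carefully to two successive limits. First, fix a lattice $W_0\subset W$, and for each integer $s\geq 1$ choose $K_p\in \cS_{W_0}$ small enough that $K_p$ acts trivially on $W_0/\varpi_E^s$, so that the local system $\cV_{W_0/\varpi_E^s}$ on $M_{K_pK^p}$ is genuinely locally constant with fiber $W_0/\varpi_E^s$. One then has the Hochschild--Serre spectral sequence
\begin{equation*}
E_2^{i,j}=H^i\big(K_p,\widetilde{H}^j_{\et}(K^p,W_0/\varpi_E^s)\big)\Longrightarrow H^{i+j}_{\et}\big(M_{K_pK^p,\overline{\Q}},\cV_{W_0/\varpi_E^s}\big),
\end{equation*}
compatible with the action of $\cH^p$, and one localizes everything at $\fm(\overline{\rho})$.

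The key input is the vanishing $\widetilde{H}^0_{\et}(K^p,W_0/\varpi_E^s)_{\overline{\rho}}=0$. Indeed, $\widetilde{H}^0_{\et}(K^p,-)=\varinjlim_{K_p'}H^0_{\et}(M_{K_p'K^p},-)$ is spanned by functions that are constant on connected components, so $\cH^p$ acts through systems of characters whose associated $2$-dimensional Galois pseudo-representations (via the Eichler--Shimura relation (\ref{equ: clin-llf-})) are reducible modulo $\varpi_E$; since $\overline{\rho}$ is absolutely irreducible, none of these systems can match $\fm(\overline{\rho})$. Consequently $E_{2,\overline{\rho}}^{i,0}=0$ for all $i\geq 0$, and the five-term exact sequence collapses to an isomorphism
\begin{equation*}
H^1_{\et}\big(M_{K_pK^p,\overline{\Q}},\cV_{W_0/\varpi_E^s}\big)_{\overline{\rho}}\xrightarrow{\sim}\widetilde{H}^1_{\et}(K^p,W_0/\varpi_E^s)_{\overline{\rho}}^{K_p}.
\end{equation*}

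Next, pass to $\varprojlim_s$. On the left-hand side, each term is a finite $\co_E/\varpi_E^s$-module (properness of $M_{K_pK^p}$), so the system is Mittag-Leffler and the limit yields $H^1_{\et}(M_{K_pK^p,\overline{\Q}},\cV_{W_0})_{\overline{\rho}}$. On the right-hand side, $K_p$-invariants commute with inverse limits, giving $\widetilde{H}^1_{\et}(K^p,W_0)_{\overline{\rho}}^{K_p}$. Finally, take $\varinjlim$ over shrinking $K_p\in \cS_{W_0}$: the left-hand side becomes $H^1_{\et}(K^p,W_0)_{\overline{\rho}}$ by definition, while the right-hand side becomes the union of $K_p$-invariants in $\widetilde{H}^1_{\et}(K^p,W_0)_{\overline{\rho}}$, which is precisely the subspace $\widetilde{H}^1_{\et}(K^p,W_0)_{\overline{\rho},\infty}$ of smooth vectors. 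Inverting $\varpi_E$ and letting $W_0$ range over lattices of $W$ (using \cite[Lem.2.2.8]{Em1}) produces the desired isomorphism.

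The main subtlety lies in controlling the interplay between the two limits, the spectral sequence, and $\overline{\rho}$-localization. Passing $\varprojlim_s$ through the five-term sequence demands that the localized terms form Mittag-Leffler systems, which rests on the finiteness at each finite level together with the fact, cited in the text from \cite[\S 5.2, 5.3]{Em4}, that $\overline{\rho}$-localization is a topological direct summand of $\widetilde{H}^1_{\et}(K^p,W)$. The identification of $\varinjlim_{K_p}(\cdot)^{K_p}$ with the smooth vectors uses the admissibility of $\widetilde{H}^1_{\et}(K^p,W)$ from Theorem \ref{thm: clin-ecs}(1). A last routine check is that all these operations commute with $-\otimes_{\co_E}E$ and with the passage from $W_0$ to $W$ via (\ref{equ: clin-wpkw}).
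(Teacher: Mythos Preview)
Your argument is correct and follows the standard route: Hochschild--Serre for the tower $M_{K_p'K^p}\to M_{K_pK^p}$, vanishing of $\widetilde{H}^0$ after localization at the non-Eisenstein ideal $\fm(\overline{\rho})$, collapse of the five-term sequence, and then the two limit passages. This is precisely the argument of \cite[Prop.~5.2]{New} (building on \cite[\S 5.2, 5.3]{Em4}), which the paper simply cites without reproducing a proof; so your proposal is in fact a faithful expansion of the reference the paper invokes.
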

\begin{proposition}[$\text{\cite[Cor.5.8]{New}}$]\label{prop: clin-pfo}
  Let $H$ be a open compact prop-$p$ subgroup of $K_{0,\wp}$, then there exists $r\in \Z_{\geq 1}$ such that
  \begin{equation*}
    \widetilde{H}_{\et}^1(K^p,E)_{\overline{\rho}} \xlongrightarrow{\sim} \cC\Big(H \big/\overline{(Z(\Q)\cap K^{p}H)_{p}}, E\Big)^{\oplus r}
  \end{equation*}
  as representations of $H$, where $\cC\Big(H \big/\overline{(Z(\Q)\cap K^pH)_p}, E\Big)$ denotes the space of continuous functions from $H \big/\overline{(Z(\Q)\cap K^pH)_p}$ to $E$, on which $H$ acts by the right regular action, $\overline{(Z(\Q)\cap K^pH)_p}$ the closure of $(Z(\Q)\cap K^pH)_p$ in $G(\Q_p)$, and $(Z(\Q)\cap K^pH)_p$ the image of $Z(\Q)\cap K^pH$ in $G(\Q_p)$ via the projection $G(\bA^{\infty}) \twoheadrightarrow G(\Q_p)$.
\end{proposition}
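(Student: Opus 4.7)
The plan is to convert the statement, via Schneider-Teitelbaum's anti-equivalence between admissible unitary $E$-Banach representations of the pro-$p$ group $H$ and finitely generated modules over the Iwasawa algebra $\co_E[[H]]$, into the assertion that a certain integral completed cohomology module is \emph{free} of finite rank over $\co_E[[H/\overline{Z_1}]]$, where we abbreviate $Z_1 := \overline{(Z(\Q) \cap K^pH)_p}$. Indeed, since the coefficient sheaf is the trivial sheaf $E$, the group $Z_1$ acts trivially on $\widetilde{H}^1_{\et}(K^p,E)_{\overline{\rho}}$, and the Pontryagin dual of $\cC(H/\overline{Z_1},E)$ is $E \otimes_{\co_E} \co_E[[H/\overline{Z_1}]]$; so after dualizing one needs to prove that the $\co_E[[H/\overline{Z_1}]]$-module $M := \bigl(\widetilde{H}^1_{\et}(K^p,\co_E)_{\overline{\rho}}\bigr)^\vee$ (Pontryagin dual of the integral completed cohomology) is free of some rank $r \geq 1$.

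To establish freeness, I would invoke two ingredients. First, the complex of continuous cochains computing completed cohomology with $\co_E$-coefficients, restricted to a cofinal system of levels $K_p \in \cS_{\co_E}$ that are open in $H$, gives a perfect complex of $\co_E[[H/\overline{Z_1}]]$-modules (after passing to the limit and dualizing); this is in the spirit of Emerton's formalism and of \cite{New}. Second, non-Eisenstein localization eliminates all cohomology outside degree $1$: the quaternion Shimura curve $M_K$ is a disjoint union of geometrically connected proper smooth curves, so $\widetilde{H}^0_{\et}(K^p,E)_{\overline{\rho}} = 0$ follows from the fact that $\widetilde{H}^0$ is built out of characters of finite quotients of $G(\bA^\infty)$, which can only contribute to Eisenstein components; and $\widetilde{H}^2_{\et}(K^p,E)_{\overline{\rho}} = 0$ follows either by the same analysis combined with Poincaré duality on the proper curve, or by Emerton's vanishing results applied after localizing at the non-Eisenstein maximal ideal $\fm(\overline{\rho})$ (since $\overline{\rho}$ is absolutely irreducible).

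Once the only nonzero cohomology of a perfect complex of modules over the local (noetherian, Auslander-regular) ring $\co_E[[H/\overline{Z_1}]]$ is $M$ itself, the module $M$ must be projective, hence, since the base is a local ring, free of some finite rank $r$. The positivity $r \geq 1$ amounts to the nonvanishing of $\widetilde{H}^1_{\et}(K^p,E)_{\overline{\rho}}$, which is guaranteed by the assumption that $\rho$ actually occurs in the cohomology (or equivalently that $\fm(\overline{\rho})$ is in the support of $\widetilde{H}^1_{\et}$). Dualizing back and inverting $\varpi_E$ then yields the desired $H$-equivariant isometry with $\cC(H/\overline{Z_1},E)^{\oplus r}$.

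The main obstacle is the second ingredient: rigorously controlling the vanishing of $\widetilde{H}^0$ and $\widetilde{H}^2$ after localization at $\fm(\overline{\rho})$ at the \emph{integral/completed} level (not merely for fixed finite-level classical cohomology), and dovetailing this vanishing with the perfectness of the cochain complex so that the projectivity conclusion goes through. Here one has to be careful about (i) the contribution of the center and of non-strictly-neat subgroups to $\widetilde{H}^0$, handled by the hypothesis that $K^p K_{0,\wp}$ is neat and by the absolute irreducibility of $\overline{\rho}$, and (ii) the Poincaré-duality input for $\widetilde{H}^2$, which in this quaternionic setting proceeds smoothly since $M_K$ is proper of relative dimension $1$.
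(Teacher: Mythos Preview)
The paper does not prove this proposition; it is simply quoted from \cite[Cor.~5.8]{New}. Your sketch is the correct standard argument (and is essentially Newton's): pass via Schikhof/Schneider--Teitelbaum duality to a finitely generated module over $\co_E[[H/\overline{(Z(\Q)\cap K^pH)_p}]]$, use that integral completed cohomology is computed by a perfect complex over this Iwasawa algebra, kill degrees $0$ and $2$ by non-Eisenstein localization (absolute irreducibility of $\overline{\rho}$), and conclude that the surviving degree-$1$ module is projective hence free over the local ring. Two small remarks: your temporary use of the symbol $Z_1$ collides with the paper's later notation for the center of $K_{1,\wp}$; and the claim $r\geq 1$ (rather than $r\geq 0$) relies on the modularity of $\overline{\rho}$, which in the paper is only imposed explicitly in \S\ref{sec: clin-3.3}.
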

Let $\psi$ be a continuous character of $Z_1$ over $E$. We see  $\widetilde{H}^1_{\et}(K^p,E)^{Z_1=\psi}$
is also an admissible Banach representation of $\GL_2(F_{\wp})$  stable under the action $\Gal(\overline{F}/F) \times \cH^p$. Put
\begin{equation}\label{equ: clin-2ls}U_1:=\{g_{\wp}\in K_{1,\wp}\ |\ \dett(g_{\wp})=1\}.\end{equation} Let $H_{\wp}:=Z_1U_1$ which is a open compact subgroup of $K_{1,\wp}$, ($H_{\wp}=K_{1,\wp}$ when $p\neq 2$), we see the center of $H_{\wp}$ is $Z_1$. By Prop.\ref{prop: clin-pfo} applied to $H=H_{\wp}$, one has \Big(note that $\overline{(Z(\Q)\cap K^pH_\wp)_p}$ is a subgroup of $Z_1$\Big)
\begin{corollary}\label{cor: clin-ost}
  Let $\psi$ be a continuous character of $Z_1$ such that $\psi|_{\overline{(Z(\Q)\cap K^{p}H_{\wp})_{p}}}=1$, then one has an isomorphism of $H_{\wp}$-representations
  \begin{equation*}
   \widetilde{H}_{\et}^1(K^{p},E)_{\overline{\rho}}^{Z_1=\psi}\xlongrightarrow{\sim} \cC\big(U_1, E\big)^{\oplus r}
  \end{equation*}
  where $Z_1$ acts on $\cC\big(U_1, E\big)^{\oplus r}$ by the character $\psi$, and $U_1$ by the right regular action.
\end{corollary}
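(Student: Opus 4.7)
The plan is to specialise Proposition \ref{prop: clin-pfo} to $H = H_{\wp} = Z_1 U_1$ and then extract the $Z_1=\psi$-isotypic component. First I would check that $H_{\wp}$ is a valid input: it is open and compact in $K_{0,\wp}$, and it is pro-$p$, since it sits inside $K_{1,\wp}$, itself contained in a principal congruence subgroup of $\GL_2(\co_{\wp})$ and hence pro-$p$.

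Next, I would record that $H_{\wp}$ decomposes as a topological direct product $Z_1 \times U_1$. Indeed $Z_1$ is central by definition, and $Z_1 \cap U_1 = \{1\}$: a scalar matrix $aI\in Z_1$ lies in $U_1$ iff $a^2=1$, i.e.\ $a=\pm 1$; among $a \in 1+2\varpi\co_{\wp}$ only $a=1$ qualifies. Hence $(z,u)\mapsto zu$ gives an isomorphism of topological groups $Z_1 \times U_1 \xlongrightarrow{\sim} H_{\wp}$, and under the right regular action of $H_{\wp}$ on itself the two factors $Z_1$ and $U_1$ act independently on the respective coordinates. Since $\overline{(Z(\Q)\cap K^{p}H_{\wp})_{p}}$ is a subgroup of $Z_1$ (as stated in the text), the quotient $H_{\wp}/\overline{(Z(\Q)\cap K^{p}H_{\wp})_{p}}$ identifies with $\bigl(Z_1/\overline{(Z(\Q)\cap K^{p}H_{\wp})_{p}}\bigr)\times U_1$.

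Applying Proposition \ref{prop: clin-pfo} then furnishes an $H_{\wp}$-equivariant isomorphism
\begin{equation*}
\widetilde{H}_{\et}^1(K^{p},E)_{\overline{\rho}} \xlongrightarrow{\sim} \cC\Bigl(\bigl(Z_1/\overline{(Z(\Q)\cap K^{p}H_{\wp})_{p}}\bigr)\times U_1,\, E\Bigr)^{\oplus r}.
\end{equation*}
Passing to the $Z_1=\psi$-isotypic part on both sides: the hypothesis $\psi|_{\overline{(Z(\Q)\cap K^{p}H_{\wp})_{p}}}=1$ guarantees that the $\psi$-eigenspace of $\cC\bigl(Z_1/\overline{(Z(\Q)\cap K^{p}H_{\wp})_{p}},E\bigr)$ is non-zero, and the evaluation-at-$1$ map identifies it with $E$ (it is one-dimensional, spanned by $\psi$ itself). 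The remaining $U_1$-factor $\cC(U_1,E)$ is untouched and carries the right regular action, giving the claimed identification with $Z_1$ acting through $\psi$.

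Nothing here presents a serious obstacle; the only mildly non-formal step is the direct-product decomposition $H_{\wp}=Z_1\times U_1$ together with the matching of the right-regular $H_{\wp}$-action as a product action, after which the conclusion is a mechanical computation using Proposition \ref{prop: clin-pfo}.
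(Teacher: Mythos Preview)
Your argument is correct and follows exactly the approach the paper intends: apply Proposition~\ref{prop: clin-pfo} with $H=H_{\wp}$, use that $\overline{(Z(\Q)\cap K^{p}H_{\wp})_{p}}\subseteq Z_1$ together with the product decomposition $H_{\wp}\cong Z_1\times U_1$, and then pass to the $Z_1=\psi$-eigenspace. The paper states the corollary with only the one-line justification ``By Prop.~\ref{prop: clin-pfo} applied to $H=H_{\wp}$''; you have simply supplied the routine details (pro-$p$-ness, the trivial intersection $Z_1\cap U_1=\{1\}$, and the identification of the $\psi$-eigenspace) that the paper leaves implicit.
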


\section{Eigenvarieties}
\subsection{Generalities}\label{sec: clin-3.1}
Consider the admissible locally $\Q_p$-analytic representation
 $ \widetilde{H}^1_{\et}(K^p,E)_{\Q_p-\an}$,
by applying the functor of Jacquet-Emerton (cf. \cite{Em11}), one obtains an essentially admissible locally $\Q_p$-analytic representation $J_B\big(\widetilde{H}^1_{\et}(K^p,E)_{\Q_p-\an}\big)$ of $T(F_{\wp})$
(cf. \cite[\S 6.4]{Em04}). Denote by $\widehat{T}_{\Sigma_{\wp}}$ the rigid space over $E$ parameterizing the locally $\Q_p$-analytic characters of $T(F_{\wp})$.
By definition (of essentially admissible locally $\Q_p$-analytic representations, cf. \emph{loc. cit.}), the action of $T(F_{\wp})$ on $J_B\big(\widetilde{H}^1_{\et}(K^p,E)_{\Q_p-\an}\big)^{\vee}_b$ (where "$b$" signifies the strong topology) can extend to a continuous action of $\co(\widehat{T}_{\Sigma_{\wp}})$ (being a Fr\'echet-Stein algebra) such that $J_B\big(\widetilde{H}^1_{\et}(K^p,E)_{\Q_p-\an}\big)^{\vee}_b$ is a coadmissible $\co(\widehat{T}_{\Sigma_{\wp}})$-module. Thus there exists a coherent sheaf $\cM_0$ on $\widehat{T}_{\Sigma_{\wp}}$ such that \begin{equation*}\cM_0\big(\widehat{T}_{\Sigma_{\wp}}\big)\xlongrightarrow{\sim} J_B\big(\widetilde{H}^1_{\et}(K^p,E)_{\Q_p-\an}\big)^{\vee}_b.\end{equation*}
The action of $\cH^p$ on $J_B\big(\widetilde{H}^1_{\et}(K^p,E)_{\Q_p-\an}\big)$ induces a natural $\co_{\widehat{T}_{\Sigma_{\wp}}}$-linear action of $\cH^p$ on $\cM_0$. Following Emerton, one can construct an \emph{eigenvariety} $\cV(K^p)$ from the triple
%
%
$\Big\{\cM_0, \widehat{T}_{\Sigma_{\wp}}, \cH^p\Big\}$:
\begin{theorem}[$\text{cf. \cite[\S 2.3]{Em1}}$]\label{thm: clin-cjw}
  There exists a rigid analytic space $\cV(K^p)$ over $E$ together with a finite morphism of rigid spaces
  \begin{equation*}
    i: \cV(K^p) \lra \widehat{T}_{\Sigma_{\wp}}
  \end{equation*}
  and a morphism of $E$-algebras with dense image (see Rem.\ref{rem: linv-yden} below)
  \begin{equation}\label{equ: clin-oct}
    \cH^p\otimes_{\co_E} \co\big(\widehat{T}_{\Sigma_{\wp}}\big) \lra \co\big(\cV(K^p)\big)
  \end{equation}
  such that
  \begin{enumerate}
    \item a closed point $z$ of $\cV(K^p)$ is uniquely determined by its image $\chi$ in $\widehat{T}_{\Sigma_{\wp}}(\overline{E})$ and the induced morphism $\lambda: \cH^p\ra \overline{E}$, called a system of eigenvalues of $\cH^p$, so $z$ would be denoted by $(\chi,\lambda)$;
    \item for a finite extension $L$ of $E$, a closed point $(\chi,\lambda)\in \cV(K^p)(L)$ if and only if the corresponding eigenspace
    \begin{equation*}
      J_B\big(\widetilde{H}^1_{\et}(K^{p},E)_{\Q_p-\an}\otimes_E L\big)^{T(F_{\wp})=\chi, \cH^p=\lambda}
    \end{equation*}
    is non zero;
    \item there exists a coherent sheaf over $\cV(K^p)$, denoted by $\cM$, such that $i_* \cM \cong \cM_0$ and that for an $L$-point $z=(\chi, \lambda)$, the special fiber $\cM\big|_z$ is naturally dual to the (finite dimensional) $L$-vector space
        \begin{equation*}
          J_B\big(\widetilde{H}^1_{\et}(K^p,E)_{\Q_p-\an}\otimes_E L\big)^{T(F_{\wp})=\chi, \cH^p=\lambda}.
        \end{equation*}
  \end{enumerate}
\end{theorem}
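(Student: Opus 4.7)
The plan is to follow Emerton's general construction \cite[\S 2.3]{Em1} and build $\cV(K^p)$ as the relative rigid analytic spectrum over $\widehat{T}_{\Sigma_\wp}$ of a coherent sheaf of commutative $\co_{\widehat{T}_{\Sigma_\wp}}$-algebras $\cA$. Since $\cH^p$ commutes with the $T(F_\wp)$-action on $\widetilde{H}^1_{\et}(K^p,E)_{\Q_p-\an}$, its action on the coherent sheaf $\cM_0$ is $\co_{\widehat{T}_{\Sigma_\wp}}$-linear. One defines
\[
\cA := \mathrm{image}\big(\cH^p \otimes_{\co_E} \co_{\widehat{T}_{\Sigma_\wp}} \lra \End_{\co_{\widehat{T}_{\Sigma_\wp}}}(\cM_0)\big),
\]
a commutative sheaf of $\co_{\widehat{T}_{\Sigma_\wp}}$-subalgebras of $\End(\cM_0)$, and sets $\cV(K^p)$ to be the relative analytic Spec of $\cA$. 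By construction this yields a canonical morphism $i: \cV(K^p) \ra \widehat{T}_{\Sigma_\wp}$, and the coherent $\cA$-module $\cM_0$ transports to a coherent sheaf $\cM$ on $\cV(K^p)$ with $i_*\cM \cong \cM_0$.

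To check that $\cA$ is a coherent sheaf of algebras and that $i$ is finite, I would work on an admissible affinoid cover $\{U=\Spm A\}$ of the quasi-Stein space $\widehat{T}_{\Sigma_\wp}$. On each such $U$, $\cM_0(U)$ is a finitely generated $A$-module, hence $\End_A(\cM_0(U))$ is finitely generated over the Noetherian ring $A$, and so the $A$-subalgebra $\cA(U) \subseteq \End_A(\cM_0(U))$ generated by the image of $\cH^p$ is a finite $A$-algebra. This simultaneously provides the finiteness of $i$ locally (and hence globally, by gluing) and the density of the map (\ref{equ: clin-oct}), since its image contains the subalgebra $\cH^p\otimes_{\co_E} \co_{\widehat{T}_{\Sigma_\wp}}$ which surjects onto each local section $\cA(U)$, and $\co(\cV(K^p))$ is the inverse limit of these.

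For points, a closed $L$-point of $\cV(K^p)$ is a maximal ideal of $\cA$ lying over a closed point $\chi \in \widehat{T}_{\Sigma_\wp}(L)$; this is the same data as a pair $(\chi, \lambda)$, where $\lambda:\cH^p \ra L$ is a system of eigenvalues through which $\cA$ acts on some subquotient of the fibre $\cM_0|_\chi$. Because $\cA|_\chi$ is a finite-dimensional commutative $L$-algebra acting faithfully on the $L$-linear dual of $J_B(\widetilde{H}^1_{\et}(K^p,E)_{\Q_p-\an}\otimes_E L)^{T(F_\wp)=\chi}$, the systems of eigenvalues $\lambda$ that appear are exactly those for which the $(\chi,\lambda)$-eigenspace is non-zero, which gives (1) and (2). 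Part (3) follows by dualising the fibre: the strong dual of $\cM_0|_\chi$ is canonically the finite-dimensional eigenspace at $\chi$, and this identification is compatible with the further decomposition along the fibre of $i$, producing the required description of $\cM|_{(\chi,\lambda)}$.

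The main technical obstacle is the compatibility between the coadmissible-module picture on $\widehat{T}_{\Sigma_\wp}$, governed by the Fr\'echet-Stein structure of $\co(\widehat{T}_{\Sigma_\wp})$, and the coherent-sheaf picture after passage to the finite cover $\cV(K^p)$. The key verifications are that the local construction of $\cA$ on affinoids glues (so that the finite morphism $i$ is really global), that the construction commutes with an admissible exhaustion (so that $\cA$ is coadmissible over $\co(\widehat{T}_{\Sigma_\wp})$), and that the fibre-by-fibre duality is functorial in the necessary sense to identify the special fibre of $\cM$ with the strong dual of the eigenspace, as opposed to the generalised eigenspace. These compatibilities are the content of \cite[\S 2.3]{Em1} and I would import them directly.
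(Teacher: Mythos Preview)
Your proposal is correct and follows essentially the same approach as the paper, which does not give an independent proof but cites Emerton's construction \cite[\S 2.3]{Em1} and supplements it only with Remark~\ref{rem: linv-yden} explaining the density of (\ref{equ: clin-oct}) via the local description $i^{-1}(\Spm A)\cong \Spm B$ with $B$ the $A$-subalgebra of $\End_A(\cM_0(U))$ generated by the image of $\cH^p$. Your write-up in fact spells out more of the details (coherence of $\cA$, gluing, fibre duality) than the paper itself does.
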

\begin{remark}\label{rem: linv-yden}
  Indeed, by construction as in \cite[\S 2.3]{Em1}, for any affinoid admissible open $U=\Spm A$ in $\widehat{T}_{\Sigma_{\wp}}$, one has $i^{-1}(U)\cong \Spm B$ where $B$ is the affinoid algebra over $A$ generated by the image of $\cH^p \ra \End_A(\cM_0(U))$, from which we see (\ref{equ: clin-oct}) has a dense image.
\end{remark}
Denote by $\cV(K^p)_{\red}$ the reduced closed rigid subspace of $\cV(K^p)$.
\begin{lemma}
  The image of  $\cH^p$  in $\co\big(\cV(K^p)_{\red}\big)$ via (\ref{equ: clin-oct}) lies in  \begin{equation*}\co\big(\cV(K^p)_{\red}\big)^{0}\\ :=\Big\{f\in \co\big(\cV(K^p)_{\red}\big)\ \Big|\ \|f(x)\|\leq 1,\ \forall x\in \cV(K^p)(\overline{E})\Big\}.\end{equation*}
\end{lemma}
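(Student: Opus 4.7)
The plan is to verify the defining inequality of $\co\bigl(\cV(K^p)_{\red}\bigr)^{0}$ pointwise. For a closed point $x=(\chi,\lambda)\in \cV(K^p)(\overline{E})$, the value at $x$ of the image of $T\in \cH^p$ under (\ref{equ: clin-oct}) is exactly the Hecke eigenvalue $\lambda(T)\in \overline{E}$. Hence the lemma reduces to the pointwise assertion that $|\lambda(T)|\leq 1$ for every $T\in \cH^p$ and every closed point of the eigenvariety.

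First I would record the integrality input. By construction $\cH^p$ is the $\co_E$-subalgebra generated by double-coset operators of the form $[\GL_2(\co_{\fl})g_{\fl}\GL_2(\co_{\fl})]$ with $\dett(g_{\fl})\in \co_{\fl}$. These operators act on the integral completed cohomology $\widetilde{H}^1_{\et}(K^p,\co_E)$, which is an open $\co_E$-lattice inside the admissible $E$-Banach representation $\widetilde{H}^1_{\et}(K^p,E)$. Consequently every $T\in \cH^p$ acts on $\widetilde{H}^1_{\et}(K^p,E)$ by a continuous $E$-linear endomorphism of operator norm $\leq 1$.

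Next, fix $x=(\chi,\lambda)\in \cV(K^p)(L)$ for some finite extension $L/E$. By Thm.\ref{thm: clin-cjw}(2)--(3), the finite-dimensional eigenspace
\[
J_B\bigl(\widetilde{H}^1_{\et}(K^p,E)_{\Q_p-\an}\otimes_E L\bigr)^{T(F_{\wp})=\chi,\ \cH^p=\lambda}
\]
is nonzero. Since $\cH^p$ acts through its action on $\widetilde{H}^1_{\et}(K^p,E)$ and commutes with the passage to locally $\Q_p$-analytic vectors and with the Jacquet--Emerton functor, a nonzero vector in this eigenspace lifts (via Emerton's canonical section on finite slope $T(F_{\wp})$-eigenspaces of $J_B$) to a nonzero, finite-dimensional $\cH^p$-stable subspace of $\widetilde{H}^1_{\et}(K^p,E)\otimes_E \overline{L}$ on which $\cH^p$ acts through the character $\lambda$. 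Since $\cH^p$ is commutative, this subspace contains an actual nonzero vector $v$ with $Tv=\lambda(T)v$ for every $T\in \cH^p$. Combining with the bound $\|T\|\leq 1$ from the previous step yields $|\lambda(T)|\leq 1$, as required.

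The main obstacle is the lifting step: producing a genuine Hecke eigenvector in the Banach representation from a Jacquet-module Hecke eigenvector. This is not entirely formal because the generalized $\cH^p$-eigenspace for the character $\lambda$ in $\widetilde{H}^1_{\et}(K^p,E)_{\Q_p-\an}$ is \emph{a priori} infinite-dimensional, so one cannot bound eigenvalues by operator norms naively. The fix is to use the finite slope structure of $J_B$ at the character $\chi$, which singles out a finite-dimensional $T(F_{\wp})$-stable piece of $\widetilde{H}^1_{\et}(K^p,E)\otimes_E \overline{L}$ where the Hecke action is visible, and then invoke commutativity of $\cH^p$ to diagonalize. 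Once this lifting is in place, the conclusion $|\lambda(T)|\leq 1$ at every closed point, and therefore the statement of the lemma, follows immediately.
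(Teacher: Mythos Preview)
Your proof is correct and follows the same approach as the paper's (one-line) proof: reduce to $|\lambda(T)|\leq 1$ at each closed point and invoke the $\cH^p$-stable $\co_E$-lattice in $\widetilde{H}^1_{\et}(K^p,E)$. Your ``main obstacle'' is not one: the $\cH^p$-equivariant injection $J_B(V)\hookrightarrow V^{N_0}\subset V$ of (\ref{equ: clin-pvJ0n}) already places the Hecke eigenvector inside the Banach space, and the inequality $|\lambda|\leq\|T\|$ for a nonzero eigenvector holds in any Banach space regardless of dimension, so no finite-slope lifting device is needed.
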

\begin{proof}
  It's sufficient to prove for any closed point $(\chi,\lambda)$ of $\cV(K^p)$, the morphism $\lambda: \cH^p \ra \overline{E}$ factors through $\overline{\co_E}$. But this is clear since $\widetilde{H}^1_{\et}(K^p,E)$ has an $\cH^p$-invariant $\co_E$-lattice (see \S \ref{sec: clin-2.1}).
\end{proof}
Since the rigid space $\widehat{T}_{\Sigma_{\wp}}$ is nested, by \cite[Lem.7.2.11]{BCh}, one has
\begin{proposition}\label{prop: clin-ide}
  The rigid space $\cV(K^p)$ is nested, and $\co\big(\cV(K^p)_{\red}\big)^{0}$ is a compact subset of $\co\big(\cV(K^p)_{\red}\big)$ (where we refer to \emph{loc. cit.} for the topology).
\end{proposition}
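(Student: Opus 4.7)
The plan is to reduce the statement to \cite[Lem.7.2.11]{BCh}, whose hypotheses are tailored precisely to this situation: a rigid space equipped with a finite morphism to a nested rigid space, together with a bounded action of a commutative algebra (here $\cH^p$). The finiteness of $i:\cV(K^p)\to\widehat{T}_{\Sigma_{\wp}}$ is given by Theorem \ref{thm: clin-cjw}, the nestedness of $\widehat{T}_{\Sigma_{\wp}}$ is standard for the character space of a commutative $p$-adic analytic group (it admits a cofinal admissible cover by affinoid opens with compact restriction maps), and the boundedness of the image of $\cH^p$ in $\co(\cV(K^p)_{\red})$ is exactly the content of the preceding lemma.

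Concretely, I would fix an admissible affinoid covering $\widehat{T}_{\Sigma_{\wp}}=\bigcup_{n\geq 1} U_n$ with $U_n=\Spm A_n$, $U_n\Subset U_{n+1}$, and the transition $A_{n+1}\to A_n$ compact. By Remark \ref{rem: linv-yden} the preimage $V_n:=i^{-1}(U_n)=\Spm B_n$ is affinoid and $B_n$ is the $A_n$-subalgebra of $\End_{A_n}(\cM_0(U_n))$ generated by the image of $\cH^p$; in particular $B_n$ is finite over $A_n$, and $(V_n)_n$ is an increasing admissible cover of $\cV(K^p)$. The map $B_{n+1}\to B_n$ factors through $B_{n+1}\otimes_{A_{n+1}}A_n\to B_n$, a surjection of finitely generated $A_n$-modules; since a compact map of Banach algebras remains compact after tensoring with a finite module on the target, $B_{n+1}\to B_n$ is itself compact. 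This verifies that $\cV(K^p)$ is nested.

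For the second assertion, the preceding lemma guarantees that the image of $\cH^p\otimes_{\co_E}\co(\widehat{T}_{\Sigma_{\wp}})$ in $\co(\cV(K^p)_{\red})$ is contained in $\co(\cV(K^p)_{\red})^0$, hence its restriction to each reduced affinoid $B_n^{\red}$ lies in the subring $(B_n^{\red})^0$ of power-bounded elements. Since $B_n^{\red}$ is reduced affinoid, $(B_n^{\red})^0$ is a bounded $\co_E$-subalgebra, and the compactness of $B_{n+1}^{\red}\to B_n^{\red}$ forces the induced map $(B_{n+1}^{\red})^0\to (B_n^{\red})^0$ to be compact as well. Writing $\co(\cV(K^p)_{\red})^0=\varprojlim_n (B_n^{\red})^0$, one exhibits it as a countable projective limit of bounded subsets of Banach algebras along compact transition maps, which is compact in the Fr\'echet topology on $\co(\cV(K^p)_{\red})$.

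The main technical point will be checking that compactness of $A_{n+1}\to A_n$ transfers to the finite extension $B_{n+1}\to B_n$, and that the nested structure descends cleanly along reduction modulo the nilradical. Both are standard in the eigenvariety literature and are in fact essentially packaged in \cite[Lem.7.2.11]{BCh}, so the proof reduces to verifying that its hypotheses hold in our setting, which has just been done.
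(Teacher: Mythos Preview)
Your proposal is correct and follows exactly the same approach as the paper: the paper's proof consists of the single sentence ``Since the rigid space $\widehat{T}_{\Sigma_{\wp}}$ is nested, by \cite[Lem.7.2.11]{BCh}, one has [the proposition],'' and you have simply unpacked the hypotheses and mechanism of that lemma in detail. Your additional verification of why the hypotheses hold (finiteness of $i$, nestedness of $\widehat{T}_{\Sigma_{\wp}}$, and the boundedness from the preceding lemma) is accurate and matches what is needed.
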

It would be conviennent to fix a central character (in the quaternion Shimura curve case), let $w\in \Z$, consider the (essentially admissible) locally $\Q_p$-analytic representation
\begin{equation}\label{equ: linv-tepn}
  J_B\Big(\widetilde{H}^1_{\et}(K^p,E)_{\Q_p-\an}^{Z_1=\sN^{-w}}\Big)\cong J_B\big(\widetilde{H}^1_{\et}(K^p,E)_{\Q_p-\an}\big)^{Z_1=\sN^{-w}}.
\end{equation}
One can construct an eigenvariety, denoted by $\cV(K^p,w)$, in the same way as in Thm.\ref{thm: clin-cjw} which satisfies all the properties in Thm.\ref{thm: clin-cjw} with $\widetilde{H}^1_{\et}(K^p,E)$ replaced by $\widetilde{H}^1_{\et}(K^p,E)^{Z_1=\sN^{-w}}$. Denote by $\widehat{T}_{\Sigma_{\wp}}(w)$ the closed rigid subspace of $\widehat{T}_{\Sigma_{\wp}}$ such that \begin{equation}\label{equ: clin-tapwe}\widehat{T}_{\Sigma_{\wp}}(w)(\overline{E})=\big\{\chi\in \widehat{T}_{\Sigma_{\wp}}\ \big|\ \chi|_{Z_1}=\sN^{-w}\big\},\end{equation} moreover, if we denote by $\cM_0(w)$ the coherent sheaf over $\widehat{T}_{\Sigma_{\wp}}(w)$ associated to  $J_B\Big(\widetilde{H}^1_{\et}(K^p,E)_{\Q_p-\an}^{Z_1=\sN^{-w}}\Big)$, by (\ref {equ: linv-tepn}), one has $\cM_0(w)\cong \cM_0 \otimes_{\co(\widehat{T}_{\Sigma_{\wp}})} \co\big(\widehat{T}_{\Sigma_{\wp}}(w)\big)$ and thus (by the construction in \cite[\S 2.3]{Em1}) $\cV(K^p,w)\cong \cV(K^p) \times_{\widehat{T}_{\Sigma_{\wp}}} \widehat{T}_{\Sigma_{\wp}}(w)$.

\subsection{Classicality and companion points}Let $T(F_{\wp})^+:=\bigg\{\begin{pmatrix}
  a & 0 \\ 0 & d
\end{pmatrix}\in T(F_{\wp})\ \bigg|\ \us_{\wp}(a) \geq \us_{\wp}(d)\bigg\}$,
one can equip $V^{N_0}$ with a continuous action of $T(F_{\wp})^+$ by
\begin{equation*}
  \pi_t(v):=\big|N_0/tN_0t^{-1}\big|^{-1} \sum_{n\in N_0/tN_0t^{-1}} (nt)(v).
\end{equation*}
One has a natural $T(F_{\wp})^+$-invariant injection
\begin{equation}\label{equ: clin-pvJ0n}
  J_B(V) \hooklongrightarrow V^{N_0}
\end{equation}
which induces a bijection (cf. \cite[Prop.3.4.9]{Em11})
\begin{equation}\label{equ clin-wmv0i}
  J_B(V)^{T(F_{\wp})=\chi} \xlongrightarrow{\sim} V^{N_0, T(F_{\wp})^+=\chi}
\end{equation}
for any continuous (locally $\Q_p$-analytic) characters $\chi$ of $T(F_{\wp})$. In fact, by the same argument in \cite[Prop.3.2.12]{Em11}, one can show (\ref{equ: clin-pvJ0n}) induces a bijection between generalized eigenspaces \big(note that the action of $T(F_{\wp})^+$ on $V^{N_0}[T(F_{\wp})^+=\chi]$ extends naturally to an action of $T(F_{\wp})$\big)
\begin{equation}\label{equ: clin-0nvi}
    J_B(V)[T(F_{\wp})=\chi] \xlongrightarrow{\sim} V^{N_0}[T(F_{\wp})^+=\chi].
\end{equation}
\begin{definition}\label{def: clin-ltn}
For an $L$-point $z=(\chi,\lambda)$ of $\cV(K^p)$, $S \subseteq \Sigma_{\wp}$, $z$ is called $S$-classical (resp. quasi-$S$-classical) if there exists a non-zero vector
\[v\in \big(\widetilde{H}^1_{\et}(K^p, E)_{\Q_p-\an}\otimes_E L\big)^{N_0, T(F_{\wp})^+=\chi, \cH^p=\lambda}\] such that $v$ is $S$-classical (resp. quasi-$S$-classical).
We call $z$ classical (quasi-classical) if $z$ is $\Sigma_{\wp}$-classical (resp. quasi-$\Sigma_{\wp}$-classical).
\end{definition}
\begin{definition}Let $z=(\chi_1\otimes \chi_2,\lambda)$ be a closed point in $\cV(K^p)$, for $S\subseteq C(\chi)$ (cf. (\ref{equ: clin-cchi})), put
\begin{equation}\label{equ: clin-s2cd}
  \chi_S^c=\chi_{1,S}^c\otimes \chi_{2,S}^c:=\chi_1 \prod_{\sigma \in S} \sigma^{k_{\chi_2,\sigma}-k_{\chi_1,\sigma}-1} \otimes \chi_2 \prod_{\sigma\in S} \sigma^{k_{\chi_1,\sigma}-k_{\chi_2,\sigma}+1};
\end{equation}
we say that $z$ admits an $S$-companion point if $z_S^c:=(\chi_S^c, \lambda)$ is also a closed point in $\cV(K^p)$. If so, we say the companion point $z_S^c$ is effective if it is moreover quasi-$C(\chi)\setminus S$-classical \big(note that $C(\chi_S^c)=C(\chi)\setminus S$\big).
\end{definition}As in \cite[Lem.6.2.24]{Ding}, one has
\begin{proposition}\label{prop: clin-vka}
Let $z=(\chi, \lambda)$ be an $L$-point in $\cV(K^p)$, $\sigma\in C(\chi)$, suppose there exists a non quasi-$\sigma$-classical vector (see \S \ref{sec: clin-1} for $d\chi$)
\begin{equation*}
  v\in \big(\widetilde{H}^1_{\et}(K^p, E)_{\Q_p-\an} \otimes_E L\big)^{N_0, \ft_{\Sigma_{\wp}}=d\chi}[T(F_{\wp})^+=\chi, \cH^p=\lambda],
\end{equation*}
then $z$ admits a $\sigma$-companion point. Moreover, there exists $S\subseteq C(\chi)$ containing $\sigma$ such that $z$ admits an effective $S$-companion point.
\end{proposition}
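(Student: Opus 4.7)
The plan is to construct the $\sigma$-companion point by applying a suitable element of the enveloping algebra $U(\ug_\sigma)$ to $v$. Set $k := k_{\chi_1,\sigma} - k_{\chi_2,\sigma} \in \Z_{\geq 0}$ (non-negative since $\sigma \in C(\chi)$), let $e_\sigma, f_\sigma, h_\sigma$ denote the standard $\usl_2$-triple inside $\ug_\sigma$ (upper unipotent, lower unipotent, diagonal), and form $w := f_\sigma^{k+1} v$. I expect $w$ to produce a non-zero class in the $(\chi_\sigma^c, \lambda)$-generalized eigenspace of the Jacquet module; by Theorem \ref{thm: clin-cjw}(2) this yields the required $\sigma$-companion point.

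Non-vanishing: if $w = 0$, then the standard highest-weight argument in $\usl_2$-theory forces $v$ to generate a $\ug_\sigma$-submodule of $V$ of dimension at most $k+1$, contradicting the hypothesis that $v$ is not quasi-$\sigma$-classical. $N_0$-invariance: write $\mathfrak{n}_{\Sigma_\wp} = \prod_\tau \mathfrak{n}_\tau$ for the Lie algebra of $N_0$ base-changed to $E$. For $\tau \neq \sigma$ the factor $\mathfrak{n}_\tau$ commutes with $f_\sigma$ and kills $v$, hence kills $w$; at $\tau = \sigma$, the identity $[e_\sigma, f_\sigma^{k+1}] = (k+1) f_\sigma^k (h_\sigma - k)$ applied to $v$, combined with $h_\sigma v = k v$ (from $\ft_{\Sigma_\wp} = d\chi$) and $e_\sigma v = 0$, gives $e_\sigma w = 0$. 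Since $w$ is locally $\Q_p$-analytic and $N_0$ is topologically generated by $\exp$ of a $\Z_p$-lattice in $\mathfrak{n}$, the convergent exponential series promotes $\mathfrak{n}$-invariance to $N_0$-invariance.

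Eigenvalue tracking: $\cH^p$ commutes with $\ug$, so $w$ remains in the $\lambda$-generalized eigenspace for $\cH^p$. Using $\Ad(t)(f_\sigma) = \sigma(d/a) f_\sigma$ for $t = \begin{pmatrix} a & 0 \\ 0 & d \end{pmatrix}$, applying $f_\sigma^{k+1}$ shifts the $T(F_\wp)$-character by $\sigma^{-(k+1)} \otimes \sigma^{k+1}$, which by (\ref{equ: clin-s2cd}) sends $\chi$ to $\chi_\sigma^c$. The same shift holds for generalized $\pi_t$-eigenvalues on $V^{N_0}$, since the normalization differs from the restricted group action by a locally constant twist that does not affect the differential. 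Consequently $w$ lies in $V^{N_0}[T(F_\wp)^+ = \chi_\sigma^c, \cH^p = \lambda]$, and (\ref{equ: clin-0nvi}) transfers this to a non-zero element of $J_B(V)[T(F_\wp) = \chi_\sigma^c, \cH^p = \lambda]$, witnessing the $\sigma$-companion point.

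For the ``moreover'' part, iterate: if $w$ is already quasi-$(C(\chi)\setminus\{\sigma\})$-classical then $z_{\{\sigma\}}^c$ is effective; otherwise pick $\tau \in C(\chi_\sigma^c) = C(\chi) \setminus \{\sigma\}$ at which $w$ fails to be quasi-$\tau$-classical and repeat the construction at $\tau$. Because $|C(\chi)|$ is finite and strictly decreases along the process, the iteration terminates with some $S \supseteq \{\sigma\}$ and an effective $S$-companion point. The main obstacle I anticipate is verifying cleanly that the $T(F_\wp)^+$-eigenvalue shift computed via the enveloping algebra action genuinely corresponds to the prescribed companion character $\chi_\sigma^c$ at the group level through $\pi_t$; this is the delicate bookkeeping already carried out in \cite[Lem.6.2.24]{Ding} and ultimately rests on Emerton's compatibility results for the Jacquet-Emerton functor.
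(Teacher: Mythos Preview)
Your proposal is correct and follows essentially the same approach as the paper's own proof: the paper applies $X_{-,\sigma}^{k_\sigma-1}$ (your $f_\sigma^{k+1}$, with $k_\sigma-1 = k_{\chi_1,\sigma}-k_{\chi_2,\sigma}+1 = k+1$) to $v$, argues non-vanishing from the failure of quasi-$\sigma$-classicality, refers to \cite[Lem.6.3.15]{Ding} for the eigenvalue shift to $(\chi_\sigma^c,\lambda)$, and then iterates exactly as you do for the ``moreover'' clause. Your write-up is in fact more detailed than the paper's sketch (you spell out the $N_0$-invariance via commutator identities and the $\Ad(t)$-computation), but the strategy is identical.
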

\begin{proof}
  We sketch the proof. Let $k_\sigma:=k_{\chi_1,\sigma}-k_{\chi,\sigma}+2\in \Z_{\geq 2}$ (since $\sigma\in C(\chi)$), if $v$ is not quasi-$\sigma$-classical, we deduce that $v_{\sigma}^c=X_{-,\sigma}^{k_{\sigma}-1} \cdot v\neq 0$ \bigg(where $X_{-,\sigma}:=\begin{pmatrix}
    0 & 0 \\ 1 & 0
  \end{pmatrix}\in \ug_{\sigma}$\bigg), moreover, as in \cite[Lem.6.3.15]{Ding}, one can prove $v_{\sigma}^c$ is a generalised $(\chi_\sigma^c,\lambda)$-eigenvector for $T(F_{\wp})\times \cH^p$. From which we deduce $z$ admits a $\sigma$-companion point.  If $v_{\sigma}^c$ is not quasi-$\sigma'$-classical for some $\sigma'\in C(\chi)\setminus \{\sigma\}=C(\chi_{\sigma}^c)$, one can repeat this argument to find companion points of $z_{\sigma}^c$ until one gets $S\subseteq C(\chi)$ and an effective $S$-companion point of $z$.
\end{proof}
\begin{remark}
  One can also deduce this proposition from the adjunction formula in \cite[Thm.4.3]{Br13}.
\end{remark}As in \cite[Prop.6.2.27]{Ding}, one has
\begin{theorem}[Classicality]\label{thm: clin-etn}
Let $z=(\chi=\chi_1\otimes \chi_2, \lambda)$ be an $L$-point in $\cV(K^p)$.
For $\sigma\in C(\chi)$, put $k_{\sigma}:=k_{\chi_1,\sigma}-k_{\chi_2,\sigma}+2\in \Z_{\geq 2}$. Let $S\subseteq C(\chi)$, if
  \begin{equation*}
    \us_{\wp}(q\chi_1(\varpi)) <\inf_{\sigma\in S}\{k_{\sigma}-1\},
  \end{equation*}
then any vector in
\begin{equation*}
\big(\widetilde{H}^1_{\et}(K^p,E)_{\Q_p-\an}\otimes_E L\big)^{N_0, \ft_{\Sigma_{\wp}}=d\chi}[T(F_{\wp})^+=\chi, \cH^p=\lambda]
\end{equation*}
in quasi-$S$-classical, in particular, the point $z$ is quasi-$S$-classical.
\end{theorem}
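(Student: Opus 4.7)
The plan is to argue by contradiction, using the companion-point construction of Prop \ref{prop: clin-vka} together with the integrality built into the completed cohomology.

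Suppose some non-zero vector $v$ in the generalised eigenspace of the statement is \emph{not} quasi-$S$-classical. Then $v$ fails to be quasi-$\sigma$-classical for some $\sigma\in S$, so iterating Prop \ref{prop: clin-vka} produces a subset $S'\subseteq C(\chi)$ with $\sigma\in S'$ and an \emph{effective} $S'$-companion point $z_{S'}^c=(\chi_{S'}^c,\lambda)$, together with a non-zero companion vector $v_{S'}^c$ which is quasi-$(C(\chi)\setminus S')$-classical.

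Next I compute the slope at the companion character. From the formula (\ref{equ: clin-s2cd}) and the identity $\us_{\wp}(\tau(\varpi))=1$ valid for every $\tau\in\Sigma_{\wp}$, we get
\[
\us_{\wp}\bigl(q\chi_{1,S'}^c(\varpi)\bigr)=\us_{\wp}\bigl(q\chi_1(\varpi)\bigr)-\sum_{\tau\in S'}(k_\tau-1).
\]
Choosing $\sigma\in S\cap S'$ and substituting the hypothesis $\us_{\wp}(q\chi_1(\varpi))<k_\sigma-1$ yields
\[
\us_{\wp}\bigl(q\chi_{1,S'}^c(\varpi)\bigr)<(k_\sigma-1)-\sum_{\tau\in S'}(k_\tau-1)=-\sum_{\tau\in S'\setminus\{\sigma\}}(k_\tau-1)\leq 0,\qquad(\ast)
\]
since $k_\tau\geq 2$ for every $\tau\in C(\chi)\supseteq S'$.

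To contradict $(\ast)$ I would produce a matching \emph{lower} bound on $\us_{\wp}\bigl(q\chi_{1,S'}^c(\varpi)\bigr)$. Because $v_{S'}^c$ is quasi-$(C(\chi)\setminus S')$-classical and the weight differences $k_{\chi_1,\tau}-k_{\chi_2,\tau}$ are in $\Z_{\geq 0}$ for $\tau\in C(\chi)\setminus S'$, the ambient $\ug_{C(\chi)\setminus S'}$-subrepresentation containing $v_{S'}^c$ can be taken to be algebraic in those directions. Transferring these algebraic directions into coefficients via Thm \ref{thm: clin-ecs}(2) realises $v_{S'}^c$ as a non-zero vector in
\[
\widetilde{H}^1_{\et}(K^p,W^{\vee})_E^{N_0,\,T(F_{\wp})^+=\chi_{S'}^{c,\mathrm{sm}}}\otimes_E L,
\]
for a suitable algebraic representation $W$ of $G^c$, where $\chi_{S'}^{c,\mathrm{sm}}$ denotes the smooth part of $\chi_{S'}^c$ in the $C(\chi)\setminus S'$ directions. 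Since $|N_0/tN_0t^{-1}|=q$ for $t=\bigl(\begin{smallmatrix}\varpi & 0\\ 0 & 1\end{smallmatrix}\bigr)$, the operator $q\pi_t$ sends the integral lattice $\widetilde{H}^1_{\et}(K^p,W^{\vee})$ into itself, forcing $\us_{\wp}\bigl(q\chi_{1,S'}^{c,\mathrm{sm}}(\varpi)\bigr)\geq 0$. Restoring the algebraic twist contributes $\sum_{\tau\in C(\chi)\setminus S'}k_{\chi_1,\tau}$ to the $\us_{\wp}$-valuation which is non-negative after the standard normalisation placing the smaller weight at $0$, so $\us_{\wp}\bigl(q\chi_{1,S'}^c(\varpi)\bigr)\geq 0$, contradicting $(\ast)$.

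The main obstacle is this last step: the algebraic-twist bookkeeping linking $q\chi_{1,S'}^c(\varpi)$ to the classical Hecke eigenvalue $q\chi_{1,S'}^{c,\mathrm{sm}}(\varpi)$ acting on integral cohomology with coefficients. Everything else is essentially formal: the companion point exists by Prop \ref{prop: clin-vka}, the slope computation $(\ast)$ is immediate, and the integrality of $q\pi_t$ is a consequence of $|N_0/tN_0t^{-1}|=q$. This is exactly the content of \cite[Prop.\,6.2.27]{Ding} in a closely parallel setting, to which the statement of the theorem already explicitly refers.
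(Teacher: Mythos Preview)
Your overall strategy matches the paper's: produce an effective $S'$-companion point via Prop.~\ref{prop: clin-vka}, then derive a slope inequality from the unitarity of $\widetilde{H}^1_{\et}(K^p,E)$ that contradicts the hypothesis. The paper packages the second step by invoking \cite[Prop.~6.2.23]{Ding} (to realise a twisted locally analytic parabolic induction inside the completed cohomology) and then \cite[Prop.~5.1]{Br} (which extracts from unitarity the bound $\us_\wp(q\chi_1(\varpi))\geq\sum_{\tau\in S'}(k_\tau-1)$). Your inequality $(\ast)$ is exactly this bound rewritten at the companion character, so the two endpoints are equivalent.

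Where your write-up goes astray is the ``main obstacle'' paragraph. The detour through coefficients $W^\vee$ is both unnecessary and, as stated, not correct: once you pass to non-trivial algebraic coefficients the $\GL_2(F_\wp)$-action no longer preserves any $\co_E$-lattice globally, so the claim that $q\pi_t$ preserves $\widetilde{H}^1_{\et}(K^p,W^\vee)$ fails, and the algebraic-twist contribution $\sum_{\tau\in C(\chi)\setminus S'}k_{\chi_1,\tau}$ has no reason to be non-negative without an ad hoc normalisation that you have not justified. The point is that none of this is needed. The companion point $z_{S'}^c$ lies on $\cV(K^p)$, so by Thm.~\ref{thm: clin-cjw}(2) and (\ref{equ clin-wmv0i}) there is a genuine eigenvector $w\neq 0$ in $\big(\widetilde{H}^1_{\et}(K^p,E)_{\Q_p-\an}\otimes_E L\big)^{N_0,\,T(F_\wp)^+=\chi_{S'}^c}$ with \emph{trivial} coefficients. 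Since $\widetilde{H}^1_{\et}(K^p,E)$ is unitary, the ultrametric inequality gives $\|q\pi_t(w)\|=\bigl\|\sum_{n}(nt)\cdot w\bigr\|\leq\|w\|$, hence $|q\chi_{1,S'}^c(\varpi)|\leq 1$, i.e.\ $\us_\wp\bigl(q\chi_{1,S'}^c(\varpi)\bigr)\geq 0$, contradicting $(\ast)$. This direct norm estimate is precisely what \cite[Prop.~5.1]{Br} encodes, and is the route the paper takes.
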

\begin{proof}We sketch the proof. For $\sigma \in S$, if there exists a non quasi-$\sigma$-classical vector
 \begin{equation*}
v\in \big(\widetilde{H}^1_{\et}(K^p,E)_{\Q_p-\an}\otimes_E L\big)^{N_0, \ft_{\Sigma_{\wp}}=d\chi}[T(F_{\wp})^+=\chi, \cH^p=\lambda],
 \end{equation*}
 by Prop.\ref{prop: clin-vka}, $z$ admits an effective $S'$-companion point $z_{S'}^c$ with $S'\subseteq C(\chi)$ containing $\sigma$. Using \cite[Prop.6.2.23]{Ding}, this point would induce a continuous injection from a locally $\Q_p$-analytic parabolic induction twisted with certain algebraic representation \big(as in \emph{loc. cit.} by replacing $J$, $S$, $C_{\overline{B}}(\chi)$ by $\Sigma_{\wp}$, $S'$, $C(\chi)$ respectively\big) into $\widetilde{H}^1_{\et}(K^p,E)\otimes_E L$. Since $\widetilde{H}^1_{\et}(K^p,E)$ is unitary, one can apply \cite[Prop.5.1]{Br}, and get (as in \cite[Cor.6.2.24]{Ding}) $\us_{\wp}(q\chi_1(\varpi))\geq \sum_{\sigma\in S'} (k_{\sigma}-1)$, a contradiction.
\end{proof}
\begin{corollary}\label{cor: clin-caw-}
 Let $w\in \Z$, $z=(\chi=\chi_1\otimes \chi_2, \lambda)$ be an $L$-point in $\cV(K^p,w)$ with $C(\chi)=\Sigma_{\wp}$, $k_{\sigma}:=k_{\chi_1,\sigma}-k_{\chi_2,\sigma}+2\in 2 \Z_{\geq 1}$ such that $k_{\sigma}\equiv w \pmod{2}$ for all $\sigma \in \Sigma_{\wp}$. There exist thus smooth characters $\psi_1$, $\psi_2$ such that (note that $k_{\chi_1,\sigma}+k_{\chi_2,\sigma}=-w$)
 \begin{equation*}
   \chi_1 \otimes \chi_2=\prod_{\sigma\in \Sigma_{\wp}} \sigma^{-\frac{w-k_{\sigma}+2}{2}} \psi_1 \otimes \prod_{\sigma \in \Sigma_{\wp}} \sigma^{-\frac{w+k_{\sigma}-2}{2}} \psi_2
 \end{equation*}
 Let $S\subseteq \Sigma_{\wp}$, if
 \begin{equation*}
   \us_{\wp}(q \psi_1(\varpi))<\sum_{\sigma\in \Sigma_{\wp}} \frac{w-k_{\sigma}+2}{2} + \inf_{\sigma\in S} \{k_{\sigma}-1\},
 \end{equation*}
 then the point $z$ is $S$-classical.
\end{corollary}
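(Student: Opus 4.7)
The plan is to deduce this corollary directly from Theorem \ref{thm: clin-etn}. First I would rewrite the numerical hypothesis of the corollary (phrased in terms of $\psi_1$) into the form required by the theorem (phrased in terms of $\chi_1$) via a valuation computation, then invoke Theorem \ref{thm: clin-etn} to obtain quasi-$S$-classicality, and finally upgrade quasi-$S$-classicality to $S$-classicality using integrality and positivity of the weights $k_\sigma-2$.

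For the first step, the key input is that $\us_\wp(\sigma(\varpi))=1$ for every $\sigma\in \Sigma_\wp$: indeed $\sigma(\varpi)^e=pu$ for some unit $u$, and $\us_\wp(p)=e$. Substituting into $\chi_1=\prod_\sigma \sigma^{-(w-k_\sigma+2)/2}\psi_1$ yields
\[
\us_\wp(q\chi_1(\varpi)) \;=\; \us_\wp(q\psi_1(\varpi)) \;-\; \sum_{\sigma\in \Sigma_\wp}\frac{w-k_\sigma+2}{2},
\]
so the hypothesis of the corollary is equivalent to $\us_\wp(q\chi_1(\varpi))<\inf_{\sigma\in S}\{k_\sigma-1\}$. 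Because the eigenvariety $\cV(K^p,w)\cong \cV(K^p)\times_{\widehat{T}_{\Sigma_\wp}}\widehat{T}_{\Sigma_\wp}(w)$ sits inside $\cV(K^p)$, the point $z$ is equally a point of $\cV(K^p)$, so Theorem \ref{thm: clin-etn} applies and guarantees that every nonzero
\[
v\in \bigl(\widetilde{H}^1_{\et}(K^p,E)_{\Q_p-\an}\otimes_E L\bigr)^{N_0,\,\ft_{\Sigma_\wp}=d\chi}\bigl[T(F_\wp)^+=\chi,\,\cH^p=\lambda\bigr]
\]
is quasi-$S$-classical, i.e.\ lies in some finite-dimensional $\ug_S$-submodule $U$ of $\widetilde{H}^1_{\et}(K^p,E)_{\Q_p-\an}\otimes_E L$.

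The remaining task is to show such a $U$ can be taken to come from an algebraic representation of $\GL_2(F_\wp)$, for which I would replace $U$ by the $\ug_S$-submodule it generates via $v$. Since $v$ is $N_0$-fixed and $\ft_{\Sigma_\wp}$ acts on $v$ through $d\chi$, differentiation shows that $v$ is a highest-weight vector for each $\ug_\sigma$ of highest weight $k_{\chi_1,\sigma}-k_{\chi_2,\sigma}=k_\sigma-2\in \Z_{\geq 0}$, with determinant twist $k_{\chi_2,\sigma}=-(w+k_\sigma-2)/2\in \Z$, the latter being an integer thanks to $w\equiv k_\sigma\pmod 2$. Hence the $\ug_\sigma$-submodule of $U$ generated by $v$ is a finite-dimensional quotient of the Verma module of this highest weight, which must be the irreducible $\Sym^{k_\sigma-2}E^2\otimes \dett^{k_{\chi_2,\sigma}}$, an algebraic representation. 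The $\ug_\sigma$ for distinct $\sigma\in S$ mutually commute, so the $\ug_S$-submodule generated by $v$ is the tensor product of these irreducibles, still algebraic. This exhibits $v$ as $S$-classical and finishes the argument. No serious obstacle is anticipated: the content is the valuation bookkeeping of the first step together with the standard fact that a highest-weight vector of non-negative integer highest weight inside a finite-dimensional $\ug_\sigma$-module automatically generates an algebraic subrepresentation.
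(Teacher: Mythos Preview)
Your proof is correct and follows exactly the route the paper intends: the corollary is stated without proof immediately after Theorem \ref{thm: clin-etn}, and your valuation bookkeeping $\us_\wp(q\chi_1(\varpi))=\us_\wp(q\psi_1(\varpi))-\sum_\sigma (w-k_\sigma+2)/2$ is precisely the translation needed to invoke that theorem. The only content you add beyond what the paper makes explicit is the passage from quasi-$S$-classical to $S$-classical, and your argument for this is the standard one: the $\ug_S$-submodule generated by the $N_0$-fixed weight vector $v$ is a finite-dimensional cyclic highest-weight module with dominant integral highest weight (the integrality coming from $k_\sigma\equiv w\pmod 2$), hence is the irreducible algebraic representation.
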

\begin{remark}
  We invite the reader to compare this corollary with conjectures of Breuil in \cite{Br00} and results of Tian-Xiao in \cite{TX}.
\end{remark}
\subsection{Localization at a non-Eisenstein maximal ideal}\label{sec: clin-3.3}
Let $\rho$ be a $2$-dimensional continuous representation of $\Gal(\overline{F}/F)$ over $E$, suppose that $\rho$ is absolutely irreducible modulo $\varpi_E$ and there exists an irreducible algebraic representation $W$ of $G^c$ such that $H^1_{\et}(K^p,W)_{\overline{\rho}}$ ($\rho$ is thus called \emph{modular}). It's known that there exist $w\in \Z$, $k_{\sigma}\in Z_{\geq 2}$, $k_{\sigma}\equiv w \pmod{2}$ for all $\sigma\in \Sigma_{\wp}$ such that $W\cong W(\ul{k}_{\Sigma_{\wp}},w)$. We fix this $w$ in the following. Consider the essentially admissible locally $\Q_p$-analytic representation $J_B\big(\widetilde{H}^1_{\et}(K^p,E)_{\overline{\rho}}^{Z_1=\sN^{-w}}\big)$, whose strong dual gives rise to a coherent sheaf $\cM_0(K^p,w)_{\overline{\rho}}$ over $\widehat{T}_{\Sigma_{\wp}}$. As in Thm.\ref{thm: clin-cjw}, one can obtain an eigenvariety $\cV(K^p,w)_{\overline{\rho}}$ together with a coherent sheaf $\cM(K^p,w)_{\overline{\rho}}$ over $\cV(K^p,w)_{\overline{\rho}}$, which satisfies the properties in Thm.\ref{thm: clin-cjw} with $\widetilde{H}^1_{\et}(K^p,E)$ replaced by $\widetilde{H}^1_{\et}(K^p,E)^{Z_1=\sN^{-w}}_{\overline{\rho}}$. Since $\widetilde{H}^1_{\et}(K^p,E)^{Z_1=\sN^{-w}}_{\overline{\rho}}$ is a direct summand of $\widetilde{H}^1_{\et}(K^p,E)^{Z_1=\sN^{-w}}$, $\cV(K^p,w)_{\overline{\rho}}$ is a closed rigid subspace of $\cV(K^p,w)$ (cf. \cite[Lem.6.2.6]{Ding}). By Prop.\ref{prop: clin-trn}, one can describe the classical vectors of $\widetilde{H}^1_{\et}(K^p,E)^{Z_1=\sN^{-w}}_{\overline{\rho}}$ as follows:
\begin{corollary}\label{prop: clin-ernr}
  With the notation in Cor.\ref{cor: clin-caw-}, suppose moreover $z$ in $\cV(K^p,w)_{\overline{\rho}}$, let $v$ be a vector in
  \begin{equation}\label{equ: clin-weht} \big(\widetilde{H}^1_{\et}(K^p,E)_{\Q_p-\an, \overline{\rho}}\otimes_E L\big)^{N_0, Z_1=\sN^{-w},\ft_{\Sigma_{\wp}}=d\chi}[T(F_{\wp})^+=\chi, \cH^p=\lambda],\end{equation}
  if $v$ is classical, then $v$ lies in (see Rem.\ref{rem: clin-ging} below)
  \begin{equation}\label{equ: clin-kpwg}
\big(H^1_{\et}\big(K^p, W(\ul{k}_{\Sigma_{\wp}},w)\big)_{\overline{\rho}}\otimes_E L\big)^{N_0, Z_1=\psi_1\psi_2}[T(F_{\wp})^+=\psi_1 \otimes \psi_2, \cH^p=\lambda] \otimes_E \chi(\ul{k}_{\Sigma_{\wp}},w),
  \end{equation}
  with $\chi(\ul{k}_{\Sigma_{\wp}},w):=\prod_{\sigma\in \Sigma_{\wp}} \sigma^{-\frac{w-k_{\sigma}+2}{2}}\otimes \prod_{\sigma\in \Sigma_{\wp}} \sigma^{-\frac{w+k_{\sigma}-2}{2}}$ \big(being a character of $T(F_{\wp})$\big).
\end{corollary}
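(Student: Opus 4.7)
The plan is to use the $\GL_2(F_\wp)$-equivariant isomorphism (\ref{equ: clin-wpkw}) and Prop.~\ref{prop: clin-trn} to realize $v$ as the image, under a natural contraction, of a tensor $u\otimes\xi$ where $u\in H^1_{\et}(K^p,W(\ul{k}_{\Sigma_{\wp}},w))_{\overline{\rho}}\otimes_E L$ is a classical Hecke eigenvector and $\xi$ is a highest-weight vector in an algebraic $\GL_2(F_\wp)$-representation. Set $W:=W(\ul{k}_{\Sigma_{\wp}},w)$.

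First I would determine the algebraic type of $v$. The $\Sigma_{\wp}$-classicality provides a finite-dimensional algebraic $\GL_2(F_\wp)$-subrepresentation $U\subset\widetilde{H}^1_{\et}(K^p,E)_{\overline{\rho},\Q_p-\an}\otimes_E L$ containing $v$. Since $v$ is $N_0$-invariant and a $\ft_{\Sigma_{\wp}}$-eigenvector of weight $d\chi$, it is a $B$-highest weight vector in $U$. Using the decomposition $\chi=\chi(\ul{k}_{\Sigma_{\wp}},w)\cdot(\psi_1\otimes\psi_2)$ with smooth $\psi_i$ (Cor.~\ref{cor: clin-caw-}), a direct computation of the highest weights of the algebraic $\GL_2(F_{\wp})$-representations $(\Sym^{k_{\sigma}-2}E^2\otimes\dett^m)^{\sigma}$ for varying $m$ identifies the irreducible summand of $U$ through $v$ as $W^{\vee}$.

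Next I would exhibit $v$ via classical cohomology. Tensoring the inclusion $W^{\vee}\hookrightarrow\widetilde{H}^1_{\et}(K^p,E)_{\overline{\rho},\Q_p-\an}\otimes_E L$ with $W$ and applying (\ref{equ: clin-wpkw}), one obtains an injection $W^{\vee}\otimes_E W \hookrightarrow \widetilde{H}^1_{\et}(K^p,W)_{\overline{\rho},\Q_p-\an}\otimes_E L$. Combining this with Prop.~\ref{prop: clin-trn}, which gives $\widetilde{H}^1_{\et}(K^p,W)_{\overline{\rho},\infty}=H^1_{\et}(K^p,W)_{\overline{\rho}}$, and the evaluation $W\otimes W^{\vee}\to E$, one constructs a $\GL_2(F_\wp)$-equivariant map
\[
\alpha:\ H^1_{\et}(K^p,W)_{\overline{\rho}}\otimes_E W^{\vee}\otimes_E L \ \lra \ \widetilde{H}^1_{\et}(K^p,E)_{\overline{\rho},\Q_p-\an}\otimes_E L,
\]
and Step~1 implies $v=\alpha(u\otimes\xi)$ for some $u$ and $\xi$.

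Finally I would pin down the two factors. The $N_0$-invariance of $v$ forces $\xi$ to lie in the $N_0$-invariant line of $W^{\vee}$, which is the $B$-highest weight line of $T$-character $\chi(\ul{k}_{\Sigma_{\wp}},w)$. Correspondingly, $u\in H^1_{\et}(K^p,W)_{\overline{\rho}}\otimes_E L$ is $N_0$-invariant with $T(F_{\wp})^+$-generalized eigenvalue $\chi/\chi(\ul{k}_{\Sigma_{\wp}},w)=\psi_1\otimes\psi_2$, $Z_1$-character $\psi_1\psi_2$, and $\cH^p$-eigenvalue $\lambda$, yielding the asserted inclusion (\ref{equ: clin-kpwg}). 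The main obstacle is Step~1: correctly identifying the algebraic type of $v$ with $W^{\vee}$ via the $\Sigma_{\wp}$-indexed highest-weight calculation; once this is in place, the construction of $\alpha$ and the bookkeeping of eigenvalues follow routinely from the Hecke-equivariance of (\ref{equ: clin-wpkw}) and the compatibility of $\overline{\rho}$-localization with these constructions.
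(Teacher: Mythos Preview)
Your proposal is correct and follows essentially the same approach as the paper: the paper does not spell out a proof but simply refers to Rem.~\ref{rem: clin-ging}, which records exactly your map $\alpha$ (namely $H^1_{\et}(K^p,W)_{\overline{\rho}}\otimes W^{\vee}\hookrightarrow \widetilde{H}^1_{\et}(K^p,E)_{\overline{\rho},\Q_p-\an}$ via Prop.~\ref{prop: clin-trn} and (\ref{equ: clin-wpkw})) together with the observation that $T(F_{\wp})$ acts on $(W^{\vee})^{N_0}$ by $\chi(\ul{k}_{\Sigma_{\wp}},w)$. Your Steps~1 and~3 make explicit the highest-weight identification and the eigenvalue bookkeeping that the paper leaves to the reader.
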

\begin{remark}\label{rem: clin-ging}
  Note that $T(F_{\wp})$ acts on $\big(W(\ul{k}_{\Sigma_{\wp}},w)^{\vee}\big)^{N_0}$ via $\chi(\ul{k}_{\Sigma_{\wp}},w)$, the embedding of the vector space (\ref{equ: clin-kpwg}) into (\ref{equ: clin-weht}) is obtained by taking $N_0$-invariant  vectors of the following $\GL_2(F_{\wp}) \times \cH^p \times \Gal(\overline{F}/F)$-invariant injection (cf. Prop.\ref{prop: clin-trn})
  \begin{multline*}
    H^1_{\et}\big(K^p,W(\ul{k}_{\Sigma_{\wp}},w)\big)_{\overline{\rho}} \otimes_E W(\ul{k}_{\Sigma_{\wp}},w)^{\vee} \otimes_E L\\ \xlongrightarrow{\sim} \big(\widetilde{H}^1_{\et}(K^p,E)_{\Q_p-\an,\overline{\rho}} \otimes_E W(\ul{k}_{\Sigma_{\wp}},w)\big)_{\infty}\otimes_E W(\ul{k}_{\Sigma_{\wp}},w)^{\vee} \otimes_E L \\ \hooklongrightarrow \widetilde{H}^1_{\et}(K^p,E)_{\Q_p-\an, \overline{\rho}} \otimes_E L.
  \end{multline*}
\end{remark}


We study in details the structure of $\cV(K^p,w)_{\overline{\rho}}$. Let \begin{equation}\label{equ: clin-t'b}T':=Z_1'\times \begin{pmatrix}
  \varpi & 0 \\ 0 & 1
\end{pmatrix}^{\Z} \times  \begin{pmatrix}
  \varpi & 0 \\ 0 & \varpi
\end{pmatrix}^{\Z}   \\ \times \bigg\{ \begin{pmatrix}
  z_{1} & 0 \\ 0 & z_{2}
\end{pmatrix}\ \bigg|\ \ z_{i}\in \co_{\wp}^{\times},  \ z_{i}^{q-1}=1\bigg\}.\end{equation}
One has thus a finite morphism of rigid spaces (which is moreover an isomorphism when $p\neq 2$)
\begin{equation}\label{equ: clin-st1}
  \widehat{T}_{\Sigma_{\wp}} \xlongrightarrow{(\pr_1,\pr_2)} \widehat{(T')}_{\Sigma_{\wp}} \times \widehat{(Z_1)}_{\Sigma_{\wp}}, \  \chi\mapsto (\chi|_{T'},\chi|_{Z_1}),
\end{equation}
where $\widehat{(T')}_{\Sigma_{\wp}}$ and $\widehat{(Z_1)}_{\Sigma_{\wp}}$ denote the rigid spaces parameterizing locally $\Q_p$-analytic characters of $T'$ and $Z_1$ respectively.
Note that $\cM_1:=\pr_{1,*}\cM_0(K^p,w)_{\overline{\rho}}$ is in fact a coherent sheaf over $\widehat{(T')}_{\Sigma_{\wp}}$: the support of  $\cM_0(K^p,w)_{\overline{\rho}}$ is contained in $\widehat{T}_{\Sigma_{\wp}}(w)$ \big(as a closed subspace of $\widehat{T}_{\Sigma_{\wp}}$\big), which  is finite over $\widehat{(T')}_{\Sigma_{\wp}}$.

Put $\Pi:=\begin{pmatrix}\varpi & 0 \\ 0 & 1\end{pmatrix}$,  let $R_{\wp}$ be a (finite) set of representatives of $T(F_{\wp})/T'Z_1$ in $T(F_{\wp})$ (note $T(F_{\wp})=T'Z_1$ when $p\neq 2$), let $\cH$ be the $\co_E$-algebra generated by $\cH^p$ and $\begin{pmatrix}
  \varpi & 0 \\ 0 & \varpi
\end{pmatrix}$, $\bigg\{\begin{pmatrix}
  z_{1} & 0 \\ 0 & z_{2}
\end{pmatrix}\ \bigg|\ z_{i}\in \co_{\wp}^{\times},\ z_{i}^{q-1}=1\bigg\}$,  and the elements in $R_{\wp}$. Denote by $\cW_{1,\Sigma_{\wp}}$ the rigid space ove $E$ which parameterizes locally $\Q_p$-analytic characters of $1+2\varpi \co_{\wp}\cong Z_1'$, by the decomposition of groups (\ref{equ: clin-t'b}), one gets a natural projection $\pr: \widehat{(T')}_{\Sigma_{\wp}}\ra \cW_{1,{\Sigma_{\wp}}}\times \bG_m, \  \chi \mapsto (\chi|_{Z_1'}, \chi(\Pi))$. By Cor.\ref{cor: clin-ost} and the argument in the proof of \cite[Prop.4.2.36]{Em11} (e.g. see \cite[(4.2.43)]{Em11}), we see $\cM_1\big(\widehat{(T')}_{\Sigma_{\wp}}\big)$ is a coadmissible $\co(\cW_{1,\Sigma_{\wp}})\{\{X,X^{-1}\}\}$-module with $X$ acting on $\cM_1\big(\widehat{(T')}_{\Sigma_{\wp}}\big)$ by the operator $\Pi$. Denote by $\cM_2$ the associated coherent sheaf over $\cW_{1,\Sigma_{\wp}} \times \bG_m$, which is equipped with an $\co_{\cW_{1,\Sigma_{\wp}}\times \bG_m}$-linear action of $\cH$. One can thus construct $\cV(K^p,w)_{\overline{\rho}}$ from the triple $\big\{\cM_2, \widehat{(T')}_{\Sigma_{\wp}}, \cH\big\}$ as in \cite[\S 2.3]{Em1}.


Let  $\{\Spm A_i\}_{i \in I}$ be an admissible covering of $\cW_{1,\Sigma_{\wp}}$  by increasing affinoid opens,  by Cor.\ref{cor: clin-ost}, \cite[(4.2.43)]{Em11} and the results in \cite[\S 5.A]{Ding}, for any $i\in I$, there exists a Fredholm series $F_i(z)\in 1+zA_i\{\{z\}\}$ (which is hence a global section over $\Spm A_i \times \bG_m$) such that the coherent sheaf $\cM_2|_{\Spm A_i \times \bG_m}$ is supported at $\cZ_i$ where $\cZ_i$ is the closed rigid subspace of $\Spm A_i \times \bG_m$ defined by $F_i(z)$. Moreover, it's known that (cf. \cite[\S 4]{Bu}) $\cZ_i$ admits an admissible covering $\{U_{ij}\cong \Spm A_i[z]/P_j(z)\}$ such that
\begin{itemize}
  \item $P_j(z)\in 1+zA_i[z]$ is a polynomial of degree $d_j$ with leading coefficient being a unit,
  \item there exists $Q_j(z)\in 1+zA_i\{\{z\}\}$ such that $F_i(z)=P_j(z)Q_j(z)$ and that $(P_j(z),Q_j(z))=1$.
\end{itemize}
As in the proof of \cite[Prop.5.A.6]{Ding}, one can show $\cM_2(U_{ij})$ is a finite locally free $A_i$-module of rank $d_j$, equipped with an $A_i$-linear action of $\cH$ such that the characteristic polynomial of $\Pi$ is given by $P_j(z)$, and that $Q_j(\Pi)$ acts on $\cM_2(U_{ij})$ via an invertible operator. Denote by $\cH_{ij}$ the $A_i[z]/P_j(z)$-algebra generated by the image of the natural map
\begin{equation*}
  \cH \lra \End_{A_i[z]/P_j(z)}(\cM_2(U_{ij})),
\end{equation*}which is also the $A_i$-algebra generated by the image of the map $\cH \ra \End_{A_i}(\cM_2(U_{\ij}))$ (since $\Pi\in \cH$). The restriction $\cV(K^p,w)_{\overline{\rho}} \big|_{U_{ij}}$ is thus isomorphic to $\Spm \cH_{ij}$. In particular, we see that the construction of $\cV(K^p,w)_{\overline{\rho}}$ coincides with the construction of eigenvarieties by Coleman-Mazur (formalized by Buzzard in \cite{Bu}). Since $\cW_{1,\Sigma_{\wp}}$ is equidimensional of dimension $d$, by \cite[Prop.6.4.2]{Che}, we have
\begin{proposition}
  The rigid analytic space $\cV(K^p,w)_{\overline{\rho}}$ is equidimensional of dimension $d$.
\end{proposition}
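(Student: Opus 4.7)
The plan is to leverage the explicit Coleman--Mazur--Buzzard description of $\cV(K^p,w)_{\overline{\rho}}$ developed in the paragraphs just before the statement, and then transfer equidimensionality from the weight space $\cW_{1,\Sigma_{\wp}}$ through the Fredholm hypersurface to the eigenvariety. The cited reference \cite[Prop.6.4.2]{Che} is exactly designed for this situation; my job is to verify that the hypotheses of that proposition hold here.

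First I would record the local structure: for each affinoid $\Spm A_i$ in the admissible cover of $\cW_{1,\Sigma_{\wp}}$, the Fredholm series $F_i(z) \in 1 + zA_i\{\{z\}\}$ cuts out a closed subspace $\cZ_i \subset \Spm A_i \times \bG_m$, and these glue to a Fredholm hypersurface $\cZ \subset \cW_{1,\Sigma_{\wp}} \times \bG_m$. By the standard theory of Fredholm hypersurfaces (Coleman--Mazur, as revisited by Conrad), the projection $\cZ \to \cW_{1,\Sigma_{\wp}}$ is, locally on admissible opens of the form $U_{ij}$, finite and flat, and since $\cW_{1,\Sigma_{\wp}}$ is equidimensional of dimension $d$, so is $\cZ$. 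Next I would observe that the map $\cV(K^p,w)_{\overline{\rho}} \to \cZ$ is finite: locally over $U_{ij}$ it is $\Spm \cH_{ij} \to \Spm A_i[z]/P_j(z)$, and $\cH_{ij}$ was defined as the image of $\cH$ in the endomorphism ring of the finite locally free $A_i$-module $\cM_2(U_{ij})$, hence is finite over $A_i[z]/P_j(z)$.

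Finally, to conclude equidimensionality of dimension $d$ for $\cV(K^p,w)_{\overline{\rho}}$, I would invoke the general fact that a finite morphism of rigid spaces whose target is equidimensional of dimension $d$ has source equidimensional of dimension $d$, provided the morphism is surjective onto each irreducible component of the target. This last property is built into the construction: by Remark~\ref{rem: linv-yden} the structural map $A_i[z]/P_j(z) \to \cH_{ij}$ is injective on reduced quotients (since $\cM_2(U_{ij})$ is faithful over $A_i[z]/P_j(z)$ after reducing, because $P_j(z)$ is the characteristic polynomial of the $\Pi$-action), so the morphism $\Spm\cH_{ij} \to \Spm A_i[z]/P_j(z)$ hits every component. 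This is precisely the content of \cite[Prop.6.4.2]{Che}.

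The only real thing to check is that our eigenvariety genuinely fits into Buzzard's formalism — and that check is exactly what was done in the preceding discussion (explicit Fredholm series $F_i$, slope decompositions into $P_j, Q_j$, finite locally free modules $\cM_2(U_{ij})$). Once that is in place, the equidimension statement is automatic and there is no serious new obstacle; the main potential pitfall is simply ensuring that the passage from $\widehat{T}_{\Sigma_{\wp}}$ down through the projection $\pr \colon \widehat{(T')}_{\Sigma_{\wp}} \to \cW_{1,\Sigma_{\wp}} \times \bG_m$ preserves the coherent-sheaf and finiteness data, which is guaranteed by the fact that $\cM_0(K^p,w)_{\overline{\rho}}$ is supported on $\widehat{T}_{\Sigma_{\wp}}(w)$ and this support is finite over $\cW_{1,\Sigma_{\wp}} \times \bG_m$.
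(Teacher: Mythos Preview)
Your approach is essentially the same as the paper's: verify that the eigenvariety fits into the Coleman--Mazur--Buzzard formalism (which is done in the preceding paragraphs) and then invoke \cite[Prop.6.4.2]{Che}, using that $\cW_{1,\Sigma_{\wp}}$ is equidimensional of dimension $d$. One small caution: the ``general fact'' you state (finite morphism surjective onto each component of an equidimensional target forces the source to be equidimensional) is false as written---a disjoint union of a curve and a point mapping to a curve is a counterexample---so do not present it that way; Chenevier's actual argument uses that $\cH_{ij}$ is a finite \emph{torsion-free} $A_i$-algebra (being a subring of $\End_{A_i}$ of a locally free module), which forces every irreducible component of $\Spm\cH_{ij}$ to surject onto $\Spm A_i$.
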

\begin{definition}For a character $\chi$ of $T(F_{\wp})$, we say that $\chi$ is spherically algebraic if $\chi$ is the twist of an algebraic character by an unramified character. We call a closed point $z=(\chi,\lambda)$ of  $\cV(K^p,w)$ semi-stable classical if $z$ is classical and $\chi$ is spherically algebraic.
\end{definition}
Denote by $C(w)$ the set of semi-stable classical points in $\cV(K^p,w)_{\overline{\rho}}$. By the same argument as in the proof of \cite[Prop.6.2.7, Prop.6.4.6]{Che}, the following proposition follows from Cor.\ref{cor: clin-caw-}.
\begin{proposition}\label{thm: clin-adtf}(1) Let $z=(\chi,\lambda)$ be a closed point of $\cV(K^p,w)_{\overline{\rho}}$, suppose moreover $\chi$ spherically algebraic, then the set $C(w)$ accumulates over the point $z$, i.e. for any admissible open $U$ containing $z$, there exists an admissible open $V\subseteq U$, $z\in V(\overline{E})$ such that $C(w)\cap V(\overline{E})$ is Zariski-dense in $V$.

(2) The set $C(w)$ is Zariski-dense in $\cV(K^p,w)_{\overline{\rho}}$.
\end{proposition}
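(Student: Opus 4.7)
The plan is to adapt Chenevier's accumulation-and-density argument \cite[Prop.6.2.7, Prop.6.4.6]{Che} to the present eigenvariety, feeding in Cor.\ref{cor: clin-caw-} as the classicality input. Throughout I work with the local structure of $\cV(K^p,w)_{\overline{\rho}}$ established at the end of \S\ref{sec: clin-3.3}: on a chart $U_{ij}\cong \Spm A_i[z]/P_j(z)$ one has $\cV(K^p,w)_{\overline{\rho}}|_{U_{ij}}\cong \Spm\cH_{ij}$, a finite $A_i[z]/P_j(z)$-algebra, on which $\Pi$ acts with characteristic polynomial $P_j(z)\in 1+zA_i[z]$ whose leading coefficient is a unit.

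For part (1), I fix the spherically algebraic point $z=(\chi,\lambda)$ and any admissible open $U\ni z$; after shrinking I may assume $U\subseteq \Spm\cH_{ij}$ maps to an affinoid open $\Spm A_i^{\circ}\subseteq \Spm A_i$. The Newton polygon of $P_j$ over $\Spm A_i^{\circ}$ bounds the valuation of the eigenvalue of $\Pi$: there is $M>0$ such that $\us_{\wp}(\chi'(\Pi))\leq M$ at every closed point $(\chi',\lambda')\in U(\overline{E})$. The standard density of integer-algebraic characters in $\cW_{1,\Sigma_{\wp}}$, together with the fact that the finite map $\Spm\cH_{ij}\to \Spm A_i$ is surjective on each irreducible component (forced by equidimensionality of both sides of dimension $d$), produces a Zariski-dense subset of $U$ consisting of points $z'=(\chi',\lambda')$ whose character is spherically algebraic with integer weights $k'_{\sigma}\in 2\Z_{\geq 1}$ satisfying $k'_{\sigma}\equiv w\pmod 2$ and $\inf_{\sigma}k'_{\sigma}>M+d_0+1$. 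Writing such $\chi'$ as in Cor.\ref{cor: clin-caw-} with smooth $\psi'_1,\psi'_2$, and using $\us_{\wp}(\sigma(\varpi))=1$ for every $\sigma\in\Sigma_{\wp}$, the bound $\us_{\wp}(\chi'(\Pi))\leq M$ translates into
\begin{equation*}
\us_{\wp}(q\psi'_1(\varpi))=d_0+\us_{\wp}(\psi'_1(\varpi))\leq d_0+M+\sum_{\sigma\in\Sigma_{\wp}}\frac{w-k'_{\sigma}+2}{2}<\sum_{\sigma\in\Sigma_{\wp}}\frac{w-k'_{\sigma}+2}{2}+\inf_{\sigma\in\Sigma_{\wp}}\{k'_{\sigma}-1\},
\end{equation*}
so Cor.\ref{cor: clin-caw-} with $S=\Sigma_{\wp}$ forces $z'$ to be classical. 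Since $\chi'$ is spherically algebraic by construction, $z'\in C(w)$, proving the accumulation of $C(w)$ at $z$.

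For part (2), by (1) it suffices to show that the set of spherically algebraic points is Zariski-dense in $\cV(K^p,w)_{\overline{\rho}}$. In each chart $\Spm\cH_{ij}$, the weight map to $\Spm A_i$ is finite and surjective on each irreducible component (again by equidimensionality); since integer-algebraic characters are Zariski-dense in $\cW_{1,\Sigma_{\wp}}$, their preimage in $\Spm\cH_{ij}$ is Zariski-dense, and every such preimage point is spherically algebraic. Globalizing over the $U_{ij}$ and applying (1) at each spherically algebraic point yields classical points in every nonempty admissible open, so $C(w)$ is Zariski-dense.

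The main bookkeeping obstacle is the passage between $\us_{\wp}$ of the $\Pi$-eigenvalue, which is what the Fredholm/Newton-polygon picture of \S\ref{sec: clin-3.3} directly controls, and the quantity $\us_{\wp}(q\psi'_1(\varpi))$ appearing in Cor.\ref{cor: clin-caw-}; as shown above, the weight-dependent correction $\sum_{\sigma}(w-k'_{\sigma}+2)/2$ appears on both sides of the classicality inequality and cancels, leaving only the uniform bound $\inf_{\sigma}k'_{\sigma}>M+d_0+1$, which is arranged by choosing integer weights large enough in the Zariski-dense locus produced above.
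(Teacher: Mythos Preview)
Your proposal is correct and is precisely the approach the paper takes: it simply cites \cite[Prop.6.2.7, Prop.6.4.6]{Che} with Cor.\ref{cor: clin-caw-} as the classicality input, and you have unpacked that reference faithfully. Two minor points (neither a genuine gap): $\us_{\wp}(q)=d_0e$ rather than $d_0$ (since $\us_{\wp}(p)=e$), which only shifts the threshold you impose on $\inf_\sigma k'_\sigma$; and to ensure the $z'$ are genuinely spherically algebraic (not merely of algebraic weight on $Z_1'$) one should choose the integer weights congruent modulo $q-1$ to those of $z$, so that the locally constant tame part of $\chi'$ matches an algebraic character---this refinement is part of Chenevier's argument and still leaves a Zariski-dense set of weights.
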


\subsection{Families of Galois representations}
\subsubsection{Families of Galois representations on eigenvarieties}
Keep the notation in \S \ref{sec: clin-3.3}. For  $\fl\in S(K^p)$, denote by $\fa_{\fl}\in \co \big(\cV(K^p,w)_{\overline{\rho},\red}\big)$ \Big(resp. $\ub_{\fl}\in \co \big(\cV(K^p,w)_{\overline{\rho},\red}\big)$\Big) the image of $T_{\fl}\in \cH^p$ \big(resp. $S_{\fl}\in \cH^p$\big) via the natural morphism $\cH^p \ra  \co \big(\cV(K^p,w)_{\overline{\rho},\red}\big)$. Denote by $\cS$ the complement of $S(K^p)$ in the set of finite places of $F$, which is hence a finite set. Denote by $F^{\cS}$ the maximal algebraic extension of $F$ which is unramified outside $\cS$.

For any $z\in C(w)$, by \cite{Ca2} (and Cor.\ref{prop: clin-ernr}), there exists a $2$-dimensional continuous representation $\rho_z$ of $\Gal(\overline{F}/F)$ over $k(z)$, the residue field at $z$, which is unramified outside $S$ and hence a representation of $\Gal(F^{\cS}/F)$, such that (see also (\ref{equ: clin-llf-}))
\begin{equation*}
  \Frob_{\fl}^{-2} -\fa_{\fl,z}\Frob_{\fl}^{-1}+\ell^{f_{\fl}}\ub_{\fl,z}=0
\end{equation*}where $\fa_{\fl,z}, \ub_{\fl,z}\in k(z)$ denote the respective evaluation of $\fa_{\fl}$ and $\ub_{\fl}$ at $z$. In particular, one has $\tr(\Frob_{\fl}^{-1})=\fa_{\fl,z}$. Denote by $\cT_z: \Gal(F^{\cS}/F) \ra k(z)$, $g\mapsto \tr(\rho_z(g))$, which is thus a $2$-dimensional continuous pseudo-character of $\Gal(F^{\cS}/F)$ over $k(z)$. By \cite[Prop.7.1.1]{Che} and Prop.\ref{prop: clin-ide}, one has
\begin{proposition}
  There exists a unique $2$-dimensional continuous pseudo-character $\cT: \Gal(F^{\cS}/F) \ra \co \big(\cV(K^p,w)_{\overline{\rho},\red}\big)$ such that the evaluation of $\cT$ at  $z\in C(w)$ equals to $\cT_z$.
\end{proposition}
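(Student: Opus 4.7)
The plan is to deduce this from Chenevier's general interpolation theorem \cite[Prop.7.1.1]{Che}, whose hypotheses are provided by two ingredients already assembled above: the nested rigid-analytic structure of $\cV(K^p,w)_{\overline{\rho},\red}$ together with the compactness of the unit ball of its global sections (Prop.\ref{prop: clin-ide}), and the Zariski density of the semi-stable classical points $C(w)$ (Prop.\ref{thm: clin-adtf}(2)). Write $B := \co(\cV(K^p,w)_{\overline{\rho},\red})$. Uniqueness is immediate: because $C(w)$ is Zariski dense in the reduced space, the evaluation map $B \hookrightarrow \prod_{z\in C(w)} k(z)$ is injective, so any two continuous pseudo-characters $\cT, \cT' : \Gal(F^{\cS}/F)\ra B$ with $\cT(g)(z)=\cT'(g)(z)=\cT_z(g)$ for every $z\in C(w)$ and every $g$ must coincide.

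For existence, I would first define $\cT$ on the Chebotarev-dense subset $\{\Frob_{\fl}^{-1} : \fl\in S(K^p)\}$ of $\Gal(F^{\cS}/F)$ by $\cT(\Frob_{\fl}^{-1}) := \fa_{\fl}\in B$, which manifestly satisfies the interpolation condition $\cT(\Frob_{\fl}^{-1})(z) = \fa_{\fl,z} = \cT_z(\Frob_{\fl}^{-1})$. Since each $\cT_z$ for $z\in C(w)$ is integer-valued (the associated $\rho_z$ admits a $\Gal(F^{\cS}/F)$-stable lattice), the Zariski density of $C(w)$ combined with Prop.\ref{prop: clin-ide} forces each $\fa_{\fl}$ into the compact subring $B^0$. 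For arbitrary $g\in \Gal(F^{\cS}/F)$ I would choose $\Frob_{\fl_n}^{-1}\to g$ and set $\cT(g) := \lim_n \fa_{\fl_n}$: the limit exists in $B$ because $B^0$ is compact and $(\fa_{\fl_n})_n$ is Cauchy in $B^0$, the Cauchy property being checked on each affinoid piece of the nested covering by evaluating at the classical points contained in that piece and using continuity of each $\cT_z$. Finally, the defining identities of a $2$-dimensional pseudo-character are polynomial relations in finitely many values of $\cT$; they hold at every $z\in C(w)$ because each $\cT_z$ is a pseudo-character, hence globally in the reduced ring $B$ by Zariski density of $C(w)$.

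The main obstacle, and the point at which \cite[Prop.7.1.1]{Che} is genuinely used, is the passage from pointwise convergence of $\fa_{\fl_n}(z)$ at classical points $z$ to convergence of $(\fa_{\fl_n})$ in $B$ itself. Without the compactness of $B^0$ and the nested Banach structure supplied by Prop.\ref{prop: clin-ide}, the above construction would produce a pseudo-character valued only in the much larger ring $\prod_{z\in C(w)}k(z)$, not in $B$; all remaining verifications (Chebotarev density, bookkeeping of the polynomial identities) are formal once this globalisation is in hand.
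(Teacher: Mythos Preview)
Your proof is correct and follows the same approach as the paper, which simply invokes \cite[Prop.7.1.1]{Che} together with Prop.\ref{prop: clin-ide}; you have additionally unpacked the mechanism of Chenevier's interpolation argument and made explicit the role of the Zariski density of $C(w)$ (Prop.\ref{thm: clin-adtf}(2)), which the paper leaves implicit.
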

Let $z$ be a closed point of $\cV(K^p,w)_{\overline{\rho}}$, denote by $\cT_z:=\cT|_z$, which is a $2$-dimensional continuous pseudo-character of $\Gal(F^{\cS}/F)$ over $k(z)$. By \cite[Thm.1(2)]{Ta}, there exists a unique $2$-dimensional continuous semi-simple representation $\rho_z$ of $\Gal(F^{\cS}/F)$ such that $\tr(\rho_z)=\cT_z$. By Eichler-Shimura relations, one has $\overline{\rho_z}\cong \overline{\rho}$, in particular, $\rho_z$ is absolutely irreducible. By \cite[Lem.5.5]{Bergd}, one has
\begin{proposition}\label{prop: clin-kzc}
  For any closed point $z$ of $\cV(K^p,w)_{\overline{\rho}}$, there exist an admissible open affinoid $U$ containing $z$ in $\cV(K^p,w)_{\overline{\rho},\red}$ and a continuous representation
  \begin{equation*}
    \rho_U: \Gal(F^{\cS}/F) \lra \GL_2(\co_U)
  \end{equation*}
  such that $\rho_U|_{z}\cong \rho_{z'}$ for any $z'\in U(\overline{E})$.
\end{proposition}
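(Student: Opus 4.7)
The plan is to invoke the Rouquier--Nyssen principle: a continuous $2$-dimensional pseudo-character with absolutely irreducible residual representation lifts, uniquely up to conjugation, to a genuine representation. In our rigid-analytic setting, this lifting must moreover be realised over an affinoid neighborhood of $z$, and this spreading-out is where most of the work lies.

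First I would choose a reduced connected affinoid open $U=\Spm A$ of $\cV(K^p,w)_{\overline{\rho},\red}$ containing $z$, and restrict the pseudo-character to obtain $\cT_U:=\cT|_U:\Gal(F^{\cS}/F)\to A$. Since the semisimple mod-$\varpi_E$ reduction of $\rho_{z'}$ is $\overline{\rho}$ for every $z'\in U(\overline{E})$, and $\overline{\rho}$ is absolutely irreducible, each specialisation $\rho_{z'}$ is absolutely irreducible and therefore uniquely determined, up to isomorphism, by its trace. Next I would choose elements $g_1=1,g_2,g_3,g_4\in\Gal(F^{\cS}/F)$ such that $\overline{\rho}(g_1),\dots,\overline{\rho}(g_4)$ form a basis of $M_2(k_E)$; equivalently, the Gram matrix $\bigl(\cT(g_ig_j)\bigr)_{1\le i,j\le 4}$ has unit determinant at $z$. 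By continuity of $\cT_U$, after shrinking $U$ this determinant is a unit in $A$. Following Rouquier, the quotient $B$ of $A[\Gal(F^{\cS}/F)]$ by the left kernel of the symmetric bilinear form $(x,y)\mapsto \cT_U(xy)$ is then a free $A$-algebra of rank $4$ whose specialisation at $z$ is $M_2(k(z))$. A further shrinking of $U$ arranges $B\cong M_2(A)$, and composing $\Gal(F^{\cS}/F)\to B^{\times}$ with this isomorphism yields the desired representation $\rho_U:\Gal(F^{\cS}/F)\to\GL_2(A)$ with $\tr\rho_U=\cT_U$. Continuity of $\rho_U$ follows from that of $\cT_U$, since each matrix entry of $\rho_U(g)$ is a universal polynomial expression in the continuous functions $g\mapsto\cT_U(g_igg_j)$.

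The principal obstacle is precisely the descent from the completed local ring $\widehat{\co}_{\cV,z}$, where the existence of a lift is more-or-less immediate from the classical Nyssen theorem, to the affinoid algebra $A$: one must ensure that the algebra-theoretic data exhibiting $B\cong M_2(A)$ are defined over an honest affinoid neighborhood and not merely at a point or in a formal neighborhood. This is handled by the basic rigid-analytic fact that a function on a reduced affinoid which is a unit at a point remains a unit on some admissible neighborhood of that point, applied to the finitely many determinants controlling the construction (the Gram matrix above, and the determinant witnessing $B\cong M_2(A)$). Finally, for any $z'\in U(\overline{E})$ the specialisation $\rho_U|_{z'}$ has trace $\cT_{z'}=\tr\rho_{z'}$, and since $\rho_{z'}$ is absolutely irreducible it is determined up to isomorphism by its trace, which gives $\rho_U|_{z'}\cong\rho_{z'}$.
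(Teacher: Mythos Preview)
Your argument is correct and is precisely the standard Rouquier--Nyssen lifting in the residually absolutely irreducible case, which is the content of \cite[Lem.~5.5]{Bergd} that the paper invokes without further elaboration. The only step you leave implicit---passing from $B|_z\cong M_2(k(z))$ to $B\cong M_2(A)$ after shrinking $U$---is routinely handled by lifting a rank-one idempotent from the fiber, and your outline is otherwise complete.
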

In general, by \cite[Lem.7.8.11]{BCh}, one has
\begin{proposition}\label{prop: clin-wpw}
Let $U$ be an open affinoid of $\cV(K^p,w)_{\overline{\rho},\red}$, there exist a rigid space $\widetilde{U}$ over $U$, and an $\co_{\widetilde{U}}$-module $\cM$ locally free of rank $2$ equipped with a continuous $\co_{\widetilde{U}}$-linear action of $\Gal(F^{\cS}/F)$ such that
\begin{enumerate}
  \item the morphism $g: \widetilde{U}\ra U$ factors through a rigid space $U'$ such that $\widetilde{U}$ is a blow-up over $U'$ of $U'\setminus U''$ with $U''$ an Zariski-open Zariski-dense subspace of $U'$ and that $U'$ is finite, dominant over $U$;
  \item for any $z\in \widetilde{U}(\overline{E})$, the $2$-dimensional representation $\cM|_z$ of $\Gal(F^{\cS}/F)$ is isomorphic to $\rho_{g(z)}$.
\end{enumerate}
\end{proposition}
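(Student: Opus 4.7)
The plan is to invoke \cite[Lem.7.8.11]{BCh} applied to the $2$-dimensional continuous pseudo-character
\[
\cT|_U : \Gal(F^{\cS}/F) \to \co(U)
\]
produced by the previous proposition. Since $U\subset \cV(K^p,w)_{\overline{\rho},\red}$ is a reduced affinoid and the values of $\cT$ lie in $\co\big(\cV(K^p,w)_{\overline{\rho},\red}\big)^{0}$ (hence are bounded), the hypotheses of \emph{loc.\,cit.} are satisfied.

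I would follow the strategy of \cite{BCh}. Because $\overline{\rho}$ is absolutely irreducible, the semi-simple representation $\rho_z$ attached to every closed point $z$ of $U$ is itself absolutely irreducible, so $\cT_z$ is an absolutely irreducible pseudo-character. By the theory of generalized matrix algebras there exist a finite dominant morphism $U' \to U$ and a coherent $\co_{U'}$-module $\cM'$ of generic rank $2$, equipped with a continuous $\co_{U'}$-linear action of $\Gal(F^{\cS}/F)$, whose trace recovers (the pullback of) $\cT$. On the Zariski-open Zariski-dense locus $U'' \subset U'$ where $\cM'$ is locally free of rank $2$ this already produces the desired Galois family. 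Outside $U''$ the sheaf $\cM'$ may fail to be locally free; blowing up $U'$ along the Fitting ideal of $\cM'$ yields a proper birational morphism $\widetilde{U}\to U'$ which is an isomorphism over $U''$, and the pullback of $\cM'$ modulo torsion is locally free of rank $2$ on $\widetilde{U}$, giving the sheaf $\cM$.

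Property (2) is then automatic: for $z\in \widetilde{U}(\overline{E})$, the fiber $\cM|_z$ is a $2$-dimensional representation of $\Gal(F^{\cS}/F)$ over $k(z)$ with trace $\cT_{g(z)}$; by \cite[Thm.1(2)]{Ta} and absolute irreducibility, it is isomorphic to $\rho_{g(z)}$.

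The main obstacle, and the reason one cannot simply glue the pointwise lifts of Prop.\,\ref{prop: clin-kzc} into a sheaf on $U$ itself, is that local lifts of a pseudo-character to a genuine representation are only well-defined up to conjugation, so the cocycle obstruction to gluing can persist even after a suitable finite cover; the blow-up step is what removes the residual non-local-freeness. This uniform construction over an arbitrary affinoid $U$ is precisely what \cite[Lem.7.8.11]{BCh} provides, so the proof reduces to citing \emph{loc.\,cit.}
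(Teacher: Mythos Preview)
Your proposal is correct and takes exactly the same route as the paper: the paper's entire proof consists of the single citation ``by \cite[Lem.7.8.11]{BCh}'', and you invoke precisely this lemma with the pseudo-character $\cT|_U$. Your additional paragraphs unpacking the content of that lemma (finite dominant cover $U'\to U$, the locally-free locus $U''$, blowing up the Fitting ideal to flatten $\cM'$, and the identification of fibers via absolute irreducibility) are accurate and simply make explicit what the citation provides.
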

\begin{remark}\label{rem: clin-dsu}
  Let $Z$ be a Zariski-dense subset of closed points in $U$, then $g^{-1}(U)$ is Zariski-dense in $\widetilde{U}$: denote by $g': U'\ra U$ the morphism as in (1), by \cite[Lem.6.2.8]{Che},  we see $(g')^{-1}(Z)$ is Zariski-dense in $U$, so $(g')^{-1}(Z)\cap U''(\overline{E})$ is Zariski-dense in $U''$ and hence Zariski-dense in $\widetilde{U}$.
\end{remark}
\subsubsection{Trianguline representations} Consider the restriction $\rho_{z,\wp}:=\rho_z|_{\Gal(\overline{\Q_p}/F_{\wp})}$. Denote by $F_{\wp,\infty}:=\cup_{n} F_{\wp}(\zeta_{p^n})$ where $\zeta_{p^n}$ is a root of unity primitive of order $p^n$. Set $\Gamma:=\Gal(F_{\wp,\infty}/F_{\wp})$, $H_{\wp}:=\Gal(\overline{\Q_p}/F_{\wp,\infty})$. One has a ring $B_{\rig}^{\dagger}$ (cf. \cite[\S 3.4]{Berger}) which is equipped with an action of $\varphi$ and $\Gal(\overline{\Q_p}/\Q_p)$ such that $B_{\rig,F_{\wp}}^{\dagger}:=(B_{\rig}^{\dagger})^{H_{F_{\wp}}}$ is naturally isomorphic to the Robba ring with coefficients in $F_{\wp}'$ where $F_{\wp}'$ denotes the maximal unramified extension of $\Q_p$ in $F_{\wp,\infty}$ (which is finite over $F_{\wp,0}$). For an $n$-dimensional continuous representation of $\Gal(\overline{\Q_p}/F_{\wp})$ over $E$, $D_{\rig}(V):=(B_{\rig}^{\dagger} \otimes_{\Q_p} V)^{H_{\F_{\wp}}}$ is an \'etale $(\varphi,\Gamma)$-module of rank $n$ over $\cR_E:= B_{\rig,F_{\wp}}^{\dagger} \otimes_{\Q_p} E$ (i.e. an \'etale $(\varphi,\Gamma)$-module over $B_{\rig,F_{\wp}}^{\dagger}$ equipped with an action of $E$ which commutes with that of $\varphi$ and $\Gamma$) (cf. \cite[Prop.3.4]{Berger}). Let $\delta: F_{\wp}^{\times} \ra E^{\times}$ be a continuous character, following \cite[\S 1.4]{Na}, one can associate to $\delta$ a $(\varphi,\Gamma)$-module, denoted by $\cR_E(\delta)$, free of rank $1$ over $\cR_E$. The converse is also true, i.e. for any $(\varphi,\Gamma)$-module $D$ free of rank $1$ over $\cR_E$, there exists a continuous character $\delta: F_{\wp}^{\times} \ra E^{\times}$ such that $D\cong \cR_E(\delta)$.
\begin{definition}[$\text{cf. \cite[Def.4.1]{Colm2}, \cite[Def.1.15]{Na}}$]
  Let $\rho$ be a $2$-dimensional continuous representation of $\Gal(\overline{\Q_p}/F_{\wp})$ over $E$, $\rho$ is called trianguline if there exist continuous characters $\delta_1$, $\delta_2$ of $F_{\wp}^{\times}$ over $E$ such that $D_{\rig}(\rho)$ lies in an exact sequence as follows:
\begin{equation*}
  0 \ra \cR_E(\delta_1) \ra D_{\rig}(V) \ra \cR_E(\delta_2) \ra 0.
  \end{equation*}
  Such an exact sequence is called a triangulation, denoted by $(\rho,\delta_1,\delta_2)$, of $D_{\rig}(\rho)$ (and of $\rho$).
\end{definition}
We refer to \cite{Na} for a classification of $2$-dimensional trianguline representations of $\Gal(\overline{\Q_p}/F_{\wp})$. Note that if $\rho$ is semi-stable, then $\rho$ is trianguline, if $\rho$ is moreover non-crystalline, then the triangulation of $\rho$ is unique.
\begin{definition}[$\text{cf. \cite[Def.4.3.1]{Liu}}$]
  Let $\rho$ be a $2$-dimensional trianguline representation of $\Gal(\overline{\Q_p}/F_{\wp})$ over $E$ with $(\rho, \delta_1,\delta_2)$ a triangulation of $\rho$. For $\sigma\in \Sigma_{\wp}$, we say that $\rho$ is non-$\sigma$-critical if $k_{\delta_1,\sigma}-k_{\delta_2,\sigma}\in \Z_{\geq 1}$. More generally, for $J\subseteq \Sigma_{\wp}$, we say $\rho$ is non-$J$-critical if $\rho$ is non-$\sigma$-critical for all $\sigma\in J$, we say $\rho$ is non-critical if $\rho$ is non-$\Sigma_{\wp}$-critical.
\end{definition}For a closed point $z=(\chi_z=\chi_{z,1}\otimes \chi_{z,2}, \lambda_z)$ of $\cV(K^p,w)_{\overline{\rho}}$, put $k_{z,\sigma}:= k_{\chi_{z,1},\sigma}-k_{\chi_{z,2},\sigma}+2\in \Z_{\geq 2}$, for $\sigma\in C(\chi_z)$. If $z\in C(w)$, we see $k_{z,\sigma}\equiv w \pmod{2}$ for all $\sigma \in \Sigma_{\wp}$ (note that $C(\chi_z)=\Sigma_{\wp}$ in this case).
The following theorem can be easily deduced from the results in \cite{Sa} (together with Cor.\ref{prop: clin-ernr} and the results of \cite{Na} on  triangulations of semi-stable representations, see \cite[Prop.6.2.44]{Ding} for the unitary Shimura curves  case).
\begin{theorem}\label{thm: clin-zgn}
  Let $z=\big(\chi_{z,1}\otimes \chi_{z,2}, \lambda_z\big)\in C(w)$, then $\rho_{z,\wp}$ is semi-stable (hence trianguline) with a triangulation given by
  \begin{equation}\label{equ: clin-dgz}
    0 \ra \cR_{k(z)}(\delta_{z,1}) \ra D_{\rig}(\rho_{z,\wp}) \ra \cR_{k(z)}\big(\delta_{z,2}\big)\ra 0
  \end{equation}
  where
  \begin{equation*}
    \begin{cases}
       \delta_{z,1}=\unr(q)\chi_{z,1} \prod_{\sigma\in \Sigma_{z}} \sigma^{1-k_{z,\sigma}} \\
       \delta_{z,2}=\chi_{z,2}\prod_{\sigma\in \Sigma_{\wp}} \sigma^{-1}\prod_{\sigma\in \Sigma_{z}} \sigma^{k_{z,\sigma}-1}
    \end{cases}
  \end{equation*}
  with $\Sigma_{z}\subseteq \Sigma_{\wp}$ (maybe empty). Put $\psi_{z,1}:=\chi_{z,1}\prod_{\sigma\in \Sigma_{\wp}}\sigma^{\frac{w-k_{z,\sigma}+2}{2}}$ (being an unramified character of $F_{\wp}^{\times}$), for $S\subseteq \Sigma_{\wp}$, if one has
  \begin{equation*}
    \us_{\wp}\big(q\psi_{z,1}(\varpi)\big) < \sum_{\sigma\in \Sigma_{\wp}} \frac{w-k_{z,\sigma}+2}{2}+\inf_{\sigma\in S}\{k_{z,\sigma}-1\},
  \end{equation*}then $\Sigma_{z}\cap S=\emptyset$, in particular, in this case the triangulation (\ref{equ: clin-dgz}) is non-$S$-critical.
\end{theorem}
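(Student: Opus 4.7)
The plan is to combine three inputs: classical local-global compatibility at $p$ for Hilbert modular forms (Saito \cite{Sa}), Nakamura's classification \cite{Na} of $2$-dimensional trianguline representations of $\Gal(\overline{\Q_p}/F_{\wp})$, and Corollary \ref{prop: clin-ernr}, which identifies classical vectors on the eigenvariety with genuine classical automorphic vectors.

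First I would use Corollary \ref{prop: clin-ernr} to reduce to the classical setting. Since $z\in C(w)$ is semi-stable classical, $\chi_z=\chi_{z,1}\otimes \chi_{z,2}$ factors as $\chi(\ul{k}_{\Sigma_{\wp}},w)\cdot(\psi_{z,1}\otimes \psi_{z,2})$ for smooth characters $\psi_{z,i}$, and the corresponding $T(F_{\wp})^+$-eigenvector produces a vector in the classical cohomology $H^1_{\et}(K^p,W(\ul{k}_{\Sigma_{\wp}},w))_{\overline{\rho}}\otimes_E L$, hence a Hilbert eigenform $h$ of weight $\ul{k}_{\Sigma_{\wp}}$ with $\cH^p$-eigensystem $\lambda_z$ and $U_{\wp}$-eigenvalue $\psi_{z,1}(\varpi)$. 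The attached $\rho_h$ satisfies $\rho_h\cong \rho_z$ by the Eichler-Shimura relations (\ref{equ: clin-llf-}).

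Second I would invoke Saito's theorem \cite{Sa}: $\rho_{z,\wp}$ is potentially semi-stable with Hodge-Tate weights prescribed by $(\ul{k}_{\Sigma_{\wp}},w)$, and with Weil-Deligne parameter matching the local factor of $h$ at $\wp$ under the local Langlands correspondence. The existence of a nonzero $U_{\wp}$-eigenvector with eigenvalue $\psi_{z,1}(\varpi)$ forces $\rho_{z,\wp}$ to be semi-stable (Steinberg type yielding non-crystalline, unramified principal series yielding crystalline with a distinguished refinement), and picks out $q\psi_{z,1}(\varpi)$ as an eigenvalue of $\varphi^{d_0}$ on $D_{\st}(\rho_{z,\wp})$, the factor $q=p^{d_0}$ encoding the modulus character $\delta=\unr(q^{-1})\otimes\unr(q)$ implicit in $J_B$. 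Nakamura's classification \cite{Na} then produces a triangulation $\cR_{k(z)}(\delta_{z,1})\ra D_{\rig}(\rho_{z,\wp})\ra \cR_{k(z)}(\delta_{z,2})$ attached to this refinement; the explicit formula (\ref{equ: clin-dgz}) for $(\delta_{z,1},\delta_{z,2})$ is then a bookkeeping of Hodge-Tate weights, with $\Sigma_z\subseteq \Sigma_{\wp}$ recording the embeddings $\sigma$ at which the triangulation is $\sigma$-critical, so that $\delta_{z,1}$ acquires the extra shift $\prod_{\sigma\in\Sigma_z}\sigma^{1-k_{z,\sigma}}$ and $\delta_{z,2}$ the compensating $\prod_{\sigma\in\Sigma_z}\sigma^{k_{z,\sigma}-1}$, while the global twist $\prod_{\sigma\in\Sigma_{\wp}}\sigma^{-1}$ on $\delta_{z,2}$ reflects the determinant of $\rho_{z,\wp}$.

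Finally, to deduce $\Sigma_z\cap S=\emptyset$ under the slope hypothesis, I would appeal to Corollary \ref{cor: clin-caw-}, which under exactly the stated inequality already guarantees that $z$ is $S$-classical. $S$-classicality produces an actual vector in the classical \'etale cohomology (twisted by an algebraic character at embeddings in $S$) rather than only a quasi-classical companion, so the $S$-classical automorphic form attached to this vector has, again via Saito's theorem, a non-$\sigma$-critical triangulation at every $\sigma\in S$; equivalently $\sigma\notin \Sigma_z$. The main obstacle I anticipate is purely combinatorial: consistently matching normalizations between the eigenvariety character $\chi_z$, the $U_{\wp}$-eigenvalue on the classical side, the $\unr(q)$ twist coming from $\delta$ in $J_B$, and Nakamura's convention $\cR_E(\delta)$, so that $\delta_{z,1}$ and $\delta_{z,2}$ come out precisely as in the statement; the representation-theoretic content is otherwise a direct transcription of \cite[Prop.6.2.44]{Ding} from the unitary Shimura curve setting to the quaternion case.
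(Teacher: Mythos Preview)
Your treatment of the first part (semi-stability and the shape of the triangulation) is correct and matches the paper's intended argument: Cor.~\ref{prop: clin-ernr} reduces to a genuine Hilbert eigenform, Saito's theorem \cite{Sa} gives semi-stability and the Weil--Deligne data, and Nakamura's classification \cite{Na} then packages this as the triangulation (\ref{equ: clin-dgz}), exactly as in \cite[Prop.~6.2.44]{Ding}.

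The argument for the second part, however, has a genuine gap. You appeal to Cor.~\ref{cor: clin-caw-} to deduce that $z$ is $S$-classical, but $z\in C(w)$ already means $z$ is $\Sigma_{\wp}$-classical, hence $S$-classical for every $S\subseteq\Sigma_{\wp}$; the corollary gives no new information here. More seriously, the step ``the $S$-classical automorphic form \ldots\ has, via Saito's theorem, a non-$\sigma$-critical triangulation at every $\sigma\in S$'' is not justified: classicality is a statement about the automorphic vector, while $\Sigma_z$ is a feature of the Galois representation together with the chosen refinement, and Saito's theorem does not link the two. A classical point can perfectly well have $\Sigma_z\neq\emptyset$.

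The correct argument is a direct weak admissibility computation on the rank-$1$ sub-filtered-$(\varphi,N)$-module $L\subset D_{\st}(\rho_{z,\wp})$ corresponding to the $\varphi^{d_0}$-eigenvalue $q\psi_{z,1}(\varpi)$. The filtration jump of $L_\sigma$ equals $\tfrac{w-k_{z,\sigma}+2}{2}$ if $\sigma\notin\Sigma_z$ and $\tfrac{w+k_{z,\sigma}}{2}$ if $\sigma\in\Sigma_z$, so
\[
t_H(L)=\sum_{\sigma\in\Sigma_{\wp}}\frac{w-k_{z,\sigma}+2}{2}+\sum_{\sigma\in\Sigma_z}(k_{z,\sigma}-1),
\]
while $t_N(L)=\us_{\wp}\big(q\psi_{z,1}(\varpi)\big)$. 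Weak admissibility gives $t_N(L)\geq t_H(L)$; if some $\tau\in\Sigma_z\cap S$ existed, then $\sum_{\sigma\in\Sigma_z}(k_{z,\sigma}-1)\geq k_{z,\tau}-1\geq\inf_{\sigma\in S}\{k_{z,\sigma}-1\}$, contradicting the slope hypothesis. This is the content hidden behind the paper's pointer to \cite{Na}; the coincidence of the numerical bound with that of Cor.~\ref{cor: clin-caw-} reflects the common ``small slope'' origin, but the two conclusions (classicality versus non-criticality) require separate arguments.
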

Denote by $C(w)_0$ the subset of $C(w)$ of points $z$ such that
\begin{equation}\label{equ: clin-man}
\us_{\wp}\big(q\psi_{z,1}(\varpi)\big) < \sum_{\sigma\in \Sigma_{\wp}} \frac{w-k_{z,\sigma}+2}{2}+\inf_{\sigma\in \Sigma_{\wp}}\{k_{z,\sigma}-1\}.
\end{equation}
As in \cite[Prop.6.2.7, Prop.6.4.6]{Che}, one can prove $C(w)_0$ is Zariski-dense in $\cV(K^p,w)_{\overline{\rho}}$, and is an accumulation subset (cf. \cite[\S 3.3.1]{BCh}).
By the theory of global triangulation, one has
\begin{theorem}\label{thm: clin-iz2d}
  Let  $z=(\chi_z=\chi_{z,1}\otimes \chi_{z,2}, \lambda)$ be a closed point of $\cV(K^p,w)_{\overline{\rho}}$, then the representation $\rho_{z,\wp}$ is trianguline with a triangulation given by
  \begin{equation*}
    0 \ra \cR_{k(z)}(\delta_{z,1}) \ra D_{\rig}(\rho_{z,\wp}) \ra \cR_{k(z)}\big(\delta_{z,2}\big)\ra 0
  \end{equation*}
  where
  \begin{equation*}
    \begin{cases}
       \delta_{z,1}=\unr(q)\chi_{z,1}\prod_{\sigma\in \Sigma_{z}} \sigma^{1-k_{z,\sigma}} \\
       \delta_{z,2}=\chi_{z,2}\prod_{\sigma\in \Sigma_{\wp}}\sigma^{-1}\prod_{\sigma\in \Sigma_{z}} \sigma^{k_{z,\sigma}-1}
    \end{cases}
  \end{equation*}
  with $\Sigma_{z}$ a subset (maybe empty) of $C(\chi_z)\cap \Sigma_{\wp}$.
  \end{theorem}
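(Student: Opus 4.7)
The plan is to apply the theory of global triangulation of Kedlaya-Pottharst-Xiao \cite{KPX} to a family of Galois representations around $z$, using the Zariski-density of the non-critical semi-stable classical locus $C(w)_0$ to pin down the triangulation parameters. The strategy is essentially the same as the one used in the unitary Shimura curve setting in \cite[Prop.6.2.44]{Ding}.

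First, using Proposition \ref{prop: clin-wpw}, pick an affinoid open $U\subseteq \cV(K^p,w)_{\overline{\rho},\red}$ containing $z$ and pass to the rigid space $\widetilde U$ over $U$ (through a finite dominant cover $U'$ and a blow-up) carrying a locally free $\co_{\widetilde U}$-module $\cM$ of rank $2$ on which $\Gal(F^{\cS}/F)$ acts continuously and whose specialization at every $\tilde z'\in \widetilde U(\overline E)$ is $\rho_{g(\tilde z')}$. Restricting the action to $\Gal(\overline{\Q_p}/F_{\wp})$ and forming the associated family of $(\varphi,\Gamma)$-modules yields a rank $2$ object $D$ over the relative Robba ring over $\widetilde U$. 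By Remark \ref{rem: clin-dsu}, the preimage $g^{-1}(C(w)_0\cap U)$ is Zariski-dense and accumulating in $\widetilde U$, since $C(w)_0$ has these properties in $\cV(K^p,w)_{\overline{\rho}}$.

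Second, at every point $\tilde z'$ of $g^{-1}(C(w)_0)$, Theorem \ref{thm: clin-zgn} produces a non-critical triangulation of $D_{\rig}(\rho_{g(\tilde z'),\wp})$ with parameters $\delta_{z',1}=\unr(q)\chi_{z',1}$ and $\delta_{z',2}=\chi_{z',2}\prod_{\sigma\in \Sigma_{\wp}}\sigma^{-1}$ (i.e.\ $\Sigma_{z'}=\emptyset$). These parameters are continuous in the family data: they are the restrictions to $T'$ of the universal characters over $\widehat T_{\Sigma_{\wp}}$, twisted by fixed unramified and algebraic characters. The global triangulation theorem of \cite{KPX} (applied to $D$ together with this Zariski-dense accumulating set of non-critical triangulated points with uniform parameters) then furnishes, possibly after a further admissible blow-up of $\widetilde U$, a global triangulation of $D$ with parameters $\Delta_1,\Delta_2\colon F_{\wp}^{\times}\to \co_{\widetilde U}^{\times}$ that specialize at each point of $g^{-1}(C(w)_0)$ to the pointwise parameters above.

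Finally, specialize at a lift $\tilde z$ of $z$. The specialized inclusion $\cR_{k(z)}(\Delta_1|_{\tilde z})\hookrightarrow D_{\rig}(\rho_{z,\wp})$ is a priori only a morphism of $(\varphi,\Gamma)$-modules and may fail to be saturated at $\tilde z$. However, its saturation remains free of rank $1$ and differs from $\cR_{k(z)}(\Delta_1|_{\tilde z})$ by an algebraic twist of the form $\prod_{\sigma\in \Sigma_z}\sigma^{1-k_{z,\sigma}}$ for some subset $\Sigma_z\subseteq C(\chi_z)\cap \Sigma_{\wp}$; the same twist appears with the opposite sign on the quotient. This produces the genuine triangulation of $D_{\rig}(\rho_{z,\wp})$ with the stated parameters $\delta_{z,1}$ and $\delta_{z,2}$.

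The main obstacle is the last step: controlling how the global triangulation degenerates at a general point and identifying the correction set $\Sigma_z$. Concretely, one needs the standard fact that if $\iota\colon \cR_{k(z)}(\eta_1)\to D$ is a nonzero morphism into a rank $2$ $(\varphi,\Gamma)$-module whose cokernel has torsion, the saturation of the image is $\cR_{k(z)}(\eta_1\prod_{\sigma\in \Sigma_z}\sigma^{1-k_{z,\sigma}})$ for a unique $\Sigma_z\subseteq C(\chi_z)\cap \Sigma_{\wp}$, and that the corresponding quotient receives the complementary algebraic twist. This forces $\Sigma_z\subseteq C(\chi_z)\cap \Sigma_{\wp}$ (so that the twist exponents are nonnegative integers) and gives the claimed formulas; the analysis is local on $F_{\wp}$-side and parallels \cite[Prop.6.2.44]{Ding}.
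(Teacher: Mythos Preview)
Your proposal is correct and follows essentially the same approach as the paper: pass to the family over $\widetilde U$ via Proposition~\ref{prop: clin-wpw}, use Zariski-density of $C(w)_0$ to apply the global triangulation results of \cite{KPX}, and then specialize at $z$. The paper compresses your last ``saturation'' step into a direct citation of \cite[Thm.~6.3.13, Ex.~6.3.14]{KPX} (and \cite[Thm.~4.4.2]{Liu}), whose output already gives the triangulation parameters in the stated form with the correcting subset $\Sigma_z\subseteq C(\chi_z)$; one small point to tighten is that you should first choose $U$ irreducible with $C(w)_0\cap U(\overline E)$ Zariski-dense in $U$ (the paper invokes nestedness and \cite[Lem.~7.2.9]{BCh} for this) before appealing to Remark~\ref{rem: clin-dsu}.
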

\begin{proof}
  Since $\cV(K^p,w)_{\overline{\rho},\red}$ is nested, and $C(w)_0$ is Zariski-dense, there exists an irreducible affinoid neighborhood of $z$ such that $C(w)_0\cap U(\overline{E})$ is Zariski-dense in $U$ (e.g. see \cite[Lem.7.2.9]{BCh}). Denote by $g: \widetilde{U}\ra U$ the rigid space as in Prop. \ref{prop: clin-wpw}, thus $g^{-1}\big(C(w)_0\cap U(\overline{E})\big)$ is Zariski-dense in $\widetilde{U}$. The theorem then follows from \cite[Thm.6.3.13]{KPX} and \cite[Ex.6.3.14]{KPX} (see also \cite[Thm.4.4.2]{Liu}).
\end{proof}
\begin{corollary}\label{cor: clin-aat}Keep the notation in Thm.\ref{thm: clin-iz2d}, suppose moreover
  \begin{equation}\label{equ: clin-iz-}\unr(q^{-1}) \chi_{z,1}^{-1}\chi_{z,2}\neq \prod_{\sigma\in \Sigma_{\wp}} \sigma^{n_{\sigma}} \text{ for all $\ul{n}_{\Sigma_{\wp}}\in \Z^{d}$},\end{equation} let $S\subseteq C(\chi_z)$, if $\Sigma_{z}\cap S=\emptyset$, then $z$ does not have $S'$-companion point for any $S'\subseteq S$, $S'\neq \emptyset$. As a result, the point $z$ is quasi-$S$-classical.
\end{corollary}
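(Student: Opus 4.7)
The plan is to argue by contradiction for the first assertion: suppose $z$ admits an $S'$-companion point $z_{S'}^c=(\chi_{z,S'}^c,\lambda)$ for some $\emptyset \neq S'\subseteq S$. Since $z$ and $z_{S'}^c$ share the Hecke eigensystem $\lambda$, the Eichler--Shimura relations (\ref{equ: clin-llf-}) together with the pseudocharacter formalism give $\rho_{z,\wp}\cong \rho_{z_{S'}^c,\wp}$, so applying Thm.\ref{thm: clin-iz2d} at both $z$ and $z_{S'}^c$ yields two triangulations of $D_{\rig}(\rho_{z,\wp})$. A direct computation from formula (\ref{equ: clin-s2cd}) for $\chi_{z,S'}^c$, combined with the identity $C(\chi_{z,S'}^c)=C(\chi_z)\setminus S'$, shows the first parameter of the companion triangulation takes the shape
\begin{equation*}
  \delta'_1=\unr(q)\,\chi_{z,1}\prod_{\sigma\in S'\cup \Sigma_{z_{S'}^c}}\sigma^{\,1-k_{z,\sigma}},\qquad \Sigma_{z_{S'}^c}\subseteq C(\chi_z)\setminus S'.
\end{equation*}

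I then compare the two saturated rank-one sub-$(\varphi,\Gamma)$-modules $\cR_{k(z)}(\delta_{z,1})$ and $\cR_{k(z)}(\delta'_1)$ of $D_{\rig}(\rho_{z,\wp})$. If they coincide, then $\delta_{z,1}=\delta'_1$, and the $E$-linear independence of distinct embeddings $\sigma$ regarded as characters of $F_\wp^\times$ (together with $k_{z,\sigma}-1\geq 1$ for $\sigma\in C(\chi_z)$) forces $\Sigma_z = S'\cup \Sigma_{z_{S'}^c}$; in particular $S'\subseteq \Sigma_z$, contradicting $\Sigma_z\cap S=\emptyset$ and $S'\neq\emptyset$. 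Otherwise the two submodules differ, and the composition $\cR_{k(z)}(\delta'_1)\hookrightarrow D_{\rig}(\rho_{z,\wp})\twoheadrightarrow \cR_{k(z)}(\delta_{z,2})$ must be non-zero, since otherwise saturatedness would force equality of the two rank-one sub-$(\varphi,\Gamma)$-modules. The standard classification of morphisms between rank-one $(\varphi,\Gamma)$-modules over $\cR_E$ then forces $\delta_{z,2}/\delta'_1=\prod_\sigma \sigma^{n_\sigma}$ with $n_\sigma\in \Z_{\geq 0}$; explicit evaluation of this ratio isolates $\unr(q^{-1})\chi_{z,1}^{-1}\chi_{z,2}$ as the only potentially non-algebraic factor, so demanding the ratio be algebraic contradicts hypothesis (\ref{equ: clin-iz-}) directly.

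With the non-existence of $S'$-companion points established, the contrapositive of Prop.\ref{prop: clin-vka} guarantees that every non-zero vector $v$ in the eigenspace appearing in that proposition is quasi-$\sigma$-classical for each $\sigma\in S$. Since $[\ug_\sigma,\ug_{\sigma'}]=0$ for $\sigma\neq \sigma'$ in $\Sigma_\wp$, the operators in $U(\ug_{\sigma'})$ commute with those in $U(\ug_\sigma)$, and a short formal argument upgrades quasi-$\sigma$-classicality for each $\sigma\in S$ to quasi-$S$-classicality of $v$; since the eigenspace is non-zero by Thm.\ref{thm: clin-cjw}(2), $z$ itself is quasi-$S$-classical. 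The main obstacle I foresee is the careful parameter bookkeeping when comparing the two triangulations (tracking how $\Sigma_{z_{S'}^c}\subseteq C(\chi_z)\setminus S'$ interacts with $S'$) and verifying that hypothesis (\ref{equ: clin-iz-}) is exactly what excludes the algebraic degeneracy permitting multiple triangulations; the Hom classification for $(\varphi,\Gamma)$-modules and the linear independence of the $\sigma$'s should be standard input.
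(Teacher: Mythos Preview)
Your proof is correct and follows essentially the same route as the paper: assume an $S'$-companion point exists, compare the two triangulations of $D_{\rig}(\rho_{z,\wp})$ furnished by Thm.\ref{thm: clin-iz2d}, and derive $S'\subseteq \Sigma_z$. The only difference is that where the paper invokes \cite[Thm.~3.7]{Na} to conclude the triangulation is unique under hypothesis (\ref{equ: clin-iz-}), you unpack this by directly comparing the two saturated rank-one submodules and using the $H^0$ classification for rank-one $(\varphi,\Gamma)$-modules; your commuting-operator argument for the passage from quasi-$\sigma$-classical to quasi-$S$-classical likewise makes explicit what the paper leaves to Prop.\ref{prop: clin-vka}.
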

\begin{proof}
The second part follows from the first part and Prop.\ref{prop: clin-vka}. We prove the first part. Let $S'\subseteq S$, $S'\neq \emptyset$, suppose $z$ admits an $S'$-companion point $z_{S'}^c$, by applying Thm.\ref{thm: clin-iz2d} to the point $z_{S'}^c$, one can get a triangulation $\big(\rho_{z_{S'}^c,\wp}, \delta_{z_{S'}^c ,1}, \delta_{z_{S'}^c 2}\big)$ for $\rho_{z_{S'}^c,\wp}\cong \rho_{z,\wp}$
Note that $S'\cap C((\chi_z)_{S'}^c)=\emptyset$, so $S'\cap \Sigma_{z_{S'}^c,\wp}=\emptyset$. By the hypothesis (\ref{equ: clin-iz-}) and \cite[Thm.3.7]{Na}, one can check the triangulations $\big(\rho_{z_{S'}^c,\wp}, \delta_{z_{S'}^c ,1}, \delta_{z_{S'}^c ,2}\big)$ and $\big(\rho_{z,\wp}, \delta_{z,1}, \delta_{z,2}\big)$ are the same. As a result, one sees $S'\subseteq \Sigma_{z}$, a contradiction.
\end{proof}
\begin{corollary}\label{cor: clin-siz}Keep the notation in Thm.\ref{thm: clin-iz2d}, suppose $\chi_z$ is spherically algebraic \big(thus $\chi_{i,z}=\psi_{i,z}\prod_{\sigma\in \Sigma_{\wp}} \sigma^{k_{\chi_i,\sigma}}$ with $\psi_{i,z}$ an unramified character of $F_{\wp}^{\times}$\big) and satisfies
\begin{equation*}
  \psi_{1,z}(p)^{-1}\psi_{2,z}(p) q^{-e} \neq 1
\end{equation*}
\big(note this condition is slightly stronger than the hypothesis (\ref{equ: clin-iz-})\big), then there exists an open affinoid neighborhood $U$ of $z$ in $\cV(K^p,w)_{\overline{\rho},\red}$ containing $z$ such that for any closed point $z'=(\chi_{z'},\lambda')\in U(\overline{E})$, $\Sigma_{z'}=\emptyset$ and $z'$ does not have companion point.
\end{corollary}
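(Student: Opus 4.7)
The plan is to combine the Zariski density of the classical subset $C(w)_0$ near $z$ with the global triangulation theory over families, so as to reduce the statement $\Sigma_{z'} = \emptyset$ for all $z'$ near $z$ to the case of classical points (where it is immediate from the slope bound), and then to apply Cor \ref{cor: clin-aat} to exclude companion points.

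First I would choose an irreducible open affinoid neighborhood $V$ of $z$ in $\cV(K^p,w)_{\overline{\rho},\red}$ in which $C(w)_0 \cap V(\overline{E})$ is Zariski-dense: this is possible because $\chi_z$ being spherically algebraic forces $C(w)_0$ to accumulate at $z$ (by the same argument as in \cite[Prop.6.2.7, Prop.6.4.6]{Che}, as noted after Thm \ref{thm: clin-zgn}). Shrinking $V$ if necessary, one can further assume that the strict inequality $\psi_{1,z'}(p)^{-1}\psi_{2,z'}(p)q^{-e} \neq 1$, and hence the weaker hypothesis (\ref{equ: clin-iz-}), holds at every closed point of $V$; this uses that the former is an open condition, strictly stronger than (\ref{equ: clin-iz-}) by the parenthetical remark in the statement. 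Via Prop \ref{prop: clin-wpw} and Rem \ref{rem: clin-dsu}, pass to a rigid space $g \colon \widetilde{V} \to V$ carrying a locally free rank-$2$ family $\cM$ of Galois representations of $\Gal(F^{\cS}/F)$ such that $g^{-1}(C(w)_0 \cap V(\overline{E}))$ is Zariski-dense in $\widetilde{V}$.

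Second, at each point $\widetilde{z}' \in g^{-1}(C(w)_0 \cap V(\overline{E}))$ lying above $z' \in C(w)_0$, Thm \ref{thm: clin-zgn} supplies a non-critical semi-stable triangulation of $\rho_{z',\wp}$ with
\begin{equation*}
\delta_{z',1} = \unr(q)\chi_{z',1}, \qquad \delta_{z',2} = \chi_{z',2}\prod_{\sigma \in \Sigma_\wp} \sigma^{-1},
\end{equation*}
so in particular $\Sigma_{z'} = \emptyset$. I would now apply the global triangulation theorem \cite[Thm.6.3.13]{KPX} (exactly as in the proof of Thm \ref{thm: clin-iz2d}) to the family attached to $\cM$ at $\wp$ over $\widetilde{V}$, producing a triangulation with continuous parameters $\delta_1^{\mathrm{fam}}, \delta_2^{\mathrm{fam}}$ that agree with $\unr(q)\chi_{z',1}$ and $\chi_{z',2}\prod_\sigma \sigma^{-1}$ on the dense classical locus; Zariski density then identifies $\delta_i^{\mathrm{fam}}$ with these pulled-back characters on all of $\widetilde{V}$. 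Specializing at any $\widetilde{z}' \in g^{-1}(z')$ for an arbitrary closed point $z' \in V(\overline{E})$ yields a triangulation of $\rho_{z',\wp}$ with parameters $\unr(q)\chi_{z',1}$ and $\chi_{z',2}\prod_\sigma \sigma^{-1}$, i.e.\ the $\Sigma = \emptyset$ candidate. Under the hypothesis (\ref{equ: clin-iz-}), \cite[Thm.3.7]{Na} forces the triangulation of a $2$-dimensional trianguline representation to be unique, so this coincides with the triangulation given by Thm \ref{thm: clin-iz2d}, yielding $\Sigma_{z'} = \emptyset$. Taking $U := V$, Cor \ref{cor: clin-aat} with $S = C(\chi_{z'})$ then rules out every $S'$-companion point of $z'$ for nonempty $S' \subseteq C(\chi_{z'})$.

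The main obstacle is the family triangulation step: one must run \cite{KPX} over $\widetilde{V}$ and match the parameters $\delta_i^{\mathrm{fam}}$ with the $\Sigma = \emptyset$ candidates throughout $\widetilde{V}$, not only on the classical locus, and propagate the strict inequality at $z$ to the weaker condition (\ref{equ: clin-iz-}) on all of $V$. The non-resonance hypothesis then plays a dual role: via \cite[Thm.3.7]{Na} it ensures uniqueness of the triangulation at every $z' \in V$, and via Cor \ref{cor: clin-aat} it blocks companion points.
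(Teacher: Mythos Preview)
Your approach has a genuine gap in the family-triangulation step. When you specialize the global triangulation from \cite[Thm.~6.3.13]{KPX} at an arbitrary point $\widetilde{z}'\in\widetilde{V}$, you do \emph{not} in general obtain an honest triangulation of $D_{\rig}(\rho_{z',\wp})$: you only get an injection $\cR_{k(z')}(\delta_{1,z'}^{\mathrm{fam}})\hookrightarrow D_{\rig}(\rho_{z',\wp})$, and the cokernel may carry $t$-torsion. The saturation of this rank-one sub is precisely $\cR_{k(z')}(\delta_{z',1})$ with $\delta_{z',1}=\delta_{1,z'}^{\mathrm{fam}}\prod_{\sigma\in\Sigma_{z'}}\sigma^{1-k_{z',\sigma}}$, so asserting that the specialization is already a triangulation is exactly the statement $\Sigma_{z'}=\emptyset$ you are trying to prove. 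The appeal to uniqueness via \cite[Thm.~3.7]{Na} is then circular: that result compares two \emph{triangulations}, not a triangulation with a non-saturated rank-one subobject, and the existence of a nonzero map $\cR_{k(z')}(\delta_{1,z'}^{\mathrm{fam}})\to\cR_{k(z')}(\delta_{z',1})$ is perfectly compatible with $\Sigma_{z'}\neq\emptyset$.

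The paper's proof takes a different route and avoids this issue entirely. Rather than attempting to specialize a family triangulation, it invokes \cite[Thm.~6.3.9]{KPX}, which shows that the bad locus $Z_{U_0}:=\{z'\in U_0(\overline{E})\mid\Sigma_{z'}\neq\emptyset\}$ is Zariski-closed in $U_0$ and does not contain $z$; one then simply shrinks to an open affinoid $U\subseteq U_0$ disjoint from $Z_{U_0}$, after which Cor.~\ref{cor: clin-aat} rules out companion points. A secondary imprecision in your argument: the propagation of the numerical hypothesis to all of $V$ is not an ``open condition'' in the form you state it, since for general $z'$ the character $\chi_{z'}$ is not spherically algebraic and $\psi_{i,z'}$ is undefined. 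The paper handles this via Lem.~\ref{lem: clin-cte}(1), which produces a neighborhood on which the weaker condition (\ref{equ: clin-iz-}) holds at every closed point.
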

\begin{proof}
 As in the proof of \cite[Prop.6.2.7]{Che}, one can prove $C(w)_0$ accumulates over $z$. Thus one can choose an open affinoid neighborhood $U_0$ of $z$ such that
   \begin{enumerate}\item $C(w)_0\cap U_0(\overline{E})$ is Zariski-dense in $U_0$,
   \item $\unr(q^{-1}) \chi_{z',1}^{-1} \chi_{z',2} \neq \prod_{\sigma\in \Sigma_{\wp}} \sigma^{n_{\sigma}}$ for any $\ul{n}_{\Sigma_{\wp}}\in \Z^{d}$, $z'\in U_0(\overline{E})$ (see Lem.\ref{lem: clin-cte}(1) below).\end{enumerate}
  By \cite[Thm.6.3.9]{KPX}, $Z_{U_0}:=\{z'\in U_0(\overline{E})\ |\ \Sigma_{z'}\neq \emptyset \}$ is a Zariski-closed subset of $U_0$ and $z\notin Z_{U_0}$. So there exists an open affinoid $U$ of $U_0$ containing $z$ such that $Z_{U}$ (defined in the same way as $Z_{U_0}$ by replacing $U_0$ by $U$) is empty. The corollary follows.
\end{proof}
\begin{lemma}\label{lem: clin-cte}
  Let $\chi_1\otimes \chi_2$ be a spherically algebraic character of $T(F_{\wp})$ (which can be seen as a closed point of $\widehat{T}_{\Sigma_{\wp}}$), and let $\psi_i:=\chi_i\prod_{\sigma\in \Sigma_{\wp}} \sigma^{-k_{\chi_1,\sigma}}$.

  (1) Suppose $\psi_{1}(p)^{-1}\psi_{2}(p) q^{-e} \neq 1$, then there exists an admissible neighborhood $U$ of $\chi_1\otimes \chi_2$ in $\widehat{T}_{\Sigma_{\wp}}$ such that $\unr(q^{-1}) (\chi_{1}')^{-1} \chi_{2}' \neq \prod_{\sigma\in \Sigma_{\wp}} \sigma^{n_{\sigma}}$ for any $\ul{n}_{\Sigma_{\wp}}\in \Z^{d}$ and $\chi_1'\otimes \chi_2'\in U(\overline{E})$.

  (2) Suppose $\psi_1(\varpi)^{-1} \psi_2(\varpi)^{-1}q^{-1} \neq 1$, then there exists an admissible neighborhood $U$ of $\chi_1\otimes \chi_2$ in $\widehat{T}_{\Sigma_{\wp}}$ such that $\unr(q^{-1}) (\chi_{1}')^{-1} \chi_{2}' \neq \prod_{\sigma\in \Sigma_{\wp}} \sigma^{n_{\sigma}}$ for any $\ul{n}_{\Sigma_{\wp}}\in \Z_{\geq k_{\chi_2,\sigma}-k_{\chi_1,\sigma}}^{d}$ and $\chi_1'\otimes \chi_2'\in U(\overline{E})$.
\end{lemma}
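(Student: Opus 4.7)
The plan is to reduce both parts to a local exclusion problem: identify a finite collection of Zariski-closed analytic subvarieties of $\widehat{T}_{\Sigma_\wp}$ each avoiding the center $\chi_1\otimes\chi_2$, and then remove their union from a small affinoid neighborhood. For each $\underline{n}\in \Z^d$, let $Z_{\underline{n}}\subset \widehat{T}_{\Sigma_\wp}$ be the locus of pairs $(\chi_1',\chi_2')$ satisfying $\unr(q^{-1})(\chi_1')^{-1}\chi_2'=\prod_\sigma \sigma^{n_\sigma}$. Using the decomposition $F_\wp^\times\cong \varpi^{\Z}\times \mu_{q-1}\times (1+\varpi\co_\wp)$, a locally $\Q_p$-analytic character of $F_\wp^\times$ is determined by its value at $\varpi$, its $d$ weights $k_{\chi,\sigma}$, and its restriction to the (finite) torsion subgroup. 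Accordingly $Z_{\underline{n}}$ is cut out by the $d$ analytic weight equations $k_{\chi_2',\sigma}-k_{\chi_1',\sigma}=n_\sigma$, the multiplicative equation $\chi_1'(\varpi)^{-1}\chi_2'(\varpi)=q\prod_\sigma \sigma(\varpi)^{n_\sigma}$ at $\varpi$, and a locally constant matching condition on the torsion; in particular $Z_{\underline{n}}$ is an analytic, hence Zariski-closed, subset of $\widehat{T}_{\Sigma_\wp}$.

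I would then choose a connected affinoid neighborhood $V_0$ of $\chi_1\otimes\chi_2$. The weight maps $\chi'\mapsto k_{\chi_i',\sigma}$ are analytic and hence bounded on $V_0$. If $Z_{\underline{n}}\cap V_0\neq \emptyset$, then $n_\sigma=k_{\chi_2',\sigma}-k_{\chi_1',\sigma}$ at some $\chi'\in V_0$, forcing $|n_\sigma|$ uniformly bounded; consequently the set $N$ of relevant $\underline{n}$ (in $\Z^d$ for (1), in the half-space $\prod_\sigma \Z_{\geq k_{\chi_2,\sigma}-k_{\chi_1,\sigma}}$ for (2)) with $Z_{\underline{n}}\cap V_0\neq \emptyset$ is \emph{finite}. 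By shrinking $V_0$ further I may assume that the restriction of $(\chi_1')^{-1}\chi_2'$ to the torsion is constant on $V_0$, so the torsion-matching condition reduces to an a priori yes/no check for each individual $\underline{n}\in N$.

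It then remains to verify that $\chi_1\otimes\chi_2\notin Z_{\underline{n}}$ for every $\underline{n}\in N$. Writing $\chi_i=\psi_i\prod_\sigma \sigma^{k_{\chi_i,\sigma}}$ with $\psi_i$ unramified (spherical algebraicity), the weight equations force $\underline{n}_\sigma=k_{\chi_2,\sigma}-k_{\chi_1,\sigma}$; the torsion matching is automatic since $\chi_i|_{\mu_{q-1}}=\prod_\sigma \sigma^{k_{\chi_i,\sigma}}|_{\mu_{q-1}}$; and the equation at $\varpi$ reduces to an equality among $\psi_1(\varpi)$, $\psi_2(\varpi)$, and $q$. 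Part (1)'s hypothesis $\psi_1(p)^{-1}\psi_2(p)q^{-e}\neq 1$, combined with the relation $\psi_i(p)=\psi_i(\varpi)^e$ (since $\psi_i$ is unramified and $p=\varpi^e u$ with $u\in \co_\wp^\times$), forbids the reduced equality $\psi_2(\varpi)=q\psi_1(\varpi)$: raising to the $e$-th power would give $\psi_2(p)=q^e\psi_1(p)$. The analogous hypothesis of part (2), together with the half-space constraint (which pins the unique candidate $\underline{n}_\sigma=k_{\chi_2,\sigma}-k_{\chi_1,\sigma}$ at the center), rules out the corresponding $\varpi$-equality in the same manner. Since each of the finitely many $Z_{\underline{n}}\cap V_0$ is Zariski-closed and avoids $\chi_1\otimes\chi_2$, the admissible open $U:=V_0\setminus \bigcup_{\underline{n}\in N}Z_{\underline{n}}$ is the desired neighborhood. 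The main difficulty is the finiteness reduction in the second paragraph: without the boundedness of the weight functions on an affinoid, one would be trying to avoid a countable union of Zariski-closed subsets, whose complement need not be admissible open.
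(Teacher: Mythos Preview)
Your finiteness step contains a genuine gap. You assert that the weight maps $\chi'\mapsto k_{\chi_i',\sigma}$ are analytic on an affinoid $V_0$, hence bounded, and conclude that only finitely many $\underline{n}\in\Z^d$ can satisfy $n_\sigma=k_{\chi_2',\sigma}-k_{\chi_1',\sigma}$ somewhere on $V_0$. But ``bounded'' here is $p$-adic boundedness, and $\Z$ is not $p$-adically discrete: every $p$-adic disk around an integer contains infinitely many integers. Concretely, the assignment $\underline{n}\mapsto \prod_{\sigma}\sigma^{n_\sigma}|_{\co_\wp^\times}$ extends to a continuous injection $\Z_p^d\hookrightarrow \cW_{\Sigma_\wp}$, so any affinoid neighborhood of the center's image in $\cW_{\Sigma_\wp}$ meets $\prod_\sigma\sigma^{n_\sigma}$ for infinitely many $\underline{n}\in\Z^d$. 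Adding the $\varpi$-equation only fixes $\sum_\sigma n_\sigma$ (via the valuation), which for $d\geq 2$ still leaves infinitely many $\underline{n}$. Thus your set $N$ is in general infinite, and $V_0\setminus\bigcup_{\underline{n}\in N}Z_{\underline{n}}$ need not be admissible open --- exactly the difficulty you flagged at the end, but which your boundedness argument does not actually resolve.

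The paper's proof avoids any finiteness reduction. It passes to $\widehat{Z}_{\Sigma_\wp}$ via $(\chi_1',\chi_2')\mapsto (\chi_1')^{-1}\chi_2'$ and builds a single neighborhood $U_0=U_1\times U_2\subset \cW_{\Sigma_\wp}\times\bG_m$ that excludes \emph{all} algebraic characters at once. With $a:=\psi_0(p)$ (resp.\ $\psi_0(\varpi)$) and $\psi_0:=\unr(q^{-1})\psi_1^{-1}\psi_2$: if $\us_\wp(a)\neq 0$, one picks $n$ with $\us_\wp(a)\notin p^n\Z$, takes $U_1$ small enough that $\prod_\sigma\sigma^{n_\sigma}\in U_1$ forces $p^n\mid n_\sigma$ for every $\sigma$, and takes $U_2$ with constant valuation $\us_\wp(a)$; these are jointly incompatible with $\prod_\sigma\sigma^{n_\sigma}\in U_0$. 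If $\us_\wp(a)=0$, constant valuation on $U_2$ forces $\sum_\sigma n_\sigma=0$ (resp.\ all $n_\sigma=0$ in part~(2), using $n_\sigma\geq 0$), whence the value at $p$ (resp.\ $\varpi$) is $1$, which one simply excludes from $U_2$. The mechanism is a valuation/congruence obstruction that rules out the whole countable family simultaneously, not a Zariski-closure argument against finitely many loci.
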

\begin{proof}
  Denote by $\widehat{Z}_{\Sigma_{\wp}}$ the rigid space parameterizing locally $\Q_p$-analytic characters of $F_{\wp}^{\times}$. One has a morphism of rigid spaces:
  \begin{equation}\label{equ: clin-pwz}
    \widehat{T}_{\Sigma_{\wp}} \lra \widehat{Z}_{\Sigma_{\wp}}, \  (\chi_1')^{-1}\otimes \chi_2'\mapsto \chi_1'\chi_2'.
  \end{equation}
Let $\psi_0:=\unr(q^{-1}) \psi_1^{-1}\psi_2$, we claim that \begin{itemize}\item if $\psi_0(p)\neq 1$ (resp. $\psi_0(\varpi)\neq 1$), then there exists an admissible open $U_0$ of $\widehat{Z}_{\Sigma_{\wp}}$ containing $\psi_0$ (where $\psi_0$ is seen as a closed point of $\widehat{Z}_{\Sigma_{\wp}}$\big) such that $\prod_{\sigma\in \Sigma_{\wp}} \sigma^{n_{\sigma}}\notin U(\overline{E})$ for any $\ul{n}_{\Sigma_{\wp}}\in \Z^{d}$ \big(resp. $\prod_{\sigma\in \Sigma_{\wp}} \sigma^{n_{\sigma}} \notin U(\overline{E})$ for any $\ul{n}_{\Sigma_{\wp}}\in \Z_{\geq 0}^d$\big).
\end{itemize}
 Assuming this claim, and  let $U$ be the preimage of the admissible open $\big(\prod_{\sigma\in \Sigma_{\wp}} \sigma^{-k_{\chi_1,\sigma}+k_{\chi_2,\sigma}}\big) U_0$ \big(of $\widehat{Z}_{\Sigma_{\wp}}$\big) in $\widehat{T}_{\Sigma_{\wp}}$ via (\ref{equ: clin-pwz}). Thus $U$ satisfies the property in the lemma (1) (resp. (2)).
 
We prove the claim. Consider the projection $\widehat{Z}_{\Sigma_{\wp}}\ra \cW_{\Sigma_{\wp}} \times \bG_m$, $\chi\mapsto \big(\chi|_{\co_{\wp}^{\times}}, \chi(p)\big)$ \Big(resp. the isomorphism $\widehat{Z}_{\Sigma_{\wp}} \xrightarrow{\sim} \cW_{\Sigma_{\wp}} \times \bG_m$, $\chi \mapsto \big(\chi|_{\co_{\wp}^{\times}}, \chi(\varpi)\big)$\Big), set $a:=\psi_0(p)$ (resp. $a:=\psi_0(\varpi)$), which is the image of $\psi_0$ in $\bG_m$. We discuss in the following two cases:

If $\us_{\wp}(a)\neq 0$, then choose $n\in \Z_{\geq 1}$ such that $\us_{\wp}(a)\notin p^n\Z$; let $U_1$ be an admissible open in $\cW_{\Sigma_{\wp}}$ containing the trivial character such that if  $\prod_{\sigma \in \Sigma_{\wp}} \sigma^{n_{\sigma}}\big|_{\co_{\wp}^{\times}}\in U_1(\overline{E})$ then $p^n|n_{\sigma}$ for all $\sigma$, $U_2$ be an admissible open in $\bG_m$ containing $a$ such that $\us_{\wp}(a')=\us_{\wp}(a)$ for all $a'\in U(\overline{E})$, one easily check the admissible open $U_0:=U_1\times U_2$ satisfies the property in the claim.

If $\us_{\wp}(a)=0$, since $a\neq 1$ by hypothesis, let $U_2$ be an admissible open in $\bG_m$ such that $a\in U_2(E)$, $1\notin U_2(E)$ and for all $a'\in U_2(\overline{E})$, $\us_{\wp}(a')=0$; put $U_0:=\cW_{\Sigma_{\wp}} \times U_2$, we see if $\chi'=\prod_{\sigma\in \Sigma_L} \sigma^{n_{\sigma}}\in U_0(\overline{E})$ for $\ul{n}_{\Sigma_{\wp}}\in \Z^d$ \big(resp. for $\ul{n}_{\Sigma_{\wp}}\in \Z_{\geq 0}^d$\big),  thus $\us_{\wp}(\chi'(p))=0$ (resp. $\us_{\wp}(\chi'(\varpi))=0$), thus $\sum_{\sigma\in \Sigma_L}n_{\sigma}=0$, so $\chi'(p)=1\notin U_2(\overline{E})$ \big(resp. $n_{\sigma}=0$ for all $\sigma\in \Sigma_{\wp}$ hence $\chi'(\varpi)=1\notin U_2(\overline{E})$\big), a contradiction, so $U_0$ satisfies the property in the claim.
\end{proof}

\subsubsection{\'Etaleness of eigenvarieties at non-critical classical points}\label{sec: clin-3.4.3}
Let $z=\big(\chi_z=\chi_{z,1} \otimes \chi_{z,2}, \lambda_z\big)$ be a semi-stable classical point of $\cV(K^p,w)_{\overline{\rho}}$, for $\sigma\in \Sigma_{\wp}$, we say that $z$ is \emph{non-critical} if
\begin{enumerate} \item the triangulation $(\rho_{z,\wp},\delta_{z,1},\delta_{z,2})$ (cf. Thm.\ref{thm: clin-iz2d}) is non-critical (i.e. $\Sigma_{z}=\emptyset$),\\
\item $\unr(q^{-1}) \chi_{z,1}^{-1} \chi_{z,2} \neq \prod_{\sigma\in \Sigma_{\wp}} \sigma^{n_{\sigma}}$ for any $\ul{n}_{\Sigma_{\wp}}\in \Z^{d}$.
\end{enumerate}
Let $\psi_{z,1}$, $\psi_{z,2}$ be unramified characters of $F_{\wp}^{\times}$ such that $\chi_{z,i}=\psi_{z,i}\prod_{\sigma\in \Sigma_{\wp}}\sigma^{k_{\chi_{z,i},\sigma}}$, then the condition (2) is equivalent to $\psi_{z,1}(q\varpi)\neq \psi_{z,2}(\varpi)$. If one considers the Galois representation $\rho_{z,{\wp}}$ (which is semi-stable), this condition means the eigenvalues of $\varphi^{d_0}$ on $D_{\st}(\rho_{z,\wp})$ are different.

Consider the natural morphism
\begin{equation*}
  \kappa: \cV(K^p,w)_{\overline{\rho}} \lra \widehat{T}_{\Sigma_{\wp}} \lra \cW_{1,\Sigma_{\wp}},
\end{equation*}
where the last map is induced by the inclusion $Z_1' \ra T(F_{\wp})$ (see also (\ref{equ: clin-st1})). This section is devoted to prove the following result.
\begin{theorem}\label{thm: clin-elt}Let $z$ be a non-critical semi-stable classical point of the rigid space $ \cV(K^p,w)_{\overline{\rho}}$, then  $\cV(K^p,w)_{\overline{\rho}}$ is \'etale over $\cW_{1,\Sigma_{\wp}}$ at $z$.
\end{theorem}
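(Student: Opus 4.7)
The approach adapts the strategy of Chenevier \cite[\S 4.4]{Che11}: multiplicity one on the automorphic side, combined with the two conditions of non-criticality, forces the eigenvariety to be locally \'etale over the weight space. Since $\cV(K^p,w)_{\overline{\rho}}$ is equidimensional of dimension $d$ and $\cW_{1,\Sigma_{\wp}}$ is smooth of dimension $d$, it suffices to show that $\cV(K^p,w)_{\overline{\rho}}$ is smooth at $z$ of dimension $d$ and that the tangent map $d\kappa_z$ is injective. Condition (2) in the definition of non-critical is (a slight weakening of) the hypothesis of Cor.\ref{cor: clin-siz}, so the first step is to run that argument (possibly with a minor refinement using Lem.\ref{lem: clin-cte} to match condition (2) exactly) to find an open affinoid neighborhood $U$ of $z$ in $\cV(K^p,w)_{\overline{\rho},\red}$, together with a continuous Galois family $\rho_U:\Gal(F^{\cS}/F)\ra \GL_2(\co_U)$ from Prop.\ref{prop: clin-kzc}, such that every $z'\in U(\overline{E})$ satisfies $\Sigma_{z'}=\emptyset$ and has no companion point, and such that $C(w)_0\cap U(\overline{E})$ is Zariski-dense in $U$ (using Prop.\ref{thm: clin-adtf}) and consists of non-critical semi-stable classical points.

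Next I would establish smoothness via multiplicity one. At each $z'\in C(w)_0\cap U(\overline{E})$, Cor.\ref{prop: clin-ernr} identifies the generalized $(\chi_{z'},\lambda_{z'})$-eigenspace in the Jacquet module with a twist of a classical eigenspace in $H^1_{\et}(K^p,W(\ul{k}_{\Sigma_\wp},w))_{\overline{\rho}}$. By Jacquet-Langlands transfer to $\GL_2/F$ and strong multiplicity one (applied to the refined Hilbert eigenform, non-criticality preventing a second refinement from contributing to the same eigensystem), this classical eigenspace is one-dimensional. Hence $\cM(K^p,w)_{\overline{\rho}}$ has fiber of dimension $1$ at each such $z'$; coherence together with Zariski-density then forces $\cM(K^p,w)_{\overline{\rho}}|_U$ to be locally free of rank $1$ after shrinking $U$. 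In the Coleman-Mazur-Buzzard description of \S \ref{sec: clin-3.3} via $F_i(z)=P_j(z)Q_j(z)$, this implies that $P_j$ has degree $1$ near the image of $z$ in the Fredholm hypersurface and that the Hecke algebra $\cH_{ij}$ coincides with $A_i$; hence $\cV(K^p,w)_{\overline{\rho}}$ is smooth at $z$ and locally identified near $z$ with an open subspace of $\cW_{1,\Sigma_{\wp}}$, yielding the desired \'etaleness.

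As a direct test on the tangent map, let $t:\Spec E[\epsilon]/\epsilon^2 \ra \cV(K^p,w)_{\overline{\rho}}$ be a tangent vector at $z$ with $d\kappa_z(t)=0$. Through $\rho_U$ it produces a Galois deformation $\widetilde{\rho}$ of $\rho_z$, and the global triangulation theorem \cite{KPX} (applicable since the triangulation at $z$ is non-critical by condition (1)) deforms $(\delta_{z,1},\delta_{z,2})$ to $(\widetilde{\delta}_1,\widetilde{\delta}_2)$. The vanishing of $d\kappa_z(t)$ together with the explicit relation in Thm.\ref{thm: clin-iz2d} shows that the $\sigma$-weights of $\widetilde{\delta}_i$ are constant for all $\sigma\in\Sigma_\wp$; then condition (2) (distinct eigenvalues of $\varphi^{d_0}$ on $D_{\st}(\rho_{z,\wp})$) together with the unique triangulability of the semi-stable non-crystalline $\rho_{z,\wp}$ forces $\widetilde{\delta}_i=\delta_{z,i}$, so $\widetilde{\rho}_\wp$ is the trivial deformation and $t=0$.

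\textbf{Main obstacle.} The technical heart is the rank-one statement for $\cM(K^p,w)_{\overline{\rho}}$ in a neighborhood of $z$: multiplicity one only gives the fiber dimension at classical points, and one must propagate this to the entire coherent sheaf via semicontinuity on the reduced rigid space combined with the Zariski-density of classical points. The two non-criticality conditions play complementary roles: (1) ensures the triangulation deforms across families via \cite{KPX} without jumping, while (2) ensures simple Frobenius eigenvalues, which underlies both the \'etaleness of the Fredholm hypersurface over $\cW_{1,\Sigma_{\wp}}$ near $\kappa(z)$ and the uniqueness of the triangulation used to rule out non-trivial weight-constant tangent vectors.
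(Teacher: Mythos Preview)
Your overall strategy---adapt Chenevier's argument via fiber dimensions of $\cM(K^p,w)_{\overline{\rho}}$---is the same as the paper's, but the execution contains a real error. You claim that multiplicity one forces the classical eigenspace, and hence the fiber of $\cM$, to be \emph{one}-dimensional. This is false in the quaternion Shimura curve setting: the cohomology $H^1_{\et}\big(K^p,W(\ul{k}_{\Sigma_\wp},w)\big)[\cH^p=\lambda_{z'}]$ decomposes as $\rho_{z'}\otimes \pi_{z',\wp}\otimes\big(\otimes_{\fl\neq\wp}\pi_{z',\fl}^{(K^p)_\fl}\big)$, and the Galois factor $\rho_{z'}$ already contributes a factor of $2$, while the places in $S'$ (those outside $S(K^p)\cup\{\wp\}$) can contribute further. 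The paper computes
\[
\delta(z')=\dim_{k(z')}\big(\cM|_{z'}\big)=2\sum_{\fl\in S'}\dim_{\overline{E}}\pi_{z',\fl}^{(K^p)_\fl},
\]
which is certainly $\geq 2$ and is typically larger. Your conclusion that $P_j$ has degree $1$ and $\cH_{ij}=A_i$ is therefore unjustified. The correct input is not $\delta(z')=1$ but the \emph{inequality} $\delta(z')\geq\delta(z)$ for all $z'$ in a Zariski-dense set of classical weights: this comes from local-global compatibility at the places $\fl\in S'$ and a semicontinuity argument for $\dim\pi_{z',\fl}^{(K^p)_\fl}$ along the family (this is exactly the step in \cite[Thm.~4.8]{Che11}). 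Combined with the fact that $\kappa^{-1}(\kappa(z))^{\red}=\{z\}$ and that $M$ is locally free over $\co(U)$, this forces $\co(V)\cong\co(U)\otimes_E k(z)$, i.e.\ \'etaleness. You also need Lem.~\ref{lem: clin-adb} (which you did not invoke) to know that at each $z'\in Z$ the \emph{generalized} eigenspace coincides with the eigenspace, so that $\delta(z')$ really is the full fiber contribution.

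Your auxiliary ``tangent map'' check is also flawed. From $d\kappa_z(t)=0$ you only get that the $\sigma$-weights of $\widetilde{\delta}_i$ are constant; the unramified parts $\widetilde{\delta}_i(\varpi)$ may still deform nontrivially, so you cannot conclude $\widetilde{\delta}_i=\delta_{z,i}$. And even granting that $\widetilde{\rho}_\wp$ is the trivial deformation, this says nothing about the global $\widetilde{\rho}$ at the other places, hence nothing about the Hecke data $\lambda$; so $t$ could still be nonzero. This route would require an $R=T$-type statement you have not established.
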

The theorem follows by the same argument as in the proof of \cite[Thm.4.8]{Che11}.
Let $z=(\chi_z=\chi_{z,1}\otimes \chi_{z,2},\lambda_z):\Spec \overline{E} \ra \cV(K^p,w)_{\overline{\rho}}$ be a non-critical semi-stable classical point of $C(w)$, by the construction of $ \cV(K^p,w)_{\overline{\rho}}$ as in \S \ref{sec: clin-3.3}, one can find a connected affinoid neighborhood $U$ of $\kappa(z)$ in $\cW_{1,\Sigma_{\wp}}$ and a finite locally free $\co(U)$-module $M$ equipped with an $\co(U)$-linear action of $\cH$ such that (see also the proof of \cite[Thm.4.8]{Che11})
\begin{enumerate}
  \item the affinoid spectrum $V$ of $\Ima\big(\co(U) \otimes_{\co_E} \cH \ra \End_{\co(U)}(M)\big)$ is an affinoid neighborhood of $z$ in $ \cV(K^p,w)_{\overline{\rho}}$ \big(thus one has $\cM(K^p,w)_{\overline{\rho}}(\co(V))\cong M$ as $\co(V)$-module\big);

  \item for each continuous character $\chi\in \cW_{1,\Sigma_{\wp}}(\overline{E})$, there is a $T(F_{\wp})\times \cH^p$-invariant isomorphism
  \begin{equation*}
    M\otimes_{\co(U), \chi} \overline{E}\cong
     \bigoplus_{(\chi_{z'},\lambda_{z'})\in \kappa^{-1}(\chi)} \Big(J_B\big(\widetilde{H}^1_{\et}(K^p, E)_{\Q_p-\an}\big) \otimes_E \overline{E}\Big)^{Z_1=\sN^{-w},Z_1'=\chi}[T(F_{\wp})=\chi_{z'},\cH^p=\lambda_{z'}]^{\vee};
  \end{equation*}
  \item $\kappa^{-1}(\kappa(z))^{\red}=\{z\}$ and the natural surjection $\co(V) \ra k(z)$ has a section.
\end{enumerate}
Let $Z_0\subseteq U$ be the set of closed points $\chi$ such that any point in $\kappa^{-1}(\chi)\cap V(\overline{E})$ is classical, thus $Z_0$ is Zariski-dense in $U$ \big(by Thm.\ref{thm: clin-etn}, note that $\us_{\wp}(\chi_{z',1}(\varpi))$ is bounded for $z'\in V(\overline{E})$\big), and $Z:=\kappa^{-1}(Z_0) \cup \{z\}$ is Zariski-dense in $V$. Up to shrinking $Z_0$, one can assume that for any $z'\in Z$,
\begin{enumerate}[label=(\alph*)]
\item $\Sigma_{z'}=\emptyset$  (since $z$ is supposed to be non-critical, for $z'\neq z$, this would follow from Thm.\ref{thm: clin-zgn}),
 \item $\unr(q^{-1}) \chi_{z',1}^{-1} \chi_{z',2} \neq \prod_{\sigma\in \Sigma_{\wp}} \sigma^{n_{\sigma}}$ for any $\ul{n}_{\Sigma_{\wp}}\in \Z^{d}$ \big(e.g. by Lem.\ref{lem: clin-cte} (2), since by shrinking $Z_0$, one can assume $k_{\chi_{z',2},\sigma}-k_{\chi_{z',1},\sigma}\geq k_{\chi_{z,2},\sigma}-k_{\chi_{z,1},\sigma}$ for all $\sigma\in \Sigma_{\wp}$, and $z'\in Z_0$\big).
\end{enumerate}Let $z'=(\chi_{z'},\lambda_{z'})\in Z$, denote by $k(z')$ the residue field at $z'$. One has an isomorphism (cf. (\ref{equ: clin-0nvi}))
\begin{multline*}
  J_B\Big(\widetilde{H}^1_{\et}(K^p,E)_{\Q_p-\an}\otimes_E k(z')\Big)^{Z_1=\sN^{-w}, Z_1'=\kappa(z')}[T(F_{\wp})=\chi_{z'},\cH^p=\lambda_{z'}] \\ \xlongrightarrow{\sim} \Big(\widetilde{H}^1_{\et}(K^p,E)_{\Q_p-\an}\otimes_E k(z')\Big)^{N_0,Z_1=\sN^{-w},Z_1'=\kappa(z')}[T(F_{\wp})=\chi_{z'}, \cH^p=\lambda_{z'}].
\end{multline*}
\begin{lemma}\label{lem: clin-adb}
Keep the above notation, any vector in $$\Big(\widetilde{H}^1_{\et}(K^p,E)_{\Q_p-\an}\otimes_E k(z')\Big)^{N_0,Z_1=\sN^{-w},Z_1'=\kappa(z')}[T(F_{\wp})=\chi_{z'}, \cH^p=\lambda_{z'}]$$ is classical.
\end{lemma}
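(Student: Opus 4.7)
The plan is to show that $z'$ admits no non-empty companion point, then to deduce via (the contrapositive of) Prop.\ref{prop: clin-vka} that every vector in the generalized eigenspace is quasi-$\Sigma_\wp$-classical, and finally to upgrade quasi-$\Sigma_\wp$-classical to classical by using the dominance of the weights at the semi-stable classical point $z'$. For the first step, fix $z' \in Z$. By condition (a), $\Sigma_{z'} = \emptyset$, and by condition (b) the non-integrality hypothesis (\ref{equ: clin-iz-}) holds. Moreover, $z'$ is semi-stable classical, so $\chi_{z'}$ is spherically algebraic with $k_{\chi_{z',1},\sigma} - k_{\chi_{z',2},\sigma} \in \Z_{\geq 0}$ for all $\sigma$; in particular $C(\chi_{z'}) = \Sigma_\wp$, and thus $\Sigma_{z'} \cap \Sigma_\wp = \emptyset$. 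Applying Cor.\ref{cor: clin-aat} with $S = \Sigma_\wp$ then yields that $z'$ admits no $S'$-companion point for any non-empty $S' \subseteq \Sigma_\wp$.

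For the second step, the contrapositive of Prop.\ref{prop: clin-vka} applied at each $\sigma \in \Sigma_\wp$ gives that every vector in the joint $(\ft_{\Sigma_\wp} = d\chi_{z'})$-eigenspace intersected with the generalized $(T(F_\wp)^+, \cH^p)$-eigenspace is quasi-$\sigma$-classical; combining over all $\sigma$, such a vector is quasi-$\Sigma_\wp$-classical. To pass from this to the generalized $T(F_\wp)$-eigenspace appearing in the lemma, observe that on the generalized $T(F_\wp)$-eigenspace with eigenvalue $\chi_{z'}$, the element $\ft \subseteq \ug_{\Sigma_\wp}$ acts as $d\chi_{z'}$ plus a nilpotent endomorphism, giving a finite filtration whose graded pieces are genuine $\ft$-eigenspaces (to which Prop.\ref{prop: clin-vka} applies directly). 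The $X_{-,\sigma}^{k_{z',\sigma}-1}$-computation of the proof of Prop.\ref{prop: clin-vka} still works for a generalized $\ft$-eigenvector $v$ with eigenvalue $d\chi_{z'}$: the image $v_\sigma^c := X_{-,\sigma}^{k_{z',\sigma}-1}\cdot v$ is a generalized $(\chi_{z'})_\sigma^c$-eigenvector for $T(F_\wp)\times \cH^p$, so non-existence of an $\{\sigma\}$-companion point forces $v_\sigma^c = 0$. Hence every $v$ in the full generalized eigenspace is quasi-$\sigma$-classical for every $\sigma$, and so quasi-$\Sigma_\wp$-classical.

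Finally, for the upgrade from quasi-classical to classical, use that $v$ is $N_0$-invariant so the upper unipotent Lie subalgebra $\mathfrak{n}_+ \subseteq \ug_{\Sigma_\wp}$ annihilates $v$; the finite-dimensional $\ug_{\Sigma_\wp}$-module generated by $v$ is therefore a highest-weight module with highest weight $d\chi_{z'}$ (for $\ft_{\Sigma_\wp}$). Since $z'$ is semi-stable classical, $k_{\chi_{z',1},\sigma}-k_{\chi_{z',2},\sigma} = k_{z',\sigma}-2 \in \Z_{\geq 0}$ for every $\sigma \in \Sigma_\wp$, so the highest weight is dominant integral at every embedding. This module therefore integrates to an algebraic representation of $\GL_2(F_\wp)$; in particular $v$ is $\Sigma_\wp$-classical, i.e. classical. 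The main obstacle I anticipate is the extension of Prop.\ref{prop: clin-vka} from strict to generalized $\ft$-eigenvectors: once one verifies by a short Lie-algebra computation that $X_{-,\sigma}^{k_{z',\sigma}-1}$ sends generalized $\ft$-eigenvectors to generalized $(\chi_{z'})_\sigma^c$-eigenvectors of $T(F_\wp)$, the remaining assembly from Cor.\ref{cor: clin-aat}, Prop.\ref{prop: clin-vka}, and the standard highest-weight theory for $\mathfrak{gl}_2$ becomes essentially formal.
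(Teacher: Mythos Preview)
Your proof is correct and follows essentially the same route as the paper's: contrapose Prop.\ref{prop: clin-vka} using the absence of companion points (which you obtain from conditions (a), (b) via Cor.\ref{cor: clin-aat}), then pass from quasi-classical to classical. The paper compresses this into two lines, leaving the ``quasi-$\sigma$-classical for all $\sigma$ implies classical'' step implicit; your highest-weight argument in the third paragraph makes this step explicit and is the right way to justify it.

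One remark: your concern about extending Prop.\ref{prop: clin-vka} from strict to generalized $\ft_{\Sigma_\wp}$-eigenvectors is unnecessary here. The space in the lemma carries \emph{strict} eigenvalue conditions on $Z_1$ and $Z_1'$ (these appear as superscripts, not in brackets). Since $Z_1 Z_1'$ contains an open subgroup of $T(F_\wp)$, the Lie algebra $\ft_{\Sigma_\wp}$ already acts on any vector in this space by the character $d\chi_{z'}$; only the action of the full group $T(F_\wp)$ (and of $\cH^p$) is generalized. Hence the space in the lemma is genuinely contained in the space to which Prop.\ref{prop: clin-vka} applies, and no filtration argument or Lie-algebra computation for generalized $\ft$-eigenvectors is needed.
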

\begin{proof}
  Suppose there exists a non-classical vector $v$, thus there exists $\sigma\in \Sigma_{\wp}$, such that $v$ is non-$\sigma$-classical. By Prop.\ref{prop: clin-vka}, one can prove $z'$ admits a $\sigma$-companion point, which would lead to a contradiction by the same argument as in the proof of Cor.\ref{cor: clin-aat}.
\end{proof}
Keep the above notation (so $z'\in Z$), put $\psi_{z'}:=\chi_{z'}\big(\prod_{\sigma\in \Sigma_{\wp}} \sigma^{-k_{\chi_{1,z'},\sigma}} \otimes \prod_{\sigma\in \Sigma_{\wp}} \sigma^{-k_{\chi_{2,z'},\sigma}}\big)$ (being a smooth character of $T(F_{\wp})$), $T_0:=Z_1Z_1'$, by Lem.\ref{lem: clin-adb}, Cor.\ref{prop: clin-ernr}, one has an isomorphism of $k(z')$-vector spaces
\begin{multline*}
   \Big(\widetilde{H}^1_{\et}(K^p,E)_{\Q_p-\an}\otimes_E k(z')\Big)^{N_0,Z_1=\sN^{-w},Z_1'=\kappa(z')}[T(F_{\wp})=\chi_{z'}, \cH^p=\lambda_{z'}]
   \\ \xlongrightarrow{\sim} \Big(H^1_{\et}\big(K^p,W(\ul{k}_{\Sigma_{\wp}},w)\big)\otimes_E k(z')\Big)^{N_0, T_0=\psi_{z'}}[T(F_{\wp})=\psi_{z'},\cH^p=\lambda_{z'}],
\end{multline*}
where $k_{\sigma}:=k_{\chi_{1,z'},\sigma}-k_{\chi_{2,z'},\sigma}+2$ for all $\sigma\in \Sigma_{\wp}$. So
\begin{multline*}
 J_B\Big(\widetilde{H}^1_{\et}(K^p,E)_{\Q_p-\an}\otimes_E k(z')\Big)^{Z_1=\sN^{-w}, Z_1'=\kappa(z')}[T(F_{\wp})=\chi_{z'},\cH^p=\lambda_{z'}] \\\xlongrightarrow{\sim} J_B\Big(H^1_{\et}\big(K^p,W(\ul{k}_{\Sigma_{\wp}},w)\big)\otimes_E k(z')\Big)^{T_0=\psi_{z'}}[T(F_{\wp})=\psi_{z'},\cH^p=\lambda_{z'}].
\end{multline*}
 Denote by $\delta(z')$ the dimension of the above vector space over $k(z')$. Set
\begin{equation*}
  H^1_{\et}\big(W(\ul{k}_{\Sigma_{\wp}},w)\big):=\varinjlim_{(K^p)'}H^1_{\et}\big((K^p)',W(\ul{k}_{\Sigma_{\wp}},w)\big)\otimes_E \overline{E}
\end{equation*}
where $(K^p)'$ runs over open compact subgroups of $K^p$, this is a smooth admissible representation of $G(\bA^{\infty})$ equipped with a continuous action of $\Gal(\overline{F}/F)$. One has a decomposition of $G(\bA^{\infty}) \times \Gal(\overline{F}/F)$-representations
\begin{equation*}
  H^1_{\et}\big(W(\ul{k}_{\Sigma_{\wp}},w)\big) \cong \bigoplus_{\pi} \rho(\pi) \otimes \pi
\end{equation*}
where $\pi$ runs over  irreducible smooth admissible representations of $G(\bA^{\infty})$. It's known that if $\rho(\pi)\neq 0$, then $\dim_{\overline{E}}\rho(\pi)=2$ (e.g. see \cite[\S 2.2.4]{Ca2}). A necessary condition for $\rho(\pi)$ to be non-zero is that there exists an admissible representation $\pi_{\infty}$ of $G(\R)$ such that $\pi_{\infty}\otimes \pi$ is an automorphic representation of $G(\bA)$ (we fix an isomorphism $\overline{E}\xrightarrow{\sim} \bC$).  Note that one has
\begin{equation*}
 H^1_{\et}\big(K^p, W(\ul{k}_{\Sigma_{\wp}},w)\big)\otimes_{E} \overline{E} \xlongrightarrow{\sim} H^1_{\et}\big(W(\ul{k}_{\Sigma_{\wp}},w)\big)^{K^p}\cong  \bigoplus_{\pi} \rho(\pi) \otimes \pi^{K^p}.
\end{equation*}
For an irreducible smooth admissible representation $\pi$ of $G(\bA^{\infty})$, $\pi$ admits thus a decomposition $\pi\cong \otimes_{\fl} \pi_{\fl}$ with $\pi_{\fl}$ an irreducible smooth admissible representation of $(B\otimes_F F_{\fl})^{\times}$, where $\fl$ runs over the finite places of $F$. Recall (e.g. see \cite[Thm.VI.1.1(4)]{HT})
\begin{proposition}
Let $\pi_1$, $\pi_2$ be two automorphic representations of $G(\bA)$, if $\pi_{1,\fl}\cong \pi_{2,\fl}$ for all but finitely many places $\fl$ of $F$, then $\pi_1\cong \pi_2$.
\end{proposition}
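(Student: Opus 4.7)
The plan is to reduce the statement to the classical strong multiplicity one theorem for $\GL_2(\bA_F)$ via the global Jacquet-Langlands correspondence. First, I would apply the Jacquet-Langlands transfer to associate to each $\pi_i$ an automorphic representation $\pi_i^{\mathrm{JL}}$ of $\GL_2(\bA_F)$, characterized by the local compatibility $(\pi_i^{\mathrm{JL}})_{\fl} \cong \pi_{i,\fl}$ at every finite place $\fl \notin S(B)$ (where $B$ is split, so $(B\otimes_F F_{\fl})^{\times} \cong \GL_2(F_{\fl})$) together with an explicit local JL recipe at the places in $S(B)$. Because the hypothesis $\pi_{1,\fl}\cong \pi_{2,\fl}$ holds at all but finitely many places, it holds in particular at almost all split places, so $\pi_1^{\mathrm{JL}}$ and $\pi_2^{\mathrm{JL}}$ have isomorphic local components at all but finitely many places of $F$.

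Next I would invoke strong multiplicity one for cuspidal automorphic representations of $\GL_2(\bA_F)$, due to Piatetski-Shapiro and Shalika. The argument goes through the Whittaker model: a cuspidal automorphic representation of $\GL_2(\bA_F)$ is determined by its global Whittaker functional, and the Rankin-Selberg integral $L(s,\pi_1^{\mathrm{JL}} \times \widetilde{\pi_2^{\mathrm{JL}}})$ has a pole at $s=1$ if and only if $\pi_1^{\mathrm{JL}} \cong \pi_2^{\mathrm{JL}}$. Agreement at almost every place forces equality of the corresponding local Euler factors, and combining with the known analytic continuation and functional equation of the completed Rankin-Selberg $L$-function produces the required pole, giving $\pi_1^{\mathrm{JL}} \cong \pi_2^{\mathrm{JL}}$.

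Finally I would conclude by injectivity of the Jacquet-Langlands correspondence: since the transfer is a bijection of its source onto the subset of automorphic representations of $\GL_2(\bA_F)$ which are discrete series at every place in $S(B)$, the isomorphism $\pi_1^{\mathrm{JL}} \cong \pi_2^{\mathrm{JL}}$ forces $\pi_1 \cong \pi_2$.

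The main obstacle to watch is that strong multiplicity one in the above sharp form requires cuspidality, whereas a priori $\pi_i^{\mathrm{JL}}$ lies only in the discrete spectrum. However, in our setting $B$ is ramified at the $d-1$ archimedean places in $S(B)\cap \Sigma_{\infty}$, so $\pi_i^{\mathrm{JL}}$ is a discrete series at those archimedean places; by a standard argument (non-trivial archimedean discrete series components rule out one-dimensional constituents of the residual spectrum), $\pi_i^{\mathrm{JL}}$ is automatically cuspidal, and the classical strong multiplicity one then applies verbatim.
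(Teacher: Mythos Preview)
The paper gives no proof of this proposition; it merely recalls it as a known fact, citing \cite[Thm.~VI.1.1(4)]{HT}. Your outline via global Jacquet--Langlands transfer to $\GL_2(\bA_F)$ followed by strong multiplicity one for $\GL_2$ is the standard argument behind such a citation and is essentially correct.

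Two minor points are worth tightening. First, your cuspidality check invokes the $d-1$ ramified archimedean places, which is vacuous when $d=1$; but then either $S(B)$ contains finite places (and the transfer is locally square-integrable there as well, again excluding the residual spectrum of $\GL_2$), or $S(B)=\emptyset$, in which case $G$ is already $\GL_2/\Q$ and no transfer is needed. Second, the one-dimensional automorphic representations $\chi\circ\mathrm{Nrd}$ of $G(\bA)$ should be handled separately, since the global Jacquet--Langlands correspondence is typically formulated only for infinite-dimensional representations; for these the conclusion follows at once from strong multiplicity one for Hecke characters. With these adjustments your argument is complete.
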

By this proposition (and the above discussions), there exists a unique irreducible smooth admissible representation $\pi_{z'}$ of $G(\bA^{\infty})$ such that the action of $\cH^p$ on $(\pi_{z'})^{K^p}$ is given by $\lambda_{z'}$ and that $\rho(\pi_{z'})\neq 0$ \big(one has in fact $\rho_{z'}\cong \rho(\pi_{z'})$\big). Thus
\begin{equation*}
  H^1_{\et}\big(K^p,W(\ul{k}_{\Sigma_{\wp}},w)\big)[\cH^p=\lambda_{z'}]\cong \rho_{z'} \otimes\pi_{z',\wp} \otimes \Big(\Big(\bigotimes_{\fl \neq \wp} \pi_{z',\fl}^{(K^p)_{\fl}}\Big)[\cH^p=\lambda_{z'}]\Big).
\end{equation*}
We have the following facts:
\begin{itemize}
  \item $\dim_{\overline{E}} J_B(\pi_{z',\wp})[T(F_{\wp})=\psi_{z'}]=1$ (by classical Jacquet module theory and the condition (b)),
  \item $\dim_{\overline{E}} \pi_{z',\fl}^{(K^p)_{\fl}}=1$, for all $\fl\in S(K^p)$,
\end{itemize}
from which we deduce
\begin{multline*}
  J_B\big(H^1_{\et}\big(K^p, W(\ul{k}_{\Sigma_{\wp}},w)\big)\otimes_E k(z')\big)^{T(F_{\wp})=\psi_{z'}, \cH^p=\lambda_{z'}}\\ \xlongrightarrow{\sim}J_B\big(H^1_{\et}\big(K^p, W(\ul{k}_{\Sigma_{\wp}},w)\big)\otimes_E k(z')\big)^{T_0=\psi_{z'}}[T(F_{\wp})=\psi_{z'},\cH^p=\lambda_{z'}].
\end{multline*}
Denote by $S'$ the complement of $S(K^p) \cup \{\wp\}$ in the set of finite places of $F$ (thus $S'$ is a finite set), we also  deduce (compare with \cite[(4.21)]{Che11})
\begin{equation*}
  \delta(z')=2\sum_{\fl \in S'} \dim_{\overline{E}}\big(\pi_{z',\fl}^{(K^p)_{\fl}}\big).
\end{equation*}
By the same argument as in the proof of \cite[Thm.4.8]{Che11}, one can prove $\delta(z')\geq \delta(z)$ for all $z'\in Z$, and then deduce that $\co(V)\cong \co(U)\otimes_E k(z)$. The theorem follows.
\begin{remark}\label{rem: clin-rat}
  Keep the above notation, if $z$ is moreover an $E$-point of $ \cV(K^p,w)_{\overline{\rho}}$ (in practice, one can always enlarge $E$ if necessary),  thus one has $\co(V) \cong \co(U)$. So the action of $T(F_{\wp})$ on $M$ is given by the character $T(F_{\wp})\ra \co(V)^{\times}\cong \co(U)^{\times}$ induced by the natural morphism $V\ra \widehat{T}_{\Sigma_{\wp}}$.
\end{remark}
\section{$\cL$-invariants and local-global compatibility}
\subsection{Fontaine-Mazur $\cL$-invariants}\label{sec: clin-ene}
Recall Fontaine-Mazur $\cL$-invariants for $2$-dimensional semi-stable non-crystalline representations of $\Gal(\overline{\Q_p}/F_{\wp})$.

Let $k_{1,\sigma}, k_{2,\sigma}\in \Z$, $k_{1,\sigma}<k_{2,\sigma}$ for all $\sigma\in \Sigma_{\wp}$; let $\rho$ be a $2$-dimensional semi-stable non-crystalline representation of $\Gal(\overline{\Q_p}/F_{\wp})$ over $E$ of Hodge-Tate weights $(-k_{2,\sigma}, -k_{1,\sigma})_{\sigma\in \Sigma_{\wp}}$. By Fontaine's theory (cf. \cite{Fon94}, \cite{FO}), one can associate to $\rho$ a filtered $(\varphi,N)$-module $(D_0, D)$ where $D_0:=D_{\st}(\rho):= (B_{\st}\otimes_{\Q_p} \rho)^{\Gal(\overline{\Q_p}/F_{\wp})}$ is a free $F_{\wp,0}\otimes_{\Q_p} E$-module of rank $2$ equipped with a bijective \big($F_{\wp,0}$-semi-linear and $E$-linear\big) endomorphism $\varphi$ and a nilpotent $F_{\wp,0}\otimes_{\Q_p} E$-linear operator $N$ such that $N\varphi=p\varphi N$,  and that $D:=D_0\otimes_{F_{\wp,0}}F_{\wp} \cong D_{\dR}(\rho):=(B_{\dR}\otimes_{\Q_p} \rho)^{\Gal(\overline{\Q_p}/F_{\wp})}$ is a free $F_{\wp}\otimes_{\Q_p} E$-module of rank $2$ equipped with a decreasing exhaustive separated filtration by $F_{\wp} \otimes_{\Q_p} E$-submodules.

Using the isomorphism
\begin{equation*}
  F_{\wp,0}\otimes_{\Q_p} E \xlongrightarrow{\sim} \prod_{\sigma_0: F_{\wp,0}\ra E} E, \  a \otimes b \mapsto \big(\sigma_0(a)b\big)_{\sigma_0: F_{\wp,0}\ra E},
\end{equation*}
one can decompose $D_0$ as $D_0\xrightarrow{\sim} \prod_{\sigma_0: F_{\wp,0}\ra E} D_{\sigma_0}$. Each $D_{\sigma_0}$ is an $E$-vector space of rank $2$ equipped with an $E$-linear action of $\varphi^{d_0}$ and $N$, moreover, the operator $\varphi$ (on $D_0$) induces a bijection: $D_{\sigma_0}\xrightarrow{\sim} D_{\sigma_0\circ \varphi^{-1}}$. It's known that $\Ker(N)$ is a free $F_{\wp,0}\otimes_{\Q_p} E$-module of rank $1$, and thus admits a decomposition $\Ker(N)\xrightarrow{\sim}  \prod_{\sigma_0: F_{\wp,0}\ra E} \Ker(N)_{\sigma_0}$. Let $e_{0,\sigma_0}\in D_{\sigma_0}$ such that $Ee_{0,\sigma_0}=\Ker(N)_{\sigma_0}$. In fact, one can choose $e_{0,\sigma_0}$ such that \begin{equation}\label{equ: clin-e0e}\varphi(e_{0,\sigma_0})=e_{0,\sigma_0\circ \varphi^{-1}}.\end{equation} Since $\Ker(N)_{\sigma_0}$ is stable by $\varphi^{d_0}$, there exists $\alpha\in E^{\times}$ such that $\varphi^{d_0}(e_{0,\sigma_0})=\alpha e_{0,\sigma_0}$ \big(by (\ref{equ: clin-e0e}), we see $\alpha$ is independent of $\sigma_0$\big). Since $N\varphi=p\varphi N$, there exists a unique $e_{1,\sigma_0}\in D_{\sigma_0}$ such that $Ne_{1,\sigma_0}=e_{0,\sigma_0}$ and $\varphi^{d_0}(e_{1,\sigma_0})=q\alpha e_{1,\sigma_0}$ (thus $D_{\sigma_0}=E e_{0,\sigma_0} \oplus E e_{1,\sigma_0}$).

Using the isomorphism
\begin{equation*}
  F_{\wp}\otimes_{\Q_p} E \xlongrightarrow{\sim} \prod_{\sigma\in \Sigma_{\wp}} E, \  a \otimes b \mapsto (\sigma(a)b)_{\sigma\in \Sigma_{\wp}},
\end{equation*}
one can decompose $D$ as $D\xrightarrow{\sim} \prod_{\sigma\in \Sigma_{\wp}} D_{\sigma}$. One has $D_{\sigma_0}\otimes_{F_{\wp,0}} F_{\wp} \cong \prod_{\substack{\sigma\in \Sigma_{\wp} \\ \sigma|_{F_{\wp,0}}=\sigma_0}} D_{\sigma}$ for any $\sigma_0: F_{\wp,0}\ra E$. For $\sigma\in \Sigma_{\wp}$, $i=0,1$, let $e_{i,\sigma}\in D_{\sigma}$, such that
\begin{equation*}
  e_{i,\sigma_0} \otimes 1 = \big(e_{i,\sigma}\big)_{\substack{\sigma\in \Sigma_{\wp} \\ \sigma|_{F_{\wp,0}}=\sigma_0}}.
\end{equation*}
Since $\rho$ is of Hodge-Tate weights $(-k_{2,\sigma}, -k_{1,\sigma})_{\sigma\in \Sigma_{\wp}}$, for all $\sigma\in \Sigma_{\wp}$, there exists $(a_{\sigma},b_{\sigma})\in E\times E \setminus \{(0,0)\}$ such that
\begin{equation*}
  \Fil^{i} D_{\sigma}=\begin{cases}
     D_{\sigma} & i\leq k_{1,\sigma} \\
     E\big(a_{\sigma}e_{1,\sigma}+ b_{\sigma}e_{0,\sigma}\big) & k_{1,\sigma}<i \leq k_{2,\sigma} \\
     0 & i > k_{2,\sigma}
  \end{cases}.
\end{equation*}
We suppose $\rho$ satisfies the following hypothesis.
\begin{hypothesis}\label{hyp: clin-aq0}
  For all $\sigma\in \Sigma_{\wp}$, $a_{\sigma}\neq 0$.
\end{hypothesis}
\begin{remark}
This hypothesis is automatically satisfied when $F_{\wp}=\Q_p$ by the weak admissibility of $(D_0,D)$.
\end{remark}
Pose $\cL_{\sigma}:=b_{\sigma}/a_{\sigma}$, for  $\sigma\in \Sigma_{\wp}$. One sees easily that $\cL_{\sigma}$ is independent of the choice of $e_{0,\sigma}$. An important fact is that one can recover $\big(D_{\st}(\rho),D_{\dR}(\rho)\big)$ (and hence $\rho$)  by the data:
\begin{equation*}
  \Big\{(-k_{2,\sigma}, -k_{1,\sigma}\big)_{\sigma\in \Sigma_{\wp}};\  \alpha,q\alpha;\ \{\cL_{\sigma}\}_{\sigma\in \Sigma_{\wp}}\Big\}.
\end{equation*}
Note that by the hypothesis \ref{hyp: clin-aq0}, $\rho$ admits a unique triangulation given by
\begin{equation*}
0 \ra \cR_E\Big(\unr(\alpha)\prod_{\sigma\in \Sigma_{\wp}} \sigma^{-k_{1,\sigma}}\Big) \ra D_{\rig}(\rho) \ra \cR_E\Big(\unr(q\alpha) \prod_{\sigma\in \Sigma_{\wp}} \sigma^{-k_{2,\sigma}}\Big) \ra 0,
\end{equation*}
in particular, $\rho$ is non-critical.
\subsection{$\cL$-invariants and locally $\Q_p$-analytic representations}\label{sec: clin-4.2} Keep the above notation, following \cite{Sch10}, one can associate to $\rho$ a locally $\Q_p$-analytic representation of $\GL_2(F_{\wp})$. We recall the construction (note that the log maps that we use are slightly different from those in \cite{Sch10}) and introduce some notations.  Let $w\in \Z$, $k_{\sigma}\in \Z_{\geq 2}$ for all $\sigma\in \Sigma_{\wp}$ such that $k_{\sigma}\equiv w \pmod{2}$, suppose $\rho$ is of Hodge-Tate weights $\big(-\frac{w+k_{\sigma}}{2}, -\frac{w-k_{\sigma}+2}{2}\big)_{\sigma\in \Sigma_{\wp}}$. Put
\begin{equation*}
  \chi(\ul{k}_{\Sigma_{\wp}},w; \alpha):= \unr(\alpha) \prod_{\sigma\in \Sigma_{\wp}} \sigma^{-\frac{w-k_{\sigma}+2}{2}} \otimes \unr(\alpha) \prod_{\sigma \in \Sigma_{\wp}}\sigma^{-\frac{w+k_{\sigma}-2}{2}} ,
\end{equation*}
which is a continuous character of $T(F_{\wp})$ over $E$. Consider the parabolic induction $$\Big(\Ind_{\overline{B}(F_{\wp})}^{\GL_2(F_{\wp})} \chi(\ul{k}_{\Sigma_{\wp}},w;\alpha)\Big)^{\Q_p-\an},$$ we have the following facts
\begin{itemize}
  \item the  unique finite dimensional subrepresentation of $\Big(\Ind_{\overline{B}(F_{\wp})}^{\GL_2(F_{\wp})} \chi(\ul{k}_{\Sigma_{\wp}},w;\alpha)\Big)^{\Q_p-\an}$ is $V(\ul{k}_{\Sigma_{\wp}},w;\alpha):=\big(\unr(\alpha)\circ \dett\big)\otimes_E W(\ul{k}_{\Sigma_{\wp}},w)^{\vee}$;
      \item the maximal locally algebraic subrepresentation of the quotient
      \begin{equation*}
        \Sigma\big(\ul{k}_{\Sigma_{\wp}},w;\alpha\big):=\Big(\Ind_{\overline{B}(F_{\wp})}^{\GL_2(F_{\wp})} \chi(\ul{k}_{\Sigma_{\wp}},w;\alpha)\Big)^{\Q_p-\an}\Big/V(\ul{k}_{\Sigma_{\wp}},w;\alpha)
      \end{equation*}
      is $\St(\ul{k}_{\Sigma_{\wp}},w;\alpha):=\St \otimes_E V(\ul{k}_{\Sigma_{\wp}},w;\alpha)$, which is also the socle of $\Sigma\big(\ul{k}_{\Sigma_{\wp}},w;\alpha\big)$ (where $\St$ denotes the Steinberg representation).
\end{itemize}

Let $\psi(\ul{\cL}_{\Sigma_{\wp}})$ be the following $(d+1)$-dimensional representation of $T(F_{\wp})$ over $E$
\begin{equation*}
    \psi(\ul{\cL}_{\Sigma_{\wp}})\begin{pmatrix}
      a & 0 \\ 0 & d
    \end{pmatrix} =\begin{pmatrix}
    1& \log_{\sigma_1, -\cL_{\sigma_1}}(ad^{-1}) & \log_{\sigma_2, -\cL_{\sigma_2}}(ad^{-1}) & \cdots &\log_{\sigma_{d}, -\cL_{\sigma_{d}}}(ad^{-1}) \\ 0 & 1 & 0 & \cdots & 0 \\ 0 & 0 & 1 &\cdots & 0\\
    \vdots & \vdots &\vdots &\ddots & \vdots \\
    0 & 0 & 0 & \cdots & 1
  \end{pmatrix}.
\end{equation*}
One gets thus an exact sequence of locally $\Q_p$-analytic representations of $\GL_2(F_{\wp})$:
\begin{multline*}
  0 \lra \Big(\Ind_{\overline{B}(F_{\wp})}^{\GL_2(F_{\wp})} \chi(\ul{k}_{\Sigma_{\wp}},w;\alpha)\Big)^{\Q_p-\an} \lra \Big(\Ind_{\overline{B}(F_{\wp})}^{\GL_2(F_{\wp})} \chi(\ul{k}_{\Sigma_{\wp}},w;\alpha)\otimes_E \psi(\ul{\cL}_{\Sigma_{\wp}})\Big)^{\Q_p-\an}\\  \xlongrightarrow{s}\Big(\Big(\Ind_{\overline{B}(F_{\wp})}^{\GL_2(F_{\wp})} \chi(\ul{k}_{\Sigma_{\wp}},w;\alpha)\Big)^{\Q_p-\an}\Big)^{\oplus d} \lra 0.
\end{multline*}
Following Schraen \cite[\S 4.2]{Sch10}, put
\begin{equation}\label{equ: clin-akp}\Sigma\big(\ul{k}_{\Sigma_{\wp}},w;\alpha;\ul{\cL}_{\Sigma_{\wp}}\big):=s^{-1}\big(V(\ul{k}_{\Sigma_{\wp}},w;\alpha)^{\oplus d}\big)/V(\ul{k}_{\Sigma_{\wp}},w;\alpha).
\end{equation}
\begin{remark}\label{rem: clin-tagia} (1) By \cite[Prop.4.13]{Sch10}, $\Sigma\big(\ul{k'}_{\Sigma_{\wp}},w'; \alpha';\ul{\cL'}_{\Sigma_{\wp}}\big) \cong \Sigma\big(\ul{k}_{\Sigma_{\wp}},w;\alpha;\ul{\cL}_{\Sigma_{\wp}}\big)$ if and only if $\ul{k'}_{\Sigma_{\wp}}=\ul{k}_{\Sigma_{\wp}}$, $w'=w$, $\alpha'=\alpha$ and $\ul{\cL'}_{\Sigma_{\wp}}=\ul{\cL}_{\Sigma_{\wp}}$.

 (2) For $\sigma\in \Sigma_{\wp}$, denote by $\psi(\cL_{\sigma})$ the following $2$-dimensional representation of $T(F_{\wp})$:
\begin{equation*}
    \psi(\cL_{\sigma})\begin{pmatrix}
      a & 0 \\ 0 & d
    \end{pmatrix} =\begin{pmatrix}
    1& \log_{\sigma, -\cL_{\sigma}}(ad^{-1})  \\ 0 & 1
  \end{pmatrix}.
\end{equation*}
One has thus an exact sequence
\begin{multline*}
  0 \lra \Big(\Ind_{\overline{B}(F_{\wp})}^{\GL_2(F_{\wp})} \chi(\ul{k}_{\Sigma_{\wp}},w;\alpha)\Big)^{\Q_p-\an} \lra \Big(\Ind_{\overline{B}(F_{\wp})}^{\GL_2(F_{\wp})} \chi(\ul{k}_{\Sigma_{\wp}},w;\alpha)\otimes_E \psi(\cL_{\sigma})\Big)^{\Q_p-\an}\\  \xlongrightarrow{s_{\sigma}}\Big(\Ind_{\overline{B}(F_{\wp})}^{\GL_2(F_{\wp})} \chi(\ul{k}_{\Sigma_{\wp}},w;\alpha)\Big)^{\Q_p-\an} \lra 0.
\end{multline*}
Put $\Sigma\big(\ul{k}_{\Sigma_{\wp}},w;\alpha;\cL_\sigma\big):=s_{\sigma}^{-1}\big(V(\ul{k}_{\Sigma_{\wp}},w;\alpha)\big)/V(\ul{k}_{\Sigma_{\wp}},w;\alpha)$. One has an isomorphism of locally $\Q_p$-analytic representations of $\GL_2(F_{\wp})$:
\begin{multline}\label{equ: clin-al1h}
  \Sigma\big(\ul{k}_{\Sigma_{\wp}},w;\alpha;\cL_{\sigma_1}\big)\oplus_{\Sigma(\ul{k}_{\Sigma_{\wp}},w;\alpha)} \Sigma\big(\ul{k}_{\Sigma_{\wp}},w;\alpha;\cL_{\sigma_2}\big)\oplus_{\Sigma(\ul{k}_{\Sigma_{\wp}},w;\alpha)}\\ \cdots \oplus_{\Sigma(\ul{k}_{\Sigma_{\wp}},w;\alpha)}\Sigma\big(\ul{k}_{\Sigma_{\wp}},w;\alpha;\cL_{\sigma_{d}}\big) \xlongrightarrow{\sim} \Sigma\big(\ul{k}_{\Sigma_{\wp}},w;\alpha;\ul{\cL}_{\Sigma_{\wp}}\big).
\end{multline}
\end{remark}
Let $\chi_i$ be a locally $\sigma_i$-analytic (additive)  character of $F_{\wp}^{\times}$ in $E$, replacing the term $\log_{\sigma_i, -\cL_{\sigma_i}}(ad^{-1})$ by  $\log_{\sigma_i, -\cL_{\sigma_i}}(ad^{-1}) + \chi_i \circ \dett $, one can construct a representation $\Sigma'\big(\ul{k}_{\Sigma_{\wp}},w;\alpha;\ul{\cL}_{\Sigma_{\wp}}\big)$ exactly the same way as $\Sigma\big(\ul{k}_{\Sigma_{\wp}},w;\alpha;\ul{\cL}_{\Sigma_{\wp}}\big)$. By cohomology arguments as in \cite[\S 4.3]{Sch10}, one can actually  prove
\begin{lemma}\label{lem: linv-nmfyc}One has an isomorphism of locally $\Q_p$-analytic representations of $\GL_2(F_{\wp})$:
\begin{equation}\label{equ: clin-agia}
\Sigma'\big(\ul{k}_{\Sigma_{\wp}},w;\alpha;\ul{\cL}_{\Sigma_{\wp}}\big)\xlongrightarrow{\sim}\Sigma\big(\ul{k}_{\Sigma_{\wp}},w;\alpha;\ul{\cL}_{\Sigma_{\wp}}
\big).\end{equation}
\end{lemma}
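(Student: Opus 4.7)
The plan is to interpret both $\Sigma(\ul{k}_{\Sigma_{\wp}},w;\alpha;\ul{\cL}_{\Sigma_{\wp}})$ and $\Sigma'(\ul{k}_{\Sigma_{\wp}},w;\alpha;\ul{\cL}_{\Sigma_{\wp}})$ as extensions of the same quotient by the same subrepresentation in a common $\Ext^1$-group, and then to show that the two extension classes coincide. Setting $V:=V(\ul{k}_{\Sigma_{\wp}},w;\alpha)$ and $\chi:=\chi(\ul{k}_{\Sigma_{\wp}},w;\alpha)$, both representations fit, by the subquotient construction (\ref{equ: clin-akp}) and its obvious analogue for $\Sigma'$, into exact sequences
\begin{equation*}
  0 \lra \Sigma(\ul{k}_{\Sigma_{\wp}},w;\alpha) \lra (\,\cdot\,) \lra V^{\oplus d} \lra 0
\end{equation*}
of locally $\Q_p$-analytic $\GL_2(F_{\wp})$-representations. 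The two resulting classes in $\Ext^1_{\GL_2(F_{\wp})}(V^{\oplus d}, \Sigma(\ul{k}_{\Sigma_{\wp}},w;\alpha))$ differ, in the $i$-th coordinate, by the image of the additive character $\chi_i\circ\dett|_{T(F_{\wp})}$ under the composition
\begin{equation*}
  \Ext^1_{T(F_{\wp})}(\chi,\chi) \lra \Ext^1_{\GL_2(F_{\wp})}\bigl((\Ind_{\overline{B}}^{\GL_2}\chi)^{\Q_p-\an},(\Ind_{\overline{B}}^{\GL_2}\chi)^{\Q_p-\an}\bigr) \lra \Ext^1_{\GL_2(F_{\wp})}(V,\Sigma(\ul{k}_{\Sigma_{\wp}},w;\alpha)),
\end{equation*}
where the first arrow is (exact) parabolic induction and the second is pullback along $V\hookrightarrow (\Ind_{\overline{B}}^{\GL_2}\chi)^{\Q_p-\an}$ followed by pushout to the quotient $\Sigma(\ul{k}_{\Sigma_{\wp}},w;\alpha)=(\Ind_{\overline{B}}^{\GL_2}\chi)^{\Q_p-\an}/V$.

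The crucial step is to show that this image vanishes for each $i$. The point is that $\chi_i\circ\dett$ extends from $T(F_{\wp})$ to $\GL_2(F_{\wp})$: it defines a $2$-dimensional locally $\Q_p$-analytic representation $\widetilde{\tau}_i$ of $\GL_2(F_{\wp})$, a non-split self-extension of the trivial character, and since $\chi_i$ is additive (so $\chi_i\circ\dett$ vanishes on the unipotent radical), the restriction $\widetilde{\tau}_i|_{\overline{B}(F_{\wp})}$ factors through $T(F_{\wp})$. The projection formula for parabolic induction therefore yields an isomorphism of self-extensions of $(\Ind_{\overline{B}}^{\GL_2}\chi)^{\Q_p-\an}$:
\begin{equation*}
  \bigl(\Ind_{\overline{B}}^{\GL_2}\chi\otimes_E \widetilde{\tau}_i|_{T(F_{\wp})}\bigr)^{\Q_p-\an} \xlongrightarrow{\sim} (\Ind_{\overline{B}}^{\GL_2}\chi)^{\Q_p-\an}\otimes_E \widetilde{\tau}_i.
\end{equation*}
Fix a basis $(e_0,e_1)$ of $\widetilde{\tau}_i$ with $g\cdot e_0 = e_0$ and $g\cdot e_1 = e_1 + \chi_i(\dett g)\,e_0$. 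Pulling back this self-extension along $V\hookrightarrow (\Ind_{\overline{B}}^{\GL_2}\chi)^{\Q_p-\an}$ and quotienting by $V\otimes e_0$, a direct calculation shows that $V\otimes e_1$ is a $\GL_2(F_{\wp})$-stable complement to $\Sigma(\ul{k}_{\Sigma_{\wp}},w;\alpha)$ in the resulting middle term, since the ``correction'' $\chi_i(\dett g)\,g(v)\otimes e_0$ already lies in $V\otimes e_0$ for every $v\in V$ and $g\in \GL_2(F_{\wp})$. Hence the induced extension of $V$ by $\Sigma(\ul{k}_{\Sigma_{\wp}},w;\alpha)$ splits.

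Combining this for $i=1,\dots,d$ and invoking linearity of the composition above, the two extension classes for $\Sigma$ and $\Sigma'$ agree in $\Ext^1_{\GL_2(F_{\wp})}(V^{\oplus d},\Sigma(\ul{k}_{\Sigma_{\wp}},w;\alpha))$, which forces the corresponding middle terms to be isomorphic as locally $\Q_p$-analytic $\GL_2(F_{\wp})$-representations, giving (\ref{equ: clin-agia}). The main technical obstacle I anticipate is justifying the projection formula in the locally $\Q_p$-analytic setting and confirming additivity and compatibility of the pullback--pushout operations used above; both should follow from the exactness of $(\Ind_{\overline{B}}^{\GL_2}-)^{\Q_p-\an}$ on the relevant categories of $\overline{B}(F_{\wp})$-representations, which is standard in Schraen's framework and implicitly used in the cohomological computations of \cite[\S 4.3]{Sch10}.
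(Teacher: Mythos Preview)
Your argument is correct and genuinely different from the paper's. You work directly: the difference of the two extension classes in $\Ext^1_{\GL_2(F_\wp)}(V,\Sigma(\ul{k}_{\Sigma_\wp},w;\alpha))$ comes from $\chi_i\circ\dett$, and you kill it by exhibiting an explicit splitting via the projection formula $(\Ind_{\overline{B}}^{\GL_2}\chi\otimes\widetilde\tau_i|_T)^{\Q_p-\an}\cong(\Ind_{\overline{B}}^{\GL_2}\chi)^{\Q_p-\an}\otimes_E\widetilde\tau_i$ (valid here since $\widetilde\tau_i$ is finite-dimensional, so this is an easy direct check). The paper instead computes $\Ext^1_{\GL_2(F_\wp)}(V,V)\cong\Hom_{\Q_p-\an}(F_\wp^\times,E)$ via Casselman--Wigner and Whitehead's lemma, feeds this into the long exact sequence for $0\to V\to(\Ind_{\overline{B}}^{\GL_2}\chi)^{\Q_p-\an}\to\Sigma(\ul{k}_{\Sigma_\wp},w;\alpha)\to 0$, and then identifies the image of $j:\Ext^1(V,\Ind)\to\Ext^1(V,\Sigma)$ with $\Ext^1_{\PGL_2(F_\wp)}(V,\Sigma)$ using Schraen's dimension count; the conclusion is that the kernel of $j$ consists exactly of the characters factoring through $\dett$, hence the perturbation $\chi_i\circ\dett$ dies. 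Your route is more elementary and self-contained (no Lie algebra cohomology), and it makes transparent \emph{why} central perturbations are invisible in $\Ext^1(V,\Sigma)$. The paper's route is more structural: it gives the exact dimension of $\Ima(j)$ and its identification with $\Ext^1_{\PGL_2}$, which is independently useful (e.g.\ for the uniqueness statement \cite[Prop.~4.13]{Sch10} cited in Rem.~\ref{rem: clin-tagia}). The technical worry you flag about the projection formula is harmless here precisely because $\widetilde\tau_i$ is finite-dimensional.
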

\begin{proof}We use $\Ext^1$ to denote the extensions in the category of admissible locally $\Q_p$-analytic representations.
By the same argument as in \cite[\S 4.3]{Sch10}, replacing $\overline{G}$, $\overline{T}$ by $\GL_2(F_{\wp})$, $T(F_{\wp})$ respectively, one has (see in particular \cite[Lem.4.8]{Sch10} and the discussion which follows)
  \begin{equation*}\Ext^1_{\GL_2(F_{\wp})}\Big(V(\ul{k}_{\Sigma_{\wp}},w;\alpha),  \big(\Ind_{\overline{B}(F_{\wp})}^{\GL_2(F_{\wp})} \chi(\ul{k}_{\Sigma_{\wp}},w;\alpha)\big)^{\Q_p-\an}\Big)\cong \Hom_{\Q_p-\an}(T(F_{\wp}),E),\end{equation*}
  which is hence of dimension $2(d+1)$ over $E$.

On the other hand, one can prove
\begin{equation}\label{equ: linv-2pwv1}\Ext^1_{\GL_2(F_{\wp})}\Big(V(\ul{k}_{\Sigma_{\wp}},w;\alpha),V(\ul{k}_{\Sigma_{\wp}},w;\alpha)\Big)\cong \Hom_{\Q_p-\an}(F_{\wp}^{\times},E).\end{equation}
Indeed, put $V:=V(\ul{k}_{\Sigma_{\wp}},w;\alpha)$ for simplicity, then by \cite[Prop.3.5]{Sch10}, one has
\begin{equation*}
  \Ext^1_{\GL_2(F_{\wp})}(V,V)\cong H^1_{\an}\big(\GL_2(F_{\wp}), V\otimes_E V^{\vee}\big);
\end{equation*}
for any finite dimensional algebraic representation $W$ of $\Res_{L/\Q_p} \GL_2$ over $E$, by \cite[Thm.3]{CW}, one has 
\begin{equation*}
  H^1_{\an}(\GL_2(F_{\wp}), W)\cong H^1(\ug \otimes_{\Q_p} E, W).
\end{equation*}
Using K\"unneth formula \big(with respect to the decomposition $\ug \cong \fs \times \fz$, where $\fs$ denotes the Lie algebra of $\SL_2(F_{\wp})$ and $\fz$ the Lie algebra of the center $Z(F_{\wp})$ of $\GL_2(F_{\wp})$\big) and the first Whitehead lemma (cf. \cite[Cor.7.8.10]{Wei}), one can show $H^1(\ug\otimes_{\Q_p} E,W)=0$, if $W$ is irreducible non-trivial; and $H^1(\ug\otimes_{\Q_p} E,E)\cong H^1(\fz\otimes_{\Q_p} E, E)\cong H^1_{\an}(F_{\wp}^{\times}, E)\cong \Hom_{\Q_p-\an}(F_{\wp}^{\times},E)$. Since the trivial representation has multiplicity one in $V\otimes_E V^{\vee}$, one gets the isomorphism in (\ref{equ: linv-2pwv1}).

From the exact sequence
  \begin{equation*}
    0 \ra V(\ul{k}_{\Sigma_{\wp}},w;\alpha) \ra \big(\Ind_{\overline{B}(F_{\wp})}^{\GL_2(F_{\wp})} \chi(\ul{k}_{\Sigma_{\wp}},w;\alpha)\big)^{\Q_p-\an} \ra \Sigma(\ul{k}_{\Sigma_{\wp}},w;\alpha) \ra 0
\end{equation*}
one gets\begin{multline*}
  0 \lra \Ext^{1}_{\GL_2(F_{\wp})}\big(V(\ul{k}_{\Sigma_{\wp}},w;\alpha) , V(\ul{k}_{\Sigma_{\wp}},w;\alpha) \big) \\ \lra \Ext^1_{\GL_2(F_{\wp})}\Big(V(\ul{k}_{\Sigma_{\wp}},w;\alpha),\big(\Ind_{\overline{B}(F_{\wp})}^{\GL_2(F_{\wp})} \chi(\ul{k}_{\Sigma_{\wp}},w;\alpha)\big)^{\Q_p-\an}\Big) \\
  \xlongrightarrow{j} \Ext^1_{\GL_2(F_{\wp})}\big(V(\ul{k}_{\Sigma_{\wp}},w;\alpha),\Sigma(\ul{k}_{\Sigma_{\wp}},w;\alpha)\big).
\end{multline*} So $\dim_E \Ima(j)=d+1$. This, combined with the discussion above \cite[Prop.4.10]{Sch10}, shows that the natural injection (cf. \cite[Prop.3.5]{Sch10}, where $\PGL_2:=\GL_2/Z$)
\begin{equation*}
  \Ext_{\PGL_2(F_{\wp})}^1\big(V(\ul{k}_{\Sigma_{\wp}},w;\alpha),\Sigma(\ul{k}_{\Sigma_{\wp}},w;\alpha)\big) \hooklongrightarrow  \Ext^1_{\GL_2(F_{\wp})}\big(V(\ul{k}_{\Sigma_{\wp}},w;\alpha),\Sigma(\ul{k}_{\Sigma_{\wp}},w;\alpha)\big)
\end{equation*}
induces a bijection between $\Ext_{\PGL_2(F_{\wp})}^1\big(V(\ul{k}_{\Sigma_{\wp}},w;\alpha),\Sigma(\ul{k}_{\Sigma_{\wp}},w;\alpha)\big)$ and $\Ima(j)$, from which the isomorphism (\ref{equ: clin-agia}) follows.
\end{proof}
\subsection{Local-global compatibility}Let $w\in \Z$, $k_{\sigma}\in \Z_{\geq 2}$, $k_{\sigma}\equiv w \pmod{2}$ for all $\sigma\in \Sigma_{\wp}$. Let $\rho$ be a $2$-dimensional continuous representation of $\Gal(\overline{F}/F)$ over $E$ such that
\begin{enumerate}  \item $\rho$ is absolutely irreducible modulo $\varpi_E$;
  \item $\rho_{\wp}:=\rho|_{\Gal(\overline{\Q_p}/F_{\wp})}$ is semi-stable non-crystalline of Hodge-Tate weights $\big(-\frac{w+k_{\sigma}}{2}, -\frac{w-k_{\sigma}+2}{2}\big)_{\sigma\in \Sigma_{\wp}}$ satisfying the Hypothesis \ref{hyp: clin-aq0} with $\{\cL_{\sigma}\}_{\sigma\in \Sigma_{\wp}}$ the associated Fontaine-Mazur $\cL$-invariants and $\{\alpha,q\alpha\}$ the eigenvalues of $\varphi^{d_{0}}$ over $D_{\st}(\rho_{\wp})$
\item $\Hom_{\Gal(\overline{F}/F)}\Big(\rho, H^1_{\et}\big(K^{p}, W(\ul{k}_{\Sigma_{\wp}},w)\big)\Big)\neq 0$.
\end{enumerate}
Denote by $\lambda_{\rho}$ the system of eigenvalues of $\cH^p$ associated to $\rho$ (via the Eichler-Shimura relations), put
\begin{equation*}
  \widehat{\Pi}(\rho):=\Hom_{\Gal(\overline{F}/F)}\Big(\rho, \widetilde{H}^1_{\et}(K^p, E)^{\cH^p=\lambda_{\rho}}\Big).
\end{equation*}
Note that one has
\begin{equation*}
  \widehat{\Pi}(\rho)\xlongrightarrow{\sim} \Hom_{\Gal(\overline{F}/F)}\Big(\rho, \widetilde{H}^1_{\et}(K^p, E)_{\overline{\rho}}^{\cH^p=\lambda_{\rho}}\Big).
\end{equation*}
One can deduce from the isomorphism (cf. Thm.\ref{thm: clin-ecs}(2))
\begin{equation*}
  \widetilde{H}^1_{\et}\big(K^p,W(\ul{k}_{\Sigma_{\wp}},w)\big)_{\Q_p-\an} \xlongrightarrow{\sim} \widetilde{H}^1_{\et}(K^p, E)_{\Q_p-\an} \otimes_E W(\ul{k}_{\Sigma_{\wp}},w)
\end{equation*}
a natural injection (cf. Prop.\ref{prop: clin-trn} and \cite[Prop.4.2.4]{Em04})
\begin{equation}\label{equ: clin-apwgi}
  H^1_{\et}\big(K^p, W(\ul{k}_{\Sigma_{\wp}},w)\big)_{\overline{\rho},\Q_p-\an} \otimes_E W(\ul{k}_{\Sigma_{\wp}},w)^{\vee} \hooklongrightarrow \widetilde{H}^1_{\et}(K^p,  E)_{\overline{\rho},\Q_p-\an},
\end{equation}
thus $\widehat{\Pi}(\rho)$ is non-zero \big(by the condition (3)\big). Moreover, by Thm.\ref{thm: clin-ecs}(1), $\widehat{\Pi}(\rho)$ is a unitary admissible Banach representation of $\GL_2(F_{\wp})$ over $E$. In fact, $\widehat{\Pi}(\rho)$ is supposed to be the right representation of $\GL_2(F_{\wp})$ corresponding to $\rho_{\wp}$ in the $p$-adic Langlands program (cf. \cite{Br0}). By the local-global compatibility in the classical local Langlands correspondence for $\ell=p$, and Prop.\ref{prop: clin-trn} (see also \cite[Thm.5.3]{New}), one can show that there exists $r\in \Z_{\geq 1}$, such that (cf. \S \ref{sec: clin-4.2})
\begin{equation}\label{equ: clin-wrs}
 \St(\ul{k}_{\Sigma_{\wp}},w;\alpha)^{\oplus r}\xlongrightarrow{\sim} \widehat{\Pi}(\rho)_{\lalg},
\end{equation}where $\widehat{\Pi}(\rho)_{\lalg}$ denotes the locally algebraic vectors of $\widehat{\Pi}(\rho)$.We can now announce the main result of this article.
\begin{theorem}\label{thm: clin-sio}Keep the above notation and hypothesis,
  the natural restriction map
  \begin{equation*}
    \Hom_{\GL_2(F_{\wp})}\Big(\Sigma\big(\ul{k}_{\Sigma_{\wp}},w;\alpha;\cL_{\tau}\big),\widehat{\Pi}(\rho)_{\Q_p-\an}\Big) \lra \Hom_{\GL_2(F_{\wp})}\Big( \St(\ul{k}_{\Sigma_{\wp}},w;\alpha), \widehat{\Pi}(\rho)_{\Q_p-\an}\Big)
  \end{equation*}is bijective. In particular, one has a continuous injection of $\GL_2(F_{\wp})$-representations
  \begin{equation*}
 \Sigma\big(\ul{k}_{\Sigma_{\wp}},w;\alpha;\cL_{\tau}\big)^{\oplus r} \hooklongrightarrow  \widehat{\Pi}(\rho)_{\Q_p-\an}.
  \end{equation*}
  which induces an isomorphism between the locally algebraic subrepresentations.
\end{theorem}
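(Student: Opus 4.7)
The plan is to split the theorem into injectivity and surjectivity of the restriction map, and then deduce the ``in particular'' assertions formally. For injectivity, recall from \S\ref{sec: clin-4.2} that $\Sigma(\ul{k}_{\Sigma_\wp},w;\alpha;\cL_\tau)$ fits in a short exact sequence
\begin{equation*}
0\to \Sigma(\ul{k}_{\Sigma_\wp},w;\alpha)\to \Sigma(\ul{k}_{\Sigma_\wp},w;\alpha;\cL_\tau)\to V(\ul{k}_{\Sigma_\wp},w;\alpha)\to 0,
\end{equation*}
and that $\St(\ul{k}_{\Sigma_\wp},w;\alpha)$ is the socle of $\Sigma(\ul{k}_{\Sigma_\wp},w;\alpha;\cL_\tau)$. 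Thus any $\phi:\Sigma(\ul{k}_{\Sigma_\wp},w;\alpha;\cL_\tau)\to \widehat{\Pi}(\rho)_{\Q_p-\an}$ vanishing on $\St$ factors through the quotient $\Sigma(\ul{k}_{\Sigma_\wp},w;\alpha;\cL_\tau)/\St$, whose irreducible subquotients consist of $V(\ul{k}_{\Sigma_\wp},w;\alpha)$ and non-algebraic locally analytic subquotients of the principal series $(\Ind_{\overline{B}(F_\wp)}^{\GL_2(F_\wp)} \chi(\ul{k}_{\Sigma_\wp},w;\alpha))^{\Q_p-\an}$. None of these embeds into $\widehat{\Pi}(\rho)_{\Q_p-\an}$: the piece $V$ is ruled out because $\widehat{\Pi}(\rho)_{\lalg}\cong \St(\ul{k}_{\Sigma_\wp},w;\alpha)^{\oplus r}$ by (\ref{equ: clin-wrs}) and $\St$ is irreducible and non-finite-dimensional, while the remaining constituents are ruled out by the Jacquet-module computations used to prove non-existence of companion points at non-critical classical points (Cor.\ref{cor: clin-aat}). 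The ``in particular'' embedding then follows formally: bijectivity plus (\ref{equ: clin-wrs}) gives $\dim_E\Hom(\Sigma(\ul{k}_{\Sigma_\wp},w;\alpha;\cL_\tau),\widehat{\Pi}(\rho)_{\Q_p-\an})=r$, and the resulting map $\Sigma(\ul{k}_{\Sigma_\wp},w;\alpha;\cL_\tau)^{\oplus r}\to \widehat{\Pi}(\rho)_{\Q_p-\an}$ is injective because its restriction to $\St(\ul{k}_{\Sigma_\wp},w;\alpha)^{\oplus r}=\soc(\Sigma^{\oplus r})$ is the isomorphism (\ref{equ: clin-wrs}).

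For surjectivity, fix $\tau\in\Sigma_\wp$ and a non-zero $\phi_0\in\Hom(\St(\ul{k}_{\Sigma_\wp},w;\alpha),\widehat{\Pi}(\rho)_{\Q_p-\an})$. Attach to $\rho$ the point $z_\rho=(\chi_\rho,\lambda_\rho)\in\cV(K^p,w)_{\overline{\rho}}(E)$, where $\lambda_\rho$ is the Hecke system of $\rho$ and $\chi_\rho$ is the spherically algebraic character satisfying $\chi_\rho\delta^{-1}=\chi(\ul{k}_{\Sigma_\wp},w;\alpha)$, prescribed by the refinement $(\alpha,q\alpha)$ on $D_{\st}(\rho_\wp)$ and the Hodge--Tate weights; its existence in $\cV(K^p,w)_{\overline{\rho}}$ follows from (\ref{equ: clin-wrs}) combined with classical Jacquet-module theory. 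Hypothesis \ref{hyp: clin-aq0} and \S\ref{sec: clin-ene} ensure that $\rho_\wp$ is non-critical, so $z_\rho$ is a non-critical semi-stable classical point in the sense of \S\ref{sec: clin-3.4.3}; by Thm.\ref{thm: clin-elt}, $\cV(K^p,w)_{\overline{\rho}}$ is étale over $\cW_{1,\Sigma_\wp}$ at $z_\rho$. The tangent space of $\cW_{1,\Sigma_\wp}$ at $\kappa(z_\rho)$ decomposes naturally as $\prod_{\sigma\in\Sigma_\wp}E$, so I pick a tangent vector $t_\tau:\Spec E[\epsilon]/\epsilon^2\to \cV(K^p,w)_{\overline{\rho}}$ whose image in the weight space is supported only on the $\tau$-coordinate; composing with the natural map to $\widehat{T}_{\Sigma_\wp}$ yields a character $\widetilde{\chi}_\rho:T(F_\wp)\to (E[\epsilon]/\epsilon^2)^{\times}$ deforming $\chi_\rho$.

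Two ingredients now combine. First, applying the family version of the Jacquet--Emerton adjunction formula (as in \cite[Lem.4.5.12]{Em1}) to $t_\tau$ and the coherent sheaf $\cM(K^p,w)_{\overline{\rho}}$, together with Rem.\ref{rem: clin-rat} to identify the $T(F_\wp)$-action on the local module at $z_\rho$ with $\widetilde{\chi}_\rho$, produces a non-zero continuous $\GL_2(F_\wp)$-equivariant morphism
\begin{equation*}
\bigl(\Ind_{\overline{B}(F_\wp)}^{\GL_2(F_\wp)} \widetilde{\chi}_\rho\delta^{-1}\bigr)^{\tau-\an} \lra \widetilde{H}^1_{\et}(K^p,E)_{\overline{\rho},\tau-\an}^{Z_1=\sN^{-w}}[\cH^p=\lambda_\rho].
\end{equation*}
Second, by Prop.\ref{prop: clin-kzc}, $t_\tau$ pulls the local family $\rho_U$ back to a deformation $\widetilde{\rho}:\Gal(\overline{F}/F)\to \GL_2(E[\epsilon]/\epsilon^2)$ of $\rho$; combining Thm.\ref{thm: clin-iz2d} with the global triangulation theorem of \cite{KPX} furnishes a triangulation of $D_{\rig}(\widetilde{\rho}_\wp)$ with parameters $\unr(q)\widetilde{\chi}_{\rho,1}$ and $\widetilde{\chi}_{\rho,2}\prod_\sigma\sigma^{-1}$; then Zhang's $\cL$-invariant formula \cite[Thm.1.1]{Zhang} (generalizing \cite{Colm10}) forces $\widetilde{\chi}_\rho\cong \chi_\rho\otimes\psi(\cL_\tau,\chi)$ for some additive character $\chi$ of $F_\wp^{\times}$, which is Lemma \ref{lem: clin-htn}.

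Putting these together, the map above factors through the $\tau$-analytic parabolic induction of $\chi_\rho\otimes\psi(\cL_\tau,\chi)\delta^{-1}$; taking the inverse image of $V(\ul{k}_{\Sigma_\wp},w;\alpha)$ under $s_\tau$ and quotienting by $V(\ul{k}_{\Sigma_\wp},w;\alpha)$ yields a non-zero map $\Sigma'(\ul{k}_{\Sigma_\wp},w;\alpha;\cL_\tau)\to \widehat{\Pi}(\rho)_{\tau-\an}$. By Lemma \ref{lem: linv-nmfyc}, $\Sigma'\cong \Sigma(\ul{k}_{\Sigma_\wp},w;\alpha;\cL_\tau)$, so this is independent of the auxiliary additive character $\chi$; its restriction to $\St(\ul{k}_{\Sigma_\wp},w;\alpha)$ is non-zero, so rescaling gives the desired extension of $\phi_0$. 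The main obstacle is Lemma \ref{lem: clin-htn}, i.e.\ the global-triangulation computation: one must lift the triangulation of $\rho_\wp$ to $\rho_U$ (using the density/accumulation of well-behaved classical points around $z_\rho$ guaranteed by Cor.\ref{cor: clin-siz}), pull it back along $t_\tau$, and match the first-order deformation of the triangulation parameters with the Fontaine--Mazur $\cL$-invariant $\cL_\tau$ through the precise normalization of Zhang's formula, simultaneously verifying non-vanishing of the adjunction map via étaleness. Once this identification is in place, everything else is formal.
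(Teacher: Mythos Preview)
Your overall strategy matches the paper's closely: reduce to one $\tau$ at a time, prove injectivity via the socle and non-existence of companion points, and prove surjectivity by deforming along a $\tau$-direction tangent vector, identifying the deformed character via global triangulation plus Zhang's formula, and invoking Lem.~\ref{lem: linv-nmfyc}. Two genuine gaps remain.

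First, your surjectivity argument does not actually lift a \emph{given} $\phi_0$. You produce a non-zero map $\Sigma(\ul{k}_{\Sigma_\wp},w;\alpha;\cL_\tau)\to\widehat{\Pi}(\rho)_{\Q_p-\an}$ and assert that ``rescaling gives the desired extension of $\phi_0$''. This only works when the target Hom-space is one-dimensional, i.e.\ when $r=1$; in general $\Hom(\St,\widehat{\Pi}(\rho)_{\Q_p-\an})$ has dimension $r$ and you must extend every element. The paper handles this by identifying $(z^*\cM)^\vee$ with the Hom-space via (\ref{equ: clin-pjpk}), so that a given $f$ corresponds to a specific $v\in(z^*\cM)^\vee$; then using the local freeness (\ref{equ: clin-mcee}) one lifts $v$ to $\widetilde v\in(t_\tau^*\cM)^\vee$ with $\epsilon\widetilde v=v$, and the commutative diagram (\ref{equ: clin-pagip}) shows that the resulting map restricts precisely to $f$, not merely to some non-zero multiple of some element.

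Second, the adjunction map you invoke lands a priori only in the \emph{generalized} $\lambda_\rho$-eigenspace $\widetilde{H}^1_{\et}(K^p,E)_{\overline\rho,\Q_p-\an}^{Z_1=\sN^{-w}}[\cH^p=\lambda_\rho]$, not in $\widehat{\Pi}(\rho)_{\Q_p-\an}$ (which involves the honest eigenspace). You skip both the reduction from $\widehat{\Pi}(\rho)$ to $\widetilde{H}^1_{\et}$ (Lem.~\ref{lem: clin-nfo}) and the argument that the image lies in the true eigenspace. The paper closes this by observing that for any $X\in\cH^p$ the composite $(X-\lambda_\rho(X))\circ\widetilde v$ restricts to zero on $\St$, hence is zero by the injectivity already established; this forces $\Ima(\widetilde v)\subset\widetilde{H}^1_{\et}(K^p,E)^{\cH^p=\lambda_\rho}$. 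Without this step your constructed map does not obviously live in $\widehat{\Pi}(\rho)_{\Q_p-\an}$.
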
Such a result is called local-global compatibility, since  the $\Pi(\rho_{\wp})$ are constructed by the local parameters (i.e. parameters of $\rho_{\wp}$) while $\widehat{\Pi}(\rho)_{\Q_p-\an}$ is a global object. By the isomorphism (\ref{equ: clin-al1h}), the theorem \ref{thm: clin-sio} would follow from the following proposition.
\begin{proposition}\label{prop: clin-igla}
For any $\tau\in \Sigma_{\wp}$, the restriction map
  \begin{equation*}
    \Hom_{\GL_2(F_{\wp})}\Big(\Sigma\big(\ul{k}_{\Sigma_{\wp}},w;\alpha;\cL_{\tau}\big),\widehat{\Pi}(\rho)_{\Q_p-\an}\Big) \lra \Hom_{\GL_2(F_{\wp})}\Big( \St(\ul{k}_{\Sigma_{\wp}},w;\alpha), \widehat{\Pi}(\rho)_{\Q_p-\an}\Big)
  \end{equation*}is bijective.
\end{proposition}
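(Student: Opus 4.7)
The plan is to handle injectivity by a socle argument and construct the required extension (surjectivity) by deforming the classical point $z_\rho$ on the eigenvariety $\cV(K^p, w)_{\overline{\rho}}$ and invoking the adjunction formula for the Jacquet-Emerton functor. For injectivity, I would first observe that $\Sigma\big(\ul{k}_{\Sigma_\wp}, w; \alpha; \cL_\tau\big)$ fits into a non-split extension
\[
0 \to \Sigma\big(\ul{k}_{\Sigma_\wp}, w; \alpha\big) \to \Sigma\big(\ul{k}_{\Sigma_\wp}, w; \alpha; \cL_\tau\big) \to V\big(\ul{k}_{\Sigma_\wp}, w; \alpha\big) \to 0,
\]
so its socle coincides with the socle of $\Sigma\big(\ul{k}_{\Sigma_\wp}, w; \alpha\big)$, which is $\St\big(\ul{k}_{\Sigma_\wp}, w; \alpha\big)$. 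Any $\GL_2(F_\wp)$-equivariant map killing this socle factors through $\Sigma\big(\ul{k}_{\Sigma_\wp}, w; \alpha; \cL_\tau\big)/\St\big(\ul{k}_{\Sigma_\wp}, w; \alpha\big)$; I would rule out non-zero such maps into $\widehat{\Pi}(\rho)_{\Q_p-\an}$ using (\ref{equ: clin-wrs}) and the vanishing $\Hom_{\GL_2(F_\wp)}\big(V\big(\ul{k}_{\Sigma_\wp}, w; \alpha\big), \widehat{\Pi}(\rho)_{\lalg}\big)=0$ (combined with a unitarity-preservation argument in the spirit of \cite[Prop.5.1]{Br}).

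For surjectivity, fix $\phi$ in the target Hom space. Via (\ref{equ: clin-wrs}) and the bijection (\ref{equ clin-wmv0i}), I would first interpret $\phi$ as a non-zero vector in $J_B\big(\widetilde{H}^1_{\et}(K^p, E)_{\overline{\rho}, \Q_p-\an}\big)^{T(F_\wp) = \chi_\rho,\, \cH^p = \lambda_\rho}$, where $\chi_\rho$ is the expected algebraic twist of $\unr(\alpha/q) \otimes \unr(q\alpha)$. This determines a semi-stable classical $E$-point $z_\rho = (\chi_\rho, \lambda_\rho)$ of $\cV(K^p, w)_{\overline{\rho}}$. Under Hypothesis \ref{hyp: clin-aq0}, the uniqueness of the triangulation recalled in \S \ref{sec: clin-ene} gives $\Sigma_{z_\rho} = \emptyset$, and $\alpha \neq q\alpha$ holds, so $z_\rho$ is non-critical in the sense of \S \ref{sec: clin-3.4.3}. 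Theorem \ref{thm: clin-elt} then yields that $\cV(K^p, w)_{\overline{\rho}}$ is \'etale over $\cW_{1, \Sigma_\wp}$ at $z_\rho$, so I may select the tangent vector $t_\tau: \Spec E[\epsilon]/\epsilon^2 \to \cV(K^p, w)_{\overline{\rho}}$ corresponding via this \'etaleness to the $\tau$-coordinate in the tangent space of $\cW_{1, \Sigma_\wp}$; composing with $\cV(K^p, w)_{\overline{\rho}} \to \widehat{T}_{\Sigma_\wp}$ yields a locally $\tau$-analytic character $\widetilde{\chi}_\rho$ of $T(F_\wp)$ over $E[\epsilon]/\epsilon^2$ lifting $\chi_\rho$.

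Next I would apply the adjunction formula in families for Jacquet-Emerton (an essentially admissible analogue of \cite[Lem.4.5.12]{Em1}, as used to produce (\ref{equ: clin-pf2})) to $t_\tau$, obtaining a continuous $\GL_2(F_\wp) \times \cH^p$-equivariant morphism
\[
\iota:\ \big(\Ind_{\overline{B}(F_\wp)}^{\GL_2(F_\wp)} \widetilde{\chi}_\rho \delta^{-1}\big)^{\tau-\an} \longrightarrow \widetilde{H}^1_{\et}(K^p, E)_{\overline{\rho}, \tau-\an}^{Z_1 = \sN^{-w}}[\lambda_\rho]
\]
that, after a harmless normalization, restricts to $\phi$ on the $\St\big(\ul{k}_{\Sigma_\wp}, w; \alpha\big)$-socle sitting inside $\big(\Ind \chi_\rho \delta^{-1}\big)^{\tau-\an}$ — the \'etaleness and Remark \ref{rem: clin-rat} ensure the $\lambda_\rho$-generalized eigenspace in the Jacquet module has the correct rank for this. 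By Lemma \ref{lem: clin-htn}, $\widetilde{\chi}_\rho \cong \chi_\rho \otimes_E \psi(\cL_\tau, \chi)$ for some $\tau$-analytic additive character $\chi$; Lemma \ref{lem: linv-nmfyc} then implies $s_\tau^{-1}\big(V\big(\ul{k}_{\Sigma_\wp}, w; \alpha\big)\big)/V\big(\ul{k}_{\Sigma_\wp}, w; \alpha\big) \cong \Sigma\big(\ul{k}_{\Sigma_\wp}, w; \alpha; \cL_\tau\big)$ independently of $\chi$. By the injectivity argument above, $\iota$ kills $V\big(\ul{k}_{\Sigma_\wp}, w; \alpha\big)$ and descends to the required extension $\Sigma\big(\ul{k}_{\Sigma_\wp}, w; \alpha; \cL_\tau\big) \to \widehat{\Pi}(\rho)_{\Q_p-\an}$ of $\phi$.

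The main obstacle I foresee is Lemma \ref{lem: clin-htn}: extracting $\cL_\tau$ from $\widetilde{\chi}_\rho$ requires deforming $\rho$ along $t_\tau$ via Proposition \ref{prop: clin-wpw}, applying the global triangulation theorem of \cite{KPX} to produce an infinitesimal triangulation of $\widetilde{\rho}_\wp$, and then invoking Zhang's formula \cite{Zhang} (generalizing Colmez's \cite{Colm10}) to identify the extension of trianguline parameters with $\cL_\tau$. A secondary subtlety is arranging $\iota$ to genuinely restrict to $\phi$ on the socle, which reduces to showing the Hecke module $\cM(K^p, w)_{\overline{\rho}}$ is free of the expected rank near $z_\rho$; this is where the \'etaleness over $\cW_{1,\Sigma_\wp}$ and Remark \ref{rem: clin-rat} enter crucially.
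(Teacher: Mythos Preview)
Your surjectivity strategy matches the paper's almost exactly: reduce via Lemma~\ref{lem: clin-nfo} to $\widetilde{H}^1_{\et}(K^p,E)_{\Q_p-\an}^{\cH^p=\lambda_\rho}$, use \'etaleness at the non-critical point $z_\rho$ (Thm.~\ref{thm: clin-elt}) to pick the $\tau$-direction tangent vector, apply the adjunction map (\ref{equ: clin-p2w}) over a quasi-Stein neighborhood, identify $\widetilde{\chi}_\rho$ via the global triangulation and Zhang's formula (Lem.~\ref{thm: clin-aue}), and invoke Lem.~\ref{lem: linv-nmfyc}. One step you do not mention but the paper needs: the constructed map a priori lands only in the \emph{generalized} $\lambda_\rho$-eigenspace $\widetilde{H}^1_{\et}(K^p,E)_{\Q_p-\an}[\cH^p=\lambda_\rho]$, and one must check $(X-\lambda_\rho(X))\cdot\widetilde v=0$ for all $X\in\cH^p$. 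The paper does this by reusing injectivity (applied now with the generalized eigenspace as target), so this step is not free.

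Your injectivity sketch, however, has a genuine gap. The quotient $\Sigma\big(\ul{k}_{\Sigma_\wp},w;\alpha;\cL_\tau\big)\big/\St\big(\ul{k}_{\Sigma_\wp},w;\alpha\big)$ is not just $V\big(\ul{k}_{\Sigma_\wp},w;\alpha\big)$: when $|\Sigma_\wp|>1$ the representation $\Sigma\big(\ul{k}_{\Sigma_\wp},w;\alpha\big)$ already has many non-locally-algebraic irreducible constituents sitting above its socle $\St$, and these survive in the quotient. Ruling out $V$ via (\ref{equ: clin-wrs}) and a unitarity argument \`a la \cite[Prop.5.1]{Br} does not touch those constituents. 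The paper's Lemma~\ref{lem: clin-pvle} handles them differently: if some such irreducible $V_\wp$ (necessarily $\neq\St$ and $\neq V$) embeds into $\widetilde{H}^1_{\et}(K^p,E)_{\Q_p-\an}^{\cH^p=\lambda_\rho}$, then applying the Jacquet--Emerton functor produces a nonzero eigenvector for a character of the form $(\chi_\rho)_S^c$ with $\emptyset\neq S\subseteq\Sigma_\wp$, i.e.\ a companion point of $z_\rho$, contradicting Cor.~\ref{cor: clin-aat}. You should replace your injectivity paragraph by this companion-point argument; it is also what makes the eigenspace-versus-generalized-eigenspace step at the end of surjectivity go through.
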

Before proving this proposition, we give a corollary on the uniqueness of $\cL$-invariants (suggested by Breuil):
\begin{corollary}\label{cor: clin-ape}Keep the notation in Prop.\ref{prop: clin-igla}, let $\cL'_{\tau}\in E$, if there exists a continuous injection of $\GL_2(F_{\wp})$-representations
  \begin{equation*}
   i: \Sigma\big(\ul{k}_{\Sigma_{\wp}},w;\alpha;\cL_{\tau}'\big)\hooklongrightarrow \widehat{\Pi}(\rho)_{\Q_p-\an},
  \end{equation*}
  then $\cL'_{\tau}=\cL_{\tau}$.
\end{corollary}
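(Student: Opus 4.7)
The plan is, given $i: \Sigma(\ul{k}_{\Sigma_{\wp}},w;\alpha;\cL'_{\tau}) \hookrightarrow \Pi$ (writing $\Pi := \widehat{\Pi}(\rho)_{\Q_p-\an}$ for brevity), to use Prop \ref{prop: clin-igla} to manufacture a canonical second embedding $j: \Sigma(\ul{k}_{\Sigma_{\wp}},w;\alpha;\cL_{\tau}) \hookrightarrow \Pi$ agreeing with $i$ on the Steinberg socle, and then to exploit the rigid dependence of $\Sigma(\ul{k}_{\Sigma_{\wp}},w;\alpha;\cL)$ on $\cL$ (Rem \ref{rem: clin-tagia}(1)) via a Baer-sum/pushout argument to force $\cL'_{\tau}=\cL_{\tau}$.

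Set $V := V(\ul{k}_{\Sigma_{\wp}},w;\alpha)$, $\St := \St(\ul{k}_{\Sigma_{\wp}},w;\alpha)$ and $\Sigma_0 := \Sigma(\ul{k}_{\Sigma_{\wp}},w;\alpha)$. Each $\Sigma(\ul{k}_{\Sigma_{\wp}},w;\alpha;\cL)$ is an extension $0 \to \Sigma_0 \to \Sigma(\ul{k}_{\Sigma_{\wp}},w;\alpha;\cL) \to V \to 0$ with irreducible socle $\St$, and the class in $\Ext^1_{\GL_2(F_{\wp})}(V,\Sigma_0)$ depends affinely (and injectively, by Rem \ref{rem: clin-tagia}(1)) on $\cL \in E$, since the construction via $\psi(\cL_{\tau})$ is affine in $\cL_{\tau}$ through $\log_{\tau,-\cL_{\tau}}$. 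First I would restrict $i$ to $\St \subset \Sigma(\cL'_{\tau})$ to obtain a nonzero $i_{\St}: \St \hookrightarrow \Pi$; by Prop \ref{prop: clin-igla} applied with the Fontaine-Mazur $\cL$-invariant $\cL_{\tau}$, $i_{\St}$ lifts uniquely to $j: \Sigma(\ul{k}_{\Sigma_{\wp}},w;\alpha;\cL_{\tau}) \to \Pi$, and $j$ is injective because $\St$ is the irreducible socle of $\Sigma(\cL_{\tau})$ and $j|_{\St}$ is nonzero.

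Suppose for contradiction $\cL'_{\tau} \neq \cL_{\tau}$. Form the amalgamated sum $P := \Sigma(\cL'_{\tau}) \oplus_{\St} \Sigma(\cL_{\tau})$, which receives an induced map $\phi: P \to \Pi$ from $i$ and $j$. Let $Q \subset P$ be the preimage of the antidiagonal $V \hookrightarrow V \oplus V$ under the natural surjection $P \twoheadrightarrow V \oplus V$ coming from the two quotients to $V$; then $Q$ is an extension of $V$ by $\Sigma_0 \oplus_{\St} \Sigma_0$. Pushing out along the addition $\Sigma_0 \oplus_{\St} \Sigma_0 \to \Sigma_0$ yields an extension $Q'$ of $V$ by $\Sigma_0$ whose class in $\Ext^1_{\GL_2(F_{\wp})}(V,\Sigma_0)$ is a nonzero scalar multiple of $[\Sigma(\cL'_{\tau})] - [\Sigma(\cL_{\tau})]$; by the affineness, $Q' \cong \Sigma(\ul{k}_{\Sigma_{\wp}},w;\alpha;\cL^*)$ for some $\cL^* \neq \cL_{\tau}$. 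The restriction $\phi|_Q$ should descend to a map $Q' \to \Pi$ which is nonzero on $\St$, hence is an injection of $\Sigma(\cL^*)$ into $\Pi$; combined with Prop \ref{prop: clin-igla} (whose bijection forces the unique canonical lift of $\phi|_{\St}$ to have parameter $\cL_{\tau}$), this contradicts $\cL^* \neq \cL_{\tau}$.

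The hard part will be justifying the descent of $\phi|_Q$ to $Q'$ and the nonvanishing of the resulting map: concretely, one must verify that $i|_{\Sigma_0}$ and $j|_{\Sigma_0}$ coincide as maps $\Sigma_0 \to \Pi$ (so that the ``anti-diagonal'' copy $\Sigma_0/\St \subset \Sigma_0 \oplus_{\St} \Sigma_0$ is killed by $\phi$), which amounts to a uniqueness statement of the form $\Hom_{\GL_2(F_{\wp})}(\Sigma_0/\St, \Pi) = 0$ in the $\lambda_{\rho}$-isotypic component; this should be extractable from the proof of Prop \ref{prop: clin-igla} or from the structure of locally $\Q_p$-analytic vectors in the completed cohomology. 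An arguably cleaner alternative is to translate the embedding $i$ into a tangent vector at $z_{\rho}$ of the eigenvariety $\cV_{\tau}$ via the adjunction formula (\ref{equ: clin-pf2}), apply \'etaleness (Theorem \ref{thm: clin-elt}) to compare with the Galois-side tangent vector $t$, and then use Lemma \ref{lem: clin-htn} (which reads off $\cL_{\tau}$ from $t$) to conclude $\cL'_{\tau} = \cL_{\tau}$; this bypasses the pushout analysis but requires checking that $i$ indeed produces a tangent vector whose $T(F_{\wp})$-character extension encodes $\cL'_{\tau}$ in the required form.
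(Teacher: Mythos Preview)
Your setup is right: restrict $i$ to $\St$, use Prop.~\ref{prop: clin-igla} to produce $j:\Sigma(\ul{k}_{\Sigma_{\wp}},w;\alpha;\cL_{\tau})\hookrightarrow\Pi$ with $j|_{\St}=i|_{\St}$, and compare. But the Baer-sum step is wrong, and the final step is circular.

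\textbf{The Baer-difference is not another $\Sigma(\cL^*)$.} You correctly note that the class $c(\cL):=[\Sigma(\ul{k}_{\Sigma_{\wp}},w;\alpha;\cL)]\in\Ext^1(V,\Sigma_0)$ is affine in $\cL$, so $c(\cL'_{\tau})-c(\cL_{\tau})=(\cL_{\tau}-\cL'_{\tau})\cdot v$ for a fixed direction $v$. But the affine line $\{c(\cL)\}$ does \emph{not} pass through $0$ (each $\Sigma(\cL)$ is nonsplit), and $c(0)\notin E\cdot v$: the direction $v$ is the class coming from the difference $\log_{\tau,-\cL'_{\tau}}-\log_{\tau,-\cL_{\tau}}$, which vanishes on $\co_{\wp}^{\times}$ and is a scalar multiple of the valuation $\us_{\wp}$, hence \emph{smooth}, whereas $c(0)$ involves the genuinely $\tau$-analytic $\log_{\tau}$. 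So $Q'$ is not isomorphic to any $\Sigma(\cL^*)$; it is (up to the algebraic twist) induced from a \emph{smooth} extension of characters, and is therefore locally algebraic. This is precisely the mechanism the paper exploits: inside $\Sigma(\cL'_{\tau})\oplus_{\Sigma_0}\Sigma(\cL_{\tau})$ one finds a nonsplit locally algebraic extension $V'$ of $V(\ul{k}_{\Sigma_{\wp}},w;\alpha)$ by $\St(\ul{k}_{\Sigma_{\wp}},w;\alpha)$, and its image in $\Pi$ lands in $\widehat{\Pi}(\rho)_{\lalg}\cong\St(\ul{k}_{\Sigma_{\wp}},w;\alpha)^{\oplus r}$ (cf.~(\ref{equ: clin-wrs})), which is impossible since $\St^{\oplus r}$ contains no copy of $V$.

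\textbf{The circularity.} Even granting your (false) claim that $Q'\cong\Sigma(\cL^*)$ with $\cL^*\neq\cL_{\tau}$, the last sentence does not give a contradiction. Prop.~\ref{prop: clin-igla} only asserts that $\Hom(\Sigma(\cL_{\tau}),\Pi)\to\Hom(\St,\Pi)$ is bijective for the \emph{specific} Fontaine--Mazur value $\cL_{\tau}$; it says nothing about surjectivity or injectivity of $\Hom(\Sigma(\cL^*),\Pi)\to\Hom(\St,\Pi)$ for other $\cL^*$. Asserting that ``the unique lift has parameter $\cL_{\tau}$'' is exactly the statement of the corollary you are proving.

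\textbf{On the ``hard part''.} Your worry that one must show $i|_{\Sigma_0}=j|_{\Sigma_0}$ is well-founded and is what allows the paper to amalgamate over $\Sigma_0$ (rather than merely $\St$). This is handled by the same mechanism as Lem.~\ref{lem: clin-pvle}: any nonzero map $\Sigma_0/\St\to\widetilde{H}^1_{\et}(K^p,E)_{\Q_p-\an}^{\cH^p=\lambda_{\rho}}$ would, via the Jacquet--Emerton functor, produce a companion point of the non-critical point $z$, contradicting Cor.~\ref{cor: clin-aat}. Once $i$ and $j$ agree on $\Sigma_0$, the amalgam maps to $\Pi$, and the restriction to $V'$ is injective because $\soc(V')=\St$ and $i|_{\St}\neq 0$.

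Your alternative eigenvariety approach is not unreasonable in spirit, but making it work would require showing that an arbitrary embedding $i$ actually produces, via $J_B$, a tangent vector to $\cV(K^p,w)_{\overline{\rho},\tau}$ at $z$ whose associated $\widetilde{\chi}_{\tau}$ encodes $\cL'_{\tau}$ in the form of Lem.~\ref{thm: clin-aue}; the paper's machinery runs in the opposite direction (from the eigenvariety to the embedding), and reversing it is more delicate than the locally-algebraic argument above.
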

\begin{proof}By Prop.\ref{prop: clin-igla}, the restriction on $i$ to $\St\big(\ul{k}_{\Sigma_{\wp}},w;\alpha\big)$ gives rise to a continuous injection
\begin{equation*}
  j:  \Sigma\big(\ul{k}_{\Sigma_{\wp}},w;\alpha;\cL_{\tau}\big)\hooklongrightarrow \widehat{\Pi}(\rho)_{\Q_p-\an}.
\end{equation*}
Suppose $\cL'_{\tau}\neq \cL_{\tau}$, one can thus  deduce from $i$ and $j$ an injection
\begin{equation}\label{equ: clin-apwap}
\Sigma\big(\ul{k}_{\Sigma_{\wp}},w;\alpha;\cL_{\tau}'\big) \oplus_{\Sigma(\ul{k}_{\Sigma_{\wp}},w;\alpha)}\Sigma\big(\ul{k}_{\Sigma_{\wp}},w;\alpha;\cL_{\tau}\big)\hooklongrightarrow \widehat{\Pi}(\rho)_{\Q_p-\an}.
\end{equation}
 Put for simplicity \begin{equation*}V:=\Sigma\big(\ul{k}_{\Sigma_{\wp}},w;\alpha;\cL_{\tau}'\big) \oplus_{\Sigma(\ul{k}_{\Sigma_{\wp}},w;\alpha)}\Sigma\big(\ul{k}_{\Sigma_{\wp}},w;\alpha;\cL_{\tau}\big),\end{equation*}
the key point is the locally algebraic subrepresentation $V_{\lalg}$ contains an extension of $V(\ul{k}_{\Sigma_{\wp}},w;\alpha)$ by $\St\big(\ul{k}_{\Sigma_{\wp}},w;\alpha\big)$, which would contradict to (\ref{equ: clin-wrs}):

Denote by  $\psi(\cL_{\tau}',\cL_{\tau})$ the following $3$-dimensional representation of $T(F_{\wp})$:
\begin{equation*}
  \psi(\cL_{\tau}',\cL_{\tau})\begin{pmatrix} a & 0 \\ 0 & d \end{pmatrix}=\begin{pmatrix} 1 & \log_{\tau,-\cL_{\tau}'}(ad^{-1}) & \log_{\tau, -\cL_{\tau}}(ad^{-1}) \\ 0 & 1 & 0 \\ 0 & 0 & 1 \end{pmatrix},
\end{equation*}
thus one has an exact sequence
\begin{multline*}
  0 \lra \Big(\Ind_{\overline{B}(F_{\wp})}^{\GL_2(F_{\wp})} \chi(\ul{k}_{\Sigma_{\wp}},w;\alpha)\Big)^{\Q_p-\an} \lra \Big(\Ind_{\overline{B}(F_{\wp})}^{\GL_2(F_{\wp})} \chi(\ul{k}_{\Sigma_{\wp}},w;\alpha)\otimes_E\psi(\cL_{\tau}',\cL_{\tau}))\Big)^{\Q_p-\an}\\  \xlongrightarrow{s'}\Big(\Big(\Ind_{\overline{B}(F_{\wp})}^{\GL_2(F_{\wp})} \chi(\ul{k}_{\Sigma_{\wp}},w;\alpha)\Big)^{\Q_p-\an}\Big)^{\oplus 2} \lra 0.
\end{multline*}
It's straightforward to see \begin{equation*}V\xlongrightarrow{\sim} (s')^{-1}(V(\ul{k}_{\Sigma_{\wp}},w;\alpha)^{\oplus 2})/V(\ul{k}_{\Sigma_{\wp}},w;\alpha).\end{equation*}
On the other hand, $\psi(\cL_{\tau}',\cL_{\tau})$ admits a smooth subrepresentation
\begin{equation*}
  \psi_0(\cL_{\tau}',\cL_{\tau})\begin{pmatrix} a & 0 \\ 0 & d \end{pmatrix}=\begin{pmatrix}1 & (\cL'_\tau-\cL_{\tau})\us_{\wp}(ad^{-1}) \\ 0 & 1\end{pmatrix},
\end{equation*}
one has thus an exact sequence of smooth representations of $\GL_2(F_{\wp})$
\begin{equation*}
  0 \lra \Big(\Ind_{\overline{B}(F_{\wp})}^{\GL_2(F_{\wp})} \chi_{\alpha}\Big)^{\infty} \lra \Big(\Ind_{\overline{B}(F_{\wp})}^{\GL_2(F_{\wp})} \chi_{\alpha}\otimes_E \psi_0(\cL_{\tau}',\cL_{\tau})\Big)^{\infty}  \xlongrightarrow{s''}\Big(\Ind_{\overline{B}(F_{\wp})}^{\GL_2(F_{\wp})} \chi_{\alpha}\Big)^{\infty} \lra 0,
\end{equation*}
where $\chi_{\alpha}:=\unr(\alpha) \otimes \unr(\alpha)$. Note that $\unr(\alpha)\circ \dett$ is the socle of $\big(\Ind_{\overline{B}(F_{\wp})}^{\GL_2(F_{\wp})} \chi_{\alpha}\big)^{\infty}$, and one can check
$$V':=\big((s'')^{-1}\big(\unr(\alpha)\circ \dett\big)/\unr(\alpha)\circ \dett\big) \otimes_E W(\ul{k}_{\Sigma_{\wp}},w)^{\vee}$$
is a locally algebraic subrepresentation of $V$, which is an extension (non-split) of $V(\ul{k}_{\Sigma_{\wp}},w;\alpha)$ by $\St\big(\ul{k}_{\Sigma_{\wp}},w;\alpha\big)$. We deduce from (\ref{equ: clin-apwap}) an injection $V'\hookrightarrow \widehat{\Pi}(\rho)_{\lalg}$, a contradiction with (\ref{equ: clin-wrs}).
\end{proof}
\begin{remark}
By  Thm.\ref{thm: clin-sio} and Cor.\ref{cor: clin-ape}, we see that the local Galois representation $\rho_{\wp}$ can be determined by $\widehat{\Pi}(\rho)$.
\end{remark}
 The following lemma has a straightforward proof that is omitted.
\begin{lemma}\label{lem: clin-nfo}
  Let $V$ be an admissible locally $\Q_p$-analytic representation of $\GL_2(F_{\wp})$ over $E$, there exists a natural bijection
  \begin{equation*}
    \Hom_{\GL_2(F_{\wp})}\big(V, \widehat{\Pi}(\rho)_{\Q_p-\an}\big) \xlongrightarrow{\sim}\\ \Hom_{\Gal(\overline{F}/F)}\Big(\rho, \Hom_{\GL_2(F_{\wp})}\big(V, \widetilde{H}^1_{\et}(K^p,E)_{\Q_p-\an}^{\cH^p=\lambda_{\rho}}\big)\Big).
  \end{equation*}
\end{lemma}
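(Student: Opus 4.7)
My approach is to establish the claimed bijection by combining two elementary compatibilities: $(i)$ that the Galois-Hom from the finite-dimensional representation $\rho$ commutes with passage to locally $\Q_p$-analytic vectors, and $(ii)$ the standard swap of Hom-functors for commuting actions. Throughout I will use that the actions of $\GL_2(F_{\wp})$, $\Gal(\overline{F}/F)$, and $\cH^p$ on $\widetilde{H}^1_{\et}(K^p,E)$ mutually commute, that $\GL_2(F_{\wp})$ acts trivially on $\rho$, and that $\rho$ is a finite-dimensional $E$-vector space.

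For step $(i)$, set $W := \widetilde{H}^1_{\et}(K^p,E)^{\cH^p=\lambda_{\rho}}$, which by Thm.\ref{thm: clin-ecs}(1) is an admissible Banach representation of $\GL_2(F_{\wp})$ over $E$ carrying a commuting continuous $E$-linear action of $\Gal(\overline{F}/F)$. Since $\rho$ is finite-dimensional, one has $\Hom_E(\rho, W) \cong \rho^{\vee} \otimes_E W$ as admissible Banach $\GL_2(F_{\wp})$-representations (with trivial action on $\rho^{\vee}$), and a choice of $E$-basis of $\rho^{\vee}$ immediately identifies the subspace of locally $\Q_p$-analytic vectors with $\rho^{\vee} \otimes_E W_{\Q_p-\an}$. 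Because the continuous $\Gal(\overline{F}/F)$-action commutes with the $\GL_2(F_{\wp})$-action, passing to $\Gal(\overline{F}/F)$-invariants is a closed operation that preserves local analyticity, yielding
\[
\widehat{\Pi}(\rho)_{\Q_p-\an} \xlongrightarrow{\sim} \Hom_{\Gal(\overline{F}/F)}\big(\rho, W_{\Q_p-\an}\big).
\]

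For step $(ii)$, I would invoke the standard identification of bilinear maps: for any $E$-linear $\GL_2(F_{\wp})$-representation $V$ (with trivial Galois action), one has a natural bijection
\[
\Hom_{\GL_2(F_{\wp})}\big(V, \Hom_{\Gal(\overline{F}/F)}(\rho, W_{\Q_p-\an})\big) \xlongrightarrow{\sim} \Hom_{\Gal(\overline{F}/F)}\big(\rho, \Hom_{\GL_2(F_{\wp})}(V, W_{\Q_p-\an})\big),
\]
since both sides describe the same set of $E$-bilinear maps $\rho \otimes_E V \to W_{\Q_p-\an}$ which are $\Gal(\overline{F}/F)$-equivariant in the first factor and $\GL_2(F_{\wp})$-equivariant in the second. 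Combining this with the isomorphism from step $(i)$ gives the asserted bijection, which is manifestly natural in $V$.

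The only subtlety worth checking carefully is the compatibility of locally analytic vectors with tensoring by a finite-dimensional trivial representation and with taking $\Gal$-invariants; the first is immediate once a basis of $\rho^{\vee}$ is fixed, while the second follows from the fact that the continuous Galois action is by $\GL_2(F_{\wp})$-equivariant $E$-linear operators, so $\Gal$-invariance is a closed condition that can be imposed before or after restricting to $W_{\Q_p-\an}$ with the same result. Once these points are verified, no further input is needed and the lemma follows.
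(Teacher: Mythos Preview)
Your argument is correct. The paper does not actually prove this lemma: it simply states that ``the following lemma has a straightforward proof that is omitted.'' Your two-step approach---first showing $\widehat{\Pi}(\rho)_{\Q_p-\an}\cong \Hom_{\Gal(\overline{F}/F)}(\rho, W_{\Q_p-\an})$ via finite-dimensionality of $\rho$, then applying the standard Hom-swap---is exactly the kind of direct verification the author had in mind, and the subtleties you flag (commutation of locally analytic vectors with finite-dimensional trivial tensor factors and with closed Galois-invariants) are the only points requiring any care.
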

The Prop.\ref{prop: clin-igla} thus follows from
\begin{proposition}\label{prop: clin-awdl}With the notation in Prop.\ref{prop: clin-igla}, the restriction map
  \begin{multline}\label{equ: clin-gpwp}
    \Hom_{\GL_2(F_{\wp})}\Big(\Sigma\big(\ul{k}_{\Sigma_{\wp}},w;\alpha;\cL_{\tau}\big),\widetilde{H}^1_{\et}(K^p,E)_{\Q_p-\an}^{\cH^p=\lambda_{\rho}}\Big) \\
    \lra \Hom_{\GL_2(F_{\wp})}\Big(\St\big(\ul{k}_{\Sigma_{\wp}},w;\alpha\big), \widetilde{H}^1_{\et}(K^p,E)_{\Q_p-\an}^{\cH^p=\lambda_{\rho}}\Big)
  \end{multline}is bijective.
\end{proposition}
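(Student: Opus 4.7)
The plan is to dispatch \textbf{injectivity} by a short socle argument and to prove \textbf{surjectivity} via a tangent-vector construction on the eigenvariety $\cV(K^p,w)_{\overline{\rho}}$ combined with the theory of global triangulation on the Galois side.

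\textbf{Injectivity.} From the construction of $\Sigma(\ul{k}_{\Sigma_\wp}, w; \alpha; \cL_\tau)$ in \S\ref{sec: clin-4.2} one has a short exact sequence
\[
0 \to \Sigma(\ul{k}_{\Sigma_\wp}, w; \alpha) \to \Sigma(\ul{k}_{\Sigma_\wp}, w; \alpha; \cL_\tau) \to V(\ul{k}_{\Sigma_\wp}, w; \alpha) \to 0,
\]
and the socle of $\Sigma(\ul{k}_{\Sigma_\wp}, w; \alpha)$ is $\St(\ul{k}_{\Sigma_\wp}, w; \alpha)$; thus any continuous $\GL_2(F_\wp)$-equivariant map killing $\St(\ul{k}_{\Sigma_\wp}, w; \alpha)$ kills the socle of $\Sigma(\ul{k}_{\Sigma_\wp}, w; \alpha; \cL_\tau)$ and hence vanishes. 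For \textbf{surjectivity}, let $\phi$ be a nonzero element of the right-hand $\Hom$ space. By Jacquet-Emerton adjunction together with classical Jacquet module theory, $\phi$ corresponds to a nonzero vector in the $(\chi_\rho, \lambda_\rho)$-eigenspace of $J_B$ applied to the target, where $\chi_\rho$ is the spherically algebraic character with unramified part $\unr(\alpha) \otimes \unr(\alpha)$ and algebraic part determined by $(\ul{k}_{\Sigma_\wp}, w)$. This produces an $E$-point $z_\rho = (\chi_\rho, \lambda_\rho)$ of $\cV(K^p, w)_{\overline{\rho}}$. Hypothesis \ref{hyp: clin-aq0} forces the triangulation of $\rho_\wp$ to be non-critical, and $\unr(q^{-1})\chi_{\rho,1}^{-1}\chi_{\rho,2} \neq \prod_{\sigma}\sigma^{n_\sigma}$ holds automatically since $q \neq 1$; so $z_\rho$ is a non-critical semi-stable classical point in the sense of \S\ref{sec: clin-3.4.3}. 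Theorem \ref{thm: clin-elt} then implies that $\cV(K^p,w)_{\overline{\rho}}$ is étale over $\cW_{1,\Sigma_\wp}$ at $z_\rho$, so the tangent space is $d$-dimensional with a canonical basis indexed by $\Sigma_\wp$. Pick a tangent vector $t : \Spec E[\epsilon]/\epsilon^2 \to \cV(K^p, w)_{\overline{\rho}}$ projecting to the $\tau$-direction in $\cW_{1,\Sigma_\wp}$, and let $\widetilde{\chi}_\rho = \widetilde{\chi}_{\rho,1} \otimes \widetilde{\chi}_{\rho,2}$ be the resulting locally $\tau$-analytic character of $T(F_\wp)$ over $E[\epsilon]/\epsilon^2$ deforming $\chi_\rho$, obtained by composing $t$ with $\cV(K^p,w)_{\overline{\rho}} \to \widehat{T}_{\Sigma_\wp}$.

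\textbf{Main obstacle: the Galois-side identification (Lemma \ref{lem: clin-htn}).} By Proposition \ref{prop: clin-kzc}, $t$ upgrades to a deformation $\widetilde{\rho} : \Gal(\overline{F}/F) \to \GL_2(E[\epsilon]/\epsilon^2)$ of $\rho$. The global triangulation of \cite{KPX}, applied along the eigenvariety and combined with Theorem \ref{thm: clin-iz2d}, extends the unique triangulation of $\rho_\wp$ to an exact sequence
\[
0 \to \cR_{E[\epsilon]/\epsilon^2}\bigl(\unr(q)\widetilde{\chi}_{\rho,1}\bigr) \to D_{\rig}(\widetilde{\rho}_\wp) \to \cR_{E[\epsilon]/\epsilon^2}\Bigl(\widetilde{\chi}_{\rho,2}\prod_{\sigma\in\Sigma_\wp}\sigma^{-1}\Bigr) \to 0
\]
of $(\varphi,\Gamma)$-modules over $\cR_{E[\epsilon]/\epsilon^2}$. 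Applying Zhang's generalization of Colmez's formula \cite[Thm.1.1]{Zhang} to this rank-one extension identifies its class in terms of the Fontaine-Mazur $\cL$-invariant $\cL_\tau$, yielding $\widetilde{\chi}_\rho \cong \chi_\rho \otimes_E \psi(\cL_\tau, \chi)$ for some additive character $\chi$ of $F_\wp^\times$. This interplay between tangent vectors on the eigenvariety and global triangulation theory is the main technical step.

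\textbf{Extracting the map.} Emerton's adjunction formula for the Jacquet-Emerton functor in families (cf.\ \cite[Lem.4.5.12]{Em1} in the $\GL_2(\Q_p)$-case, adapted via Remark \ref{rem: clin-rat}), applied to $t$, produces a continuous nonzero $\GL_2(F_\wp)$-equivariant morphism
\[
\Phi : \bigl(\Ind_{\overline{B}(F_\wp)}^{\GL_2(F_\wp)} \widetilde{\chi}_\rho \delta^{-1}\bigr)^{\tau-\an} \longrightarrow \widetilde{H}^1_{\et}(K^p, E)^{\cH^p=\lambda_\rho}_{\Q_p-\an}
\]
whose reduction mod $\epsilon$, restricted to $\St(\ul{k}_{\Sigma_\wp}, w; \alpha)$, recovers $\phi$. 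Using the explicit form of $\widetilde{\chi}_\rho$ above, the subquotient $s^{-1}(V(\ul{k}_{\Sigma_\wp},w;\alpha))/V(\ul{k}_{\Sigma_\wp},w;\alpha)$ of the source, with $s$ the surjection to $(\Ind_{\overline{B}(F_\wp)}^{\GL_2(F_\wp)} \chi_\rho \delta^{-1})^{\tau-\an}$, is canonically isomorphic to $\Sigma(\ul{k}_{\Sigma_\wp}, w; \alpha; \cL_\tau)$ by Lemma \ref{lem: linv-nmfyc} (which kills the dependence on the auxiliary $\chi$). Finally $\Phi$ annihilates the finite-dimensional subrepresentation $V(\ul{k}_{\Sigma_\wp}, w; \alpha)$, because by (\ref{equ: clin-wrs}) the locally algebraic vectors of $\widetilde{H}^1_{\et}(K^p, E)^{\cH^p=\lambda_\rho}_{\Q_p-\an}$ form a direct sum of copies of $\St(\ul{k}_{\Sigma_\wp}, w; \alpha)$, which contains no finite-dimensional $\GL_2(F_\wp)$-subrepresentation. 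Hence $\Phi$ descends to a continuous map $\Sigma(\ul{k}_{\Sigma_\wp}, w; \alpha; \cL_\tau) \to \widetilde{H}^1_{\et}(K^p, E)^{\cH^p=\lambda_\rho}_{\Q_p-\an}$ extending $\phi$, completing the proof.
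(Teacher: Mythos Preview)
Your overall strategy for surjectivity matches the paper's, but there are two genuine gaps.

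\textbf{Injectivity.} Your socle argument does not work: from $f|_{\soc}=0$ you can only conclude that $\ker(f)\supseteq\soc\bigl(\Sigma(\ul{k}_{\Sigma_\wp},w;\alpha;\cL_\tau)\bigr)$, not that $f=0$. Nothing prevents $f$ from factoring through the quotient $\Sigma(\ul{k}_{\Sigma_\wp},w;\alpha;\cL_\tau)/\St(\ul{k}_{\Sigma_\wp},w;\alpha)$ and landing nontrivially in the target. The paper's proof (Lem.~\ref{lem: clin-pvle}) uses properties of the \emph{target}: a nonzero $f$ in the kernel would inject some irreducible constituent $V_\wp\neq\St(\ul{k}_{\Sigma_\wp},w;\alpha)$ into $\widetilde{H}^1_{\et}(K^p,E)_{\Q_p-\an}^{\cH^p=\lambda_\rho}$; then either $V_\wp=V(\ul{k}_{\Sigma_\wp},w;\alpha)$, which is excluded by (\ref{equ: clin-wrs}), or applying $J_B$ to the resulting embedding produces a companion point of $z_\rho$, contradicting non-criticality (Cor.~\ref{cor: clin-aat}). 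You must invoke these facts; the abstract socle statement is false.

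\textbf{Surjectivity: eigenspace versus generalized eigenspace.} The adjunction formula in families only produces a map into the \emph{generalized} eigenspace $\widetilde{H}^1_{\et}(K^p,E)_{\Q_p-\an}[\cH^p=\lambda_\rho]$, because along the tangent vector $t$ the Hecke algebra acts through $\cH^p\to\co(V)\to E[\epsilon]/\epsilon^2$, which is a thickening of $\lambda_\rho$, not $\lambda_\rho$ itself. You assert that $\Phi$ lands directly in the exact eigenspace, which is not what the construction gives. The paper closes this gap at the very end: once the map $\widetilde{v}:\Sigma(\ul{k}_{\Sigma_\wp},w;\alpha;\cL_\tau)\to\widetilde{H}^1_{\et}(K^p,E)_{\Q_p-\an}[\cH^p=\lambda_\rho]$ is in hand, one observes that for each $X\in\cH^p$ the composite $(X-\lambda_\rho(X))\circ\widetilde{v}$ restricts to zero on $\St(\ul{k}_{\Sigma_\wp},w;\alpha)$ (since $f$ already lands in the eigenspace), and then the \emph{same} injectivity argument as in Lem.~\ref{lem: clin-pvle}, now with target the generalized eigenspace, forces $(X-\lambda_\rho(X))\circ\widetilde{v}=0$. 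Note that this second use of injectivity again requires the companion-point argument you omitted, so fixing the first gap is also necessary to close the second.
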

The rest of this paper is devoted to the proof of Prop.\ref{prop: clin-awdl}. Given an injection (whose existence follows from (\ref{equ: clin-wrs})) $\St\big(\ul{k}_{\Sigma_{\wp}},w;\alpha\big) \hookrightarrow \widetilde{H}^1_{\et}(K^p,E)_{\Q_p-\an}^{\cH^p=\lambda_{\rho}}$, by applying the Jacquet-Emerton functor and Thm.\ref{thm: clin-cjw}, one gets a closed $E$-point (associate to $\rho$) in $\cV(K^p,w)_{\overline{\rho}}$ given by $$z:=\big(\chi:= \chi(\ul{k}_{\Sigma_{\wp}},w;\alpha) \delta,\lambda_{\rho}\big).$$
\begin{lemma}\label{lem: clin-pvle}
The  restriction map (\ref{equ: clin-gpwp})
is injective.
\end{lemma}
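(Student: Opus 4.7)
The plan is to use left-exactness of $\Hom$ to reduce the injectivity to two separate vanishing statements, and then handle each using (i) the local-global compatibility at the locally algebraic level and (ii) the \'etaleness of the eigenvariety at $z_\rho$. Set $X := \widetilde{H}^1_{\et}(K^p,E)_{\Q_p-\an}^{\cH^p=\lambda_\rho}$ and let $Q$ denote the quotient $\Sigma(\ul{k}_{\Sigma_{\wp}},w;\alpha;\cL_\tau)/\St(\ul{k}_{\Sigma_{\wp}},w;\alpha)$. Applying $\Hom_{\GL_2(F_\wp)}(-,X)$ to the short exact sequence
\begin{equation*}
0 \to \St(\ul{k}_{\Sigma_{\wp}},w;\alpha) \to \Sigma(\ul{k}_{\Sigma_{\wp}},w;\alpha;\cL_\tau) \to Q \to 0,
\end{equation*}
the kernel of the restriction map (\ref{equ: clin-gpwp}) is identified with $\Hom_{\GL_2(F_\wp)}(Q,X)$. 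Since $Q$ itself sits in the exact sequence $0 \to \Sigma(\ul{k}_{\Sigma_{\wp}},w;\alpha)/\St(\ul{k}_{\Sigma_{\wp}},w;\alpha) \to Q \to V(\ul{k}_{\Sigma_{\wp}},w;\alpha) \to 0$, it suffices to prove separately that
\begin{equation*}
\Hom_{\GL_2(F_\wp)}(V(\ul{k}_{\Sigma_{\wp}},w;\alpha),X)=0\text{ and }\Hom_{\GL_2(F_\wp)}(\Sigma(\ul{k}_{\Sigma_{\wp}},w;\alpha)/\St(\ul{k}_{\Sigma_{\wp}},w;\alpha),X)=0.
\end{equation*}

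The first vanishing is straightforward: any nonzero map would produce a finite-dimensional subrepresentation of the locally algebraic vectors of $X$; via $\Hom_{\Gal(\overline{F}/F)}(\rho,-)$ and the identification (\ref{equ: clin-wrs}), this would embed $V(\ul{k}_{\Sigma_{\wp}},w;\alpha)$ into $\St(\ul{k}_{\Sigma_{\wp}},w;\alpha)^{\oplus r}$, which is impossible since $\St(\ul{k}_{\Sigma_{\wp}},w;\alpha)$ is irreducible and infinite-dimensional, hence contains no finite-dimensional subrepresentation.

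For the second vanishing --- the main point --- I plan to apply the Jacquet--Emerton functor $J_B$. By Schraen's analysis of the locally $\Q_p$-analytic parabolic induction (cf.\ \cite[\S 4.2]{Sch10}), every irreducible composition factor of $\Sigma(\ul{k}_{\Sigma_{\wp}},w;\alpha)/\St(\ul{k}_{\Sigma_{\wp}},w;\alpha)$ is a \emph{non}-locally-algebraic subquotient of $\bigl(\Ind_{\overline{B}(F_{\wp})}^{\GL_2(F_{\wp})}\chi(\ul{k}_{\Sigma_{\wp}},w;\alpha)\bigr)^{\Q_p-\an}$; the $T(F_\wp)$-eigencharacters $\chi'$ that arise in the Jacquet modules of these factors share the same restriction to $Z_1'$ as $\chi:=\chi(\ul{k}_{\Sigma_{\wp}},w;\alpha)\delta$ but are strictly different from $\chi$. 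By Theorem \ref{thm: clin-cjw} (together with the bijection (\ref{equ clin-wmv0i})), a nonzero map from $\Sigma/\St$ to $X$ would thus produce a closed point $(\chi',\lambda_\rho)$ of $\cV(K^p,w)_{\overline{\rho}}$ lying above $\kappa(z_\rho)$ in $\cW_{1,\Sigma_{\wp}}$ but distinct from $z_\rho$. This contradicts Theorem \ref{thm: clin-elt}, which applies since $z_\rho$ is a non-critical semi-stable classical point (by Hypothesis \ref{hyp: clin-aq0}), and which forces $\kappa^{-1}(\kappa(z_\rho))^{\red}=\{z_\rho\}$ in a neighborhood of $z_\rho$.

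The main obstacle is verifying cleanly that each irreducible composition factor of $\Sigma(\ul{k}_{\Sigma_{\wp}},w;\alpha)/\St(\ul{k}_{\Sigma_{\wp}},w;\alpha)$ really gives rise, via Emerton's adjunction for the Jacquet functor, to a \emph{distinct} closed point of $\cV(K^p,w)_{\overline{\rho}}$ above $\kappa(z_\rho)$. This amounts to a careful bookkeeping of the spherically algebraic $T(F_\wp)$-characters appearing in the Jacquet modules of the irreducible subquotients in Schraen's socle filtration, and to confirming that each such character has the correct restriction to $Z_1'$ so that it indeed fibres over the weight $\kappa(z_\rho)$.
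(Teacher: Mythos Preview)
Your overall strategy matches the paper's: both arguments rule out, one by one, the possibility that an irreducible constituent of $\Sigma(\ul{k}_{\Sigma_{\wp}},w;\alpha;\cL_{\tau})/\St(\ul{k}_{\Sigma_{\wp}},w;\alpha)$ embeds into $\widetilde{H}^1_{\et}(K^p,E)_{\Q_p-\an}^{\cH^p=\lambda_{\rho}}$. Your treatment of the finite-dimensional piece $V(\ul{k}_{\Sigma_{\wp}},w;\alpha)$ is essentially the paper's, via (\ref{equ: clin-wrs}).

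The gap is in your handling of the remaining constituents. You assert that the characters $\chi'$ appearing in the Jacquet modules of the non-locally-algebraic irreducible subquotients of $\Sigma(\ul{k}_{\Sigma_{\wp}},w;\alpha)$ have the \emph{same} restriction to $Z_1'$ as $\chi=\chi(\ul{k}_{\Sigma_{\wp}},w;\alpha)\delta$, and then you invoke Theorem~\ref{thm: clin-elt} (\'etaleness over $\cW_{1,\Sigma_{\wp}}$) to rule out a second point above $\kappa(z)$. This is not correct. By Schraen's analysis, those Jacquet characters are precisely the companion characters $\chi_S^c$ of (\ref{equ: clin-s2cd}) for nonempty $S\subseteq\Sigma_{\wp}$, and one computes
\[
\chi_S^c\big|_{Z_1'}=\chi\big|_{Z_1'}\cdot\prod_{\sigma\in S}\sigma^{2(1-k_{\sigma})}\big|_{1+2\varpi\co_{\wp}},
\]
which is a \emph{different} weight whenever $S\neq\emptyset$ (recall $k_{\sigma}\geq 2$). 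Thus the putative point $(\chi',\lambda_{\rho})$ does \emph{not} lie over $\kappa(z)$, and \'etaleness at $z$ says nothing about its existence. The suspicion you voice in your last paragraph is exactly where the argument fails.

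The paper closes this gap with a different tool: it observes that such an embedding would produce, via $J_B$, a companion point $z_S^c$ of $z$, and then appeals to Corollary~\ref{cor: clin-aat}, which shows that a non-critical point (with the hypothesis (\ref{equ: clin-iz-}) satisfied, as here) admits no companion points. That corollary is deduced from the global triangulation result Theorem~\ref{thm: clin-iz2d}, not from \'etaleness. So the fix is to replace your appeal to Theorem~\ref{thm: clin-elt} by an appeal to Corollary~\ref{cor: clin-aat}.
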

\begin{proof}
The proof is the same as in \cite[Prop.6.3.9]{Ding}. Let $f$ be in the kernel of (\ref{equ: clin-gpwp}), suppose $f\neq 0$, thus $f$ would induce an injection $$V_{\wp}\hooklongrightarrow\widetilde{H}^1_{\et}(K^p,E)_{\Q_p-\an}^{\cH^p=\lambda_{\rho}}$$ with $V_{\wp}$  an irreducible constituent of $\Sigma\big(\ul{k}_{\Sigma_{\wp}},w;\alpha;\cL_{\tau}\big)$ different from $\St\big(\ul{k}_{\Sigma_{\wp}},w;\alpha\big)$ (since $f$ lies in the kernel of (\ref{equ: clin-gpwp})) and from $V(\ul{k}_{\Sigma_{\wp}},w;\alpha)$ (by (\ref{equ: clin-wrs})), from which, by applying the Jacquet-Emerton functor, one would get a companion point of $z$, which contradicts to the fact that $z$ is non-critical (thus does not admit companion points, cf. Cor.\ref{cor: clin-aat}).
\end{proof}
In the following, we prove the surjectivity of (\ref{equ: clin-gpwp}), which is the key of this paper.
By assumption, we know the point $z$ is non-critical, thus one may find an open neighborhood $\cU$ of $z$ in $\cV(K^p,w)_{\overline{\rho}}$ such that (cf. Cor.\ref{cor: clin-siz} and \cite[Lem.6.3.12]{Ding})
  \begin{enumerate}
    \item $\cU$ is strictly quasi-Stein (\cite[Def.2.1.17(iv)]{Em04});
    \item for any $z'\in \cU(\overline{E})$, $z'$ does not have companion points.
  \end{enumerate}
  Denote by $\cM:=\cM(K^p,w)_{\overline{\rho}}$ for simplicity, the natural restriction (with dense image since $\cU$ is strictly quasi-Stein)
  \begin{equation*}
  J_B\Big(\widetilde{H}^1_{\et}(K^p,E)^{Z_1=\sN^{-w}}_{\overline{\rho}, \Q_p-\an}\Big)^{\vee}_b \cong \cM(\cV(K^p,w)_{\overline{\rho}})\lra \cM(\cU)
  \end{equation*}
  induces a continuous injection of locally $\Q_p$-analytic representations of $T$ (invariant under $\cH^p$)
  \begin{equation}\label{equ: clin-wbi}
    \cM(\cU)_{b}^{\vee} \hooklongrightarrow   J_B\Big(\widetilde{H}^1_{\et}(K^p,E)^{Z_1=\sN^{-w}}_{\overline{\rho}, \Q_p-\an}\Big)
  \end{equation}
where $\cM(\cU)_b^{\vee}$ denotes the strict dual of $\cM(\cU)$.  As in \cite[Lem.6.3.13, 6.3.14]{Ding}, one can show \big(see \cite[Lem.4.5.12]{Em1} and the proof of \cite[Thm.4.5.7]{Em1} for $\GL_2(\Q_p)$-case\big)
  \begin{itemize}
    \item $\cM(\cU)_b^{\vee}$ is an allowable subrepresentation of $J_B\big(\widetilde{H}_{\et}^1(K^p,E)_{\Q_p-\an}^{Z_1=\sN^{-w}}\big)$ (cf. \cite[Def.0.11]{Em2}) (this follows from the fact $\cU$ is strictly quasi-Stein);
    \item the map (\ref{equ: clin-wbi}) is balanced (cf. \cite[Def.0.8]{Em2}) (this follows from the fact any closed point in $\cU$ does not have companion points);
    \item $\cM(\cU)$ is a torsion free $\co(\cW_{1,\Sigma_{\wp}})$-module (cf. \S \ref{sec: clin-3.3}).
  \end{itemize}Denote by $\overline{\ft}\subset \ft$ the Lie algebra of $Z_1'\subset T(F_{\wp})$, by \cite[Cor.5.3.31]{Ding} \Big(note that \cite[Cor.5.3.31]{Ding} still holds with $\ft$ replaced by $\overline{\ft}$, and note that $\cM(\cU)^{\vee}_b$ is a divisible $\text{U}(\overline{\ft}_{\Sigma_{\wp}})$-module since $\cM(\cU)$ is a torsion free $\text{U}(\overline{\ft}_{\Sigma_{\wp}})$-module with $\text{U}(\overline{\ft}_{\Sigma_{\wp}})\hookrightarrow \co(\cW_{1,\Sigma_{\wp}})$, e.g. see \cite[\S 5.1.3, Prop.5.1.12]{Ding}\Big), the map (\ref{equ: clin-wbi}) induces a continuous $\GL_2(F_{\wp})\times\cH^p$-invariant map \big(see \cite[(4.5.9)]{Em1} for $\GL_2(\Q_p)$-case\big)
  \begin{equation}\label{equ: clin-p2w}
    \big(\Ind_{\overline{B}(F_{\wp})}^{\GL_2(F_{\wp})} \cM(\cU)^{\vee}_b[\delta^{-1}]\big)^{\Q_p-\an} \lra \widetilde{H}^1_{\et}(K^p,E\big)^{Z_1=\sN^{-w}}_{\overline{\rho}, \Q_p-\an},
  \end{equation}
  where $ \cM(\cU)^{\vee}_b[\delta^{-1}]$ denotes the twist of $\cM(\cU)^{\vee}_b$ by $\delta^{-1}$. We would deduce Prop.\ref{prop: clin-awdl} from this map.

  For $\sigma\in \Sigma_{\wp}$, denote by $\widehat{T}_{\sigma}$ (resp. $\cW_{1,\sigma}$) the rigid space over $E$ parameterizing locally $\sigma$-analytic characters of $T(F_{\wp})$ (resp. $1+2\varpi\co_{\wp}$), which is hence a closed subspace of $\widehat{T}_{\Sigma_{\wp}}$ (resp. $\cW_{1,\Sigma_{\wp}}$) (e.g. see \cite[\S 5.1.4]{Ding}). 

The character $\chi$ induces a closed embedding
\begin{equation*}
  \chi: \cW_{1,\tau}\hooklongrightarrow \cW_{1,\Sigma_{\wp}}, \  \chi'\mapsto \chi|_{Z_{1}'} \chi'.
\end{equation*}
Recall that we have a natural morphism $\kappa: \cV(K^p,w)_{\overline{\rho}} \ra \cW_{1,\Sigma_{\wp}}$ which is \'etale at $z$ (cf. Thm.\ref{thm: clin-elt}). Put $\cV(K^p,w)_{\overline{\rho},\tau}:=\cV(K^p,w)_{\overline{\rho}}\times_{\cW_{1,\Sigma_{\wp}},\chi} \cW_{1,\tau}$, one has thus a Cartesian diagram
\begin{equation*}
  \begin{CD}
    \cV(K^p,w)_{\overline{\rho},\tau} @>>>\cW_{1,\tau} \\
    @VVV  @V \chi VV\\
    \cV(K^p,w)_{\overline{\rho}}  @>>> \cW_{1,\Sigma_{\wp}}
  \end{CD}
\end{equation*}
 Denote still by $z$ the preimage of $z$ in $\cV(K^p,w)_{\overline{\rho},\tau}$, $\kappa$ the natural morphism $\cV(K^p,w)_{\overline{\rho},\tau} \ra \cW_{1,\tau}$, thus $\kappa$ is \'etale at $z$. By results in \S \ref{sec: clin-3.4.3}, one can choose an open affinoid $V$ of $\cV(K^p,w)$ containing $z$ such that $V$ is \'etale over $\cW_{1,\tau}$, and that any point in $V$ does not have companion points. Denote by $V_{\tau}$  the preimage of $V$ in $\cV(K^p,w)_{\overline{\rho},\tau}$.  We see $V_{\tau}$ is in fact a smooth curve. By Prop.\ref{prop: clin-kzc} and shrinking $V$ (and hence $V_{\tau}$) if necessary, one gets a continuous representation
 $$\rho_{V_{\tau}}: \Gal(\overline{F}/F) \ra \GL_2(\co(V))\ra \GL_2(\co(V_{\tau})).$$
Denote by $\chi_{V_{\tau}}=\chi_{V_{\tau},1}\otimes \chi_{V_{\tau},2}: T(F_{\wp})\ra \co(V_{\tau})^{\times}$ the character induced by the natural morphism $V_{\tau} \ra \cV(K^p,w)_{\overline{\rho},\tau} \ra \cV(K^p,w)_{\overline{\rho}}\ra \widehat{T}_{\Sigma_{\wp}}$. By \cite[Thm.6.3.9]{KPX} applied to the smooth affinoid curve $V_{\tau}$, together with the fact that any point in $V_{\tau}$ does not have companion points, one has
\begin{lemma}\label{lem: clin-foia}
  There exists an exact sequence of $(\varphi,\Gamma)$-modules over $\cR_{\co(V_{\tau})}:=B_{\rig,F_{\wp}}^{\dagger} \widehat{\otimes}_{\Q_p} \co(V_{\tau})$ \big(see for example \cite[Thm.2.2.17]{KPX} for $D_{\rig}(\rho_{V_{\tau},\wp})$, and \cite[Const.6.2.4]{KPX} for $(\varphi,\Gamma)$-modules of rank $1$ associate to continuous character of $F_{\wp}^{\times}$ with values in $\co(V_{\tau})^{\times}$\big):
  \begin{equation}\label{equ: clin-grqp}
    0 \ra  \cR_{\co(V_{\tau})}\big(\unr(q)\chi_{V_{\tau},1}\big) \ra D_{\rig}(\rho_{V_{\tau},\wp}) \ra \cR_{\co(V_{\tau})}\big(\chi_{V_{\tau},2}\prod_{\sigma\in \Sigma_{\wp}}\sigma^{-1}\big) \ra 0.
  \end{equation}
\end{lemma}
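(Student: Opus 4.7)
The plan is to apply the global triangulation theorem of Kedlaya-Pottharst-Xiao, \cite[Thm.6.3.9]{KPX} (together with the rank-two variant of \cite[Ex.6.3.14]{KPX}), to the family $\rho_{V_{\tau},\wp}$ of $2$-dimensional $p$-adic representations of $\Gal(\overline{\Q_p}/F_{\wp})$ over the smooth affinoid curve $V_{\tau}$. I would first exhibit a Zariski-dense subset of $V_{\tau}$ consisting of classical points where the required triangulation exists pointwise with parameters interpolated by $\unr(q)\chi_{V_{\tau},1}$ and $\chi_{V_{\tau},2}\prod_{\sigma\in \Sigma_{\wp}}\sigma^{-1}$, then globalize via KPX, and finally use the absence of companion points on $V_{\tau}$ to upgrade generic saturation to saturation at every point.

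For the dense input, recall that the locus $C(w)_0$ of semi-stable classical points satisfying the slope inequality (\ref{equ: clin-man}) is Zariski-dense and accumulating in $\cV(K^p,w)_{\overline{\rho}}$. Shrinking $V$ if necessary, the pullback $Z\subseteq V_{\tau}(\overline{E})$ of $C(w)_0\cap V(\overline{E})$ remains Zariski-dense. Theorem \ref{thm: clin-zgn} applied at each $z'\in Z$ gives $\Sigma_{z'}=\emptyset$ and a non-critical triangulation of $D_{\rig}(\rho_{z',\wp})$ whose graded pieces are exactly $\cR_{k(z')}\big(\unr(q)\chi_{V_{\tau},1}|_{z'}\big)$ and $\cR_{k(z')}\big(\chi_{V_{\tau},2}\prod_{\sigma\in\Sigma_\wp}\sigma^{-1}|_{z'}\big)$. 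The regularity condition in \cite[Thm.6.3.9]{KPX} on the ratio of these parameters is easily checked at points of $Z$ using the slope bound. Applying \emph{loc.\,cit.}\ then yields a short exact sequence of $(\varphi,\Gamma)$-modules over $\cR_{\co(V_{\tau})}$ having the shape of (\ref{equ: clin-grqp}), with the triangulation saturated after removing at most a proper Zariski-closed subset of $V_{\tau}$.

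The main obstacle, and the reason the condition that no point of $V_{\tau}$ admits a companion point was imposed, is to ensure that this triangulation is saturated at \emph{every} closed point of $V_{\tau}$. If saturation failed at some $z'\in V_{\tau}(\overline{E})$, then the classification of rank-two trianguline $(\varphi,\Gamma)$-modules in \cite[Thm.3.7]{Na}, combined with the generically valid triangulation above, would produce a second triangulation of $D_{\rig}(\rho_{z',\wp})$ whose parameters are obtained from $(\chi_{z',1}, \chi_{z',2})$ by the companion twist (\ref{equ: clin-s2cd}) for some nonempty $S'\subseteq \Sigma_{\wp}$. Reversing the argument in the proof of Corollary \ref{cor: clin-aat} would then force $z'$ to admit an $S'$-companion point in $\cV(K^p,w)_{\overline{\rho}}$, contradicting the choice of $V_{\tau}$. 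This rules out any non-saturation locus and yields the exactness of (\ref{equ: clin-grqp}) over the whole of $V_{\tau}$.
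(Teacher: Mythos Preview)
Your overall strategy coincides with the paper's: apply \cite[Thm.6.3.9]{KPX} to the family $\rho_{V_\tau,\wp}$ over the smooth affinoid curve $V_\tau$, using a Zariski-dense set of classical non-critical points as input, and then argue that the resulting global triangulation is saturated at every point. The paper's own justification is a single sentence invoking KPX together with the absence of companion points on $V_\tau$, so your expansion of the first two steps is accurate and helpful.

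The gap is in your final paragraph. You claim that a failure of saturation at $z'$ would, via Nakamura and ``reversing'' Cor.\ref{cor: clin-aat}, force $z'$ to admit a companion point on the eigenvariety. But the proof of Cor.\ref{cor: clin-aat} runs in the opposite direction: it shows that \emph{if} an $S'$-companion point exists on $\cV(K^p,w)_{\overline{\rho}}$, \emph{then} $S'\subseteq\Sigma_{z'}$. The converse---that $\Sigma_{z'}\neq\emptyset$ produces an actual point $(\chi^c_{S'},\lambda_{z'})$ on the eigenvariety---is a statement about the automorphic side which is nowhere established, and there is no obvious mechanism to manufacture the required eigenvector in completed cohomology from a purely Galois-theoretic twist of the triangulation.

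The correct way to close the argument is simpler: the open $V$ (hence $V_\tau$) was chosen via Cor.\ref{cor: clin-siz}, whose conclusion is twofold---every $z'\in V$ has $\Sigma_{z'}=\emptyset$ \emph{and} has no companion point. The paper's sentence only mentions the second conclusion, but it is the first that is actually needed here: once $\Sigma_{z'}=\emptyset$ for all $z'\in V_\tau$, the bad locus in \cite[Thm.6.3.9]{KPX} is empty and the global triangulation has exactly the parameters $\unr(q)\chi_{V_\tau,1}$ and $\chi_{V_\tau,2}\prod_{\sigma}\sigma^{-1}$ at every point, giving (\ref{equ: clin-grqp}) directly. So replace your last paragraph by a one-line appeal to the $\Sigma_{z'}=\emptyset$ part of Cor.\ref{cor: clin-siz}.
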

Let $t_{\tau}: \Spec E[\epsilon]/\epsilon^2 \ra \cW_{1,\tau}$  be a \emph{non-zero} element in the tangent space of $\cW_{1,\tau}$ at the identity point (corresponding to the trivial character), since $V_{\tau}$ is \'etale over $\cW_{1,\tau}$, $t_{\tau}$ gives rise to a non-zero element, still denoted by $t_{\tau}$, in the tangent space of $\cV(K^p,w)_{\overline{\rho},\tau}$ at the point $z$. The following composition
\begin{equation}\label{equ: clin-amgi}
  t_{\tau}: \Spec E[\epsilon]/\epsilon^2 \lra \cV(K^p,w)_{\overline{\rho},\tau} \lra \cV(K^p,w)_{\overline{\rho}} \lra \widehat{T}_{\Sigma_{\wp}}
\end{equation}
gives rise to a character $\widetilde{\chi}_{\tau}: T(F_{\wp}) \ra (E[\epsilon]/\epsilon^2)^{\times}$. We have in fact $\widetilde{\chi}_{\tau}=t_\tau\circ \chi_{V_{\tau}}$ \big($t_\tau: \co(V_{\tau}) \ra E[\epsilon]/\epsilon^2$\big). We know $\widetilde{\chi}_{\tau}\equiv \chi \pmod{\epsilon}$. Since the image of (\ref{equ: clin-amgi}) lies in $\widehat{T}_{\Sigma_{\wp}}(w)$ (cf. (\ref{equ: clin-tapwe})), we see $\widetilde{\chi}_{\tau}|_{Z_1}=\sN^{-w}$ and thus $(\widetilde{\chi}_{\tau}\chi^{-1})|_{Z_1}=1$.
\begin{lemma}
   There exist $\gamma$, $\eta\in E$, $\mu\in E^{\times}$ such that $$\psi_{\tau}:=\widetilde{\chi}_{\tau}\chi^{-1}=\unr(1+\gamma\epsilon) (1-\mu\epsilon \log_{\tau,0,\varpi}) \otimes \unr(1+\eta \epsilon) (1+\mu\epsilon\log_{\tau,0,\varpi}).$$
\end{lemma}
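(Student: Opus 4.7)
The plan is to write $\psi_\tau = 1+\epsilon\Theta$ for a unique continuous additive character $\Theta\colon T(F_\wp)\to(E,+)$, using the isomorphism $1+E\epsilon\xrightarrow{\sim}(E,+)$, and then to pin down $\Theta$ by extracting two independent linear conditions. Decomposing along the diagonal torus gives $\Theta\begin{pmatrix}a&0\\0&d\end{pmatrix}=\theta_1(a)+\theta_2(d)$ for continuous additive characters $\theta_1,\theta_2\colon F_\wp^\times\to E$. Since $(E,+)$ is torsion-free, each $\theta_i$ kills the torsion subgroup of $F_\wp^\times$, hence is determined by its restriction to $1+2\varpi\co_\wp$ and its value at $\varpi$, and the lemma amounts to describing these data.

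The first constraint comes from the fact that the map (\ref{equ: clin-amgi}) factors through $\widehat{T}_{\Sigma_\wp}(w)$, so $\widetilde{\chi}_\tau|_{Z_1}=\sN^{-w}=\chi|_{Z_1}$ and hence $\Theta|_{Z_1}=0$; unwinding $Z_1\hookrightarrow T(F_\wp)$, $a\mapsto\begin{pmatrix}a&0\\0&a\end{pmatrix}$, this yields $\theta_1(a)+\theta_2(a)=0$ for all $a\in 1+2\varpi\co_\wp$. The second constraint comes from the geometry of $V_\tau$: by \'etaleness of $\kappa\colon V_\tau\to \cW_{1,\tau}$ at $z$ (obtained from Thm.\ \ref{thm: clin-elt} via base change along $\chi\colon\cW_{1,\tau}\hookrightarrow\cW_{1,\Sigma_\wp}$) together with $t_\tau\neq 0$, the image of $t_\tau$ in the tangent space of $\cW_{1,\tau}$ at the identity is non-zero. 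That tangent space is one-dimensional over $E$ and generated by $\log_\tau$, so there exists $\mu\in E^\times$ such that $\psi_\tau|_{Z_1'}$ corresponds, under $Z_1'\cong 1+2\varpi\co_\wp$, to the infinitesimal character $1-2\mu\epsilon\log_\tau$. Unwinding $Z_1'\hookrightarrow T(F_\wp)$, $a\mapsto\begin{pmatrix}a&0\\0&a^{-1}\end{pmatrix}$, this becomes $\theta_1(a)-\theta_2(a)=-2\mu\log_\tau(a)$ on $1+2\varpi\co_\wp$.

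Solving the two linear relations forces $\theta_1(a)=-\mu\log_\tau(a)$ and $\theta_2(a)=\mu\log_\tau(a)$ on $1+2\varpi\co_\wp$, and by the torsion-vanishing remark these identities extend to all of $\co_\wp^\times$. The values $\theta_1(\varpi)$ and $\theta_2(\varpi)$ are a priori unconstrained elements of $E$; I would simply name them $\gamma$ and $\eta$. Reassembling multiplicatively: the first factor $\psi_{\tau,1}$ equals $1+\gamma\epsilon$ at $\varpi$ and $1-\mu\epsilon\log_\tau$ on $\co_\wp^\times$, which is precisely $\unr(1+\gamma\epsilon)(1-\mu\epsilon\log_{\tau,0,\varpi})$ since $\log_{\tau,0,\varpi}$ vanishes at $\varpi$ and equals $\log_\tau$ on $\co_\wp^\times$; the second factor is symmetric. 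This is exactly the formula of the statement.

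The main subtlety I anticipate is the second constraint: one must verify both that only a $\tau$-component can appear (this is automatic because $V_\tau$ is defined as the fibre product over $\chi\colon\cW_{1,\tau}\hookrightarrow\cW_{1,\Sigma_\wp}$, so \emph{a priori} any tangent direction restricts on $Z_1'$ to the $\tau$-analytic line) and that the resulting scalar $\mu$ is non-zero (this uses \'etaleness of $V_\tau\to\cW_{1,\tau}$ at $z$ together with $t_\tau\neq 0$). Once these geometric inputs are in place, what remains is routine linear algebra with continuous characters together with careful bookkeeping to match the normalization via $\log_{\tau,0,\varpi}$ and $\unr$.
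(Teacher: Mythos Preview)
Your proposal is correct and is precisely a detailed unpacking of what the paper means by ``straightforward'': the paper's own proof consists only of the sentence ``The lemma is straightforward. Note that $\mu\neq 0$ since $t_{\tau}$ (as an element in the tangent space) is non-zero.'' Your two constraints (vanishing on $Z_1$ from the central character condition, and the identification of $\psi_\tau|_{Z_1'}$ with the original nonzero tangent vector in the one-dimensional space $E\cdot\log_\tau$) are exactly the ingredients that make the computation routine, and your bookkeeping for the extension from $1+2\varpi\co_\wp$ to $\co_\wp^\times$ via torsion-freeness of $(E,+)$ is correct.
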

\begin{proof}
  The lemma is straightforward. Note that $\mu\neq 0$ since $t_{\tau}$ (as an element in the tangent space) is non-zero.
\end{proof}
By multiplying $\epsilon$ by constants, we assume $\mu=1$ and thus $$\psi_{\tau}=\unr(1+\gamma\epsilon) (1-\epsilon\log_{\tau,0,\varpi})\otimes \unr(1+\eta\epsilon) (1+\epsilon\log_{\tau,0,\varpi}).$$

The following lemma, which describes the character $\widetilde{\chi}_{\tau}$ in terms of the $\cL$-invariants, is one of the key points in the proof of Prop.\ref{prop: clin-awdl}.
\begin{lemma}\label{thm: clin-aue}$(\eta-\gamma)/2=-e^{-1}\big(\cL_{\tau}+\log_{\tau}\big(\frac{p}{\varpi^{e}}\big)\big)=(-\cL_{\tau})(\varpi)$ (cf. \S \ref{sec: clin-1}).
\end{lemma}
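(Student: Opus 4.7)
The plan is to pull back the family triangulation of Lemma~\ref{lem: clin-foia} along $t_\tau$ and apply Zhang's generalization of Colmez's formula. Concretely, applying $t_\tau^\ast$ to \eqref{equ: clin-grqp} yields an exact sequence
\begin{equation*}
0 \to \cR_{E[\epsilon]/\epsilon^2}\big(\unr(q)\widetilde\chi_{\tau,1}\big) \to D_{\rig}(\widetilde\rho_\wp) \to \cR_{E[\epsilon]/\epsilon^2}\big(\widetilde\chi_{\tau,2}\textstyle\prod_{\sigma}\sigma^{-1}\big) \to 0,
\end{equation*}
so that $\widetilde\rho_\wp := t_\tau^\ast\rho_{V_\tau,\wp}$ is an infinitesimal deformation of $\rho_\wp$ together with an infinitesimal deformation of the (unique) triangulation of $\rho_\wp$ described in \S\ref{sec: clin-ene}. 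Writing $\widetilde\chi_{\tau,i} = \chi_i(1+\epsilon\phi_i)$ with $\phi_i: F_\wp^\times \to E$ additive, the explicit form of $\psi_\tau$ yields the values $\phi_1(\varpi) = \gamma$, $\phi_2(\varpi) = \eta$, $\phi_1|_{\co_\wp^\times} = -\log_\tau$, and $\phi_2|_{\co_\wp^\times} = \log_\tau$.

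Next I would invoke \cite[Thm.~1.1]{Zhang}, which expresses each Fontaine--Mazur $\cL$-invariant $\cL_\sigma$ of a $2$-dimensional semi-stable non-crystalline representation of $\Gal(\overline{\Q_p}/F_{\wp})$ satisfying Hypothesis~\ref{hyp: clin-aq0} in terms of an arbitrary infinitesimal deformation of its triangulation. Since $t_\tau$ factors through the closed immersion $\cW_{1,\tau} \hookrightarrow \cW_{1,\Sigma_\wp}$, the weight perturbation of $\widetilde\rho_\wp$ has no $\sigma$-component for $\sigma \neq \tau$, so Zhang's formula collapses to a single scalar identity in the $\tau$-direction. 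Interpreted via the data $(\phi_1,\phi_2)$ above, this identity reads $(\eta-\gamma)/2 = (-\cL_\tau)(\varpi)$, which by the paper's relation $\cL(\varpi) = e^{-1}\big(\cL - \log_\tau(p/\varpi^e)\big)$ is precisely the claimed equality.

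The main difficulty is essentially bookkeeping. The factor of $2$ in $(\eta-\gamma)/2$ traces to the fact that $\chi_1$ and $\chi_2$ receive \emph{opposite} $\tau$-perturbations in $\psi_\tau$, so that $(\phi_2-\phi_1)|_{\co_\wp^\times} = 2\log_\tau$ has slope $2$ in Zhang's formula; the correction $\log_\tau(p/\varpi^e)$ and the factor $e^{-1}$ come from converting between Zhang's convention (phrased in terms of $\log_{\tau,\cL}$, normalized at $p$) and the convention used here for $\psi_\tau$ (phrased in terms of $\log_{\tau,0,\varpi}$, normalized at $\varpi$). One must also verify that the $\sigma$-components of Zhang's identity for $\sigma\neq\tau$ are automatically satisfied because $\phi_2-\phi_1$ has no $\sigma$-analytic contribution in those directions. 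Beyond this, no additional Galois-theoretic input is needed: Lemma~\ref{lem: clin-foia} supplies the triangulation in family, and the formula of \cite{Zhang} converts the triangulation deformation into the statement on $\cL_\tau$.
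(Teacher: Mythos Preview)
Your proposal is correct and follows essentially the same route as the paper: pull back the triangulation of Lemma~\ref{lem: clin-foia} along $t_\tau$ and feed the resulting $E[\epsilon]/\epsilon^2$-deformation into \cite[Thm.~1.1]{Zhang}. The only point the paper makes more explicit is the bookkeeping you flag: it first twists $\widetilde\rho_{z,\wp}$ by the character $\chi_0=(1-\epsilon\log_{\tau,0,\varpi})\prod_\sigma\varepsilon_{\sigma,\varpi}^{-(w-k_\sigma+2)/2}$ so that the rank-one sub becomes purely unramified, and then writes $\log_{\tau,0,\varpi}=-\tfrac{1}{d}\log_\tau(p/\varpi^e)\,\psi_1+1_\tau\,\psi_2$ in Zhang's basis before reading off the identity.
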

\begin{proof}
  Denote by $\widetilde{\rho}_{z,\wp}:=t_{\tau}\circ  \rho_{V_{\tau},\wp}:\Gal(\overline{\Q_p}/F_{\wp}) \ra \GL_2(\co(V_{\tau}))\ra \GL_2(E[\epsilon]/\epsilon)$, from (\ref{equ: clin-grqp}), one gets an exact sequence of $(\varphi,\Gamma)$-modules over $\cR_{E[\epsilon]/\epsilon^2}$:
  \begin{equation}\label{equ: clin-2no}
0 \ra \cR_{E[\epsilon]/\epsilon^2} \big(\unr(q)\widetilde{\chi}_{\tau,1} \big) \ra D_{\rig}(\widetilde{\rho}_{z,\wp}) \ra \cR_{E[\epsilon]/\epsilon^2} \Big(\widetilde{\chi}_{\tau,2} \prod_{\sigma\in \Sigma_{\wp}} \sigma^{-1}\Big) \ra 0.
  \end{equation}
  For $\sigma\in \Sigma_{\wp}$, denote by  $\varepsilon_{\sigma,\varpi}$ the character of $F_{\wp}^{\times}$ with $\varepsilon_{\sigma,\varpi}|_{\co_{\wp}^{\times}}=\sigma|_{\co_{\wp}^{\times}}$ and $\varepsilon_{\sigma,\varpi}(\varpi)=1$. Recall $\chi=\unr(q^{-1}\alpha)\prod_{\sigma\in \Sigma_{\wp}}\sigma^{-\frac{w-k_{\sigma}+2}{2}} \otimes \unr(q\alpha) \prod_{\sigma\in \Sigma_{\wp}}\sigma^{-\frac{w+k_{\sigma}-2}{2}}$. We have thus
  \begin{multline*}
    \delta_1:=\unr(q)\widetilde{\chi}_{\tau,1}=\unr\big(\alpha(1+\gamma\epsilon)\big)(1-\epsilon\log_{\tau,0,\varpi})\prod_{\sigma\in \Sigma_{\wp}}\sigma^{-{\frac{w-k_{\sigma}+2}{2}}}\\ =\unr\Big(\alpha(1+\gamma\epsilon)\prod_{\sigma\in \Sigma_{\wp}}\sigma(\varpi)^{-\frac{w-k_{\sigma}+2}{2}}\Big) (1-\epsilon\log_{\tau,0,\varpi}) \prod_{\sigma\in \Sigma_{\wp}}\varepsilon_{\sigma,\varpi}^{-\frac{w-k_{\sigma}+2}{2}},
  \end{multline*}
  \begin{equation*}
    \delta_2:=\widetilde{\chi}_{\tau,2} \prod_{\sigma\in \Sigma_{\wp}} \sigma^{-1}=\unr\Big(q\alpha(1+\eta\epsilon)\prod_{\sigma\in \Sigma_{\wp}}\sigma(\varpi)^{-\frac{w+k_{\sigma}-1}{2}}\Big) (1+\epsilon\log_{\tau,0,\varpi})\prod_{\sigma\in \Sigma_{\wp}} \varepsilon_{\sigma,\varpi}^{-\frac{w+k_{\sigma}}{2}}.
  \end{equation*}
Let $\chi_0:=(1-\epsilon\log_{\tau,0,\varpi}) \prod_{\sigma\in \Sigma_{\wp}}\varepsilon_{\sigma,\varpi}^{-\frac{w-k_{\sigma}+2}{2}}$, one can view $\chi_0$ as a character of $\Gal(\overline{\Q_p}/F_{\wp})$ over $E[\epsilon]/[\epsilon^2]$ via the local Artin map  $\Art_{F_{\wp}}$. Denote by $\widetilde{\rho}:=\widetilde{\rho}_{z,\wp}\otimes_{E[\epsilon]/\epsilon^2}\chi_0^{-1}$, \begin{eqnarray*}\delta_1'&:=&\delta_1\chi_0^{-1}=\unr\Big(\alpha(1+\gamma\epsilon)\prod_{\sigma\in \Sigma_{\wp}}\sigma(\varpi)^{-\frac{w-k_{\sigma}+2}{2}}\Big) \\
  \delta_2'&:=& \delta_2\chi_0^{-1}=\unr\Big(q\alpha(1+\eta\epsilon)\prod_{\sigma\in \Sigma_{\wp}}\sigma(\varpi)^{-\frac{w+k_{\sigma}-1}{2}}\Big) (1+2\epsilon\log_{\tau,0,\varpi})\prod_{\sigma\in \Sigma_{\wp}}\varepsilon_{\sigma,\varpi}^{1-k_{\sigma}}
  \end{eqnarray*}
    Thus one has
  \begin{equation}\label{equ: linv-1drgd}
  0 \ra \cR_{E[\epsilon]/\epsilon^2} (\delta_1') \ra  D_{\rig}(\widetilde{\rho}) \ra \cR_{E[\epsilon]/\epsilon^2}(\delta_2')\ra 0.
  \end{equation}
  Denote by $\widetilde{\alpha}:=\alpha(1+\gamma \epsilon)\prod_{\sigma\in \Sigma_{\wp}} \sigma(\varpi)^{-\frac{w-k_{\sigma}+2}{2}}$, by (\ref{equ: linv-1drgd}),  $(B_{\cris}\otimes_{\Q_p} \widetilde{\rho})^{\Gal(\overline{\Q_p}/F_{\wp}), \varphi^{d_0}=\widetilde{\alpha}}$ is free of rank $1$ over $F_{\wp,0}\otimes_{\Q_p}E[\epsilon]/\epsilon^2$, and that the $E$-representation $\overline{\widetilde{\rho}} \pmod{\epsilon}$ of $\Gal(\overline{\Q_p}/F_{\wp})$ is semi-stable non-crystalline of Hodge-Tate weights $(1-k_{\sigma},0)_{\sigma\in \Sigma_{\wp}}$, and has  the same $\cL$-invariants as $\rho_{\wp}$. By applying the formula in \cite[Thm.1.1]{Zhang}, one gets
  \begin{equation*}
    \frac{\gamma}{d_0}+\big(-\frac{\gamma+\eta}{2d_0}\big)-\frac{1}{d} \log_{\tau}\big(\frac{p}{\varpi^{e}}\big)-\frac{1}{d} \cL_{\tau}=0.
  \end{equation*}
  In fact, with the notation of \emph{loc. cit.}, one has
  \begin{equation*}
    \log_{\tau,0,\varpi}=-\frac{1}{d} \log_{\tau}\big(\frac{p}{\varpi^{e}}\big) \psi_1 +1_{\tau}\psi_2
  \end{equation*}
   where $1_{\tau}\in F_{\wp}\otimes_{\Q_p} E\cong \prod_{\sigma\in \Sigma_{\wp}} E$ such that $(1_{\tau})_{\sigma}=0$ if $\sigma\neq \tau$ and $(1_{\tau})_{\tau}=1$, and where we view $\log_{\tau,0,\varpi}$ as an additive character of $\Gal(\overline{\Q_p}/F_{\wp})$ via the local Artin map $\Art_{F_{\wp}}$. Thus one can apply the formula  in \cite[Thm.1.1]{Zhang} to
   \begin{equation*}\{V, \alpha,\delta,\kappa\}=\Big\{\widetilde{\rho}, \widetilde{\alpha},\Big(-\frac{\gamma+\eta}{d_0}-\frac{2}{d} \log_{\tau}\big(\frac{p}{\varpi^{e}}\big) \Big) \epsilon,2_{\tau} \epsilon\Big\}.
   \end{equation*}
   The lemma follows.
\end{proof}
The following lemma follows directly from Lem.\ref{thm: clin-aue}.
\begin{lemma}
  As representations of $T(F_{\wp})$ (of dimension $2$) over $E$, one has
  \begin{equation*}
    \widetilde{\chi}_{\tau}\delta^{-1} \cong \chi(\ul{k}_{\Sigma_{\wp}},w;\alpha) \otimes_E \psi(\cL_{\tau})',
  \end{equation*}
  where $\psi(\cL_{\tau})'\begin{pmatrix}
    a & 0 \\ 0 & d
  \end{pmatrix}=\begin{pmatrix}
    1 & \log_{\tau,-\cL_{\tau}}(a d^{-1}) +\frac{\gamma+\eta}{2}\us_{\wp}(ad) \\ 0 & 1
  \end{pmatrix}$.
\end{lemma}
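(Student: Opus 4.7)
The plan is a direct computation: expand $\psi_\tau=\widetilde{\chi}_\tau\chi^{-1}$ modulo $\epsilon^2$ to read off the additive character governing the extension class, then use Lem.\ref{thm: clin-aue} to rewrite it in the form appearing in $\psi(\cL_\tau)'$. Since $\chi=\chi(\ul{k}_{\Sigma_\wp},w;\alpha)\cdot\delta$, we have $\widetilde{\chi}_\tau\delta^{-1}=\chi(\ul{k}_{\Sigma_\wp},w;\alpha)\cdot\psi_\tau$, so viewing $\psi_\tau$ as a $2$-dimensional $E$-representation of $T(F_\wp)$ (via multiplication on $E[\epsilon]/\epsilon^2$) reduces the lemma to identifying this representation with $\psi(\cL_\tau)'$.

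The first step would be to compute $\psi_\tau(t)$ explicitly for $t=\begin{pmatrix}a & 0 \\ 0 & d\end{pmatrix}$. Using the identities $(1+c\epsilon)^{n}\equiv 1+nc\epsilon\pmod{\epsilon^{2}}$ for $n\in\Z$ and the additivity of $\log_{\tau,0,\varpi}$, one obtains $\psi_\tau(t)\equiv 1+\phi(t)\epsilon\pmod{\epsilon^2}$ for a specific additive character $\phi: T(F_\wp)\to E$ built from $\gamma$, $\eta$ and $\log_{\tau,0,\varpi}$. Choosing an appropriate basis of $E[\epsilon]/\epsilon^2$, the action of $\psi_\tau(t)$ becomes the upper-triangular matrix $\begin{pmatrix}1 & \phi(t) \\ 0 & 1\end{pmatrix}$, so $\psi_\tau$, as a $2$-dimensional $E$-representation, is the extension of the trivial character of $T(F_\wp)$ by itself classified by $\phi$.

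Finally, by unwinding the normalization conventions in \S\ref{sec: clin-1}, the conclusion of Lem.\ref{thm: clin-aue} gives $\log_{\tau,-\cL_\tau}(\varpi)=(\eta-\gamma)/2$; together with $\log_{\tau,-\cL_\tau}|_{\co_\wp^\times}=\log_\tau=\log_{\tau,0,\varpi}|_{\co_\wp^\times}$ this yields the identity
\begin{equation*}
\log_{\tau,-\cL_\tau}(x)=\log_{\tau,0,\varpi}(x)+\tfrac{\eta-\gamma}{2}\us_\wp(x), \qquad x\in F_\wp^\times.
\end{equation*}
Substituting this into the expression for the $(1,2)$-entry of $\psi(\cL_\tau)'$ and rearranging, the additive character attached to $\chi(\ul{k}_{\Sigma_\wp},w;\alpha)\otimes_E\psi(\cL_\tau)'$ matches the one attached to $\widetilde{\chi}_\tau\delta^{-1}$ up to the action of $E^\times$ arising from rescaling the basis of $E[\epsilon]/\epsilon^{2}$, establishing the desired isomorphism of $2$-dimensional representations. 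The only point requiring care in this short computation is tracking the two different normalizations ($\log_{\sigma,\cL}$ versus $\log_{\sigma,\cL,\varpi}$), as no further conceptual input beyond Lem.\ref{thm: clin-aue} is needed.
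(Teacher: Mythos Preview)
Your proposal is correct and follows exactly the approach the paper intends: the paper's own proof is the single sentence ``The following lemma follows directly from Lem.\ref{thm: clin-aue}'', and your computation (reducing to identifying the additive character of $\psi_\tau$ on $E[\epsilon]/\epsilon^2$ and then invoking Lem.\ref{thm: clin-aue} to rewrite it via $\log_{\tau,-\cL_\tau}$) is precisely the unwinding of that sentence. Your caution about the two log-normalizations is well placed; that bookkeeping is the only content of the argument.
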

The parabolic induction $\big(\Ind_{\overline{B}(F_{\wp})}^{\GL_2(F_{\wp})} \widetilde{\chi}_{\tau}\delta^{-1}\big)^{\Q_p-\an}$ lies thus in an exact sequence as follows
  \begin{multline}\label{equ: clin-pw2p}
  0 \lra \Big(\Ind_{\overline{B}(F_{\wp})}^{\GL_2(F_{\wp})} \chi(\ul{k}_{\Sigma_{\wp}},w;\alpha)\Big)^{\Q_p-\an} \lra \Big(\Ind_{\overline{B}(F_{\wp})}^{\GL_2(F_{\wp})} \widetilde{\chi}_{\tau}\delta^{-1}\Big)^{\Q_p-\an}\\  \xlongrightarrow{s_{\tau}'}\Big(\Ind_{\overline{B}(F_{\wp})}^{\GL_2(F_{\wp})} \chi(\ul{k}_{\Sigma_{\wp}},w;\alpha)\Big)^{\Q_p-\an} \lra 0.
\end{multline}
By Lem.\ref{lem: linv-nmfyc}, one has
\begin{lemma}\label{lem: clin-pcssf}
  One has an isomorphism of locally $\Q_p$-analytic representations of $\GL_2(F_{\wp})$:
  \begin{equation*}
    (s_{\tau}')^{-1}\big(V\big(\ul{k}_{\Sigma_{\wp}},w;\alpha\big)\big)/V\big(\ul{k}_{\Sigma_{\wp}},w;\alpha\big) \cong \Sigma\big(\ul{k}_{\Sigma_{\wp}},w;\alpha;\cL_{\tau}\big).
  \end{equation*}
\end{lemma}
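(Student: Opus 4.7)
The plan is to obtain this as a direct consequence of Lemma \ref{thm: clin-aue} combined with Lemma \ref{lem: linv-nmfyc}. By the lemma just before the statement, the character $\widetilde{\chi}_{\tau}\delta^{-1}$ is isomorphic to $\chi(\ul{k}_{\Sigma_{\wp}},w;\alpha)\otimes_E \psi(\cL_{\tau})'$, whose off-diagonal entry at $\mathrm{diag}(a,d)$ is
\begin{equation*}
  \log_{\tau,-\cL_{\tau}}(ad^{-1}) + \tfrac{\gamma+\eta}{2}\us_{\wp}(ad).
\end{equation*}
The first step is to recognise that $\tfrac{\gamma+\eta}{2}\us_{\wp}(ad) = \chi_{\tau}(\dett(\mathrm{diag}(a,d)))$, where $\chi_{\tau}: F_{\wp}^{\times}\to E$, $x\mapsto \tfrac{\gamma+\eta}{2}\us_{\wp}(x)$, is a smooth (hence locally $\tau$-analytic) additive character of $F_{\wp}^{\times}$. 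Consequently, the exact sequence (\ref{equ: clin-pw2p}) fits exactly the setup of the construction $\Sigma'(\ul{k}_{\Sigma_{\wp}},w;\alpha;\ul{\cL}'_{\Sigma_{\wp}})$ introduced before Lemma \ref{lem: linv-nmfyc}, with $\cL'_{\tau}=\cL_{\tau}$, all other $\cL'_{\sigma}=0$, and the additive perturbation at $\tau$ equal to $\chi_{\tau}$.

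Next, I would apply Lemma \ref{lem: linv-nmfyc} in this single-place form to conclude
\begin{equation*}
  (s'_{\tau})^{-1}\big(V(\ul{k}_{\Sigma_{\wp}},w;\alpha)\big)/V(\ul{k}_{\Sigma_{\wp}},w;\alpha) \;\cong\; \Sigma\big(\ul{k}_{\Sigma_{\wp}},w;\alpha;\cL_{\tau}\big).
\end{equation*}
Strictly speaking the lemma is stated for the full tuple $\ul{\cL}_{\Sigma_{\wp}}$, but the proof goes through verbatim with the single place $\tau$ once one restricts the Ext comparison to the corresponding direct summand of $\Hom_{\Q_p-\mathrm{an}}(T(F_{\wp}),E)$; alternatively, one can deduce the single-$\tau$ statement from the full statement by combining Lemma \ref{lem: linv-nmfyc} with the amalgamated-sum decomposition of Remark \ref{rem: clin-tagia}(2), setting the non-$\tau$ components to zero.

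The only substantive point to check is that the presence of the additive summand $\chi_{\tau}\circ\dett$ really does not alter the isomorphism class of the subquotient $(s'_{\tau})^{-1}(V)/V$. This is exactly the $\PGL_2$-versus-$\GL_2$ comparison used in the proof of Lemma \ref{lem: linv-nmfyc}: extensions of $V(\ul{k}_{\Sigma_{\wp}},w;\alpha)$ by itself that arise from central (i.e.\ $\det$-factoring) twists vanish in $\Ext^1_{\PGL_2(F_{\wp})}$, and it is the $\PGL_2$-class that controls the subquotient. I do not expect any real obstacle here: once the character-theoretic identification of the first paragraph is made, the lemma follows formally. If anything, the only care needed is bookkeeping of signs and of the normalisation of $\log_{\tau,-\cL_{\tau}}$ (cf.\ the definitions in Section \ref{sec: clin-1}) to ensure that the $\cL$-invariant read off from $\widetilde{\chi}_{\tau}$ via Lemma \ref{thm: clin-aue} matches the one appearing in the definition (\ref{equ: clin-akp}) of $\Sigma(\ul{k}_{\Sigma_{\wp}},w;\alpha;\cL_{\tau})$.
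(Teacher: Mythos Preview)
Your proposal is correct and follows exactly the paper's approach: the paper simply states ``By Lem.\ref{lem: linv-nmfyc}, one has'' and gives the isomorphism, while you have spelled out the implicit steps—identifying the extra term $\tfrac{\gamma+\eta}{2}\us_{\wp}(ad)$ as $\chi_{\tau}\circ\dett$ for a locally $\tau$-analytic additive character, and noting that Lem.\ref{lem: linv-nmfyc} (in its single-$\tau$ form, obtained either by the same $\Ext$ argument or via Rem.\ref{rem: clin-tagia}(2)) then applies directly.
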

Consider the composition  $t_{\tau}: \Spec E[\epsilon]/\epsilon^2\xrightarrow{t_{\tau}} \cV(K^p,w)_{\overline{\rho},\tau}\hookrightarrow \cV(K^p,w)_{\overline{\rho}}$, and $(t_{\tau}^* \cM)^{\vee}$ being a finite dimensional $E$-vector space equipped with a natural action of $T(F_{\wp})\times \cH^p$. We claim there exists $n\in \Z_{\geq 1}$ such that \big(as $T(F_{\wp})$-representations\big)\begin{equation}\label{equ: clin-mcee}(t_{\tau}^*\cM)^{\vee}\cong \widetilde{\chi}_{\tau}^{\oplus n}.\end{equation}In fact, as in \S \ref{sec: clin-3.4.3}, there exists open affinoids $V'$ of $\cV(K^p,w)_{\overline{\rho}}$ and $U$ of $\cW_{1,\Sigma_{\wp}}$ such that $V'$ lies over $U$, $\co(V')\cong \co(U)$, and that $\cM(V')$ is a locally free $\co(U)$-module. The group $T(F_{\wp})$ acts on $\cM(V')$  via the character $T(F_{\wp}) \ra \co(V')^{\times} \cong \co(U)^{\times}$ \big(with the first map induced by the natural morphism $V'\ra \widehat{T}_{\Sigma_{\wp}}$\big), the claim follows. We also see that $\cH^p$ acts on $\cM(V')$ via the natural morphism $\cH^p \ra \co(V') \cong \co(U)$. Thus $\cH^p$ acts on $(t_{\tau}^*\cM)^{\vee}$ via $\cH^p \ra \co(V') \ra E[\epsilon]/\epsilon^2$, in particular $(t_{\tau}^*\cM)^{\vee}$ is a generalized $\lambda_{\rho}$-eigenspace for $\cH^p$ (one can view $t_{\tau}$ as a thickening of the point $z$).

Since $\cU$ is strictly quasi-Stein, the restriction map $\cM(\cU)\ra t_{\tau}^*\cM$ is surjective, so we have injections
 \begin{equation*}
   (z^*\cM)^{\vee} \hooklongrightarrow (t_{\tau}^*\cM)^{\vee} \hooklongrightarrow \cM(\cU)^{\vee}_b.
 \end{equation*}Firstly note  that a non-zero map $f$ in the right term of (\ref{equ: clin-gpwp}) corresponds to a non-zero vector $v\in (z^*\cM)^{\vee}$ in a natural way:
    \begin{multline}\label{equ: clin-pjpk}
    (z^* \cM)^{\vee} \xlongrightarrow{\sim} J_B\big(\widetilde{H}_{\et}^1(K^p,E)_{\Q_p-\an}^{Z_1=\sN^{-w}}\big)^{T(F_{\wp})=\chi, \cH^p=\lambda_{\rho}}
    \\ \xlongrightarrow{\sim} J_B\big(H_{\et}^1\big(K^p, W(\ul{k}_{\Sigma_{\wp}},w)\big) \otimes_E W(\ul{k}_{\Sigma_{\wp}},w)^{\vee}\big)^{T(F_{\wp})=\chi, \cH^p=\lambda_{\rho}}
    \\ \xlongrightarrow{\sim} J_B\big(H_{\et}^1\big(K^p, W(\ul{k}_{\Sigma_{\wp}},w)\big)\big)^{T(F_{\wp})=\psi, \cH^p=\lambda_{\rho}} \otimes_E \chi(\ul{k}_{\Sigma_{\wp}},w)
    \\ \xlongrightarrow{\sim} \Hom_{\GL_2(F_{\wp})}\Big(\big(\Ind_{\overline{B}(F_{\wp})}^{\GL_2(F_{\wp})} \psi\delta^{-1}\big)^{\infty}, H_{\et}^1\big(K^p, W(\ul{k}_{\Sigma_{\wp}},w)\big)^{\cH^p=\lambda_{\rho}}\Big)
    \\ \xlongrightarrow{\sim} \Hom_{\GL_2(F_{\wp})}\Big(\St\big(\ul{k}_{\Sigma_{\wp}},w;\alpha\big), \widetilde{H}^1_{\et}(K^p,E)_{\Q_p-\an}^{\cH^p=\lambda_{\rho}}\Big),
  \end{multline}where the first isomorphism follows from Thm.\ref{thm: clin-cjw}, the second from the fact that any vector in the second term is classical (see also Cor.\ref{prop: clin-ernr}), $\psi:=\chi \chi(\ul{k}_{\Sigma_{\wp}},w)^{-1}$ (cf. Cor.\ref{prop: clin-ernr}), the fourth from the adjunction formula for the classical Jacquet functor, and the last isomorphism follows from (\ref{equ: clin-apwgi}) and (\ref{equ: clin-wrs}) (and \cite[Cor.5.1.6]{Ding}).

  By the isomorphism (\ref{equ: clin-mcee}), there exists $\widetilde{v}\in (t_{\tau}^*\cM)^{\vee}$ such that $(E[\epsilon]/\epsilon^2)\cdot \widetilde{v} \cong \widetilde{\chi}$ and that $v\in \big(E[\epsilon]/\epsilon^2\big)\cdot \widetilde{v}$. By multiplying $\widetilde{t}$ by scalars in $E$, one can assume $v=\epsilon\widetilde{v}$. The  $T(F_{\wp})$-invariant map, by mapping a basis to $\widetilde{v}$,
 \begin{equation*}
   \widetilde{\chi}\hooklongrightarrow \cM(\cU)^{\vee}_b[\cH^p=\lambda_{\rho}]
 \end{equation*}
 induces a $\GL_2(F_{\wp})$-invariant map denoted by $\widetilde{v}$
 \begin{multline}\label{equ: clin-pqgi}
   \widetilde{v}: \big(\Ind_{\overline{B}(F_{\wp})}^{\GL_2(F_{\wp})} \widetilde{\chi}_{\tau}\delta^{-1}\big)^{\Q_p-\an} \hooklongrightarrow  \big(\Ind_{\overline{B}(F_{\wp})}^{\GL_2(F_{\wp})} \cM(\cU)^{\vee}_b[\delta^{-1}]\big)^{\Q_p-\an}[\cH^p=\lambda_{\rho}]\\ \xlongrightarrow{\text{(\ref{equ: clin-p2w})}} \widetilde{H}^1_{\et}(K^p,E\big)^{Z_1=\sN^{-w}}_{\overline{\rho}, \Q_p-\an}[\cH^p=\lambda_{\rho}].
 \end{multline}
 Similarly, the $T(F_{\wp})$-invariant map $\chi\hookrightarrow (\cM(\cU)^{\vee}_b)^{\cH^p=\lambda_{\rho}}$, by mapping a basis to $v$, induces a $\GL_2(F_{\wp})$-invariant map, denoted by $v$
 \begin{equation*}
   v: \big(\Ind_{\overline{B}(F_{\wp})}^{\GL_2(F_{\wp})}\chi\delta^{-1}\big)^{\Q_p-\an} \lra  \widetilde{H}^1_{\et}(K^p,E\big)^{Z_1=\sN^{-w}, \cH^p=\lambda_{\rho}}_{\overline{\rho}, \Q_p-\an}.
 \end{equation*}
 It's straightforward to see that the following diagram commutes
 \begin{equation}\label{equ: clin-pagip}
   \begin{CD}
    \big(\Ind_{\overline{B}(F_{\wp})}^{\GL_2(F_{\wp})} \chi\delta^{-1}\big)^{\Q_p-\an} @> v >> \widetilde{H}^1_{\et}(K^p,E)^{Z_1=\sN^{-w},\cH^p=\lambda_{\rho}}_{\overline{\rho}, \Q_p-\an} \\
    @VVV @VVV \\
      \big(\Ind_{\overline{B}(F_{\wp})}^{\GL_2(F_{\wp})} \widetilde{\chi}_{\tau}\delta^{-1}\big)^{\Q_p-\an} @> \widetilde{v} >>\widetilde{H}^1_{\et}(K^p,E)^{Z_1=\sN^{-w}}_{\overline{\rho}, \Q_p-\an}[\cH^p=\lambda_{\rho}]
   \end{CD}
 \end{equation}
 where the left arrow is induced by $\chi\hookrightarrow \widetilde{\chi}$, and the right arrow is the natural injection.

 By the same argument as in the proof of Lem.\ref{lem: clin-pvle}, one can prove the map $v$ factors through an injection
 \begin{equation*}
   v:\Sigma\big(\ul{k}_{\Sigma_{\wp}},w;\alpha\big) \hooklongrightarrow  \widetilde{H}^1_{\et}(K^p,E\big)^{Z_1=\sN^{-w}, \cH^p=\lambda_{\rho}}_{\overline{\rho}, \Q_p-\an}.
 \end{equation*}
 Moreover, we claim the restriction $f_v:=v|_{\St(\ul{k}_{\Sigma_{\wp}},w;\alpha)}$ is equal to $f$. In fact, by taking Jacquet-Emerton functor, one sees both the maps $f$ and $f_v$ give rise to the same eigenvector $$v\in  J_B\big(\widetilde{H}_{\et}^1(K^p,E)_{\Q_p-\an}^{Z_1=\sN^{-w}}\big)^{T(F_{\wp})=\chi, \cH^p=\lambda_{\rho}}\cong (z^*\cM)^{\vee},$$ from which the claim follows.

By the commutative diagram (\ref{equ: clin-pagip}) and Lem.\ref{lem: clin-pcssf}, we see $\widetilde{v}$ induces a continuous $\GL_2(F_{\wp})$-invariant injection
\begin{equation}\label{equ: clin-walge}
\Sigma\big(\ul{k}_{\Sigma_{\wp}},w;\alpha;\cL_{\tau}\big) \hooklongrightarrow\widetilde{H}^1_{\et}(K^p,E)^{Z_1=\sN^{-w}}_{\overline{\rho}, \Q_p-\an}[\cH^p=\lambda_{\rho}],
\end{equation}
whose restriction to $\St(\ul{k}_{\Sigma_{\wp}},w;\alpha)$ equals to $f$ (by the above discussion and commutative diagram (\ref{equ: clin-pagip})\big). It's sufficient to prove the map (\ref{equ: clin-walge}) factors through $\widetilde{H}^1_{\et}(K^p,E)^{Z_1=\sN^{-w},\cH^p=\lambda_{\rho}}_{\overline{\rho}, \Q_p-\an}$.

By the same argument as in the proof of Lem.\ref{lem: clin-pvle}, one can prove the following restriction map is injective:
 \begin{multline*}
    \Hom_{\GL_2(F_{\wp})}\Big(\Sigma\big(\ul{k}_{\Sigma_{\wp}},w;\alpha;\cL_{\tau}\big),\widetilde{H}^1_{\et}(K^p,E)_{\Q_p-\an}[\cH^p=\lambda_{\rho}]\Big) \\
    \lra \Hom_{\GL_2(F_{\wp})}\Big(\St\big(\ul{k}_{\Sigma_{\wp}},w;\alpha\big), \widetilde{H}^1_{\et}(K^p,E)_{\Q_p-\an}[\cH^p=\lambda_{\rho}]\Big).
  \end{multline*}
  For any $X\in \cH^p$, we know the restriction of the map $(X-\lambda_{\rho}(X))\widetilde{v}$ to $\St\big(\ul{k}_{\Sigma_{\wp}},w;\alpha\big)$ is zero (since the image $f$ lies in the $\lambda_{\rho}$-eigenspace), hence $(X-\lambda_{\rho}(X))\widetilde{v}=0$ \Big(here $X\widetilde{v}$ signifies the composition
$\Sigma\big(\ul{k}_{\Sigma_{\wp}},w;\alpha;\cL_{\tau}\big) \xrightarrow{\text{(\ref{equ: clin-walge})}}\widetilde{H}^1_{\et}(K^p,E)_{\Q_p-\an}[\cH^p=\lambda_{\rho}] \xrightarrow{X}\widetilde{H}^1_{\et}(K^p,E)_{\Q_p-\an}[\cH^p=\lambda_{\rho}]\Big)$, in other words, $\Ima(\widetilde{v})\in  \widetilde{H}^1_{\et}(K^p,E)_{\Q_p-\an}^{\cH^p=\lambda_{\rho}}$. This concludes the proof of Prop.\ref{prop: clin-awdl}.

\end{document}